\definecolor{shadecolor}{rgb}{0.8,0.8,0.8}
\definecolor{ocre}{RGB}{243,102,25} 
\definecolor{darkocre}{RGB}{121,51,12} 
\definecolor{lightocre}{RGB}{255,150,37} 
\definecolor{verylightocre}{RGB}{255,204,50} 
\definecolor{soccerfield}{RGB}{107,142,35} 
\definecolor{lightgray}{RGB}{200,200,200} 
\definecolor{warmblue}{RGB}{51,102,153} 
\definecolor{lightwarmblue}{RGB}{105,141,198} 
\definecolor{sepia}{RGB}{112,66,20}
\newcommand{\btkz}{\begin{tikzpicture}}
\newcommand{\etkz}{\end{tikzpicture}}
\newtheorem{theorem}{Theorem}[section]
\newtheorem{lemma}[theorem]{Lemma}
\newtheorem{proposition}[theorem]{Proposition}
\newtheorem{corollary}[theorem]{Corollary}
\newtheorem{definition}[theorem]{Definition}
\newtheorem{example}[theorem]{Example}
\newenvironment{proof}{{\flushleft \emph{Proof}:}}{\hfill\ding{110}}
\newenvironment{remark}{{\flushleft \bfseries Remark:}}{}
\newenvironment{remarks}{{\flushleft \bfseries Remarks:}}{}
\newenvironment{comment}{{\flushleft \bfseries Comment:}}{}
\tikzset{snake it/.style={decorate, decoration=snake}}
\newcommand{\M}{M}
\newcommand{\N}{N}
\newcommand{\g}{g}
\newcommand{\h}{h}
\renewcommand{\P}{\mathcal{P}}
\newcommand{\MEps}{\M_{\e}}
\newcommand{\gEps}{\g_{\e}}
\newcommand{\PEps}{\P _{\e}}
\newcommand{\ZEps}{Z_ {\e}}
\newcommand{\euc}{\mathrm{e}}
\renewcommand{\r}{\mathfrak{r}}
\renewcommand{\b}{\mathfrak{b}}
\renewcommand{\v}{{\bf v}}
\renewcommand{\u}{{\bf u}}
\newcommand{\bxi}{\boldsymbol{\xi}}
\newcommand{\hM}{\hat{\M}}
\newcommand{\hg}{\hat{\g}}
\newcommand{\hP}{\hat{\P}}
\newcommand{\hZ}{\hat{Z}}
\newcommand{\hI}{\hat{I}}
\newcommand{\hE}{\hat{E}}
\newcommand{\Elin}{E^{\text{quad}}}
\newcommand{\Ilin}{I^{\text{quad}}}
\newcommand{\W}{{\Omega}}
\newcommand{\Vol}{\textup{Vol}}
\newcommand{\dVol}{\textup{dVol}}
\newcommand{\Volume}{\dVol_\P}
\newcommand{\hVolumeV}{\dVol_{\hP_\v}}
\newcommand{\hVolumeEV}{\dVol_{\hP_{\e\v}}}
\newcommand{\hVolumeEVE}{\dVol_{\hP_{\e\v_\e}}}
\newcommand{\VolumeE}{dx}
\newcommand{\VolumeEps}{\dVol_{\PEps}}
\newcommand{\cof}[1]{\vartheta^{#1}}
\newcommand{\weakly}{\rightharpoonup}
\newcommand{\weakstar}{\stackrel{*}{\weakly}}
\newcommand{\calW}{\mathcal{W}}
\newcommand{\bbW}{\mathbb{W}}
\newcommand{\calE}{\mathcal{E}}
\newcommand{\calM}{\mathcal{M}}
\newcommand{\frakX}{\mathfrak{X}}
\newcommand{\bbS}{\mathbb{S}}
\newcommand{\frakn}{\mathfrak{n}}
\newcommand{\bbN}{\mathbb{N}}
\newcommand{\LimEps}{\lim_{\e\to0}}
\newcommand{\LiminfEps}{\liminf_{\e\to0}}
\newcommand{\LimsupEps}{\limsup_{\e\to0}}
\newcommand{\curl}{\operatorname{curl}}
\renewcommand{\div}{\operatorname{div}}
\newcommand{\tr}{\operatorname{tr}}
\newcommand{\TR}{\tr_\euc}
\newcommand{\IdRtwo}{\id_{\R^2}}
\newcommand{\ind}{\mathds{1}}
\newcommand{\pl}{\partial}
\newcommand{\dM}{\pl\M}
\newcommand{\tf}{\tilde{f}}
\newcommand{\tmu}{\tilde{\mu}}
\newcommand{\teta}{\tilde{\eta}}
\newcommand{\kmax}{k_{\max}}
\newcommand{\LogEps}{\log(1/\e)}
\newcommand{\ip}[1]{\langle #1 \rangle}
\newcommand{\HOne}{H^1}
\newcommand{\HOneZero}{H^1_0}
\newcommand{\HminusOne}{H^{-1}}
\newcommand{\SEF}{\Sigma_{\mathbb{S}}}
\newcommand{\torsion}{\mathbb{T}}
\newcommand{\Emph}[1]{{\bfseries #1}}
\newcommand{\brk}[1]{\left(#1\right)}          
\newcommand{\BRK}[1]{\left\{#1\right\}}        
\newcommand{\Abs}[1]{\left| #1 \right|}        
\newcommand{\Norm}[1]{\left\| #1 \right\|}     
\newcommand{\Cases}[1]{\begin{cases} #1 \end{cases}}
\newcommand{\secref}[1]{Section~\ref{#1}}
\newcommand{\figref}[1]{Figure~\ref{#1}}
\newcommand{\thmref}[1]{Theorem~\ref{#1}}
\newcommand{\defref}[1]{Definition~\ref{#1}}
\newcommand{\propref}[1]{Proposition~\ref{#1}}
\newcommand{\lemref}[1]{Lemma~\ref{#1}}
\newcommand{\corrref}[1]{Corollary~\ref{#1}}
\newcommand{\beq}{\begin{equation}}
\newcommand{\eeq}{\end{equation}}
\newcommand{\bsplit}{\begin{split}}
\newcommand{\esplit}{\end{split}}
\providecommand{\e}{\varepsilon}
\providecommand{\half}{\frac{1}{2}}
\providecommand{\R}{\mathbb{R}}
\newcommand{\textand}{\quad\text{ and }\quad}
\newcommand{\Textand}{\qquad\text{ and }\qquad}
\providecommand{\vp}{\varphi}
\newcommand{\SO}{{\operatorname{SO}}}
\newcommand{\dist}{{\operatorname{dist}}}
\newcommand{\Hom}{{\operatorname{Hom}}}
\newcommand{\id}{{\operatorname{Id}}}
\numberwithin{equation}{section}
\begin{document}

\title{From Volterra dislocations to strain-gradient plasticity}
\author{Raz Kupferman* and Cy Maor\footnote{Einstein institute of Mathematics, Hebrew University of Jerusalem.}}
\date{}
\maketitle

\begin{abstract}
We rigorously derive a strain-gradient model of plasticity as a $\Gamma$-limit of continuum bodies containing finitely-many edge-dislocations (in two dimensions).
The key difference from previous such derivations is the elemental notion of a dislocation:
we work in a continuum framework in which the lattice structure is represented by a smooth frame field, and the presence of a dislocation manifests in a circulation condition on that frame field; the resulting model is a Lagrangian approach with a multiplicative strain decomposition.
The multiplicative nature of the geometric incompatibility generates many technical challenges,
which require a systematic study of the geometry of bodies containing multiple dislocations, the definition of new notions of convergence, and the derivation of new geometric rigidity estimates pertinent to dislocated bodies.
Our approach places the strain-gradient limit in a unified framework with other models of dislocations, which cannot be addressed within the ``admissible strain'' approach used in previous works.
\end{abstract}

\tableofcontents

\section{Introduction}

Dislocations are among the most important and well-studied defects in crystals.
They were studied in early 20th century by Volterra, who examined elastic equilibria of multiply-connected elastic bodies, obtained from ``stress-free'' bodies by cut-and-weld procedures mimicking plastic deformations.
In the 1930s, this theory was used to explain plastic deformation in crystalline materials \cite{Tay34, Oro34, Pol34}.
Ever since, there has been growing literature on bodies with finitely-many dislocations, collective behavior of clusters of dislocations, and on a larger scale, classical plasticity. 
Within this literature, two main threads are central to this work: 

The mechanical literature, starting from the early 1950s, has addressed kinematics issues, such as the effective fields describing materials symmetries, with and without dislocations, notions of material uniformity, and  the incorporation of these notions into mechanical models, i.e., into constitutive relations \cite{Nol58,Nol67,Wan67}.
Its elemental object is a body manifold, which is a continuum representation of a material structure.
This thread, which for the sake of establishing a nomenclature we will call the \emph{rational-mechanics approach}, often uses a geometric language, and has traditionally put less focus on the rigorous derivation of effective models from more elemental ones. 

The other thread concerns the rigorous derivation of models for the collective behavior of dislocations from models of finitely-many ones (e.g., \cite{GLP10,DGP12,MSZ14,MSZ15,CGO15,CGM16,Gin19,Gin19b,KO20,CGM23}).
In this thread, the elemental model of a single dislocation does not depart from the same premises as the rational mechanics approach. Moreover, all the aforementioned literature addresses a low-energetic regime. For the sake of nomenclature, we will call this approach the \emph{admissible strain approach}---the terminology will be clarified below.

In earlier work, we initiated a program of combining both threads, so far in a high-energetic regime  \cite{KM15,KM16,EKM20}. The present work is a continuation of this program, and its scope is twofold:
\begin{itemize}[itemsep=0pt]
\item[-] The derivation of low-energetic plasticity models from elemental models that are consonant with the rational mechanics approach. That is, our basic model is a Lagrangian model with multiplicative decomposition of the strain gradient.
\item[-] Lay a setting for a unified and rigorous study of various problems involving a wider range of material defects.
\end{itemize}

As for the second point, our motivation in studying dislocations based on notions of uniformity and 
symmetry, is the flexibility that this approach offers in addressing a wide variety of mechanical problems involving various types of defects. There are several situations in which it is not clear how to apply the  admissible strain approach. For example:
\begin{enumerate}[itemsep=0pt,label=(\alph*)]
\item High energetic regimes, in which the accumulation of dislocations induces a substantial change in the intrinsic geometry of the material.
\item Defects in slender bodies, such as graphene monolayers.
\item Defects of mixed types, such as coexistence of disclinations, dislocations and point defects.
\end{enumerate}
Item (a) has been partly addressed in \cite{KM15,KM16,EKM20}. In this work we address low-energetic limits that were studied within the admissible strain approach, using a framework enabling the future study of problems such as Items (b) and (c). 

In this introduction, we describe our model  of bodies containing finitely-many dislocations, comparing it  with the admissible strain approach, describe the strain-gradient plasticity model, survey some of the relevant literature, and present the results and structure of this paper.
We focus on two-dimensional systems, and comment only briefly on extensions to three-dimensional ones.

\paragraph{Modeling a dislocation}
A dislocation in a crystal is created by a gliding mechanism along a lattice direction; atomic bonds are broken and new bonds are formed after indentation.
Once this gliding has taken place, one has a medium having a perfect local lattice structure (except at a core), which nevertheless does not embed in a global lattice structure.
In a continuum theory, the lattice structure is replaced by smooth fields. A defect-free crystal is modeled as a Euclidean domain, possibly endowed with a frame field representing the lattice directions. 
As described by Volterra, a dislocation can be simulated by perforating the continuum by a cylindrical hole of atomic-size diameter (the core of the defect), cutting the domain across a half-plane terminating at the cylindrical hole,
translating one of the sides of the cut along one of the lattice directions, and then gluing the two sides of the cut (see Figure~\ref{fig:intro} for the discrete and continuum modeling of an edge-dislocation). 

\begin{figure}
\begin{center}
\btkz
	\foreach \i in {0,1,2,3,4,5}
	{
		\tkzDefPoint(0.5*\i,0){A};
		\tkzDrawPoint[color=blue](A);
		\tkzDefPoint(0.5*\i,0.5){A};
		\tkzDrawPoint[color=blue](A);
		\tkzDefPoint(0.5*\i,1){A};
		\tkzDrawPoint[color=blue](A);
	}	
	\foreach \i in {0,1,2,3,4}
	{
		\tkzDefPoint(0.62*\i,1.5){A};
		\tkzDrawPoint[color=blue](A);
		\tkzDefPoint(0.62*\i,2.0){A};
		\tkzDrawPoint[color=blue](A);
	}	
	\draw[dashed, color=brown] (1.25,1.20) circle (12pt);
	\draw[->,color=magenta] (0.5,0.5) -- (1.0,0.5);
	\draw[->,color=magenta] (1.0,0.5) -- (1.5,0.5);
	\draw[->,thick,color=cyan] (1.5,0.5) -- (2.0,0.5);
	\draw[->,color=magenta] (2.0,0.5) -- (2.0,1.0);
	\draw[->,color=magenta] (2.0,1.0) -- (1.85,1.5);
	\draw[->,color=magenta] (1.85,1.5) -- (1.85,2.0);
	\draw[->,color=magenta] (1.85,2.0) -- (1.25,2.0);
	\draw[->,color=magenta] (1.25,2.0) -- (0.65,2.0);
	\draw[->,color=magenta] (0.65,2.0) -- (0.65,1.5);
	\draw[->,color=magenta] (0.65,1.5) -- (0.5,1.0);
	\draw[->,color=magenta] (0.5,1.0) -- (0.5,0.5);
	\draw[->,thick,color=red] (0.,0.) -- (0.5,0.0);
	\draw[->,thick,color=red] (0.,0.) -- (0.0,0.5);
	\tkzText(1.25,2.75){Lattice model}
	\tkzText[color=red](0.0,-0.5){lattice}
	\tkzText[color=red](0.0,-0.85){directions}
	\tkzText[color=cyan](2.5,-0.5){Burgers}
	\tkzText[color=cyan](2.5,-0.85){vector $\v$}
	\tkzText[color=magenta](1.25,-1.5){Burgers}
	\tkzText[color=magenta](1.25,-1.85){circuit}
	\tkzText[color=brown](-1.0,1.5){defect}
	\tkzText[color=brown](-1.0,1.15){core}
	\draw[->,dashed,color=brown] (1.25,1.25) -- (-0.6,1.22);
	\draw[->,dashed,color=magenta] (1.25,0.4) -- (1.25,-1.2);
	\draw[->,dashed,color=cyan] (1.75,0.4) -- (2.50,-0.3);
	
	\begin{scope}[xshift=6cm]
		\tkzText(2.8,2.75){Continuum model (Volterra)}
		\tkzText[color=magenta](3.0,-1.5){defect is encoded in the}
		\tkzText[color=magenta](3.0,-1.85){frame field}
		\draw[color=blue] (0,-0.1) -- (2.0,-0.1) -- (2.0,2.1) -- (0,2.1) -- cycle;
		\draw[color=blue] (0.8,0.8) -- (1.2,0.8) -- (1.2,1.2) -- (0.8,1.2) -- cycle;
		\draw[color=blue] (1.2,1.0) -- (2.0,1.0);
		\tkzText[color=red](0.25,-0.50){frame}
		\tkzText[color=red](0.25,-0.85){field}
		\draw[->,thick,color=red] (0.1,0.) -- (0.6,0.0);
		\draw[->,thick,color=red] (0.1,0.) -- (0.1,0.5);
		\draw[->,color=brown] (2.5,1.0) -- (3.0,1.0);
		\begin{scope}[xshift=3.5cm]
			\draw[color=blue] (0,-0.1) -- (2.0,-0.1) -- (2.0,1) -- (2.2,1) -- (2.2,2.1) -- (0,2.1) -- cycle;
			\draw[color=blue] (0.8,0.8) -- (1.2,0.8) -- (1.2,1.0) -- (1.4,1.0) -- (1.4,1.2) -- (0.8,1.2) -- cycle;
			\draw[->,thick,color=cyan] (2.0,1) -- (2.2,1);
			\draw[color=blue] (1.2,1.0) -- (2.0,1.0);	
			\draw[dashed, color=brown] (1.1,1.0) circle (12pt);
			\draw[->,thick,color=red] (0.1,0.) -- (0.6,0.0);
			\draw[->,thick,color=red] (0.1,0.) -- (0.1,0.5);
			\tkzText[color=cyan](2.5,-0.5){Burgers}
			\tkzText[color=cyan](2.5,-0.85){vector $\v$}
			\draw[->,dashed,color=cyan] (2.1,0.8) -- (2.50,-0.3);
		\end{scope}
	\end{scope}
\etkz

\end{center}
\label{fig:intro}
\caption{\footnotesize{Illustration of edge-dislocations in the discrete (left) and continuum settings (right). In the discrete setting, the lattice directions are defined everywhere except in the dislocation core, and the Burgers vector  ({\color{cyan} cyan}) can be recovered by conducting a burgers circuit ({\color{magenta} magenta}) around the core.
The continuum setting depicts a Volterra cut-and-weld process: on the left is a Euclidean annular domain, cut along a ray. The cut is then shifted and glued (right). The depicted domain does not represent a stress-free configuration as the dislocated body cannot be isometrically immersed in the Euclidean space. Still, the Euclidean frame-field at each point (representing the lattice directions in the continuum model) are still well defined in the dislocated body, and from which the Burgers vector ({\color{cyan} cyan}) can be obtained by integrating the frame field along a loop, which is a continuum version of the Burgers circuit \eqref{eq:Burgers_circuit}.}}
\end{figure}

The outcome of such a cut-and-weld procedure is a body endowed with a geometry that is locally-Euclidean, however does not embed (isometrically) into a global Euclidean structure. This formalism is valid for all types of dislocations in two and three dimensions (as well as for disclinations); henceforth we focus on parallel edge-dislocations, which can be described by a two-dimensional model.
%
 
As detailed in Section~\ref{subsec:coord_free}, a body $\M$ containing an edge-dislocation can be viewed as a 
locally-Euclidean Riemannian manifold, endowed with a global frame field representing the lattice directions at every point.
Equivalently, we can replace the frame field by its dual,
which is a non-degenerate  $\R^2$-valued one-form $\P:T\M\to \R^2$, which is known in elasto-plasticity as the \Emph{plastic strain}; the locally-Euclidean structure implies that $\P$ is closed (curl free).
The defect is encoded in a \Emph{Burgers vector} $\v \in \R^2$, obtained by integrating $\P$ along a simple closed curve $C$ encircling the  core of the dislocation,
\beq
\label{eq:Burgers_circuit}
\oint_C \P = \v.
\eeq
This integral is the continuum counterpart of counting lattice sites along a Burgers circuit.

The mechanics under study concern configurations of the dislocated body in the ambient Euclidean space $\R^2$, i.e., maps $f:\M\to \R^2$; to each configuration  is associated an elastic energy accounting for how distorted is the embedded body relative to its intrinsic geometry.
An elementary notion in rational mechanics is that of \Emph{material uniformity}, which in our case amounts to the energy density being ``the same everywhere". 
As explained in Section~\ref{sec:energy}, since the lattice directions encoded by $\P$ define how different points in the material correspond to each other, the energy of a uniform material with dislocations has the form
\beq
\label{eq:EofMQ}
E^{\text{NW}}(f) = \int_\M \calW(df\circ \P^{-1})\,\Volume.
\eeq
where $\calW:\R^2\otimes \R^2 \to [0,\infty]$ is an elastic energy density, and $\Volume$ is the volume form induced by $\P$ (the superscript NW stands for Noll--Wang; see below).
The map $df \circ \P^{-1}$ can be viewed as the elastic distortion in Kr\"oner's multiplicative decomposition of the strain.
We refer to this construction, in which the dislocation is encoded in the circulation of a global frame field, as a \Emph{Volterra model of dislocations}, since Volterra was the first to describe defective bodies using cut-and-weld procedures (although the terminology used by Volterra differs from ours substantially). 

We next briefly describe the admissible strain model, as presented, e.g., in \cite{GLP10, SZ12,MSZ14,MSZ15}, and explain in which sense it is  an approximation of the Volterra model; a more detailed account is given in \secref{sec:admissible}. In a low energetic regime, there exists a coordinate system (typically referred to as reference configuration), in which $\P$  (an $\R^2\otimes\R^2$-valued function in coordinates) is close to the identity, whereas $df$ is close (also in coordinates) to a rotation $U$. 
One can formally linearize $U^T df \circ \P^{-1}$ about the identity and obtain the additive decomposition of the strain,
\[
U^T df \circ \P^{-1} \approx U^T df + (I-\P) \equiv \beta,
\]
so that $\beta:\M\to \R^2\otimes \R^2$ is a closed (curl free) matrix field satisfying
\beq\label{eq:circulation_beta}
\oint_C \beta = -\v.
\eeq

The \Emph{admissible strain} approach considers an elastic energy of the form
\[
E^{\text{as}}(\beta) = \int_\M \calW(\beta)\,dx,
\]
defined over all curl-free fields $\beta$ satisfying the circulation condition \eqref{eq:circulation_beta} (usually changing the righthand side in \eqref{eq:circulation_beta} to $\v$; in this presentation, we retain $-\v$ for consistency).
This derivation, approximating Volterra's model of dislocations by the admissible strain model, requires $\beta$ to be close to $I$, hence the linear-elasticity version of the admissible strain model replaces $\calW(\beta)$ with $\bbW(\beta-I)$, where $\bbW$ is the Hessian of $\calW$ at the identity. 
An alternative derivation of an admissible strain model departs from an Eulerian approach, where the strain $\beta$ is a map from the tangent space of a deformed (or actual/spatial) configuration in Euclidean space to the reference lattice \cite[p.~180]{MSZ15} \cite[Sec.~3.1]{CGM23}; in this approach the circulation condition for $\beta$ is exact, however, being an Eulerian approach in which the deformed configuration is given, the variational problem that this model represents is quite different.
In all of these approaches, one can further define bodies having many dislocations.

\paragraph{Modeling dislocation fields}
Bodies containing ``macroscopically-many" dislocations are ubiquitous in nature, and several models for bodies with distributed dislocations were derived along the years.
In the 1950's, Nye \cite{Nye53}, Bilby \cite{BBS55}, Kondo \cite{Kon55} and others, modeled bodies with distributed dislocations as Riemannian manifolds $(\M,\g)$, endowed with a curvature-free, metric affine connection $\nabla$.
Up to choosing a basis at a single point, the joint choice of $\g$ and $\nabla$ is equivalent to choosing an implant map $\P:T\M\to \R^2$ as above. 
However, when describing a distribution of dislocations, $\P$ needs not to be closed (curl free).
An energetic model of the form \eqref{eq:EofMQ} for non-closed $\P$ was proposed by Noll and Wang \cite{Nol58,Wan67} for describing continuously distributed dislocations; see \cite{EKM20} for a summary of the Kondo--Bilby and Noll--Wang approaches.

Later on, the so-called \Emph{strain-gradient models} were  
developed by Fleck--Hutchinson \cite{FH93} and Gurtin \cite{Gur00,GA05}. 
In these models, the energy is of the form
\[
E^{\text{sg}}(u,\beta^p) = \int_\W \bbW(\nabla u - \beta^p)\,dx + \int_\W \Sigma(\curl \beta^p) \,dx,
\]
where $u:\W\to \R^2$ is a displacement field relative to a reference configuration, $\beta^p$ is a plastic strain, whose curl represents the distribution of dislocations, $\bbW$ is a quadratic elastic energy density, and $\Sigma$ is a model-dependent function.
The first term is a linear elastic energy of the elastic strain, whereas the second term, which is independent of the displacement field, is the self-energy contribution of the plastic strain.

Comparing the two approaches, it is apparent that the Noll-Wang model \eqref{eq:EofMQ} for a non-curl-free $\P$ concerns systems subject to higher energy/stress---there is no a priori reference (zero energy) configuration and the decomposition of the strain is multiplicative rather than additive (compare $df \circ \P^{-1}$ in $E^{\text{NW}}$ and $\nabla u - \beta^p$ in $E^{\text{sg}}$).
This observation will be made precise in the next part, where we describe the derivation of these models from models of finitely-many dislocations, each in a different energy scaling.

\paragraph{Rigorous homogenization of dislocations: previous results}
In two dimensions, the Kondo--Bilby geometric model, and the Noll--Wang energetic model were obtained as limits of the Volterra  model of finitely-many dislocations: 
in the Kondo--Bilby model this reduces to showing that manifolds $(\M,\P)$ can be obtained as limits of manifolds $(\M_n,\P_n)$ with finitely-many dislocations (that is, $d\P_n = 0$), as the magnitude of the dislocations tends to zero and their number tends to infinity \cite{KM15,KM16}; the  
Noll--Wang model was obtained by taking the $\Gamma$-limit of the associated energies $E^{\text{NW}}$ of $(\M_n,\P_n)$, under some additional technical assumptions \cite{KM16a,EKM20}.
In both cases, the total Burgers vector in $(\M_n,\P_n)$ is $O(1)$ (the small parameter is the typical magnitude of a dislocation), as is the associated elastic energy. 

The strain-gradient model was first derived by Garroni--Leoni--Ponsiglione (GLP) \cite{GLP10} as a $\Gamma$-limit of the admissible strain model, as the magnitude of the dislocations tends to zero, in the case where the underlying energy density is a quadratic energy density $\bbW$ (i.e., a linear elastic model).
For a parameter $\e\to0$, they considered systems with (roughly) $n_\e\to \infty$ dislocations, each of magnitude $\e$. They showed that the energy contribution of each dislocation is of order $\e^2 \log(1/\e)$, summing up to a \Emph{self-energy} of order $n_\e \e^2 \log(1/\e)$.  
Another energy contribution is an \Emph{interaction energy} of order $n_\e^2 \e^2$.
In the low energy regime considered, both these terms are assumed to tend to zero as $\e\to 0$.

The identification of two distinct energy contributions gives rise to different energy regimes: 
\Emph{Subcritical} for $n_\e \ll \log(1/\e)$, \Emph{critical} for $n_\e = \log(1/\e)$ and \Emph{supercritical} for $n_\e \gg \log(1/\e)$.
GLP considered the energy densities
\[
\calE^{\text{GLP}}_\e(\beta_\e,\mu_\e) = \frac{1}{h_\e^2} \int_{\M_\e} \bbW(\beta_\e-I)\,dx,
\]
where $h_\e^2 = \max\{n_\e \e^2 \log(1/\e),n_\e^2 \e^2\}$, $\mu_\e$ is a sum of $\delta$-functions representing the locations and magnitudes of the dislocations, $\M_\e$ is a subset of a domain $\W$ obtained by removing discs of radius $\e$ around the support of $\mu_\e$, and $\beta_\e\in L^2(\W;\R^2\otimes \R^2)$ is a strain field satisfying $\curl \beta_\e = \mu_\e$.
GLP showed (under some technical assumptions) that $\calE^{\text{GLP}}_\e$ $\Gamma$-converges to $\calE_0: L^2(\W;\R^2\otimes \R^2) \times  \calM(\W;\R^2) \to [0,\infty]$ given by
\beq
\label{eq:limenergy}
\calE_0(J,\mu) =
\Cases{
 \int_\W \bbW (J)\,dx + \int_\W \Sigma(\frac{d\mu}{d|\mu|}) \,d|\mu| & \parbox[t]{.4\textwidth}{subcritical and $\curl J =0$} \\
 \int_\W \bbW (J)\,dx  + \int_\W \Sigma(\frac{d\mu}{d|\mu|}) \,d|\mu| & \parbox[t]{.4\textwidth}{critical, $\mu\in \HminusOne(\W;\R^2)$ and $\curl J = -\mu$} \\
\int_\W \bbW (J)\,dx  & \parbox[t]{.4\textwidth}{supercritical, $\mu\in \HminusOne(\W;\R^2)$ and $\curl J = -\mu$} \\
\infty & \text{otherwise},
}
\eeq
where $\Sigma$ is a one-homogeneous, convex function given by an appropriate cell formula.
The topology with respect to which the $\Gamma$-limit is obtained is induced by the limits $\h_\e^{-1} (\beta_\e-I) \weakly J$ in $L^2$ 
and $\frac{1}{n_\e\e}\mu_\e \weakstar \mu$ in $\calM$.

Obtaining strain-gradient plasticity from a \emph{non-linear} energy density $\calW$, i.e., for
\beq\label{eq:MSZ}
\calE^{\text{SZ}}_\e(\beta_\e,\mu_\e) = \frac{1}{h_\e^2} \int_{\M_\e} \calW(\beta_\e)\,dx,
\eeq
was first done by Scardia--Zeppieri \cite{SZ12} for the case of finitely-many dislocations of order $\e$ (that is, a constant $n_\e$), in fixed, non-variable, positions.
A generalization for $n_\e\to \infty$ was then obtained by M\"uller--Scardia--Zeppieri \cite{MSZ14}, who considered the critical regime $n_\e = \log(1/\e)$ and obtained the limiting energy
$\calE_0^{\text{MSZ}}: L^2(\W;\R^2\otimes \R^2) \times \SO(2) \times \calM(\W;\R^2) \to [0,\infty]$ given by
\beq
\label{eq:limenergy_MSZ}
\calE_0^{\text{MSZ}}(J,U,\mu) =
\Cases{
 \int_\W \bbW (J)\,dx  + \int_\W \Sigma\brk{U ,\frac{d\mu}{d|\mu|}} \,d|\mu| & \parbox[t]{.3\textwidth}{$\mu\in \HminusOne(\W;\R^2)\quad\qquad$ and $\curl J = -U^T\mu$,} \\
\infty & \text{otherwise},
}
\eeq
where $\bbW$ is the Hessian of $\calW$ at the identity.
In \cite{MSZ14}, the topology is induced by the limits $h_\e^{-1} (U_\e^T \beta_\e - I)\weakly J$ in $L^2$ for some $U_\e\in \SO(2)$ converging to $U$, and $\frac{1}{n_\e\e}\mu_\e \weakstar \mu$ in $\calM$. 

Several improvements of these results were obtained throughout the years:
the removal of a non-physical upper-bound assumption on the energy density \cite{MSZ15}, the relaxation of the assumption that the dislocations are well-separated (\cite{DGP12} in the subcritical regime, \cite{Gin19} in the critical regime, both in the linear model $\calE^{\text{GLP}}_\e$), and more.
Other recent results regarding three-dimensional models \cite{CGO15,GMS21,CGM23} are beyond the scope of this paper. 

\paragraph{Main results}
As exposed above, a first contribution of this work is the 
establishment of a nonlinear framework for the analysis of solids with defects, along with the notion of material uniformity, which is consistent with the microscopic models of lattice defects. 
While it is difficult to claim novelty when it comes to ideas that have been thoroughly discussed in the rational mechanics literature in the past 75 years, our framework, as far as we know, is the first one to combine these ideas with a rigorous calculus of variations approach, and thus it is a starting point for a wide range of future analyses, including the homogenization of media with mixed types of defects, and thin sheets containing  defects.
Similar kinematic considerations, with multiplicative strain decomposition, were recently presented in \cite{HR22b} in the context of elasto-plastic evolution.

The geometry of a single edge-dislocation is characterized 
in section~\ref{sec:single_dislocation}.
We give an axiomatic, coordinate-free definition of a two-dimensional body $(\M,\P)$ containing a single edge-dislocation of Burgers vector $\v$ (Definition~\ref{def:single_disloc}). 
A natural question is whether our axiomatic definition characterizes a unique object;  we show that it defines the body uniquely up to the shape of the core of the dislocation (Theorem~\ref{thm:disloc_unique}).
This extends the uniqueness result \cite[Theorem~3]{KMS15} in the dislocation-free case $\v=0$; the method of proof here is quite different and requires new ideas.
The importance of this result is that in different contexts, it is useful to describe the geometry of a dislocation in different ways---compare, for example, the different metrics used in \cite{KMS15}, in \cite{Kup17} and in the current work. Theorem~\ref{thm:disloc_unique} establishes that they all describe the same object.

Finally, we obtain the strain-gradient model as a homogenization limit of the Volterra model of dislocations.
Combining with \cite{EKM20}, this establishes both the strain-gradient and the Noll--Wang models under the same framework---as homogenization limits of Volterra's dislocations under different energy scalings.

Without getting into technical details, we show that the elastic energies (rescaled Noll--Wang energies)
\beq\label{eq:rescaled_Ee}
\calE_\e(f_\e,\PEps) =  \frac{1}{h_\e^2} \int_{\MEps} \calW(df_\e\circ \PEps^{-1})\,\VolumeEps,
\eeq
where $(\MEps,\PEps)$ is a sequence of bodies with (roughly) $n_\e$ dislocations of order $\e$, and $f_\e \in H^1(\MEps,\R^2)$, $\Gamma$-converges to the GLP limiting energy \eqref{eq:limenergy}. 
Here, the $\Gamma$-convergence is with respect to an appropriate notion of convergence of the bodies  $(\MEps,\PEps)$ to $(\W,\mu)$ defined in Section~\ref{sec:conv_disloc}, and with respect to the convergence of the scaled displacements $\h_\e^{-1}(U_\e^T df_\e - \PEps) \weakly J$ in $L^2$ for some $U_\e\in \SO(2)$.
A detailed formulation of the result appears in the beginning of Section~\ref{sec:GammaConv}.

We note that the self-energy term in \eqref{eq:limenergy}, in contrast to the self-energy in \eqref{eq:limenergy_MSZ}, does not depend on a limiting rotation $U$, even though the energies in both our model and \eqref{eq:MSZ} used in \cite{SZ12,MSZ14} are rotation-invariant.
This difference occurs because in our model, unlike in the admissible strain model, the Burgers vector is independent of the action of the rotation group: indeed, the rotation group only acts on $f_\e$ and not on $\PEps$, whereas in the admissible strain model one cannot rotate $\beta_\e$ without rotating $\mu_\e$. The limiting dislocation density $\mu$ is a limit of $\PEps$ alone, and thus the self-energy term associated with it is independent of a global rotation closest to the strains.

\paragraph{Structure of the paper and intermediate results}
\begin{itemize}
\item
\textbf{Modeling an elastic body:}
Section~\ref{sec:energy} presents a short introduction to the Noll--Wang energy \eqref{eq:EofMQ} associated with a general elastic body $(\M,\P)$, and lists our assumptions on the energy density $\calW$.

\item \textbf{Geometry of a dislocation:}
In section~\ref{sec:single_dislocation} we characterize axiomatically the geometry of a single edge-dislocation, which lays the basis for the subsequent analysis. 
We prove that this coordinate-free definition fully characterizes a geometry, up to the shape of a core region.

\item \textbf{Energy of a single dislocation:}
In order to obtain sharp energy estimates, we need an explicit coordinate representation of  a body containing a dislocation. 
We derive such a representation  by constructing a model body $(\hM_\v,\hP_\v)$ (Section~\ref{sec:coordinate_construction}), and we analyze its deviation from a Euclidean annulus (Section~\ref{sec:dev_from_euc}).
In Section~\ref{sec:energy_single}, we analyze the (infimal) elastic energy associated with $(\hM_\v,\hP_\v)$, and show that it is of order $|\v|^2\log(1/|\v|)$.
A more detailed analysis of the energy of $(\hM_{\e\v},\hP_{\e\v})$ as $\e\to 0$, shows that after rescaling the energy by $\e^2\log(1/\e)$, it tends to a quadratic energy functional $\Ilin_0(\v)$, whose convex relaxation, as in pervious work, yields the self-energy function $\Sigma$ appearing in the $\Gamma$-limit.

\item 
\textbf{Relation to admissible strain model:}
In Section~\ref{sec:admissible_strain} we further elaborate on how the admissible strain model can be formally obtained from the Volterra model via linearization.

\item \textbf{Geometry of multiple dislocations:} 
In Section~\ref{sec:def_bodies}, we define bodies $(\M,\P)$ containing multiple edge-dislocations---locally-flat manifolds that look locally like a body with a single edge-dislocation.
Following ideas of Epstein--Segev (\cite{ES14,ES15}, see also \cite{KO20}) we view the implant map $\P$ as a measure $\torsion\in \calM(\M;R^2)$; this alternative point of view is important when discussing convergence of such bodies.
In Section~\ref{sec:disloc_construction}, we construct bodies containing multiple dislocations, and estimate their deviation from a multiply-punctured Euclidean plane. 
This construction is essential for the construction of a recovery sequence in the  $\Gamma$-convergence analysis.

\item \textbf{Convergence of bodies with many dislocations:}
The $\Gamma$-convergence of the energy associated with bodies containing multiple dislocations, must rely on a primal notion of convergence of bodies containing multiple dislocations.
Such a notion is defined in Section~\ref{sec:conv_disloc}, in which we present a few examples, which also form the basis for the recovery sequence in the $\Gamma$-convergence section.
In lay terms, a sequence of bodies with dislocations $(\M_\e,\PEps)$ of magnitude $\e$ converges,  with respect to a parameter $n_\e$,  to a domain $\W\subset \R^2$ and a measure $\mu \in \calM(\W;\R^2)$, if 
we can embed $\MEps$ as a subset of $\W$, such that:

\begin{enumerate}[itemsep=0pt,label=(\alph*)]
\item The volume of $\W\setminus\MEps$ tends to zero. 
\item \Emph{Distortion bounds}: $|I - \PEps|$ tends to zero uniformly, except in the vicinity of the dislocations, and $\|I - \PEps\|_{L^2} = O(h_\e)$. 
\item \Emph{Burgers vector convergence}: The measures $\frac{1}{n_\e\e} \torsion_\e$ weakly converge to $\mu$, where $\torsion_\e$ are the measures associated with $\PEps$. 
\end{enumerate}

\item \textbf{Geometric rigidity:}
As always in low-energy limits of non-linear elasticity, one needs a Friesecke--James--M\"uller-type geometric rigidity estimate (henceforth FJM).
In Theorem~\ref{thm:rigidity}, we prove an asymptotic FJM result for converging bodies with dislocations:
If $(\MEps,\PEps) \to (\W,\mu)$, then for every $f_\e \in \HOne(\MEps;\R^2)$, there exists a matrix $U_\e\in \SO(2)$, such that
\[
\|df_\e - U_\e \PEps\|_{L^2(\MEps)}^2 \le C \int_{\MEps} \dist^2(df_\e\circ \PEps^{-1},\SO(2))\,\VolumeEps + Ch_\e^2,
\]
where the constant $C$ depends on $\W$ and on the uniform bound of $| I - \PEps|$.
We show a-posteriori (using both $\Gamma$-convergence and compactness results) that for $\e$ small enough, the extra $h_\e^2$ term can be removed at the expense of $C$ depending (in an explicit way) on the limiting dislocation density $\mu$, see Theorem~\ref{thm:rigidity2}. 
It would be interesting to know whether a similar statement (without the $h_\e^2$ error term) holds for a fixed $\e$, with a constant that is independent of the number of dislocations. 
See the discussion in the end of Section~\ref{sec:rigidity}.

\item \textbf{Compactness:}
Using the rigidity theorem, we prove in Theorem~\ref{thm:compactness} that if $(\MEps,\PEps) \to (\W,\mu)$ and $\calE_\e(f_\e) = O(1)$, where $\calE_\e$ is given in \eqref{eq:rescaled_Ee}, then  there exists a subsequence $U_\e \in \SO(2)$ such that the rescaled displacements $h_\e^{-1}(U_\e^T df_\e - \PEps)$ weakly converge in $L^2$ to $J\in L^2(\W;\R^2\otimes \R^2)$, where $J$ satisfies $\curl J= 0$ (subcritical) or $\curl J = -\mu$ (critical or supercritical).
This compactness of the displacement is analogous to the one in \cite{GLP10}, except that in the supercritical case, only the compactness of the symmetric part of the displacement is obtained in \cite{GLP10}; here full compactness is obtained due to the stronger rigidity estimate Theorem~\ref{thm:rigidity2}, which stems from our distortion bounds assumptions on $\PEps$.
Furthermore, we show that in the critical and subcritical regimes, and under a mild separation assumption between the dislocations, if a sequence of bodies $(\MEps,\PEps)$ satisfies the distortion bounds with respect to a limiting domain $\W$, then $\frac{1}{n_\e\e} \torsion_\e\weakstar\mu$ for a subsequence, and thus $(\MEps,\PEps)\to (\W,\mu)$.
This compactness of the dislocation measures is analogous to the one in \cite{GLP10, MSZ14}.

\item \textbf{$\Gamma$-convergence:}
In Section~\ref{sec:GammaConv}, we gather all the ingredients, in particular the asymptotic estimates for a single dislocation (Section~\ref{sec:asymp_estimates}) and the construction of bodies containing multiple dislocations (Section~\ref{sec:disloc_construction}), and prove the $\Gamma$-limit result stated above, under the assumptions that the dislocations are well-separated (the infimal distance between dislocations $\rho_\e$ satisfies $\log(1/\rho_\e)\ll \log(1/\e)$), and that the energy is not too high ($\log n_\e \ll \log(1/\e)$).

\end{itemize}

\paragraph{Main challenges}
While the $\Gamma$-convergence proof follows eventually a course similar to \cite{GLP10, SZ12,MSZ14}, e.g., separating the energy $\calE_\e$ into core and far-field regimes, there are significant challenges, both conceptual and technical, in applying this course to our nonlinear geometric setting.

First, as the model of a dislocation is encoded in a section of a vector bundle on a manifold, rather than by a measure over a fixed domain, the correct setting of the problem had to be identified, including the correct definitions of manifolds containing multiple dislocations and their convergence.
$\Gamma$-convergence of elastic models over convergent manifolds appeared in our previous work on the Noll--Wang limit \cite{KM16a,EKM20}, but in this paper the notions of convergence are much more refined.

The construction of bodies containing multiple dislocations also poses a new challenge---while measures can be added, frame fields over a manifold cannot (this can be interpreted as a geometric nonlinearity, in addition to the energetic nonlinearity of a nonlinear-elastic energy density $\calW$).  
One approach to overcome this difficulty is by gluing bodies containing single dislocations; this approach was adopted in \cite{KM15,KM16,EKM20}.
However, the energy estimates obtained for these compound manifolds are not sharp enough for obtaining recovery sequences in both critical and supercritical regimes.
Instead, we adapted ideas from the construction of recovery sequences of strains in \cite{MSZ14} to construct frame fields $\P$.
The price we have to pay in this construction is that the precise shape of the cores of the dislocations is not known, hence we need to estimate how they differ from the core in the single-dislocation model manifold $(\hM_\v,\hP_\v)$, for which we have explicit calculations.

In previous derivations of the strain-gradient limit, the rigidity estimates (FJM-like or Korn-like) compared matrix fields that are not curl-free to a fixed rotation (or an infinitesimal rotation).
In this work (Theorems~\ref{thm:rigidity_single_disloc} and \ref{thm:rigidity}) the variables are configurations $f:\M\to \R^2$, whose associated strains $df:T\M\to \R^2$ are curl-free. However, the role of a ``constant rotation" is played by the parallel 1-form $\P$ (which is not exact!).
To the best of our knowledge, this is the first time that such rigidity estimates appear in the literature (although their proofs are simpler than the ones for incompatible strains, and rely on the standard FJM estimate).

Finally, in the admissible strain approach one can choose the size of the core of the dislocation regardless of the magnitude of the Burgers vector; in the geometric approach, as in the lattice model, the size of the core is bounded from below by the magnitude of the Burgers vector (see Comment 5 after Definition~\ref{def:single_disloc}).
This makes various estimates throughout the work (in particular in Section~\ref{sec:asymp_estimates}) more challenging, as one cannot take the core size to zero independently of the Burgers vector.

\paragraph{Future extensions}
We now list some potential extensions, which, given the length of this manuscript, are not addressed in this work:
\begin{itemize}
\item
\textbf{Energy upper bound:} 
Our $\Gamma$-convergence analysis (Theorems~\ref{thm:liminf} and \ref{thm:limsup}) hinges on a non-physical upper bound  \eqref{eq:bound_calW} on the growth of the energy density $\calW$. 
This upper bound was also assumed in \cite{SZ12,MSZ14}. 
Specifically, this bound is used to bound the energies of specific constructions (Corollary~\ref{corr:Escaling}, which is not required for the $\Gamma$-limit result, Proposition~\ref{prop:cell_liminf} and Lemma~\ref{lem:rec_seq}).
We expect that this upper bound assumption can be relaxed---in Proposition~\ref{prop:cell_liminf} we only need it to hold in a neighborhood of $\SO(2)$, whereas in Lemma~\ref{lem:rec_seq}, an improved ansatz, similar the one in \cite{MSZ15}, should probably enable us to relax the upper bound assumption considerably.

\item
\textbf{Dislocation separation:}
The second technical assumption that is used throughout the $\Gamma$-convergence analysis is that the minimal separation $\rho_\e$ between dislocations tends to zero slower than any positive power of $\e$, and that $n_\e$ tends to infinity slower than any negative power of $\e$.
This means that the number of dislocations grows slower than any negative power of $\e$, that they are well-separated, and that the magnitude of all Burgers vectors is of order $\e^{1-o(1)}$.
Similar assumptions appear also in the admissible strain derivations of strain-gradient plasticity \cite{GLP10,MSZ14} and were subsequently relaxed \cite{DGP12,Gin19}; we expect that a similar relaxation can be obtained also in the geometric model. 

\item
\textbf{Compactness of lattice structure:}
Our $\Gamma$-convergence result is with respect to a topology of a joint convergence $(\MEps,\PEps,f_\e)\to(\W,\mu,J)$. Our compactness result, however, not only assumes that the energy $\calE_\e(f_\e,\PEps)$ is of order 1, but also that $\PEps$ satisfies the global distortion bounds described above.
This additional assumption is needed since the variable entering the energy is $df_\e\circ\PEps^{-1}$, whereas the information on the curl is only for $\PEps$; we needed the global distortion assumption to obtain the associated geometric rigidity result. 
No such extra assumption is necessary in the admissible strain approach, since the curl condition is explicitly given for the variable entering the energy. It would be of interest to relieve the extra assumption in our setting. 

\item
\textbf{Lower energy results in the supercritical regime:}
Our compactness result for the dislocation measures, as in \cite{GLP10}, only holds in the critical and subcritical regimes, as the self-energy is not strong enough to control the norms of the measures $\torsion_\e$ in the supercritical regime $n_\e\gg \log(1/\e)$.
If one considers a lower energetic regime --- namely, $\e^2 n_\e \log(1/\e)$ rather than $h_\e^2 = \e^2n_\e^2$ --- then compactness of $\torsion_\e$ follows as well (see remark below Theorem~\ref{thm:measure_comp}).
In the linearized context, a full $\Gamma$-convergence result in this energy scaling was obtain \cite{FPP19}; it would be interesting to adapt their results to our settings. 
Note that the compactness and lower bounds in this case are essentially covered in this work (in fact, we get a better rate of convergence of the skew-symmetric part of the strain compared to \cite{FPP19}, similar to the discussion about the supercritical compactness above), and the main challenge is in the upper bound --- to construct bodies with dislocations in the supercritical regime with this lower energy.

\item
\textbf{Three-dimensional dislocations:}
The results of this paper concern edge-dislocations in two dimensions.
A next step would be to extend this analysis to dislocations in three-dimensional bodies (following the work on the admissible strain model, e.g., \cite{CGO15,GMS21,CGM23}).
Note that parts of the settings and the constructions prevail in three dimensions. For example, the geometric, coordinate-free definition of a body with dislocations and the construction of bodies with multiple dislocations (Section~\ref{sec:disloc_construction}) can be applied in three dimensions, for dislocation of edge- and screw-type; such a construction would provide the first coordinate expression for Riemannian metrics of bodies with multiple dislocations in three dimensions.

\end{itemize}

\paragraph{Notations} 
We denote the Euclidean metric on $\R^2$ by $\euc$, the standard inner-product on $\R^2$ by $\ip{\cdot,\cdot}$, the standard frame by $\{\pl_1,\pl_2\}$ and the corresponding coframe by $\{dx^1,dx^2\}$. We denote by $B_R(x)$ the Euclidean ball of radius $R$ centered at $x$, and use the shorter notation $B_R = B_R(0)$ for balls centered at the origin.

Let $(V,\g)$ and $(W,\h)$ be inner-product spaces and let $A\in\Hom(V,W)\simeq V^*\otimes W$ be a linear operator. We denote the operator norm of $A$ by $|A|_{\g,\h}$. For a subset $K\subset \Hom(V,W)$, we denote the distance of $A$ from $K$ by
\[
\dist_{\g,\h}(A,K).
\]
In cases where no confusion should arise, we will omit the subscript $\g,\h$.
If $V$ and $W$ are oriented and of same dimension, we denote the set of orientation-preserving isometries by $\SO(\g,\h)\subset \Hom(V,W)$. These notations carry on for Riemannian vector bundles over a manifold.

Let $(\M,\g)$ be a Riemannian manifold. We denote the space of vector fields on $\M$ by $\frakX(\M) = \Gamma(T\M)$, and the space of $k$-forms by $\W^{k}(\M)$. For a vector bundle $E\to\M$, we denote the space of $k$-forms on $\M$ taking values in $E$ by $\W^k(\M;E)$, and by $L^2\W^k(\M;E)$ the space of $k$-forms of $L^2$-regularity.

For a $d$-dimensional manifold $\M$, an important role in this paper is played by non-degenerate $\R^d$-valued one-forms, that is, sections of $T^*\M\otimes \R^d$.
Given such a section $\P$, at each point $p$, $\P_p$ is a linear map $T_p\M\to \R^d$.
We say that $\P$ is non-degenerate if this linear map is invertible at each point, in which case we denote its pointwise inverse by $\P^{-1} \in \Gamma(\R^d\otimes T\M)$.
The set $\{\P_p^{-1}(\pl_1),\ldots, \P_p^{-1}(\pl_d)\}$ is thus a basis (a frame) for $T_p\M$.
Therefore we can think of $\P^{-1}$ (and thus of $\P$) as a smooth choice of lattice directions at each point.

Given an $\R^2$-valued function $\psi\in C^\infty(\M;\R^2)$ and $\R^2$-valued 1-forms $A,B\in\W^1(\M;\R^2)$, we define the $\R^2\otimes\R^2$-valued forms
\[
\psi\otimes A \in \W^1(\M;\R^2\otimes\R^2)
\Textand
A\wedge B  \in \W^2(\M;\R^2\otimes\R^2),
\]
via the coordinate expressions 
\[
(\psi\otimes A)_i^{\alpha\beta} = \psi^\alpha A_i^\beta
\Textand
(A\wedge B)_{ij}^{\alpha\beta} = A_i^\alpha B_j^\beta - B_i^\alpha A_j^\beta,
\]
where $\alpha,\beta$ denote Euclidean coordinates and $I,j$ denote entries with respect to an orthonormal frame of $T^*\M$. 
Given a $k$-form $\alpha$ taking values in $\R^2\otimes\R^2$, we denote by $\TR\alpha$ the real-valued $k$-form obtained by contracting the Euclidean components (in the above expressions, the indices $\alpha$ and $\beta$).

Let $(\N,\h)$ be another Riemannian manifold and let $\vp:\M\to\N$ be a smooth map. For a vector bundle $F\to\N$, we denote the pullback vector bundle over $\M$ by $\vp^*F$, with the canonical identification $(\vp^*F)_p \simeq F_{\vp(p)}$. For a section $\eta\in\Gamma(F)$, we denote by $\vp^*\eta\in\Gamma(\vp^*F)$ the pullback section, $(\vp^*\eta)_p = \eta_{\vp(p)}$. The pullback of sections differs from the pullback of forms: for $\omega\in\W^k(\N)$, we denote by $\vp^\#\omega\in\W^k(\M)$ the $k$-form on $\M$ defined by
\[
(\vp^\#\omega)_p(X_1,\dots,X_k) = \omega_{\vp(p)}(d\vp_p(X_1),\dots,d\vp_p(X_k)).
\]
Similarly, $\P\in \W^1(\M;\R^d)$, pulls back multilinear maps $A: (\R^d)^k \to \R$ via
\[
(\P^\# A)_p (X_1,\dots,X_k) = A(\P_p(X_1),\dots,\P_p(X_k)).
\]
In particular, a non-degenerate $\P$ defines an inner product $\P^\#\euc$ on $\M$ by pulling back the Euclidean inner-product $\euc$.

Finally, throughout this work, we use the symbols $\lesssim$ and $\gtrsim$ to denote inequalities up to a multiplicative constant, i.e.,
\[
f(x) \lesssim g(x)
\]
means that there exists a constant $C>0$ such that $f(x) \le C\, g(x)$ for all $x$. 
If $f(x)\lesssim g(x)$ and $f(x)\gtrsim g(x)$, we write $f(x)\simeq g(x)$.  
Whenever needed, we specify on which parameters the multiplicative constant does or does not depend.

\section{Material-uniform elastic models}
\label{sec:energy}

In this section we present a short exposition of the modeling of nonlinear elasticity using the geometric formalism of Riemannian geometry. 

\begin{definition}
\label{def:elastic_body}
A \textbf{$d$-dimensional elastic body} is a pair $(\M,\P)$, where $\M$ is a connected, $d$-dimensional manifold, possibly having a Lipschitz boundary, and  $\P$ is a global non-degenerate section of $T^*\M\otimes\R^d$ (i.e., an $\R^d$-valued one-form).
A \textbf{configuration} of the elastic body is a map
\[
f: \M \to \R^d
\]
into the ambient Euclidean space. 
\end{definition}

The intrinsic geometry of the body in encoded in the section $\P$.
A Riemannian metric $\g$ on $\M$ is defined by $\g_p(u,v) = \ip{\P_p(u),\P_p(v)}$, that is $\g = \P^\#\euc$.
Note that by construction, $\P$ is a section of $\SO(\g,\euc)$.
 
The map $\P^{-1}$ is sometimes called an \textbf{implant map}, for reasons that will be clarified soon; with a slight abuse of terminology, we sometimes refer to $\P$ itself as the implant map.
In the context of plasticity (i.e., when $\P$ is thought of as a kinematic variable), it is known as the \Emph{plastic strain}.
We can think of $\P$ as a frame of $T^*\M$, $(u\mapsto\ip{\P u,\pl_i})_{i=1}^d$; its dual frame $(\P^{-1}(\pl_i))_{i=1}^d$ is sometimes called the \textbf{scaffold} of the body \cite{HR22b}, and can be thought as a continuum field representing the lattice directions at each point (see Fig.~\ref{fig:intro}).
The existence of a global section $\P$ imposes some topological restrictions on $\M$, which in some cases can be relieved by replacing $\P$ with a covering of sections satisfying compatibility conditions. In this paper the above simpler definition is sufficient.

We endow elastic bodies with an elastic energy functional, which quantifies an amount by which the geometry of a configuration is deformed in comparison with the intrinsic geometry of the body. We assume that the energy density is uniform, in the sense that it behaves ``the same way'' at all points.
Mathematically speaking, given a function $\calW:\R^d\otimes\R^d\to\R$, called the \Emph{archetypal energy density} (see \cite{EKM20} for details), we define the \textbf{elastic energy} of a configuration $f$ associated with $\calW$ and $(\M,\P)$ by
\beq
E(f,\P) = \int_\M \calW(df \circ \P^{-1})\, \Volume,
\label{eq:E(f)}
\eeq
where $\Volume$ is the volume form associated with the metric induced by $\P$. 
Whether $\P$ is fixed or can be treated as a variable depends on the problem at hand.
The differential $df:T\M\to \R^d$ determines how tangent vectors in $\M$ map under $f$ into tangent vectors in $\R^d$. The right-composition of $df$ with $\P^{-1}:\R^d \to T\M$ yields a linear endomorphism of $\R^d$, which can be viewed as an elastic distortion. In this sense, $\P^{-1}$ is a plastic distortion which ``implants" the archetypal energy density into the body see \figref{fig:2}.
This definition also enables us to discuss different elastic bodies having the ``same'' elastic behavior.

\begin{figure}
\begin{center}
\btkz
	\draw[thick] plot [smooth cycle] coordinates {(1,1.5)(3.5,0.9)(4.8,0)(6,1.3)(7.2,2)(7.5,3.5)(6,4)(4,5)(2.6,3.5)(1,3)} node at (2,3.6) {$\M$};
	\tkzDefPoint(2.5,2.2){x};
	\tkzDefPoint(5.2,2.8){y};
	\tkzDrawPoints(x,y);
	\tkzLabelPoint[below](x){$x$};
	\tkzLabelPoint[below](y){$y$};
	\begin{scope}[xshift=2.5cm,yshift=2.2cm]
	\draw[->,thick, color=cyan]  (0,0) -- ++(0.4,0);
	\draw[->,thick, color=cyan]  (0,0) -- ++(0,0.4);
	\draw[->,thick, rotate = 20, color=red]  (0,0) -- ++(0.5,0);
	\draw[->,thick, rotate = 50, color=red]  (0,0) -- ++(0,0.8);
	\tkzText[color=red](0.7,0.7){$F_x\circ \P_x^{-1}$}
	\tkzText[color=cyan](0.95,0.1){$\P_x^{-1}$}
	\end{scope}

	\begin{scope}[xshift=5.2cm,yshift=2.8cm]
	\draw[->,thick, rotate=30, color=cyan]  (0,0) -- ++(0.4,0);
	\draw[->,thick, rotate=30, color=cyan]  (0,0) -- ++(0,0.4);
	\draw[->,thick, rotate = 50, color=red]  (0,0) -- ++(0.5,0);
	\draw[->,thick, rotate = 80, color=red]  (0,0) -- ++(0,0.8);
	\tkzText[color=red](1,0.6){$F_y \circ \P_y^{-1}$}
	\tkzText[color=cyan](0.9,-0.1){$\P_y^{-1}$}
	\end{scope}
	
	\begin{scope}[xshift=10cm,yshift=2.4cm]
	\tkzText(0.0,0.8){$F_x\circ \P_x^{-1} = F_y\circ \P_y^{-1}$}
	\tkzText(0.0,0.0){$\Downarrow$}
	\tkzText(0.0,-0.8){$W(x,F_x) = W(y,F_y)$}
	\end{scope}

\etkz

\end{center}
\label{fig:2}
\caption{
\footnotesize{Material uniformity: A materially-uniform body is a body in which the mechanical response is ``the same at all points''. Given a frame ({\color{cyan} cyan}) at each point (in our case $\{\P^{-1}(\pl_i)\}_{i=1}^d$), the elastic energy density of a deformation $F_x:T_x\M\to \R^d$ at a point $x$ and $F_y:T_y\M\to \R^d$ at a point $y$ is the same if the way they operate on the given frame is the same ({\color{red} red}).
This amounts to $F_x\circ \P^{-1}_x = F_y \circ \P^{-1}_y: \R^d\to \R^d$.
Thus, the elastic energy density at a point $x$ is given by $W(x, \cdot) = \calW(\cdot \circ \P_x^{-1})$ for some function $\calW:\R^d\times \R^d \to \R$.}}
\end{figure}

We assume that the archetypal energy density $\calW$ satisfies the following properties:
\begin{enumerate}[itemsep=0pt,label=(\alph*)]
\item
\Emph{Regularity}:
$\calW$ is continuous and twice differentiable in a neighborhood of $\SO(d)$.
\item 
\Emph{Frame-indifference}: $\calW(UA) = \calW(A)$ for every $U\in\SO(d)$.
\item
\Emph{Upper and lower bounds}:
\beq
\dist^2(A,\SO(d)) \lesssim \calW(A) \lesssim \dist^2(A,\SO(d)).
\label{eq:bound_calW}
\eeq
\end{enumerate}
As noted in the introduction, the upper bound in \eqref{eq:bound_calW} is a non-physical assumption; it is needed for technical reasons, and can likely be relaxed.
Throughout this work, the quadratic form associated with $D_I^2 \calW$ will play a prominent role; we denote
\[
\bbW(A) = \half D_I^2 \calW(A,A).
\]
The lower bound in \eqref{eq:bound_calW} implies in particular that
\beq
\bbW(A) \gtrsim |A + A^T|^2.
\label{eq:D2calW_bound}
\eeq

We now present some key classes of elastic bodies:
\begin{enumerate}
\item \Emph{A body with (finitely-many) dislocations} is a body in which the implant map $\P$ is closed, i.e., $d\P =0$. 
This implies that we have a well-defined, global, frame that locally looks like the (undistorted) standard frame in Euclidean space.
This is the focus of this paper, and discussed in detail below. 

\item In the case there are disclinations, one can also locally define a frame field that looks like the standard frame in Euclidean space. 
However, due to the curvature charge of the disclinations, this frame field cannot be defined globally; thus \Emph{a body containing both dislocations and disclinations} is a slight generalization of the above model: 
A body $\M$, a finite open cover $\M_i$, and a collection of sections $\P_i$ of $T\M_i\otimes \R^d$ (frame fields on each patch $\M_i$), satisfying $d\P_i=0$.
In order for the energy \eqref{eq:E(f)} to be well defined, the maps $\P_i$ need to be compatible, namely that, for each $i,j$, $\P_j \circ \P_i^{-1}: \M_i\cap \M_j \to \R^{d\times d}$ obtains values in the isotropy group $\Gamma < \SO(d)$ of $\calW$
\[
\Gamma = \{U \in \R^{d\times d} ~:~ \calW(AU) = \calW(A)\,\, \text{for all}\,\, A\in \R^{d\times d}\}.
\]
This model is described in detail in the follow-up work \cite{Mao24}.

\item Given a $2$-dimensional body $(\M,\P)$, one can obtain its \Emph{associated plate model} by considering the $3$-dimensional body $(\M\times (-t/2,t/2), \P \oplus dx^3 \otimes \pl_3)$ for a small thickness parameter $t>0$.
This corresponds to the metric $G = \brk{\begin{matrix} \g & 0 \\ 0 & 1 \end{matrix}}$, hence the manifold $(\M,\g)$ has zero second fundamental form in $(\M\times (-t/2,t/2), G)$, justifying the term "plate model".

A \Emph{reduced plate model} can be obtained from $(\M,\P)$, by considering configurations to be embeddings $f:\M \to \R^3$, an energy density $\calW:\R^3 \otimes \R^2 \to \R$, and the energy
\[
E^{\text{plate}}(f) = \int_\M \calW (df \circ \P^{-1})\, \Volume + t^2 \int_\M |\nabla n_f|^2 \, \Volume,
\]
where $n_f(p)$ is the unit normal to $f(\M)$ at $p$.
This model is a formal expansion of the three-dimensional associated plate model described above \cite{ESK09a}.

\item This setting also includes the so-called \Emph{incompatible} or \Emph{non-Euclidean elasticity} model, in which $\P$ is sometimes called the \Emph{pre-strain} and its associated metric $\g$ the \Emph{reference metric}.
In this setting the main interest is often the case $d\P\ne 0$; in this case the associated metric $\g$ is non-flat, and its curvature is a source of incompatibility.
If the body is isotropic, then the energy can be written in terms of $\g$ alone and thus the choice of $\P$ is often omitted in this case.
This model has been thoroughly studied in recent years, see e.g., \cite{ESK09a,LP11,KMS15,Lew23} and the references therein. 
\end{enumerate}

\section{The geometry of an edge-dislocation}
\label{sec:single_dislocation}

In this section we introduce two-dimensional elastic bodies modeling cross-sections of bodies with straight edge-dislocations. In Subsection~\ref{subsec:coord_free} we provide an intrinsic, coordinate-free definition of a body with an edge-dislocation.
In Subsection~\ref{sec:coordinate_construction} we construct such bodies using polar coordinates; this construction is useful for subsequent calculations, which are more easily carried out in coordinates. In Subsection~\ref{sec:dev_from_euc}, we quantify the ``defectiveness" of such bodies by a geometric comparison with defect-free bodies.

\subsection{Coordinate-free construction of dislocated bodies}
\label{subsec:coord_free}

An \Emph{edge-dislocation} is a material defect, in which a perfect lattice structure is perturbed by the presence of an extra half-plane, whose boundary is called a \Emph{dislocation line}. This extra half-plane is usually created by a gliding mechanism, as described in the introduction.  

A continuum-mechanical viewpoint of crystalline defects was classified by Volterra by means of cut-and-weld protocols. Geometrically, Volterra's procedures yield Riemannian manifolds, which are \emph{locally} Euclidean (i.e., locally isometrically-embeddable in the ambient space) with dislocations encoded in the topology and the global metric of the manifold.  

A three-dimensional body with a single (straight) edge-dislocation has an axial symmetry, and can therefore be  described by a two-dimensional cross-section. We define a body with an edge-dislocation as follows: 

\begin{definition}
\label{def:single_disloc}
\Emph{A body with an edge-dislocation} having Burgers vector $\v$ is a two-dimensional elastic body $(\M,\P)$ satisfying the following additional properties:
\begin{enumerate}[itemsep=0pt,parsep=0pt,label=(\alph*)]
\item $\M$ is diffeomorphic to $\R^2\setminus B_1$.
\item The implant map $\P$ is closed, $d\P=0$.
\item 
For every positively-oriented loop $C$ homotopic to $\dM$, 
\beq
\oint_C \P = \v,
\label{eq:def_burgers}
\eeq
where $\v\in\R^2$ is interpreted as a \Emph{Burgers vector}. 
\item The boundary $\dM$ has winding number 1.
\end{enumerate}
\end{definition}

The first assumption imposes an annular topology, where the hole represents the ``core'' of the dislocation. 
The second assumption amounts to stating that the body is locally devoid of defects. 
The third assumption asserts that the core (which is not part of the body) contains a defect of dislocation-type. 
The fourth assumption implies that being homotopic to the boundary amounts to ``encircling the core'' exactly once: 
if $\dM$ is of regularity $C^{1,1}$, and $\gamma(s)$ is a unit length oriented parametrization of $\dM$ with geodesic curvature $\kappa$, the winding number condition can be defined as the condition $\frac{1}{2\pi} \int_\gamma \kappa(s)\,ds = 1$.
We elaborate on this interpretation in the following list of comments:

\begin{enumerate}
\item The metric induced by a closed implant map is locally-Euclidean:
Indeed, a closed $\P$ is locally-exact. It follows that every point $p\in\M$ has a neighborhood $p\in U\subset\M$ and a map $f: U\to\R^2$, such that $\P|_U=df$. 
Thus, within $U$, the metric $\g$ induced by the implant map $\P$ is given by
\[
\g|_U(u,v) = \ip{df(u),df(v)},
\] 
i.e., $\g|_U$ equals the pullback of the Euclidean metric by $f$, which implies that $\g$ is locally-Euclidean, which we interpret as $\M$ being locally defect-free. 

\item 
A global implant map also induces a global notion of parallelism, or equivalently, a path-independent parallel transport map, 
$\Pi^p_q:T_q\M\to T_p\M$ given by 
\beq
\Pi^p_q = \P^{-1}_p \P_q.
\label{eq:Pi_pq}
\eeq
By construction, $\P$ is parallel with respect to the parallel transport it induces.
Denoting by $\nabla^\g$ the Riemannian connection of $\g$ and by $\nabla^\euc$ the Euclidean connection in $\R^2$, it follows from the previous item that locally $\nabla^\g \P|_U = \nabla^{f^\#\euc} df = df(\nabla^\euc \id) = 0$, i.e., $\P$ is parallel with respect to the Riemannian connection of $\g$, namely, the parallel transport induced by $\P$ coincides with the parallel transport induced by $\g$.

\item 
Any section of the form $U\P$ for some $U\in\SO(2)$ is parallel with respect to the above parallel-transport.
These are the only parallel, orientation-preserving maps that are isometries from $(T\M,\g)$ to the Euclidean space. 
In this sense, $\g$ carries all the information about $\P$, up to a global choice of rotation.

\item 
The integration \eqref{eq:def_burgers} is the integral of an $\R^2$-valued 1-form over a one-dimensional curve. By definition, if $\gamma:I\to\M$ is a positively-oriented parametrization of $C$, i.e., counter-clockwise, then
\[
\oint_C \P = \int_I \P_{\gamma(t)}(\dot{\gamma}(t))\, dt.
\]
The fact that this integral only depends on the homotopy class of $C$ results from $\P$ being closed. 

\item
For every parametrized, non-contractible loop $\gamma:I\to\M$, 
\[
|\v| = \Abs{\int_I \P_{\gamma(t)}(\dot{\gamma}(t))\, dt} \le 
\int_I |\P_{\gamma(t)}(\dot{\gamma}(t))|\, dt = 
\int_I |\dot{\gamma}(t)|_\g\, dt,
\]
where the last equality follows from $\P$ being, by definition, an isometry.
The right-hand side is the length of the loop. It follows that every loop surrounding the dislocation, and in particular $\dM$, has a length bounded from below by the magnitude of the Burgers vector. In other words, in this geometrically-nonlinear setting, as in the atomistic viewpoint, \emph{there is no such thing as a point dislocation}---the size of the core is bounded from below by the magnitude of the dislocation (as described in \cite[p.~365]{KM15}, the size of the core cannot be shrunk below a segment of length $|\v|/2$). This is consistent with the discrete picture, where the region in which  the lattice structure is imperfect cannot be smaller than the Burgers vector.

The fact that there is not such a thing as a point dislocation implies that there is no intrinsic meaning to the distance of a point $p\in \M$ from ``the dislocation''. 
In this work we define the distance between $p$ and the dislocation by
\beq\label{eq:dist_disloc}
\r(p) = \dist(p,\dM) + |\v|.
\eeq

\item
Condition  \eqref{eq:def_burgers} on the circulation of $\P$ can be replaced by an equivalent condition: let $p\in\M$ be an arbitrary reference point and denote by $\Pi^p\in\W^1(\M;T_p\M)$ the $T_p\M$-valued 1-form whose value at $q\in\M$ is $\Pi^p_q$ ($\Pi^p$ translates tangent vectors to the point $p$).  Using \eqref{eq:Pi_pq},
\beq
\oint_C \Pi^p = \oint_C \P^{-1}_p \P =  \P^{-1}_p \oint_C  \P = \P_p^{-1}(\v).
\label{eq:def_burgers2}
\eeq
The right-hand side is a tangent vector at $p$, and can be viewed as the value at $p$ of the parallel vector field $\b = \P^{-1}(\v) \in\Gamma(T\M)$ (see \cite{KMS15} for detail). 

\item Condition  \eqref{eq:def_burgers} can be replaced by yet another equivalent condition: 
Consider the space of continuous, compactly-supported functions $C_c(\M;\R^2)$ (the way we defined $\M$ as a manifold with a boundary, they need not vanish in a 
neighborhood of $\dM \subset\M$). 
Define the bounded linear functional $\torsion:C_c(\M;\R^2) \to \R$,
\[
\torsion(\psi) = \int_{\dM}  \TR  (\P\otimes \psi),
\]
where $\P\otimes \psi$ and its trace were defined in the Notations section. For every $\psi\in C_c(\M;\R^2)$ satisfying $\psi|_{\dM}=\u\in \R^2$,
\[
\torsion(\psi) =  \TR \brk{\int_{\dM}    \P \otimes \u} =
\TR(\v\otimes\u) = \ip{\v,\u}.
\]
If $\psi\in \HOneZero(\M;\R^2)$, since $\P$ is closed,
\[
\torsion(\psi) = - \int_{\M}  d\TR (\P\otimes\psi) = \int_{\M}  \TR(\P\wedge d\psi),
\]
where the change of sign in the first equality is due to $\dM$ being an inner boundary.
The distribution $\torsion$ (which can be viewed as a de-Rham $0$-current, as a measure in $\calM(\M;\R^2)$, or as an element of $H^{-1}(\M;\R^2)$) was studied in \cite{KO20} and identified as encoding the torsion of the connection induced by the parallel implant map $\P$.

\item
Dislocations are often ``quantized'', due to an underlying lattice structure; the Burgers vector $\v$ can only assume certain magnitudes and directions. This fact is encompassed in the following definition:

\begin{definition}
\label{def:disloc_structure}
Let $S\subset \R^2$ be a basis.
The set $\bbS = \operatorname{span}_{\mathbb{Z}} S\setminus \{0\}$ is called \Emph{a dislocation structure}.
A collection of bodies with edge-dislocations is said to have \Emph{a dislocation structure $\bbS$ at length-scale $\e$} if the corresponding collection of Burgers vectors is a subset of $\e\bbS$. 
\end{definition}

\item
At this point we did not make any assumption on the size of the body. We will soon consider bodies with dislocations,  having finite diameter (and hence finite volume).
If $\M$ is such a body, then there exists a compact manifold with boundary $\bar{\M}$, diffeomorphic to $\bar{B}_2\setminus B_1$, such that $\bar{\M}\setminus \M$ is the outer-boundary of $\bar{\M}$.
We refer to the set $\bar{\M}\setminus \M$ as the \Emph{outer-boundary} of $\M$, and typically denote it by $\Gamma$ and assume that it is Lipschitz.
This should not be confused with $\pl \M$, which is the \Emph{inner boundary}.

\item The reason for the winding number assumption is to exclude, say, a double cover of a Euclidean annulus, which would have a zero Burgers vector but cannot be isometrically embedded in the Euclidean plane (in fact, it can be viewed as a body containing a disclination of magnitude $-2\pi$). See also the proof of the uniqueness theorem below.

\item
Finally, the annular topology can be replaced with a simply-connected topology, with a dislocation core having a geometry which is not locally-Euclidean (representing a region where the lattice structure is defective, for example, containing a so-called 5-7 pair in an hexagonal lattice). 
In this case, the winding number condition has to be replaced with the condition that the total Gaussian curvature in the dislocation core vanishes. 
This can be seen as a different kind of regularization of the core, and is expected to lead to similar results.
\end{enumerate}

The following theorem asserts that
\defref{def:single_disloc} defines a body manifold uniquely in the following sense:

\begin{theorem}
\label{thm:disloc_unique}
Let $(\M,\P)$ and $(\M_1,\P_1)$ be metrically-complete bodies with edge-dislocations (\defref{def:single_disloc}) having identical Burgers vectors $\v$. Then, there exist annular submanifolds $\M'\subset\M$ and $\M_1'\subset \M_1$ with $\Vol_\P(\M\setminus\M')<\infty$ and $\Vol_{\P_1}(\M_1\setminus\M_1')<\infty$, such that $(\M',\P)$ and $(\M_1',\P_1)$ are isometric: there exists a diffeomorphism $f:\M'\to\M_1'$ such that $\P  = f^\#\P_1$, that is, $(\P_1)_{f(p)} \circ df_p = \P_p$.
\end{theorem}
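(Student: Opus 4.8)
The plan is to reduce the statement to a uniqueness result for the developing map of a flat metric with a fixed monodromy, and to handle the annular topology by passing to a suitable sub-annulus on which a global developing chart exists. The key observation is that since $Q$ is closed and parallel, the pair $(\g,Q)$ is equivalent, via the relation $\g_p(u,v)=\langle Q_p u,Q_p v\rangle$, to the data of a flat metric together with a choice of orthonormal coframe; by Comment~3 after Definition~\ref{def:single_disloc}, $Q$ is determined by $\g$ up to a global rotation, so after composing with one element of $\SO(2)$ we may assume without loss of generality that the two bodies have the ``same'' coframe data and it suffices to produce an isometry $f$ with $f^\#\g_1 = \g$; the identity $Q = f^\#Q_1$ will then follow automatically from $Q = f^\#Q_1$ being forced once $f$ is an orientation-preserving local isometry intertwining the parallel coframes (this last point needs a short argument: $f^\#Q_1$ is a parallel section of $\SO(\g,\euc)$, hence equals $UQ$ for some fixed $U$, and the Burgers vector condition $\oint_C Q = \v = \oint_{f\circ C} Q_1 = \oint_C f^\#Q_1 = U\v$ forces $U=\id$ provided $\v\neq 0$; the case $\v=0$ is \cite[Theorem~3]{KMS15} and can be cited, or handled separately using the winding number assumption as there).

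First I would construct the developing map. Fix a base point $p\in\M$ and a point $p_1\in\M_1$. Since $Q$ is closed on $\M$, which is homotopy-equivalent to a circle, the periods of $Q$ are generated by a single number, namely $\oint_C Q = \v$; thus $Q$ is not exact on $\M$, but it becomes exact on the universal cover $\widetilde{\M}$, or equivalently on any simply-connected subdomain. The cleaner route is: let $\widetilde{\M}\to\M$ be the universal cover, lift $Q$ to a closed $1$-form $\widetilde Q$ which is now exact, $\widetilde Q = d\,\mathrm{dev}$ for a \emph{developing map} $\mathrm{dev}\colon\widetilde\M\to\R^2$, unique up to translation; because $Q$ is an isometry pointwise, $\mathrm{dev}$ is a local isometry from $(\widetilde\M,\widetilde\g)$ to the Euclidean plane, and the deck group acts by the translation $\gamma\mapsto \gamma+\v$ (this is exactly the cut-and-weld description, and is where the winding-number hypothesis is used: it guarantees the image of $\mathrm{dev}$ wraps around exactly once, so the deck action is a single translation rather than, say, a rotation or a multiple cover). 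Do the same on $\M_1$. Since both developing maps have the same deck translation $\v$ and are local isometries onto open subsets of $\R^2$, the composition $\mathrm{dev}_1^{-1}\circ\mathrm{dev}$ descends to a local isometry between the two bodies wherever it is defined — but it need not be globally defined, because the images of the two developing maps need not coincide.

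The main obstacle, and the step that requires the ``pass to a sub-annulus'' device in the statement, is precisely this matching of images. Each body, being complete and flat with a single puncture of Burgers vector $\v$, develops onto the complement in $\R^2$ of some compact, ``star-shaped-like'' core region — more precisely, after uniformizing the angular coordinate one sees (as in the explicit model $(\hM_\v,\hg_\v,\hQ_\v)$ to be constructed in Section~\ref{sec:coordinate_construction}) that $\M$ contains an annular submanifold $\M'$ whose developing image is exactly a standard half-infinite ``wedge-with-slit'' region $W_\v$ depending only on $\v$, with the complement $\M\setminus\M'$ of finite volume because it is squeezed between $\dM$ (length $\gtrsim|\v|$) and a fixed bounded developed region. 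I would argue this by choosing $\M'$ to be $\{p : \dist(p,\dM) > R_0\}$ for $R_0$ large enough that the developed image stabilizes to $W_\v$; finiteness of $\Vol_\g(\M\setminus\M')$ follows since $\M\setminus\M'$ develops into a bounded planar region (counted with the single sheet coming from the winding-number-one hypothesis). Having arranged that both $\M'$ and $\M_1'$ develop isometrically onto the \emph{same} canonical region $W_\v$ with the \emph{same} deck translation $\v$, the map $f := \mathrm{dev}_1^{-1}\circ\mathrm{dev}\colon\M'\to\M_1'$ is a well-defined diffeomorphism, a local isometry by construction, hence a global isometry with $f^\#\g_1=\g$; orientation-preservation is built in since both developing maps are orientation-preserving, and $f^\#Q_1 = Q$ then follows from the parallel-coframe/Burgers-vector argument of the first paragraph. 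The technical heart is thus the normalization of the developing image to a $\v$-dependent canonical wedge, which is where the completeness hypothesis and the winding-number condition do their real work, and which the paper evidently prefers to carry out using the explicit coordinate model rather than an abstract developing-map argument.
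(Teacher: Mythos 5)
Your overall strategy coincides with the paper's: develop the flat structure into $\R^2$ using the exactness of $Q$ on a simply-connected (cut or covered) subdomain, observe that the monodromy is the translation by $\v$, and compare the two bodies by matching their developed images after enlarging the cores. However, there is a genuine gap exactly where the paper's proof does its real work. Your assertion that a complete flat body with one dislocation ``develops onto the complement in $\R^2$ of some compact core region'', so that $\mathrm{dev}_1^{-1}$ exists on the far region, is precisely what must be proved: injectivity of the developing map outside a compact set is not automatic. The paper explicitly notes that there exist complete metrics on $\R^2$ for which the convex hull of a compact set is the whole plane, and that the cut-and-weld description can fail when the inner boundary is not convex. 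It therefore devotes Lemmas~\ref{lem:geometry_dislocation}--\ref{lem:convex_hull2} to showing that every complete geodesic avoiding $\dM$ bounds a Euclidean half-plane on one side and a geodesically convex set containing $\dM$ on the other, that such geodesics exist in all directions at bounded distance from $\dM$ (so the geodesic convex hull $K$ of $\dM$ is bounded), and that $\overline{\M\setminus K}$ is again a body with a dislocation with \emph{convex} inner boundary; only then is the developed map shown to be a bijection onto $\R^2\setminus(\W\cup R)$, with the convexity of $\W$ used essentially in the injectivity argument. Your proposal offers no substitute for these steps, and merely attributes them to ``the completeness hypothesis and the winding-number condition doing their real work''.

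A second, concrete error is the normalization. Taking $\M'=\{p:\dist(p,\dM)>R_0\}$ and similarly for $\M_1$ produces sets that develop onto the complements of the $R_0$-neighbourhoods of two generally different convex cores $\W$ and $\W_1$; these neighbourhoods do not coincide for any $R_0$ (compare a long thin ellipse with a disc), so there is no canonical region $W_\v$ depending only on $\v$ onto which both images ``stabilize'', and $\mathrm{dev}_1^{-1}\circ\mathrm{dev}$ would not carry your $\M'$ onto your $\M_1'$. The fix --- and what the paper does --- is to choose a common convex set $\tilde{\W}\supset\W\cup\W_1$ with $[0,\v]\subset\pl\tilde{\W}$ and define $\M'$, $\M_1'$ as the preimages of $\R^2\setminus(\tilde{\W}\cup R)$ under the respective developments. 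The remaining ingredients of your write-up (the translation monodromy, ruling out multiple covers via the winding number, and reducing $Q=f^\#Q_1$ to the identity $U\v=\v$) are sound.
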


The idea of the proof is as follows:
If $\dM$ is convex, in the sense that the shortest path in $\M$ connecting any two point on $\dM$ lies in $\dM$, we show that $(\M,\P)$ can be obtained by a Volterra cut-and-weld procedure in $\R^2\setminus D$ for some convex set $D$ (this is not necessarily true if the boundary is not convex, even if we allow $D$ to be non-convex).
Thus, two such manifolds are isometric if they are obtained by the same cut-and-weld procedure from the same set $\R\setminus D$.
The cut-and-weld procedure is completely determined by $\v$, which is the same for both manifolds; 
by enlarging the cores, i.e., by taking $\M'\subset \M$ and $\M_1'\subset \M_1$, we can make the corresponding sets $D$ the same.
In order to follow this strategy, we first need to study the geometry of $(\M,\P)$, and show that we can remove from $\M$ a compact set, resulting in a body with an edge-dislocation having the same Burgers vectors and a convex inner boundary.
This is done in Lemmas~\ref{lem:geometry_dislocation}--\ref{lem:convex_hull2}, after which we prove the theorem.

In the following, a \Emph{geodesic} $\gamma$ is a simple curve which is locally length minimizing; if its endpoints are $p$ and $q$ and $d(p,q) = \text{len}(\gamma)$ we say that $\gamma$ is a \Emph{minimizing geodesic}, or a \Emph{segment} (here $d$ stands for the distance induced by $\P$, and $\text{len}(\gamma)$ is the length of $\gamma$).
A geodesic that does not intersect $\dM$ at more than one point is a geodesic in the usual Riemannian sense (i.e., its tangent vector field is parallel), and a general geodesic is a concatenation of such curves and a simple open curve in $\dM$.

\begin{lemma}\label{lem:geometry_dislocation}
Let $(\M,\P)$ be as in Theorem~\ref{thm:disloc_unique}.
Then it is metrically unbounded, and every maximal geodesic $\eta\subset \M$ that does not intersect $\dM$ (which we call a \Emph{line}) splits $\M$ into two complete manifolds with boundary $\M_\eta^{\pm}$, with the following properties: 
\begin{enumerate}[itemsep=0pt,label=(\alph*)]
\item $\M_\eta^+\cup \M_\eta^- = \M$ and $\M_\eta^+\cap \M_\eta^- = \eta$.
\item $\M_\eta^+$ is isometric to a half plane.
\item $\M_\eta^-$ contains $\dM$ and is geodesically-convex, i.e., every geodesic (minimizing or not) in $\M$ connecting two points in $\M_\eta^-$ lies in $\M_\eta^-$.
\end{enumerate}
\end{lemma}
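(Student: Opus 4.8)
The plan is to dispatch unboundedness softly, and then, for a line $\eta$, to show it is properly embedded and separates $\M$ (the topological heart of the proof) and that its ``outer'' side develops isometrically onto a Euclidean half-plane, after which geodesic convexity of the other side is immediate. For unboundedness: $\M$ is diffeomorphic to the noncompact manifold $\R^2\setminus B_1$, hence noncompact, while the Hopf--Rinow theorem guarantees that in a complete, locally compact length space closed bounded sets are compact; so $\M$, being noncompact, is unbounded.

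Now fix a line $\eta$. By completeness it is a complete geodesic $\eta\colon\R\to\operatorname{int}\M$, and it is simple by the definition of ``geodesic''. Recall (Comments after \defref{def:single_disloc}) that $\g$ is flat and $Q$ parallel; passing to the universal cover $\pi\colon\tM\to\M$ (with the lifted metric, still complete and flat) and using that $\pi^\#Q$ is closed on the simply-connected $\tM$, one gets a developing map $D\colon\tM\to\R^2$ with $dD=\pi^\#Q$, which is a local isometry since $\pi^\#Q$ is a pointwise isometry onto $(\R^2,\euc)$. A lift $\teta$ of $\eta$ develops to a unit-speed straight line $D\circ\teta$, so $D(\teta(t))\to\infty$ as $|t|\to\infty$; since $D$ is continuous on all of $\tM$, $\teta$, and hence $\eta$, cannot accumulate at any point of $\M$ (including points of $\dM$), i.e. $\eta$ is properly embedded. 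Both ends of $\eta$ therefore escape to the single end of the annulus $\M$, so $\eta$ separates $\M$ into exactly two pieces $\M_\eta^\pm$ with $\M_\eta^+\cup\M_\eta^-=\M$, $\M_\eta^+\cap\M_\eta^-=\eta$, $\dM\subset\M_\eta^-$, and $\M_\eta^+$ simply connected (being the component of $\M\setminus\eta$ that does not enclose the core). This is (a).

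For (b) I would develop $\M_\eta^+$ directly: being simply connected with $Q$ closed, $Q|_{\M_\eta^+}=df$ for a local isometry $f$; then $f\circ\eta$ is a unit-speed straight line $L$, and a continuous unit normal $N$ along $\eta$ pointing into $\M_\eta^+$ develops to a constant unit normal $\nu$ along $L$. On the maximal domain $\Omega_+\subseteq\R\times[0,\infty)$ of the normal-exponential map $\Phi(t,s)=\exp_{\eta(t)}(sN(t))$ (stopping when the geodesic leaves $\M_\eta^+$), flatness yields $f\circ\Phi(t,s)=L(t)+s\nu=p_0+t\,e_L+s\,\nu$, an affine isometry of $\R^2$; hence $\Phi$ is injective, so an isometric embedding of $\Omega_+$ into $\M_\eta^+$. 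A normal geodesic $s\mapsto\Phi(t,s)$ cannot re-meet $\eta$ for $s>0$ (injectivity would force that parameter to be $0$), so it stays in $\M_\eta^+$, never meets $\dM\subset\M_\eta^-$, and by completeness of $(\M,\g)$ is defined for all $s\ge0$; thus $\Omega_+=\R\times[0,\infty)$. Surjectivity of $\Phi$ onto $\M_\eta^+$ follows by taking, for $x\in\M_\eta^+$, a minimizing geodesic from $x$ to the closed set $\eta$, which stays in $\M_\eta^+$ (crossing $\eta$ would shorten it) and meets $\eta$ orthogonally, so $x=\Phi(t,\dist(x,\eta))$. Hence $\M_\eta^+$ is isometric to a Euclidean half-plane with boundary the geodesic line $\eta$. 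Then (c) is immediate: $\M_\eta^\pm$ are closed in the complete space $\M$, hence complete, with boundaries $\eta$ and $\eta\cup\dM$; and if a geodesic $\gamma$ of $\M$ (minimizing or not) joining two points of $\M_\eta^-$ entered $\operatorname{int}\M_\eta^+$, it would cross $\eta$ at least twice, which is impossible because inside the half-plane $\M_\eta^+$ the geodesic $\gamma$ is a straight line, meeting the line $\eta$ in at most one point unless it is contained in $\eta\subset\M_\eta^-$.

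I expect the main obstacle to be the topological step in the second paragraph---that a line is properly embedded and separates $\M$ with a simply-connected outer side. A priori a complete geodesic in a complete flat surface with boundary might accumulate on itself or spiral toward a concave part of $\dM$; the developing map rules this out, but one must be careful that completeness lifts to $\tM$ and that $D$ is globally defined there. It is essential that $\eta$ be a genuine bi-infinite simple line and not a closed geodesic: the half-plane built from the normal geodesics on one side always exists in $\tM$, but it descends to $\M$ precisely because $\eta$ is non-closed---a geodesic that winds around the core (for instance one in the direction of the Burgers vector) is in fact a closed geodesic, and the analogous region would project onto a flat cylinder inside $\M$, contradicting the existence of the finite-distance boundary $\dM$. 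A secondary technical point is keeping the normal-exponential map from prematurely hitting $\dM$, which is where completeness of $\M$ and the ``at most one crossing of $\eta$'' argument do the work.
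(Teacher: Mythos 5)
Your overall architecture matches the paper's for parts (a) and (c): the splitting of $\M$ by a properly embedded line whose two ends both escape to the single end of the annulus, and geodesic convexity of $\M_\eta^-$ deduced from the fact that a geodesic arc in the half-plane $\M_\eta^+$ with both endpoints on the boundary line must lie in that line. For (b) you take a genuinely different but valid route: the paper doubles $\M_\eta^+$ across its geodesic boundary and invokes uniqueness of complete, simply-connected flat surfaces, whereas you develop $\M_\eta^+$ into $\R^2$ and show that the normal exponential map off $\eta$ is a surjective isometry onto a closed half-plane. Your version is more hands-on and also exhibits the equidistant foliation explicitly; the paper's doubling argument is shorter.

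The step I would push back on is the properness argument. From $dD=\pi^\#Q$ you correctly conclude that the lift $\teta$ is properly embedded in $\tM$ (its development is a straight line, so it leaves every compact subset of $\tM$). But the inference ``$\teta$ does not accumulate in $\tM$, hence $\eta$ does not accumulate in $\M$'' is invalid in general: the projection of a proper curve under a covering map need not be proper --- an irrational-slope geodesic on a flat cylinder lifts to a proper straight line in $\R^2$ yet is dense downstairs, and the deck group here is likewise $\mathbb{Z}$. If $\eta(t_n)\to p$ in $\M$, the lifts $\teta(t_n)$ need only converge after applying deck transformations $\tau^{k_n}$ with $k_n\to\infty$ (the curve spiraling around the core), and then nothing is contradicted upstairs. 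The conclusion is nonetheless true and can be repaired with the structure you already have: $Q(\dot\eta)\equiv u$ is a fixed unit vector, so each visit of $\eta$ to a small Euclidean ball around an accumulation point is a full chord on which the local primitive of $\langle Q,u^\perp\rangle$ is constant; any two such constants differ by an integer multiple of the period $\langle \v,u^\perp\rangle$, so they lie in a discrete set, yet they converge --- forcing infinitely many of the pairwise-disjoint (by simplicity) visiting arcs to coincide with a single chord, a contradiction. To be fair, the paper does not prove properness either --- it simply asserts that $\eta$ ``splits $\M$ into two complete manifolds with boundary'' --- so you have correctly identified the delicate point the original elides, but your justification of it does not go through as written. (A minor aside: your closing remark that a geodesic winding around the core ``is in fact a closed geodesic'' is not right --- closed geodesics would require $\ell u=m\v$ and do not occur here --- but this is outside the proof proper.)
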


\begin{figure}[H]
\[
\btkz
\draw (0,0) ellipse (1cm and 0.5cm);
\draw [line width = 1pt](-2,1.5) -- (3,0);
\draw [dotted, line width = 1pt] (-2.5,1.65) -- (3.5,-0.15);
\tkzText(-1.2,1.6){$\eta$}
\tkzText(0,-0.8){$\dM$}
\tkzText(2.2,0.6){$\M_\eta^+$}
\tkzText(2.2,-0.3){$\M_\eta^-$}
\etkz
\]
\caption{The setting of \lemref{lem:geometry_dislocation}.}
\end{figure}

\begin{proof}
Since $(\M,\P)$ is a complete manifold, every closed bounded subset of $\M$ is compact.
As $\M$ is homeomorphic to $\R^2\setminus B_1(0)$, i.e., non-compact, it follows that $(\M,\P)$ is unbounded.
This homeomorphism also induces a global coordinate system on $\M$.
In this coordinate system, the line $\eta$ is a simple, open curve without boundary that does not intersect $\overline{B_1(0)}$, and thus splits $\M$ into two complete manifolds with boundary, one of which (denoted by $\M_\eta^-$) contains $\dM$.

The manifold $\M_\eta^+$ is a simply-connected complete, locally-Euclidean, smooth manifold with boundary, whose boundary is a geodesic line. It is thus isometric to a half plane (this follows, for example, by doubling it, obtaining a complete, simply-connected, flat two-dimensional manifold without boundary, which must be a plane by the uniqueness of constant-curvature complete simply-connected surfaces).
In particular, between any two points in $(\M_\eta^+,\P)$ there exists a unique geodesic in $\M_\eta^+$ (although it might not be the only connecting geodesic in $(\M,\P)$).
This implies that $\M_\eta^-$ is geodesically-convex in $\M$: 
Indeed, let $p,q\in \M_\eta^-$, and suppose that $\gamma$ is a geodesic in $\M$ (either minimal or not) connecting $p$ and $q$ and intersecting $\M_\eta^+$.
Then the endpoints of any connected component of the geodesic in $\M_\eta^+$ is a geodesic whose endpoints are in $\eta$, and therefore, by the uniqueness of geodesics in $\M_\eta^+$, a subset of $\eta$, and thus contained in $\M_\eta^-$.
\end{proof}

\begin{lemma}\label{lem:convex_hull}
Let $(\M,\P)$ be as in Theorem~\ref{thm:disloc_unique}.
Then the convex hull $K$ of $\dM$ is bounded. 
\end{lemma}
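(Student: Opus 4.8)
The plan is to show that $K$ stays within a bounded $\g$-neighborhood of $\dM$, exploiting the half-plane decomposition of \lemref{lem:geometry_dislocation}. Since $\M$ is diffeomorphic to $\R^2\setminus B_1$, its inner boundary $\dM$ is diffeomorphic to $S^1$, hence compact; set $R_0:=\diam_\g(\dM)<\infty$ and fix $o\in\dM$. For every line $\eta$ the set $\M_\eta^-$ is geodesically convex and contains $\dM$, hence contains $K$; thus $K\subseteq\bigcap_\eta\M_\eta^-$, the intersection over all lines. It therefore suffices to show that every $p\in\M$ with $d(p,\dM)>\sqrt2\,R_0$ lies in the open half-plane $\operatorname{int}\M_\eta^+$ for some line $\eta$: this gives $K\subseteq\{q\in\M:d(q,\dM)\le\sqrt2\,R_0\}\subseteq\overline{B}_{(\sqrt2+1)R_0}(o)$, which is closed and bounded, hence compact by completeness, and in particular bounded.

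Fix such a $p$, let $q\in\dM$ realize $\ell:=d(p,\dM)$, and let $\gamma\colon[0,\ell]\to\M$ be a unit-speed segment from $q$ to $p$. A short argument shows $d(\gamma(t),\dM)=t$ for all $t$ (a shortcut from $\gamma(t)$ to $\dM$ would shorten the path from $p$ to $\dM$). Put $m:=\gamma(\ell/2)$ and let $\mu$ be the maximal Riemannian geodesic through $m$ with $\dot\mu(0)\perp\dot\gamma(\ell/2)$. Granting for the moment that $\mu$ is a line, the conclusion follows by elementary bookkeeping: the segment $\gamma|_{[0,\ell/2]}$ joins $q$ and $m$, both in $\M_\mu^-$, so it lies in $\M_\mu^-$ by geodesic convexity; hence the branch of $\gamma$ leaving $m$ towards $q$ points into $\M_\mu^-$, and, since $\gamma$ crosses $\mu$ transversally at $m$, the branch towards $p$ points into $\M_\mu^+$; as $\M_\mu^+$ is isometric to a half-plane with boundary $\mu$, the geodesic $\gamma$ then runs straight inside it and reaches $p$ at distance $\ell/2>0$ from $\mu$, so $p\in\operatorname{int}\M_\mu^+$. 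Finally $\dM$ is connected and disjoint from the line $\mu$, so $\dM\subseteq\M_\mu^-$, whence $p\notin\M_\mu^-\supseteq K$.

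The main obstacle is the claim that $\mu$ does not meet $\dM$. If it did, say $\mu(s_1)\in\dM$ with $|s_1|$ minimal, then $\mu$ avoids $\dM$ between $m$ and $\mu(s_1)$, so $|s_1|=d\bigl(m,\mu(s_1)\bigr)\ge d(m,\dM)=\ell/2$; the points $q$, $m$, $q':=\mu(s_1)$ then form a geodesic triangle with a right angle at $m$ and the two sides at $m$ of length $\ge\ell/2$, while $d(q,q')\le R_0$ since $q,q'\in\dM$. Because $\M$ is flat this should be impossible---the planar law of cosines would give $d(q,q')^2\ge(\ell/2)^2+s_1^2\ge\ell^2/2$, i.e.\ $d(q,q')\ge\ell/\sqrt2>R_0$---but the subtlety is that $\M$ is not globally $\mathrm{CAT}(0)$ (it has the topology of an annulus), so the comparison must be established by hand. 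I would do this by developing: since $Q$ is closed it is locally exact, so on a simply-connected neighborhood of $\gamma\cup\mu$---obtained by deleting from $\M$ one arc joining $\dM$ to infinity, chosen disjoint from $\mu$---there is a local isometry $\phi$ into $\R^2$; normalize so that $\phi\circ\gamma(t)=(t,0)$. The identity $d(\gamma(t),\dM)=t$ forces $\dM$ to avoid every ball $B_t(\gamma(t))$, and the union of their images is the right half-plane $\{x>0\}$, so $\phi(\dM)\subseteq\{x\le 0\}$, whereas $\phi\circ\mu$ is (an arc of) the vertical line $\{x=\ell/2\}$, which is disjoint from $\{x\le0\}$; this is the contradiction. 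Making this rigorous---controlling that $\phi$ does not distort the distances along the relevant geodesics, so that the balls and the line $\{x=\ell/2\}$ are faithfully represented---is where the real work lies, and there one again invokes the convexity statements of \lemref{lem:geometry_dislocation} to confine the relevant segments to regions on which $\phi$ is an isometry. (Alternatively, one can avoid the pointwise construction by extracting from the developing picture near the single end of $\M$ finitely many lines whose half-planes $\M_{\eta_i}^+$ cover the complement of a bounded set, so that $K\subseteq\bigcap_i\M_{\eta_i}^-$ is bounded.)
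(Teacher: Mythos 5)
Your overall strategy is the same as the paper's: reduce boundedness of $K$ to the statement that through a suitable point of a minimizing geodesic $\gamma$ from $\dM$ there is a perpendicular geodesic that misses $\dM$ entirely (a ``line''), and then invoke the geodesic convexity of $\M_\eta^-$ from \lemref{lem:geometry_dislocation}. The bookkeeping surrounding that claim ($d(\gamma(t),\dM)=t$, $K\subseteq\bigcap_\eta\M_\eta^-$, and the placement of $p$ in $\operatorname{int}\M_\mu^+$) is fine. But the claim itself --- that the perpendicular $\mu$ through the midpoint $m=\gamma(\ell/2)$ avoids $\dM$ --- is the entire content of the lemma, and your argument for it does not close. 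Two concrete problems. First, the union of the developed balls $B_t((t,0))$ over $t\in(0,\ell]$ is the single disc $B_\ell((\ell,0))$, not the half-plane $\{x>0\}$ (one needs $t\to\infty$ to sweep out the half-plane), so even granting the transfer of the avoidance property you only learn that $\phi(\dM)$ misses a disc, which does not separate it from the full vertical line $\{x=\ell/2\}$; in particular it says nothing about $\mu(s_1)$ with $|s_1|\ge\sqrt3\,\ell/2$. Second, and more fundamentally, the developing map $\phi$ on the slit annulus is only a local isometry, hence $1$-Lipschitz from the intrinsic distance of the slit domain to the Euclidean distance; it is generally not injective (the dislocation makes the developed image overlap itself). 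Consequently ``$\dM$ avoids the metric ball $B_t(\gamma(t))$ in $\M$'' does \emph{not} imply ``$\phi(\dM)$ avoids the Euclidean ball $B_t(\phi(\gamma(t)))$'': a point of $\dM$ far from $\gamma(t)$ in $\M$ can perfectly well develop into that Euclidean ball. You flag this yourself as ``where the real work lies,'' but that work is exactly the lemma, and the midpoint normalization with the sharp constant $\sqrt2$ leaves you no room to absorb these losses.

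For comparison, the paper sidesteps both issues by erecting the perpendicular not at the midpoint but at a point $p_0$ at distance $100\ell$ from $\dM$ (with $\ell$ the length of $\dM$). For any geodesic $\sigma$ from $p_0$ to $\dM$, the initial arcs $\gamma'$ and $\sigma'$ of $\gamma$ and $\sigma$ (each of length $\ge 99\ell$) together with a boundary arc $c\subset\dM$ of length $\le\ell$ bound a \emph{simply-connected} domain $D$, which is developed isometrically; in the plane, two straight segments of length $\ge99\ell$ issuing from the same point whose far endpoints are joined by a curve of length $\le\ell$ must make an angle far less than $\pi/2$. The extreme $99{:}1$ length ratio makes the comparison trivial and confines the development to a simply-connected region where no injectivity or distance-distortion control is needed. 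If you want to salvage your midpoint construction, you would need a comparison of exactly this robust type; as written, the proof has a genuine gap.
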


Here, by a convex hull, we mean the intersection of all sets $C\subset \M$ containing $\dM$, such that any geodesic between two points in $C$ is contained in $C$.
This is not the standard definition in Riemannian geometry (in which the assumption is only on minimal geodesics), but makes the proofs below slightly easier, and is sufficient for the proof of Theorem~\ref{thm:disloc_unique}.
This claim is not trivial, as there exist complete metrics on $\R^2$, such that the convex hull of a compact set is the whole of $\R^2$.

\begin{proof}
Denote by $\ell$ the length of $\dM$, let $p\in \dM$, and consider the metric ball $B=B_{100\ell}(p)\subset \M$.
We will show that $K\subset \bar{B}$.
Let $p_0\in \pl B$, and let $\gamma$ be a minimizing geodesic from $p_0$ to $p$.
Let $\eta$ be a complete geodesic at $p_0$, in a direction perpendicular to $\gamma$.
We now show that $\eta$ is a line, i.e., that it does not intersect $\dM$.
Let $q\in \dM$, and let $\sigma$ be a geodesic (minimizing or not) from $p_0$ to $q$.
Let $p' \in \gamma$ be the first intersection of $\gamma$ and $\dM$, and let $q' \in \sigma$ be the first intersection of $\sigma$ and $\dM$.
Let $\gamma'$ and $\sigma'$ be the parts of $\gamma$ and $\sigma$ starting at $p_0$ and ending at $p'$ and $q'$, respectively.
There exists a curve $c\subset \dM$ connecting $p'$ and $q'$ such that the domain $D\subset \M$ enclosed by $\gamma'$, $\sigma'$ and $c$ is simply-connected (see figure below). 

\begin{figure}[H]
\begin{center}
\btkz
\draw [fill=warmblue!20] (0.5,0) ellipse (2.5cm and 2cm);
\draw [fill=white] (0,0) circle (0.5cm);
\tkzDefPoint(0,0.){o};
\tkzDefPoint(2,1.63){p0};
\tkzDefPoint(0.5,0){p};
\tkzDefPoint(0.1,0.47){q};
\tkzDefPoint(0.3,-0.39){pp};
\tkzDefPoint(-0.1,0.475){qq};
\tkzDefPoint(0.3,2.7){q1};
\tkzDefPoint(2.3,1.4){q2};
\tkzDrawPoint(p0);
\tkzDrawPoint(p);
\tkzDrawPoint(q);
\tkzDrawPoint(pp);
\tkzDrawPoint(qq);
\tkzLabelPoint[right](p){$p'$};
\tkzLabelPoint[below](pp){$p$};
\tkzLabelPoint[above right](p0){$p_0$};
\tkzLabelPoint[above](q){$q'$};
\tkzLabelPoint[above left](qq){$q$};
\tkzDrawSegment(p0,p);
\tkzDrawSegment(p0,q);
\tkzLabelSegment[below right](p0,p){$\gamma'$};
\tkzLabelSegment[above left](p0,q){$\sigma'$};
\tkzText(-0.6,-0.6){$\dM$};
\tkzText(2,-0.5){$B$};
\tkzDrawLine[add=1 and 1.5](q1,q2);
\tkzText(0.,3.1){$\eta$};
\etkz
\end{center}
\caption{Notations used in the proof of \lemref{lem:convex_hull}.}
\label{fig:4}
\end{figure}

Note that 
\[
\begin{split}
\text{len}(c) &\le \text{len}(\dM) = \ell,\\
\text{len}(\gamma') &= d(p_0,p') \ge d(p_0,p) - d(p,p') \ge  99\ell ,\\
\text{len}(\sigma')&\ge d(p_0,q') \ge d(p_0,p) - d(p,q') \ge 99\ell.
\end{split}
\]

Since $(D,\P)$ is a simply-connected, locally-flat manifold, it can be immersed isometrically in the Euclidean plane.
The image of $\gamma'$ and $\sigma'$ under this isometric immersion are straight lines of length $\ge 99\ell$, whereas the image of $c$ is a curve of length $\le \ell$.
It follows that the angle between these straight lines, and therefore also between $\gamma'$ and $\sigma'$ is less than $\pi/2$. In particular, $\sigma'$ is not $\eta$, from which we conclude that $\eta\cap \dM = \emptyset$.

It follows that $\dM\subset \M_\eta^-$, which by Lemma~\ref{lem:geometry_dislocation} is a geodesically-convex set.
Thus, the convex hull $K$, being the intersection of all geodesically-convex sets containing $\dM$, is a subset of $\M_\eta^-$.

That fact that $K\subset\M_\eta^-$ for every line $\eta$ constructed this way implies that $K\subset \bar{B}$, hence bounded.
Indeed, assume that $p_1 \in \M\setminus \bar{B}$.
Then, there exists a minimizing geodesic $\gamma$ connecting $p$ and $p_1$, of length $r>100\ell$.
After time $100\ell$, this geodesic intersects some $p_0\in \pl B$; construct the perpendicular geodesic $\eta$ as before.
By construction, the part of $\gamma$ connecting $p_0$ and $p_1$ is in $\M_\eta^+$, and intersects $\eta$ only at $p_0$.
Since $K\subset \M_\eta^-$, it follows that $p_1\notin K$.
\end{proof}

\begin{lemma}\label{lem:convex_hull2}
Let $(\M,\P)$ be as in Theorem~\ref{thm:disloc_unique}.
Let $K$ be the convex hull of $\dM$. 
Then $(\overline{\M\setminus K},\P)$ is a body with an edge-dislocation according to \defref{def:single_disloc}, whose (possibly only Lipschitz continuous) boundary is convex. 
\end{lemma}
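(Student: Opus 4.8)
\emph{Proof idea.} The plan is to verify that $\M' := \overline{\M\setminus K}$, with $\g$ and $Q$ restricted, satisfies conditions (a)--(d) of \defref{def:single_disloc}, and that its boundary is convex. Some pieces are immediate: on $\M'$, $Q$ is still a global parallel section of $\SO(\g,\euc)$ with $dQ = 0$ --- local properties inherited from $\M$ --- giving (b); and $K$ is bounded by \lemref{lem:convex_hull} while $(\M,\g)$ is unbounded by \lemref{lem:geometry_dislocation}, so $\M'$ is nonempty and unbounded. If $\dM$ happens to be geodesically convex then $K = \dM$, hence $\M' = \M$ and there is nothing to prove; so we may assume $K\supsetneq\dM$, in which case (one checks) $\dM$ lies in the topological interior of $K$, so that the boundary of the manifold $\M'$ is the frontier $\Gamma' := \pl K$ of $K$ in $\M$, a nonempty set disjoint from $\dM$.

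The geometric core of the statement is the convexity of $\Gamma'$: any shortest path in $\M'$ between two of its points lies in $\Gamma'$. Let $x,y\in\Gamma'$ and let $\gamma$ be such a shortest path (it exists, $\M'$ being a complete, locally compact length space). On the interior of $\M'$, which is the open set $\M\setminus K$, the curve $\gamma$ is locally length-minimizing in $(\M,\g)$, hence a geodesic there. If $\gamma$ met $\M\setminus K$, then the closure of the corresponding connected component of $\gamma^{-1}(\M\setminus K)$ would be a geodesic segment of $\M$ (in the generalized sense of \secref{subsec:coord_free}, in fact disjoint from $\dM$) with both endpoints in $\pl K\subset K$; by the geodesic convexity of $K$ --- every geodesic, minimizing or not, between two of its points stays in $K$ --- this segment would lie in $K$, contradicting that its interior lies in $\M\setminus K$. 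Hence $\gamma$ avoids $\M\setminus K$; since $\gamma\subset\overline{\M\setminus K}$ also avoids the interior of $K$, we get $\gamma\subset\pl K\setminus\dM = \Gamma'$. In particular $\Gamma'$ is locally the boundary of a (locally Euclidean) convex region, hence a Lipschitz curve, so $\M'$ is a manifold with Lipschitz boundary.

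It remains to pin down the topology and the circulation. For (a): given $z\in\M\setminus K$, let $z_K\in\Gamma'$ be the point of the compact convex set $K$ nearest to $z$; the supporting geodesic to $K$ at $z_K$ --- a complete geodesic disjoint from $\dM$, since $\dM$ lies in the interior of $K$ --- is a line $\eta$ with $K\subset\M_\eta^-$ and with $z$ in the open half-plane $\M\setminus\M_\eta^- = \M_\eta^+\setminus\eta$, which is connected, unbounded and contained in $\M\setminus K$. Thus every point of $\M\setminus K$ lies in the unique unbounded component, so $\M\setminus K$ is connected, and then an Euler-characteristic count (using that $\Gamma' = \pl K$ is a compact Lipschitz $1$-manifold, that $\M\setminus K$ is connected, and that $K$ and $\M$ are planar) forces $\Gamma'$ to be a single simple closed curve and $\M' = \overline{\M\setminus K}$ to be diffeomorphic to $\R^2\setminus B_1$ with inner boundary $\Gamma'$. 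For (c): $\Gamma'$ is homotopic in $\M$ to $\dM$ (both generating $\pi_1(\M)\cong\mathbb{Z}$ with the same orientation), so since $Q$ is closed, $\oint_C Q = \oint_{\dM} Q = \v$ for every loop $C\subset\M'$ homotopic to $\Gamma'$. For (d): applying Gauss--Bonnet to the flat annulus $K$ --- where $\chi(K) = 0$, the Gauss curvature vanishes, $\dM$ has winding number $1$, and the turning angles of the convex curve $\Gamma'$ are all of the same sign --- shows that $\Gamma'$ too has winding number $1$.

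I expect the main obstacle to be the topological and structural bookkeeping around $K$: the supporting-line claim used for (a) (extending the construction of lines from the proof of \lemref{lem:convex_hull}, which is carried out there only for points far from $\dM$, to all of $\M\setminus K$, and checking that the supporting geodesic at the nearest point is a complete line avoiding the core, with the exterior point on the correct side), the fact that the convex hull $K$ is regular enough that $\Gamma'$ is a genuine Lipschitz Jordan curve with $\dM$ in its interior, and carrying this merely-Lipschitz regularity through both the Euler-characteristic count and Gauss--Bonnet. Everything else --- the inherited properties, the convexity of $\Gamma'$, and the circulation and winding-number computations --- is then routine.
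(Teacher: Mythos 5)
Your overall strategy---verify conditions (a)--(d) of \defref{def:single_disloc} for $\overline{\M\setminus K}$, with the crux being the topology left after removing $K$---matches the paper's, and your treatment of (b), of the convexity of the new boundary via geodesic convexity of $K$, and of the circulation condition (c) is sound. However, your route to condition (a) has two genuine gaps. First, the dichotomy ``either $K=\dM$ or $\dM$ lies in the topological interior of $K$'' is false: if $\dM$ fails to be convex only along part of its length, then $K$ consists of $\dM$ together with the ``dents'', and the portion of $\dM$ lying on the convex envelope remains on the frontier of $K$. Consequently $\pl(\overline{\M\setminus K})$ need not be disjoint from $\dM$, and the premise you invoke to guarantee that your supporting geodesics avoid $\dM$ fails. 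Second, the supporting-line construction itself---that the perpendicular geodesic at the point of $K$ nearest to $z$ extends to a complete line $\eta$ avoiding $\dM$ with $K\subset \M_\eta^-$---is precisely the hard step; you flag it as the main obstacle but do not prove it. \lemref{lem:convex_hull} only produces such lines at points at distance $\gtrsim \ell$ from $\dM$, and the first-variation argument at the nearest point only controls initial directions of geodesics into $K$, not the global behaviour of the perpendicular geodesic (in particular, it does not by itself rule out that this geodesic later meets $\dM$, in which case the half-plane decomposition of \lemref{lem:geometry_dislocation} is unavailable).

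The paper sidesteps both issues. Identifying $\M$ with $\R^2\setminus B_1(0)$, it reduces the lemma to showing that $K\cup B_1(0)$ is connected (immediate, since $\pl B_1(0)\subset K$ and $K$ is connected) and simply connected. For the latter it argues by contradiction: a bounded component $\W$ of the complement would contain a point $p$ from which any complete geodesic $t\mapsto\exp_p(tv)$---defined for all $t$ unless it meets $\dM\subset K$---must, by boundedness of $\W$, meet $K$ at parameters $t_1<0<t_2$; then $\gamma|_{[t_1,t_2]}$ is a geodesic between two points of $K$ passing through $p\notin K$, contradicting geodesic convexity of $K$ (this is exactly where the definition of the hull via \emph{all} geodesics, not only minimizing ones, is used). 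This short argument replaces both your supporting-line construction and your Euler-characteristic count; the Lipschitz regularity and the homotopy class of the new boundary then follow from local convexity of planar convex curves and from $\pl(K\cup B_1(0))$ being homotopic to $\pl B_1(0)$, rather than from Gauss--Bonnet.
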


\begin{proof}
Identify $\M$ with $\R^2\setminus B_1(0)$ under the coordinate system mentioned in Lemma~\ref{lem:geometry_dislocation}.
It is sufficient to prove that $K\cup B_1(0)$ is a connected, simply-connected domain, since then the (topological) boundary $\pl (K\cup B_1(0))$ is a curve homotopically equivalent to $\dM = \pl B_1(0)$, and it coincides with the (manifold) boundary of $\overline{\M\setminus K}$.
The fact that in that case the boundary of $\overline{\M\setminus K}$ is Lipschitz continuous follows from the regularity of convex curves in the plane, as $\M$ is locally-Euclidean, and Lipschitz continuity over a compact set is a local property.

The fact that $K\cup B_1(0)$ is connected follows from the facts that $B_1(0)$ is connected, that $K$ is connected as a geodesically-convex set, and that $\pl B_1(0)\subset K$.

Assume by contradiction that $K\cup B_1(0)\subset \R^2$ is not simply-connected; this implies that its complement is not connected, hence there exists a bounded connected component $\W\subset (K\cup B_1(0))^c\subset \M$.
Let $p\in \W$, let $v\in T_p\M$, and consider the map $\gamma(t)= \exp_p(tv)$.
By the completeness of $\M$, it is defined for all $t\in \R$, unless $\gamma(t_0)\in \dM\subset K$ for some $t_0\in \R$.
The boundedness of $\W$, and the fact that $\W\subset \M\setminus K$, imply that there exist $t_1<0<t_2$ such that $\gamma$ is well-defined on $[t_1,t_2]$ and $\gamma(t_1),\gamma(t_2) \in K$.
However, this is a contradiction to the convexity of $K$, as $\gamma|_{[t_1,t_2]}$ is a geodesic between points in $K$ that goes through the point $p\notin K$ (here we used the definition of $K$ as containing all geodesics between points, as we do not know a priori that $\gamma|_{[t_1,t_2]}$ is a minimizing geodesic).
\end{proof}

We now prove Theorem~\ref{thm:disloc_unique}.
\begin{proof}
Consider $(\M,\P)$.
Since we allow removing bounded neighborhoods of the boundary of $\M$, we can, using Lemma~\ref{lem:convex_hull2}, assume that $\dM$ is geodesically-convex.
The boundary of $\M$ then may only be Lipschitz-continuous; by further removing an $\e$-neighborhood of it we can obtain a $C^{1,1}$ boundary, which is at least locally-convex:
Indeed, since the regularity of the boundary and being locally-convex are local properties, and since $\M$ is locally-Euclidean, this follows from the fact that an $\e$-neighborhood of a convex set in the plane is a convex set with a $C^{1,1}$ boundary \cite{Kis92}.

Thus, assume henceforth that $(\M,\P)$ has a locally-convex $C^{1,1}$ boundary, and let $\gamma:[0,\ell]\to \dM$ be an oriented arclength parametrization of $\dM$. 
Then, $\P\circ \dot{\gamma }:[0,\ell]\to \R^2$ is a closed $C^{1,1}$ curve. 
In particular, there exists an $s_0\in [0,\ell]$ such that $\v = - |\v| \P\circ \dot{\gamma}(s_0)$.
Without loss of generality, we take $s_0=0$ and denote $p = \gamma(0)$.

By Comment 6 following \defref{def:single_disloc}, the Burgers vector $\v$ induces via the implant map $\P$ a parallel vector field $\b = \P^{-1}(\v) \in\frakX(\M)$, which implies that $\b_{p} = -|\v|\dot{\gamma}(0)$.

Denote by $\b^\perp$ the parallel vector field such that the basis $(-\dot{\gamma}(0) ,\b^\perp_p)$ is orthonormal and oriented (by parallelism, $\b^\perp$ is orthogonal to $\b$ everywhere).
Since $\dM$ is an inner boundary and $\gamma$ is oriented, it follows that $\b^\perp_p$ points into $\M$.
Let $C$ be the geodesic ray emanating from $p$, in direction $\b^\perp_p$.
By the convexity of the boundary, $C$ does not intersect $\dM$ and extends indefinitely.

\begin{figure}[H]
\begin{center}
\btkz
\fill[color=warmblue!20] (-2,-1.5) -- (3,-1.5) -- (3,1.5) -- (-2,1.5) -- cycle;
\draw [fill=white!20] (0.5,0) ellipse (0.6cm and 0.5cm);
\tkzDefPoint(1.05,0.2){p};
\tkzDefPoint(3.0,1.1){q};
\tkzDrawPoint(p);
\tkzLabelPoint[left](p){$p$};
\tkzDrawSegment(p,q);
\draw[->] (1.05,0.2) -- (1.25,-0.3); 
\tkzLabelSegment[above](p,q){$C$};
\tkzText(1.35,-0.6){$\b_p$};
\tkzText(-1.6,1.1){$\M$};
\etkz
\end{center}
\caption{Cutting $\M$ along the ray perpendicular to the Burgers vector.}
\end{figure}

Next, ``cut" $\M$ along $C$; denote by $C_1$ and $C_2$ the two connected component of  $\pl(\M\setminus C)\setminus \pl\M$.
Define a map $f: \M\setminus C\to\R^2$ by
\[
f(q) = \int_{\gamma_{p,q}} \P,
\]
where $\gamma_{p,q}$ is a path in $\M\setminus C$ connecting $p$ and $q$. 
Since $\M\setminus C$ is simply-connected and $\P$ is closed, the integral only depends on the end points. 
A direct calculation shows that $df_q = \P_q$, i.e., $f$ is an isometric immersion. 
Furthermore, every point $q\in C$ can be identified with two points $q_1,q_2$ on the boundary of $\M\setminus C$. 
If paths connecting $q_1$ and $q_2$ are positively-oriented (as loops in $\M$), then
\[
f(q_2) - f(q_1) = \oint \P = \v.
\]
In particular, applying this to $p=\gamma(0)=\gamma(\ell)$ we obtain that $\sigma = f\circ \gamma:[0,\ell] \to \R^2$ is a $C^{1,1}$ path with $\sigma(0) = 0$ and $\sigma(\ell) = \v$.
If $\v= 0$, then $\sigma$ is closed, and the rest of the argument is similar but simpler; we assume from now that $\v\ne 0$.

\begin{figure}[H]
\begin{center}
\btkz
\fill[color=warmblue!20] (-2,-1.5) -- (3,-1.5) -- (3,1.5) -- (-2,1.5) -- cycle;
\draw [fill=white!20] (0.5,0) ellipse (0.6cm and 0.5cm);
\fill[color=white!20] (1.05,0.2) -- (3.0,1.1) -- (3.0,0.9) -- (1.05,0.) -- cycle;
\draw (1.05,0.2) -- (3.0,1.1);
\draw (1.05,0.) -- (3.0,0.9);
\tkzDefPoint(1.05,0.2){p};
\tkzDefPoint(1.07,0.009){q};
\tkzDrawPoint(p);
\tkzDrawPoint(q);
\tkzLabelPoint[above=0.2cm](p){$\sigma(0)=0$};
\tkzLabelPoint[below=0.4cm](q){$\sigma(\ell)=\v$};
\tkzText(-1.6,1.1){$\R^2$};
\etkz
\end{center}
\caption{Image of the maps $f$ and $\sigma$.}
\end{figure}

Let $\kappa$ be the geodesic curvature of $\gamma$; it is defined almost everywhere, and satisfies $\int_0^\ell \kappa(s)\,ds= 2\pi$ by the winding number assumption (see the explanation of the winding number condition below Definition~\ref{def:single_disloc}).
The local-convexity of $\gamma$ implies that $\kappa\ge 0$.
Since $f$ is an isometric immersion, $\kappa$ is also the geodesic curvature of $\sigma$.
Since 
\[
\dot\sigma(0) = \dot\sigma(\ell) = \P_p(\dot \gamma(0)) = - \frac{\v}{|\v|},
\]
we can extend $\sigma$ to $\sigma:[0,\ell +|\v|] \to \R^2$ by
\[
\sigma(\ell + s) = \brk{1-\frac{s}{|\v|}}\v,
\]
for $s\in (0,|\v|]$, so that $\sigma$ is a $C^{1,1}$ closed curve with non-negative geodesic curvature that sums to $2\pi$.
Thus $\sigma$ is a simple curve that encloses a convex domain $\W\subset \R^2$.

Denote by $R\subset \R^2$ the closed domain bounded by the segment $[0,\v]$ and the rays $f(C_1) = \{t \v_\perp ~:~ t\in [0,\infty)\}$ and $f(C_2) = \{\v + t \v_\perp ~:~ t\in [0,\infty)\}$, where $\v_\perp = \P(\b_\perp)$ (in particular, $(\v,\v_\perp)$ is a positive orthogonal basis).
We now show that the image of $f:\M\setminus C \to \R^2$ is $\R^2 \setminus (\W\cup R)$, and that $f:\M\setminus C \to \R^2 \setminus (\W\cup R)$ is an isometry.
We then construct $\M$ by gluing the two rays $f(C_1), f(C_2)$ in $\R^2 \setminus (\W\cup R)^o$, which will complete the proof.

Let $q\in \M\setminus C$. 
Since $(\M,\P)$ is complete, $q$ can be connected by a minimal geodesic $\alpha$ to $\dM$.
Since $\dM$ is $C^1$, $\alpha$ intersects $\dM$ perpendicularly, at some point $\gamma(s)$ for $s\in (0,\ell)$.
Parametrize $\alpha$ so that $\alpha (0) = \gamma(s)$.
Then $t\mapsto f(\alpha(t))$ is a straight line in $\R^2$, starting at $\sigma(s)$ and perpendicular to $\W$.
Since $\W$ is convex and $s\ne (0,\ell)$, this straight line does not intersect $f(C_1)$ and $f(C_2)$.
It follows that $\alpha$ does not intersect $C$, and that $f(q)$ is indeed in $\R^2 \setminus (\W\cup R)$.
The fact that $f:\M\setminus C \to \R^2 \setminus (\W\cup R)$ is a bijection now follows by a similar argument, using the convexity of $\W$.
Since $f$ is an isometric immersion, it follows that it is an isometry.

Construct a manifold $N$ by gluing $t \v_\perp$ and $\v + t \v_\perp$ in $\R^2 \setminus (\W\cup R)^o$ for each $t\in [0,\infty)$.
Since these rays are parallel and are perpendicular to $\sigma$ at $0$ and at $\v$, it is a smooth manifold with a $C^{1,1}$ boundary.
Extend $f$ as a map $\M\to N$ by defining $f(C(t)) = t\v_\perp \sim \v + t\v_\perp$, where $C(t)$ is an arclength parametrization of $C$.
From the construction, it follows that $f$ is an isometry.

The same construction can be done for $(\M_1,\P_1)$, obtaining a manifold $N_1$ by gluing the rays in $\R^2 \setminus (\W_1\cup R)^o$ for some convex $\W_1$ with $[0,\v]\subset \pl\W_1$; the only difference is that $\W_1$ is not necessarily $\W$.
Take a convex set $\tilde{\W}\supset \W, \W_1$ such that $[0,\v]\subset \pl \tilde{\W}$, and perform the same gluing on $\R^2 \setminus (\tilde{\W}\cup R)^o$, obtaining a manifold $\tilde{N}$ which is a submanifold of both $N$ and $N_1$ with $\Vol(N\setminus \tilde{N}),\Vol(N_1\setminus \tilde{N})<\infty$.
Since $N$ and $N_1$ are isometric to $\M$ and $\M_1$, respectively, the proof is complete.
\end{proof}

\subsection{Coordinate construction of body manifolds}
\label{sec:coordinate_construction}

In this section we construct a specific family of complete bodies with edge-dislocations $(\hM_\v,\hP_\v)$ having Burgers vector $\v$.
By \thmref{thm:disloc_unique}, every such body is essentially unique up to the precise form of its boundary.
We endow the manifold $\hM_\v$ with polar coordinates:
\[
\hM_\v = \{(r,\vp) ~:~ r\ge |\v|, \quad \vp\in S^1 \},
\]
and an implant map
\beq
\begin{split}
\hP_\v &= dx\otimes \pl_1 + dy\otimes \pl_2 + \frac{d\vp}{2\pi}\otimes \v \\
&=dx \otimes \brk{\pl_1 - \frac{y}{2\pi r^2} \v} + dy \otimes \brk{\pl_2 + \frac{x}{2\pi r^2} \v},
\end{split}
\label{eq:hQv}
\eeq
where $x = r\,\cos\vp$ and $y = r\,\sin\vp$.

A simple calculation shows that $\hP_\v$ is non-degenerate on $\{r \ge |\v|\}$, hence $(\hM_\v,\hP_\v)$ is an elastic body. 
By construction, it satisfies Assumptions (a) and (d) of \defref{def:single_disloc}.
Furthermore, since each of the summands in \eqref{eq:hQv} is closed, so is $\hP_\v$, hence Assumption (b) is satisfied as well.
Finally, a direct calculation shows that 
\[
\oint_{r=|\v|} \hP_\v = \v, 
\]
i.e.,  Assumption (c) is also satisfied, hence $(\hM_\v,\hP_\v)$ is a body with an edge-dislocation with Burgers vector $\v$ .

Writing $\v = v_1 \,\pl_1 + v_2 \,\pl_2$, 
the coframe
\beq
\begin{aligned}
\nu^1 &= dx + \frac{v_1}{2\pi} d\vp  = \cos\vp\, dr + \brk{-r\sin\vp + \frac{v_1}{2\pi}} d\vp \\
\nu^2 &= dy + \frac{v_2}{2\pi} d\vp = \sin\vp\, dr + \brk{r\cos\vp + \frac{v_2}{2\pi}} d\vp
\end{aligned}
\label{eq:nu12}
\eeq
is by construction orthonormal and parallel with respect to the metric $\hg_\v = \hP_\v^\#\euc$.
The latter is given explicitly by
\beq
\begin{split}
\hg_\v(r,\vp) = \hP_\v^\#\euc (r,\vp) 
&= dr\otimes dr \\
&\quad + \tfrac{1}{2\pi}(v_1 \cos \vp + v_2 \sin \vp) ( dr \otimes d\vp + d\vp \otimes dr) \\
&\quad + r^2 \brk{1 +  \frac{-v_1\,\sin\vp + v_2\,\cos\vp}{\pi r} + \frac{|\v|^2}{4\pi^2 r^2}}  d\vp \otimes d\vp.
\end{split}
\label{eq:hg}
\eeq

For future reference, we define the submanifold of finite diameter, 
\[
\hM_\v^R = \{(r,\vp) \in \hM_\v ~:~ r< R, \quad \vp\in S^1 \}.
\]

We proceed to derive some geometric properties of the manifold $(\hM_\v,\hP_\v)$.
Note that 
\[
(\cof1,\cof2) = (\cos\vp\,\nu^1 + \sin\vp\,\nu^2, -\sin\vp\,\nu^1 + \cos\vp\,\nu^2) 
\]
is also an orthonormal (but not parallel) coframe, which by \eqref{eq:nu12} satisfies
\beq
|\cof1 - dr|_{\hg_\v} \le \frac{|\v|}{2\pi r} |r\,d\vp|_{\hg_\v} 
\Textand
|\cof2 - r\,d\vp|_{\hg_\v} \le \frac{|\v|}{2\pi r} |r\,d\vp|_{\hg_\v},
\label{eq:bnd_for_volume}
\eeq
hence, by a straightforward calculation,
\beq
||dr|_{\hg_\v} - 1| \le \frac{|\v|}{2\pi r - |\v|}
\Textand
||r\,d\vp|_{\hg_\v} - 1| \le \frac{|\v|}{2\pi r - |\v|}.
\label{eq:bnd_dr_dvp}
\eeq

Even though the $r$-coordinate lines have unit speed, they are not geodesics of $\hg_\v$, and thus the coordinate $r$ does not coincide with the distance $\r$ to the dislocation as defined in \eqref{eq:dist_disloc}   (only on $\dM$, $r = \r = |\v|$). The following lemma estimates the discrepancy between the two:

\begin{lemma}
\label{lem:frakr=r}
In $(\hM_\v,\hP_\v)$,
the distance $\r(p)$ of a point $p = (r,\vp)$ to the dislocation as defined in \eqref{eq:dist_disloc} satisfies
\[
\brk{1 - \frac{1}{2\pi}} r +  \frac{1}{2\pi} |\v| \le \r(p) \le r.
\]
\end{lemma}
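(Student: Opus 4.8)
The plan is to prove the two inequalities separately, exploiting the explicit coframe relations \eqref{eq:bnd_for_volume}--\eqref{eq:bnd_dr_dvp}. The upper bound $\r(p)\le r$ is the easy direction: the $r$-coordinate line from $\dM$ to $p=(r,\vp)$ is a curve of Euclidean-coordinate length $r-|\v|$, and since $|dr|_{\hg_\v}\le 1$ by \eqref{eq:bnd_dr_dvp}, its $\hg_\v$-length is at most $r-|\v|$. Hence $\dist(p,\dM)\le r-|\v|$, and adding $|\v|$ gives $\r(p)=\dist(p,\dM)+|\v|\le r$. (One should note that $\dM=\{r=|\v|\}$ and that the coordinate curve $s\mapsto(s,\vp)$, $s\in[|\v|,r]$, stays in $\hM_\v$, which is clear from the definition of the manifold.)

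For the lower bound, the idea is: along \emph{any} curve in $\hM_\v$ from $\dM$ to $p$, the $\hg_\v$-length controls the change in the radial coordinate, up to a correction coming from the angular component of the metric. Concretely, let $\gamma(t)=(r(t),\vp(t))$, $t\in[0,1]$, be a curve with $\gamma(0)\in\dM$ (so $r(0)=|\v|$) and $\gamma(1)=p$ (so $r(1)=r$). Write $|\dot\gamma|_{\hg_\v}$ using \eqref{eq:hg}; the point is that the only way to decrease $|\dot r|$ relative to $|\dot\gamma|_{\hg_\v}$ is to have a nonzero $\dot\vp$, and the metric \eqref{eq:hg} charges $|{\dot\vp}|$ roughly by $(r - \tfrac{|\v|}{2\pi})|\dot\vp| \geq (1-\tfrac{1}{2\pi}) r\,|\dot\vp|$ since $r\ge|\v|$. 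A clean way to organize this: using the orthonormal coframe $(\cof1,\cof2)$ one has $|\dot\gamma|_{\hg_\v}^2=|\cof1(\dot\gamma)|^2+|\cof2(\dot\gamma)|^2\ge|\cof1(\dot\gamma)|^2$, and by \eqref{eq:bnd_for_volume}, $|\cof1(\dot\gamma)|\ge|\dot r|-\tfrac{|\v|}{2\pi r}|r\,d\vp(\dot\gamma)|=|\dot r|-\tfrac{|\v|}{2\pi}|\dot\vp|$ (using $|r\,d\vp(\dot\gamma)|=r|\dot\vp|$ in the \emph{coordinate} sense — being careful that this is the Euclidean-coordinate modulus, not the $\hg_\v$-one). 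Meanwhile $|\cof2(\dot\gamma)|\ge r|\dot\vp|-\tfrac{|\v|}{2\pi r}r|\dot\vp|=(1-\tfrac{1}{2\pi})r|\dot\vp|\ge(1-\tfrac{1}{2\pi})|\v||\dot\vp|$. Combining, $|\dot\gamma|_{\hg_\v}\ge|\dot r|-\tfrac{|\v|}{2\pi}|\dot\vp|+\big[(1-\tfrac1{2\pi})|\v|-\tfrac{|\v|}{2\pi}\big]|\dot\vp|$ — I would instead keep it simpler: bound $|\dot\gamma|_{\hg_\v}\ge\max\{|\cof1(\dot\gamma)|,|\cof2(\dot\gamma)|\}$ and argue that $(1-\tfrac{1}{2\pi})|\dot r| \le |\dot\gamma|_{\hg_\v}$ pointwise. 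Indeed, if $\tfrac{|\v|}{2\pi}|\dot\vp|\le\tfrac{1}{2\pi}|\dot r|$ then $|\cof1(\dot\gamma)|\ge(1-\tfrac1{2\pi})|\dot r|$ directly; otherwise $r|\dot\vp|\ge|\v||\dot\vp|>|\dot r|$, so $|\cof2(\dot\gamma)|\ge(1-\tfrac1{2\pi})r|\dot\vp|\ge(1-\tfrac1{2\pi})|\dot r|$. Either way $|\dot\gamma|_{\hg_\v}\ge(1-\tfrac1{2\pi})|\dot r|$. Integrating, $\operatorname{len}_{\hg_\v}(\gamma)\ge(1-\tfrac1{2\pi})\int_0^1|\dot r|\,dt\ge(1-\tfrac1{2\pi})(r-|\v|)$. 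Taking the infimum over $\gamma$ gives $\dist(p,\dM)\ge(1-\tfrac1{2\pi})(r-|\v|)$, hence $\r(p)=\dist(p,\dM)+|\v|\ge(1-\tfrac1{2\pi})r+\tfrac1{2\pi}|\v|$, as claimed.

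The main obstacle, and the place to be careful, is the bookkeeping of which ``modulus'' is meant in \eqref{eq:bnd_for_volume}: the estimates there are stated in $\hg_\v$-norm, i.e. $|\cof1-dr|_{\hg_\v}\le\tfrac{|\v|}{2\pi r}|r\,d\vp|_{\hg_\v}$, and when evaluated on a tangent vector $\dot\gamma$ this gives $|\cof1(\dot\gamma)-dr(\dot\gamma)|\le\tfrac{|\v|}{2\pi r}|r\,d\vp(\dot\gamma)|$ where now $|r\,d\vp(\dot\gamma)|=r|\dot\vp|$ is just the absolute value of a real number (the coordinate evaluation), with no metric involved. So the argument above is consistent as long as one reads $dr(\dot\gamma)=\dot r$ and $d\vp(\dot\gamma)=\dot\vp$ throughout. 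A secondary subtlety: one must ensure the comparison curve and all competitor curves for the distance stay inside $\hM_\v=\{r\ge|\v|\}$; this is automatic since $\dM$ is the inner boundary and curves from $\dM$ to $p$ live in the manifold. A final routine check: that $\dM=\{r=|\v|\}$ is exactly the set where $\r=|\v|$ and where the two bounds collapse to equality, confirming the estimates are sharp at the boundary. None of these points is deep; the only real work is the pointwise inequality $|\dot\gamma|_{\hg_\v}\ge(1-\tfrac1{2\pi})|\dot r|$, which is the case split displayed above.
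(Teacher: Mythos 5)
Your lower bound is correct and is in substance the same argument as the paper's: your case split establishes the pointwise inequality $|\dot\gamma|_{\hg_\v}\ge(1-\tfrac{1}{2\pi})|\dot r|$, which is exactly the dual-norm statement $|dr|_{\hg_\v}\le \tfrac{2\pi}{2\pi-1}$ that the paper reads off from \eqref{eq:bnd_dr_dvp} together with $r\ge|\v|$; integrating and taking the infimum then proceeds identically. Your derivation via the coframe $(\cof1,\cof2)$ is just a hands-on re-proof of that estimate, which is fine.

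The upper bound has a genuine (though easily repaired) slip. You justify that the radial coordinate curve has $\hg_\v$-length at most $r-|\v|$ by invoking ``$|dr|_{\hg_\v}\le 1$.'' This is wrong on two counts. First, \eqref{eq:bnd_dr_dvp} only gives $\bigl||dr|_{\hg_\v}-1\bigr|\le \tfrac{|\v|}{2\pi r-|\v|}$, so the covector norm can exceed $1$ (and generically does, since the metric \eqref{eq:hg} is not diagonal in $(r,\vp)$). Second, even granting it, a bound $|dr|_{\hg_\v}\le 1$ is equivalent to $|\dot r|\le|\dot\gamma|_{\hg_\v}$ for all curves, which is a \emph{lower} bound on lengths, not an upper bound on the length of the radial curve. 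The quantity you actually need is the vector norm $|\pl_r|_{\hg_\v}$, and from \eqref{eq:hg} one reads off $\hg_\v(\pl_r,\pl_r)=1$ exactly, so the radial curve has length exactly $r-|\v|$ and the upper bound $\r(p)\le r$ follows as you intend. With that substitution your proof is complete.
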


\begin{proof}
For the upper bound, the curve $\gamma(t) = (r-t,\vp)$ for $t\in[0,r-|\v|]$ connects $p$ to $\dM$. Then,
\[
\text{Length}(\gamma) = \int_0^{r-|\v|} |\pl_r|_{\hg_\v}\, dt = r - |\v|,
\]
where we used the fact that $\pl_r$, as evident from \eqref{eq:hg}, is a unit vector.  Hence,
\[
\r(p) \le \text{Length}(\gamma)  + |\v| = r.
\]
For the lower bound, let $\gamma(t)$, $t\in [0,1]$ be any curve connecting $p$ to $\dM$. Then,
\[
|\v| - r  = r(1) - r(0) = \int_\gamma dr.
\]
Using the fact that $r\ge |\v|$,
it follows from \eqref{eq:bnd_dr_dvp} that $|dr|_{\hg_\v} \le 1 + \frac{1}{2\pi-1}$, hence
\[
r - |\v| \le \Abs{\int_\gamma dr} \le \text{Length}(\gamma)  \brk{1 + \frac{1}{2\pi  - 1}},
\]
i.e.,
\[
\text{Length}(\gamma) \ge \frac{2\pi - 1}{2\pi} (r - |\v|).
\]
Taking the infimum over all such curves $\gamma$,
\[
\r(p) 
= \inf_\gamma \text{Length}(\gamma)  + |\v|
\ge \brk{1 - \frac{1}{2\pi}} r +  \frac{1}{2\pi} |\v|.
\]
\end{proof}

In the sequel, when considering a body with dislocation $(\M,\P)$, we will need to assume some geometric restrictions on the inner boundary $\dM$. 

\begin{definition}
\label{def:regular_dM}
A body with a dislocation $(\M,\P)$ with Burgers vector $\v$ is said to have a \Emph{regular inner boundary} if there is an annular neighborhood $A$ of the inner boundary such that 
\begin{enumerate}[itemsep=0pt,label=(\alph*)]
\item 
The following inclusion holds
\[
\{p\in\M ~:~ \dist(p,\dM) < |\v|\} \subset A.
\]
\item $(A,\P)$ can be embedded isometrically in $(\hM_\v^{4|\v|},\hP_\v)$.
\item $A$ is Lipschitz equivalent to the annulus $B_{2|\v|}\setminus B_{|\v|}\subset\R^2$, with bilipschitz constant 10.
\end{enumerate}

\begin{figure}[H]

\begin{center}
\btkz
\begin{scope}[xshift=0cm]
\node[circle,draw=black,fill=warmblue!20, minimum size=75, decorate,decoration={snake,amplitude=.2mm,segment length=4mm,post length=1mm}]  at (0,0) {};
\node[circle,draw=black,fill=white, minimum size=45, decorate,decoration={snake,amplitude=.2mm,segment length=4mm,post length=1mm}]  at (0,0) {};
\tkzText(0,1.8){$A\subset\M$};
\end{scope}

\begin{scope}[xshift=5cm]
\node[circle,draw=black,fill=ocre!20, minimum size=120, decorate,decoration={snake,amplitude=.2mm,segment length=4mm,post length=1mm}]  at (0,0) {};
\node[circle,draw=black,fill=warmblue!20, minimum size=75, decorate,decoration={snake,amplitude=.2mm,segment length=4mm,post length=1mm}]  at (0,0) {};
\node[circle,draw=black,fill=ocre!20, minimum size=45, decorate,decoration={snake,amplitude=.2mm,segment length=4mm,post length=1mm}]  at (0,0) {};
\node[circle,draw=black,fill=white, minimum size=30, decorate,decoration={snake,amplitude=.2mm,segment length=4mm,post length=1mm}]  at (0,0) {};
\tkzText(0,2.7){$\hM^{4|\v|}_\v$};
\end{scope}

\begin{scope}[xshift=-4cm]
\node[circle,draw=black,fill=ocre!20, minimum size=75]  at (0,0) {};
\node[circle,draw=black,fill=white, minimum size=37]  at (0,0) {};
\tkzText(0,1.8){$B_{2|\v|}\setminus B_{|\v|}$};
\end{scope}

\draw[->, line width=1pt](1,0) -- (4,0);
\tkzText(2,0.3){$\iota$};
\draw[->, line width=1pt](-1,0) -- (-3,0);
\tkzText(-2,0.3){bilip.};

\etkz
\end{center}
\caption{Illustration of \defref{def:regular_dM}.}
\end{figure}

(The constants $2$, $4$ and $10$ are not important as long as they are independent of $\v$.)
\end{definition}

Conditions (a) and (b) essentially assert that the core is not too large compared to the Burgers vector; condition (c) guarantees that the core geometry is regular enough. 

\defref{def:regular_dM} is not vacuous, as $\hM_\v$ itself satisfies these assumptions. 
Take for example $A=\hM_\v^{3|\v|}$. 
By \lemref{lem:frakr=r}, $\r(p) = |\v|$ implies $r(p) = |\v|$ and $\r(p) = 2|\v|$ implies that  
\[
r(p) \le \frac{2 - 1/2\pi}{1 - 1/2\pi} < 3,
\]
i.e.,
\[
\begin{split}
\{p\in\M ~:~ \dist(p,\dM) < |\v|\} & =  \{p\in\hM_\v~:~ |\v| \le \r(p) \le 2|\v|\} \\
& \subset \{p\in\hM_\v~:~ |\v| \le r(p) \le 3|\v|\} = \hM_\v^{3|\v|},
\end{split}
\]
which implies that the first item holds. The second item holds trivially if we take the inclusion map $\hM_\v^{3|\v|} \hookrightarrow \hM_\v^{4|\v|}$. 
The third item follows from the fact that the Euclidean metric $dr^2 + r^2\,d\vp^2$ on $\hM_\v$ induced by the coordinates $(r,\vp)$ is equivalent to the metric $\g_\v$, with equivalence constant $5/4$ (see \eqref{eq:bilipschitz} below).
Thus $A=\hM_\v^{3|\v|}$ is $5/4$-Lipschitz equivalent to $B_{3|\v|}\setminus B_{|\v|}$ with respect to the intrinsic metric of the latter, and thus $4$--Lipschitz equivalent to $B_{3|\v|}\setminus B_{|\v|}$ with its induced metric from $\R^2$ (as the intrinsic metric is larger by a factor of $\pi$ at most). Thus, $A$ is Lipschitz equivalent to $B_{2|\v|}\setminus B_{|\v|}$ with an equivalence constant of $6$.

These geometric assumptions are easily seen to yield the following double-embedding property:

\begin{proposition}
\label{prop:M_hM_M}
Let $(\M,\P)$ be a body with an edge-dislocation of finite diameter having Burgers vector $\v$ and regular inner boundary. 
Suppose that every point on the outer-boundary of $\M$ satisfies $\r\in(0.9R,1.1R)$, for some $R>10|\v|$. 
Then, 
\beq
(\hM_\v^{R/2}\setminus \hM_\v^{3|\v|},\hP_\v) 
\hookrightarrow
(\M,\P)
\hookrightarrow
(\hM_\v^{2R},\hP_\v),
\label{eq:Menclosed}
\eeq
where $\hookrightarrow$ stands here for an isometric embedding preserving the implant map.
\end{proposition}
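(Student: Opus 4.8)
The plan is to reduce both inclusions to the construction of a single isometric, implant-preserving embedding $\kappa\colon(\M,\g,Q)\to(\hM_\v,\hg_\v,\hQ_\v)$ which restricts on the annular core neighbourhood $A$ of \defref{def:regular_dM} to an embedding into $\hM_\v^{4|\v|}$, and whose image is \emph{radial}, in the sense inherited from the developing map in the proof of \thmref{thm:disloc_unique} (so that the $r$-coordinate, and the distance to $\dM$, increase outwards along $\kappa(\M)$ and are maximised on $\kappa(\Gamma)$). Granting such a $\kappa$, the right embedding is $\kappa$ itself. Indeed, since $\kappa(\dM)\subseteq\hM_\v^{4|\v|}$ lies within $\dist_{\hM_\v}\le 3|\v|$ of the inner boundary $\partial\hM_\v$, and $\kappa$ is distance-nonincreasing, every $p\in\M$ satisfies
\[
\r_{\hM_\v}(\kappa(p)) \;\le\; \dist_{\hM_\v}\!\big(\kappa(p),\kappa(\dM)\big) + 4|\v| \;\le\; \dist_\M(p,\dM) + 4|\v| \;=\; \r_\M(p) + 3|\v|,
\]
and on $\Gamma$ this is $<1.1R+3|\v|$; by \lemref{lem:frakr=r} the $r$-coordinate of $\kappa(p)$ is then at most $\tfrac{2\pi}{2\pi-1}(1.1R+3|\v|)<2R$ once $R>10|\v|$, and since $r$ is maximised on $\kappa(\Gamma)$ we get $\kappa(\M)\subseteq\hM_\v^{2R}$. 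For the left embedding, one inverts $\kappa$ on $\hM_\v^{R/2}\setminus\hM_\v^{3|\v|}$, after checking this annulus lies in $\kappa(\M)$: a point $q$ there has $r(q)<R/2$, hence $\r_{\hM_\v}(q)<R/2$ by \lemref{lem:frakr=r}, so it lies strictly inside $\kappa(\Gamma)$ — because every $p\in\Gamma$ has $\dist_\M(p,\dM)>0.9R-|\v|$, which, as in the proof of \thmref{thm:disloc_unique}, develops to a Euclidean perpendicular of that length onto the (convexified) core, forcing $\r_{\hM_\v}(\kappa(p))\ge 0.9R-O(|\v|)>R/2$; and $q$ lies outside $\kappa(\dM)$ since $A\supseteq\{\dist_\M(\cdot,\dM)<|\v|\}$ (\defref{def:regular_dM}(a)) makes $\kappa(A)$ an annular neighbourhood of $\kappa(\dM)$ whose inner extent, by \lemref{lem:frakr=r}, reaches below $r=3|\v|$.

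To build $\kappa$ I would first extend $(\M,\g,Q)$ to a complete \emph{unbounded} body with an edge-dislocation of the same Burgers vector, by attaching a flat collar to the Lipschitz outer boundary $\Gamma$ and continuing to infinity (possible as $\M$ is locally Euclidean with closed implant near $\Gamma$). Applying \thmref{thm:disloc_unique} to this extension and to $(\hM_\v,\hg_\v,\hQ_\v)$ identifies them, after removing finite-volume neighbourhoods of the two inner boundaries, isometrically and implant-preservingly; restricting to $\M$ gives an isometric, implant-preserving $\kappa_0\colon\M\setminus U\to\hM_\v$ with $U$ a bounded neighbourhood of $\dM$ that may be taken inside $A$. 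On the other hand \defref{def:regular_dM}(b) gives an isometric, implant-preserving $\iota\colon A\to\hM_\v^{4|\v|}$. On the overlap $A\setminus U$, both $\kappa_0$ and $\iota$ are implant-preserving maps into $\hM_\v$; since an implant-preserving $\psi$ has $d\psi_q=(\hQ_\v)^{-1}_{\psi(q)}\circ Q_q$, such a $\psi$ on a connected set is determined by a single point-value, so $\kappa_0$ and $\iota|_{A\setminus U}$ differ by a partial self-isometry $\rho$ of $(\hM_\v,\hQ_\v)$ — a bounded translation in the developed picture of $\hM_\v$. Replacing $\kappa_0$ by $\rho^{-1}\circ\kappa_0$ (still valid on $\M\setminus U$, as $\kappa_0(\M\setminus U)$ only approaches $\partial\hM_\v$ along $A\setminus U$), the two maps agree on $A\setminus U$ and glue to $\kappa\colon\M\to\hM_\v$ with $\kappa|_A=\iota$, hence $\kappa(A)\subseteq\hM_\v^{4|\v|}$ as required.

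The radiality of $\kappa$ used above — that $\dist_\M(p,\dM)$ is realised in $\hM_\v$, up to an $O(|\v|)$ error, by the Euclidean perpendicular onto the (convex) core, for $\M$ and for $\hM_\v$ alike — is exactly where the geometry in the proof of \thmref{thm:disloc_unique} enters: after convexifying the inner boundaries (which, by \defref{def:regular_dM}(a),(c), discards only points with $\dist_\M(\cdot,\dM)\lesssim|\v|$), both $(\M,\g)$ and $(\hM_\v,\hg_\v)$ develop isometrically onto the complement in $\R^2$ of a convex set $\W$, resp.\ $\W_0$, union a standard half-strip, with the two boundary rays glued; by the enlargement trick of \thmref{thm:disloc_unique} one may take $\W\supseteq\W_0$, and then $\kappa$ is induced by the inclusion of developed domains. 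I expect the \textbf{main obstacle} to be the bookkeeping of these core-scale ($O(|\v|)$) discrepancies — making the convexifications, the choice of $U$, the self-isometry $\rho$, and the nesting $\W_0\subseteq\W$ mutually compatible — together with re-running the convexity argument of \thmref{thm:disloc_unique} to see that the minimizing geodesic to $\dM$ indeed develops to a perpendicular onto $\partial\W$ that avoids the glued half-strip; given these, the two inclusions follow with room to spare, thanks to $R>10|\v|$.
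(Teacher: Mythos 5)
Your overall skeleton—produce a global isometric, implant-preserving map $\kappa:\M\to\hM_\v$ that restricts on the core annulus $A$ to the embedding $\iota$ of \defref{def:regular_dM}(b), then convert the hypotheses $\r|_\Gamma\in(0.9R,1.1R)$ into radial bounds on $\kappa(\Gamma)$ via \lemref{lem:frakr=r}—is the same as the paper's, and your distance estimates for the two inclusions are essentially the ones used there. The difference, and the problem, is in how you build $\kappa$. The paper obtains it in one step: since both manifolds are locally Euclidean and $Q$, $\hQ_\v$ are parallel with matching circulation, $\iota$ has a unique isometric outward extension from $A$ to all of $\M$; this extension is defined everywhere on $\M$ and agrees with $\iota$ on $A$ by construction. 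You instead route through \thmref{thm:disloc_unique}, which only identifies the two bodies after discarding submanifolds of finite volume around the cores, and you assert that the discarded neighbourhood $U\subset\M$ ``may be taken inside $A$''. That is not justified by the statement of the theorem, and its proof does not deliver it either: the set removed there contains the geodesic convex hull of $\dM$, which \lemref{lem:convex_hull} only places inside a ball of radius $100\,\mathrm{len}(\dM)\simeq 100|\v|$, whereas $A$ has diameter $O(|\v|)$ (it is $10$-bilipschitz to $B_{2|\v|}\setminus B_{|\v|}$). If $U\not\subset A$, your glued map is undefined on $U\setminus A$, and the left inclusion genuinely fails to follow: with $R$ as small as $10|\v|$, the target annulus $\hM_\v^{R/2}\setminus\hM_\v^{3|\v|}$ reaches down to $r=3|\v|$, i.e.\ exactly into the region whose preimage you have discarded. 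So the core-scale ``bookkeeping'' you flag as the main obstacle is not mere bookkeeping—it is the missing step.

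Two further remarks. First, the preliminary extension of $(\M,\g,Q)$ to a complete unbounded body (needed to invoke \thmref{thm:disloc_unique} at all) is itself not automatic for a general Lipschitz outer boundary $\Gamma$: one must check that a collar of $\Gamma$ develops injectively and that the exterior attaches consistently. Second, both of these detours are unnecessary: the developing-map argument you use to match $\kappa_0$ and $\iota$ on the overlap (an implant-preserving local isometry is determined by one point value) is precisely what proves the unique outward extension of $\iota$ directly, with no reference to the uniqueness theorem, no convexification, and no rigid motion $\rho$ to adjust. Replacing your Step 2 by that direct extension repairs the proof and reduces it to the paper's.
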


\begin{figure}[H]
\begin{center}
\btkz

\begin{scope}[xshift=0cm]
\node[circle,draw=black,fill=warmblue!20, minimum size=150, decorate,decoration={snake,amplitude=.2mm,segment length=4mm,post length=1mm}]  at (0,0) {};
\node[circle,draw=black,fill=ocre!20, minimum size=130, decorate,decoration={snake,amplitude=.2mm,segment length=4mm,post length=1mm}]  at (0,0) {};
\node[circle,draw=black,fill=warmblue!20, minimum size=90, decorate,decoration={snake,amplitude=.2mm,segment length=4mm,post length=1mm}]  at (0,0) {};
\node[circle,draw=black,fill=white, minimum size=60, decorate,decoration={snake,amplitude=.2mm,segment length=4mm,post length=1mm}]  at (0,0) {};
\end{scope}

\begin{scope}[xshift=7cm]
\node[circle,draw=black,fill=ocre!20, minimum size=190, decorate,decoration={snake,amplitude=.2mm,segment length=4mm,post length=1mm}]  at (0,0) {};
\node[circle,draw=black,fill=warmblue!20, minimum size=150, decorate,decoration={snake,amplitude=.2mm,segment length=4mm,post length=1mm}]  at (0,0) {};
\node[circle,draw=black,fill=ocre!20, minimum size=60, decorate,decoration={snake,amplitude=.2mm,segment length=4mm,post length=1mm}]  at (0,0) {};
\node[circle,draw=black,fill=white, minimum size=30, decorate,decoration={snake,amplitude=.2mm,segment length=4mm,post length=1mm}]  at (0,0) {};
\end{scope}

\tkzText(7,2.0){$\M$};
\tkzText(0,3.15){$\M$};
\tkzText(-0.03,1.92){\footnotesize{$\hM_\v^{R/2}\setminus \hM_\v^{3|\v|}$}};
\tkzText(7,3.15){$\hM_\v^{2R}$};

\etkz
\end{center}
\caption{Illustration of \propref{prop:M_hM_M}.}
\end{figure}

\begin{proof}
Let $\iota:(A,\P)\to  (\hM_\v^{4|\v|},\hP_\v)$ be the assumed isometric embedding. Since both $(\M,\P)$ and $(\hM_\v,\hP_\v)$ are locally-Euclidean, $\iota$ has a unique isometric outward extension, extending to the outer-boundary of $\M$. Since the distance of the outer-boundary from $A$ is at most $1.1R$, and since the distance of the outer-boundary of $\hM_\v^{2R}$ from $\iota(A)$ can be bounded from below, using a similar argument as in Lemma~\ref{lem:frakr=r}, by 
\[
\brk{1 - \tfrac{1}{2\pi}} (2R - 4|\v|) > \brk{1 - \tfrac{1}{2\pi}} (2R - 4R/10) > 1.1R, 
\]
it follows that $\iota(\M)\subset \hM_\v^{2R}$, thus proving the right embedding. 
The left embedding is established similarly, noting that 
the distance of the outer-boundary of $\M$ from $A$ is at least $0.9R - 2|\v|$, and the distance of the outer-boundary of $\hM_\v^{R/2}$ from $\iota(A)$ is at most $R/2$, whereas
\[
R/2 < 0.9R - 2|\v|.
\]
This shows that the outer-boundary of $\hM_\v^{R/2}$ is indeed inside $\iota(\M)$.
If $\hM_\v^{R/2}\setminus \hM_\v^{3|\v|} \nsubseteq \iota(\M)$, this would imply that $\iota(A) \subsetneq \hM_\v^{4|\v|}\setminus \hM_\v^{3|\v|}$, but this is impossible since the distance between any point at the  inner boundary of $\iota(A)$ and the outer-boundary of $\iota(A)$ is $|\v|$, by definition, whereas for $\hM_\v^{4|\v|}\setminus \hM_\v^{3|\v|}$ it is at most $|\v|$.
\end{proof}

As a result of this inner- and outer-enclosure of $(\M,\P)$ by submanifolds of the model manifold $(\hM_\v,\P_\v)$,  energy estimates for  $(\M,\P)$ can be bounded from above and from below by energy estimates for the model manifold. 
This fact will be used repeatedly in Section~\ref{sec:energy_single}.

\subsection{Deviation of $(\hM_\v^R,\hP_\v)$ from a Euclidean annulus}
\label{sec:dev_from_euc}

As the magnitude of the Burgers vector $\v$ tends to zero, the model manifolds $(\hM_\v,\hP_\v)$ approach a punctured Euclidean plane. 
In this section, we quantify the geometric discrepancy between $(\hM_\v^R,\hP_\v)$ and the Euclidean annulus $(B_R\setminus B_{|\v|},\IdRtwo)$, where $\IdRtwo$ is the canonical trivial implant map in $\R^2$.

\begin{proposition}
\label{prop:conv_manifolds}
Let $\hZ_\v:\hM_\v\to\R^2$ be the inclusion map in coordinates:
\beq
\hZ_\v(r,\vp) = (r\,\cos\vp,r\,\sin\vp).
\label{eq:defZe}
\eeq
Its restriction to $\hM_\v^R$ satisfies:
\begin{enumerate}[itemsep=0pt,parsep=0pt,label=(\alph*)]
\item 
$\hZ_\v(\hM_\v^R) = B_R\setminus B_{|\v|}$.

\item The differential of the embedding satisfies the following bounds:
\beq\label{eq:asymp_par}
|d\hZ_\v - \hP_\v|_{\hg_\v,\euc} \lesssim \frac{|\v|}{\r}
\Textand
|d\hZ_\v^{-1} - \hP_\v^{-1}|_{\euc,\hg_\v} \lesssim \frac{|\v|}{\r}.
\eeq
(Note that $d\hZ_\v = \hZ_\v^\# \IdRtwo$.) 

\item The embedding is bilipschitz, with a constant independent of $\v$,
\beq
|d\hZ_\v|_{\hg_\v,\euc} \le 6/5
\Textand 
|d\hZ_\v^{-1}|_{\euc,\hg_\v} \le 5/4. 
\label{eq:bilipschitz}
\eeq
\end{enumerate}
\end{proposition}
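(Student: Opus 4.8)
The plan is to work directly in the polar coordinates defining $\hM_\v$, where everything reduces to an explicit $2\times 2$ matrix computation. Part (a) is essentially a restatement of the coordinate definitions: $\hZ_\v$ sends $(r,\vp)$ to the point with Euclidean polar coordinates $(r,\vp)$, and since $\hM_\v^R = \{|\v| \le r < R\}$, its image is precisely the annulus $B_R \setminus B_{|\v|}$; this requires only checking that $\hZ_\v$ is a bijection onto that set, which is immediate since $(r,\vp)\mapsto(r\cos\vp,r\sin\vp)$ is a diffeomorphism from $\{r>0,\,\vp\in S^1\}$ onto $\R^2\setminus\{0\}$. I would dispense with this in one or two lines.

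For part (b), the key observation is that $d\hZ_\v = \hZ_\v^\#\id_{\R^2}$ is exactly the Euclidean coframe $dx\otimes\pl_1 + dy\otimes\pl_2 = dr\otimes\pl_r + r\,d\vp\otimes\pl_\vp$ (in the ambient Euclidean frame), whereas by \eqref{eq:hQv} we have $\hQ_\v = dx\otimes\pl_1 + dy\otimes\pl_2 + \tfrac{d\vp}{2\pi}\otimes\v$. Hence the difference is the rank-one form
\[
d\hZ_\v - \hQ_\v = -\frac{d\vp}{2\pi}\otimes\v.
\]
To estimate its operator norm from $(T\hM_\v,\hg_\v)$ to $(\R^2,\euc)$, I need $|d\vp|_{\hg_\v}$; by \eqref{eq:bnd_dr_dvp}, $|r\,d\vp|_{\hg_\v} \le 1 + \tfrac{|\v|}{2\pi r - |\v|} \lesssim 1$ (using $r\ge|\v|$), so $|d\vp|_{\hg_\v}\lesssim 1/r$, giving $|d\hZ_\v-\hQ_\v|_{\hg_\v,\euc}\lesssim |\v|/r$. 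Finally, $r\simeq \r$ by \lemref{lem:frakr=r}, which converts the bound to $\lesssim |\v|/\r$. The estimate for the inverses then follows from a standard resolvent-type identity: $d\hZ_\v^{-1} - \hQ_\v^{-1} = -d\hZ_\v^{-1}(d\hZ_\v - \hQ_\v)\hQ_\v^{-1}$, and since $\hQ_\v^{-1}$ is an isometry $(\R^2,\euc)\to(T\hM_\v,\hg_\v)$ and $d\hZ_\v^{-1}$ is bounded (part (c), which I'd prove first or in parallel), the $|\v|/\r$ bound transfers.

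For part (c) I would again use $d\hZ_\v = \hQ_\v + \tfrac{d\vp}{2\pi}\otimes\v$: since $\hQ_\v$ is an isometry (so has $\hg_\v,\euc$-operator norm $1$) and the correction term has norm $\le \tfrac{|\v|}{2\pi}|d\vp|_{\hg_\v}\le \tfrac{|\v|}{2\pi}\cdot\tfrac{1}{r}\cdot(1+\tfrac{|\v|}{2\pi r-|\v|})$, which at the worst case $r=|\v|$ is $\tfrac{1}{2\pi}(1+\tfrac{1}{2\pi-1})=\tfrac{1}{2\pi-1}$, the triangle inequality gives $|d\hZ_\v|_{\hg_\v,\euc}\le 1 + \tfrac{1}{2\pi-1} < 6/5$. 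For the inverse bound I would instead compare the two metrics directly: from \eqref{eq:hg} the pullback $\hZ_\v^\#\euc = dr^2 + r^2\,d\vp^2$ differs from $\hg_\v$ only in lower-order terms of size $O(|\v|/r)$, and bounding the ratio of quadratic forms on the worst annulus $r=|\v|$ gives an equivalence constant $\le 5/4$; equivalently, compute $|d\hZ_\v^{-1}|_{\euc,\hg_\v}$ as the reciprocal of the smallest singular value of $d\hZ_\v$ with respect to these inner products and check it is $\le 5/4$. The only mild subtlety — the part I'd be most careful about — is keeping the operator-norm bookkeeping straight between the two inner-product structures (the factor $r$ relating $d\vp$ and $r\,d\vp$, and the direction of the inequality in \eqref{eq:bnd_dr_dvp}), and verifying that the constants genuinely come out below $6/5$ and $5/4$ uniformly in $\v$; this is routine but is where an arithmetic slip would hide.
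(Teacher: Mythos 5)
Your proposal is correct and follows essentially the same route as the paper: compute $d\hZ_\v-\hQ_\v=-\tfrac{d\vp}{2\pi}\otimes\v$ explicitly, bound its operator norm via \eqref{eq:bnd_dr_dvp} and \lemref{lem:frakr=r}, obtain the bilipschitz constants by perturbing the isometry $\hQ_\v$ by a term of norm at most $1/5$, and deduce the inverse estimate from the resolvent identity. The paper gets the $5/4$ bound directly from $|d\hZ_\v-\hQ_\v|\le 1/5$ and the isometry of $\hQ_\v$ (a Neumann-series/singular-value perturbation), which is the cleaner of the two alternatives you sketch for part (c); the direct metric comparison you mention would also work but is unnecessary.
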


\begin{proof}
The first statement is immediate.
Differentiating $\hZ_\v$ and substituting $\hP_\v$ given by \eqref{eq:hQv},
\[
d\hZ_\v -\hP_\v = \frac{1}{2\pi}d\vp\otimes \v.
\]
Since 
by \eqref{eq:bnd_dr_dvp},
\[
|r\,d\vp|_{\hg_\v}  \le \frac{2\pi r}{2\pi r - |\v|} \le \frac{2\pi}{2\pi -1} \le \frac{2\pi}{5},
\]
it follows that
\[
|d\hZ_\v -\hP_\v|_{\hg_\v,\euc} \le \frac{|\v|}{5r} \le \frac{|\v|}{5\r},
\]
where the last inequality follows from \lemref{lem:frakr=r}; this proves the first claim in \eqref{eq:asymp_par}.
In particular, $|d\hZ_\v -\hP_\v|_{\hg_\v,\euc} \le 1/5$.
This implies the bilipschitz bound \eqref{eq:bilipschitz}, since $\hP_\v$ is an isometry.
The second inequality in \eqref{eq:asymp_par} follows from 
\[
|d\hZ_\v^{-1} - \hP_\v^{-1}|_{\euc,\hg_\v} \le 
|d\hZ_\v^{-1}|_{\euc,\hg_\v} |d\hZ_\v - \hP_\v|_{\hg_\v,\euc} |\hP_\v^{-1}|_{\euc,\hg_\v} 
\]
using \eqref{eq:bilipschitz}, and the fact that $\hP_\v$ is an isometry.
\end{proof}

\section{The energetics of an edge-dislocation}\label{sec:energy_single}

In this section we consider the variational problem introduced in \secref{sec:energy} for the model body manifolds with edge-dislocations $(\hM_\v,\hP_\v)$ introduced in \secref{sec:single_dislocation}. 

\subsection{Relation to admissible strain models}\label{sec:admissible_strain}
\label{sec:admissible}

We start by explaining is which sense does the \Emph{admissible strain} approach, which is often used in the literature, constitute a ``geometric linearization" of the Volterra approach used in the present work (as well as in \cite{KM16a,EKM20}).

Consider the body with a dislocation $(\hM_\v^R,\hP_\v)$. The elastic energy of a configuration $f:\hM_\v^R\to\R^2$ is
\[
E(f) = \int_{\hM_\v^R} \calW(df \circ \hP_\v^{-1})\,\hVolumeV.
\]
Suppose that the dislocation is ``small", hence there exist configurations $f$ having ``small" energy. As shown below, an FJM argument shows that there exists a matrix $U\in\SO(2)$ such that the $L^2$-norm of $df - U\hP_\v$ is of the same order as the energy $E(f)^{1/2}$. 
This motivates the following representation,
\[
df\circ \hP_\v^{-1} = U + (df - U\hP_\v) \circ \hP_\v^{-1} .
\]
On the other hand, from \propref{prop:conv_manifolds}, $d\hZ_\v - \hP_\v^{-1}$ is also small for small dislocations (sufficiently far away from the core), hence formally, to leading order,
\[
df\circ \hP_\v^{-1} \simeq U + (df - U\hP_\v) \circ d\hZ_\v^{-1},
\]
and thus, since the energy is frame-indifferent,
\[
E(f) \simeq \int_{\hM_\v^R} \calW(I+ (U^T df - \hP_\v) \circ d\hZ_\v^{-1})\,\hVolumeV.
\]
Changing variables, $\hZ_\v: \hM_\v\to B_R\setminus B_{|\v|} \equiv \W$, using the fact that $(\hZ_\v^{-1})^\#\hVolumeV\simeq \VolumeE$,
\[
E(f) \simeq \int_{\W} \calW(\beta(x))\,dx,
\]
where $\beta$ is an $\R^2$-valued 1-form on $\hZ_\v(\hM_\v)$ given by
\[
\beta = I + (U^Tdf - \hP_\v) \circ d\hZ_\v^{-1}.
\]
For every simple, closed, oriented path $C\subset\W$ homotopic to the inner boundary,
\[
\int_C \beta = \int_{\hZ_\v^{-1}(C)} (d\hZ_\v + (U^T df - \hP_\v))  = -\v,
\]
which is the standard condition for admissible strains. Thus, within this approximation, which is a combination of geometric and small-strain approximation, the variational problem may be reformulated as finding a minimum for 
\[
E(\beta) = \int_\W \calW(\beta)\, \VolumeE,
\]
where $\beta: \W\to\Hom(\R^2,\R^2)$ satisfies the circulation constraint
\[
\int_C \beta = -\v
\]
for every closed oriented path $C\subset\W$. 
This is the admissible strain model with nonlinear energy considered in \cite{SZ12,MSZ14,MSZ15,Gin19b}.

The $\R^2$-valued 1-form $\beta$ can be rewritten as
\[
\beta = U^T d(f\circ \hZ_\v^{-1})+ (I - \hP_\v)\circ d\hZ_\v^{-1}.
\]
The first term is simply the differential of the configuration (after a change of variables), and as such can be interpreted as an elastic strain.  The second term carries the circulation, hence can be thought of as a plastic strain; unlike the starting point, this term is \emph{added} to the elastic strain. 
From this perspective, the above approximation can be identified with the approximation of a multiplicative decomposition of the strain by an additive decomposition. 

Another approach to the admissible strain model is to view $\beta$ as an Eulerian variable. This approach is presented in \cite[p.~180]{MSZ15} (although the Lagrangian approach is eventually used) and in \cite{CGM23}.
There $\beta$ is a map from a ``deformed" or ``spatial" configuration $\W^{sp}\subset \R^2$, to linear maps from $T\W^{sp}$ to $\R^2$, representing how tangent vectors at each point map to a locally defined lattice configuration (which we can think of as the body $(\R^2,\IdRtwo)$).
In this model, the circulation constraint
\[
\int_C \beta = \v
\]
for every closed oriented path $C\subset\W$ does not involve any approximation (it is equivalent to $\P$ in our formulation), however the variational problem considered is quite different, as the deformed configuration is given. 

In both cases, as stated in the introduction, it is not clear to us how to encode in this framework bending (in which the assumption $\beta \approx I$ clearly fails and the deformed configuration is clearly not given), large deformations or disclinations.

\subsection{Upper and lower bounds}

We start by proving that the infimal elastic energy of a body with an edge-dislocation of magnitude $|\v|$ and outer-radius $R$ scales like $|\v|^2\, \log (R/|\v|)$. We start with the upper bound:

\begin{proposition}
\label{prop:4.1}
Consider $(\hM_\v^R,\hP_\v)$ for some $R>|\v|$, and let $\delta>0$ be such that $\delta R\in[|\v|,R)$.
Then,
\[
\inf_{f\in \HOne(\hM_\v;\R^2)} \int_{\hM_\v^R\setminus\hM_\v^{\delta R}} \dist^2(df,\SO(\hg_\v,\euc))\,\hVolumeV \lesssim |\v|^2 \log \frac{1}{\delta},
\]
with constant independent of $\v$, $\delta$ and $R$.
\end{proposition}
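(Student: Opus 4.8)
The plan is to construct an explicit competitor configuration $f$ and estimate its energy directly. Since $\dist^2(df,\SO(\hg_\v,\euc)) \le |df\circ \hQ_\v^{-1} - U|^2$ for any $U \in \SO(2)$ (because $\hQ_\v$ is an isometry), it suffices to find $f$ for which $df\circ\hQ_\v^{-1}$ is pointwise close to a fixed rotation, with the squared deviation integrating to $|\v|^2\log(1/\delta)$. The natural candidate is essentially the singular ``Volterra'' displacement: recall from \eqref{eq:hQv} that $\hQ_\v = d\hZ_\v - \tfrac{1}{2\pi} d\vp \otimes \v$, where $\hZ_\v(r,\vp) = (r\cos\vp, r\sin\vp)$. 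Thus $\hZ_\v$ itself, as a configuration $\hM_\v^R \to \R^2$, satisfies $d\hZ_\v\circ\hQ_\v^{-1} = I + \tfrac{1}{2\pi}(d\vp\otimes\v)\circ\hQ_\v^{-1}$, whose distance to $I$ is exactly the quantity $|d\hZ_\v - \hQ_\v|_{\hg_\v,\euc}$ controlled in \propref{prop:conv_manifolds}(b) by $\lesssim |\v|/\r$.

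So the first step is to take $f = \hZ_\v$ and compute
\[
\int_{\hM_\v^R\setminus\hM_\v^{\delta R}} \dist^2(d\hZ_\v,\SO(\hg_\v,\euc))\,\dVol_{\hg_\v}
\le \int_{\hM_\v^R\setminus\hM_\v^{\delta R}} |d\hZ_\v - \hQ_\v|_{\hg_\v,\euc}^2\,\dVol_{\hg_\v}
\lesssim \int_{\hM_\v^R\setminus\hM_\v^{\delta R}} \frac{|\v|^2}{\r^2}\,\dVol_{\hg_\v}.
\]
The second step is to pass from the intrinsic volume $\dVol_{\hg_\v}$ to the Euclidean volume in $(r,\vp)$ coordinates using \eqref{eq:bnd_for_volume}--\eqref{eq:bnd_dr_dvp} (which give $\dVol_{\hg_\v} \simeq r\,dr\,d\vp$ up to a constant independent of $\v$), and to replace $\r$ by $r$ using \lemref{lem:frakr=r} (which gives $\r \gtrsim r$, hence $\r^{-2} \lesssim r^{-2}$, with constant independent of $\v$). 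This reduces the integral to
\[
\lesssim |\v|^2 \int_0^{2\pi}\!\!\int_{\delta R}^{R} \frac{1}{r^2}\, r\,dr\,d\vp = 2\pi |\v|^2 \log\frac{1}{\delta},
\]
which is the claimed bound, with constant independent of $\v$, $\delta$, $R$. Since $\hZ_\v \in H^1(\hM_\v^R;\R^2)$ (it is smooth and Lipschitz on the relevant region by \propref{prop:conv_manifolds}(c)), and can be extended to an $H^1$ map on all of $\hM_\v$, it is an admissible competitor, so the infimum is bounded above by this value.

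I do not expect a serious obstacle here; this is a soft upper bound and all the needed estimates are already packaged in \propref{prop:conv_manifolds} and \lemref{lem:frakr=r}. The only point requiring a little care is the bookkeeping of which constants depend on $\v$: one must check that the volume comparison $\dVol_{\hg_\v} \lesssim r\,dr\,d\vp$, the bound $|d\hZ_\v - \hQ_\v| \lesssim |\v|/r$, and the bound $\r \gtrsim r$ all hold with constants independent of $\v$ (and of $\delta, R$), which is exactly what \propref{prop:conv_manifolds} and \lemref{lem:frakr=r} were set up to provide. A secondary, purely cosmetic point is that the statement is phrased as an infimum over $f \in H^1(\hM_\v;\R^2)$ rather than over $\hM_\v^R$; one simply notes that restricting any such $f$ to $\hM_\v^R \setminus \hM_\v^{\delta R}$ only removes degrees of freedom, and $\hZ_\v$ restricted from $\hM_\v$ is admissible, so the upper bound holds in the stated form.
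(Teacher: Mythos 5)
Your proposal is correct and follows essentially the same route as the paper: the paper's proof also takes $f=\hZ_\v$, bounds $\dist(d\hZ_\v,\SO(\hg_\v,\euc))\le|d\hZ_\v-\hQ_\v|$, and integrates the estimate \eqref{eq:asymp_par} using $\r\simeq r$ and the equivalence of $\dVol_{\hg_\v}$ with the Euclidean volume form. Your additional bookkeeping about $\v$-independence of the constants and the admissibility of $\hZ_\v$ as a competitor is consistent with what \propref{prop:conv_manifolds} and \lemref{lem:frakr=r} provide.
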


\begin{proof}
The bound is obtained by setting $f=\hZ_\v$, using the fact that $\dist(d\hZ_\v,\SO(\g,\euc)) \le |d\hZ_\v - \hP_\v|$, and integrating the first estimate \eqref{eq:asymp_par}, noting that $\r \simeq r$ and that $\hVolumeV \simeq \hZ_\v^\# \VolumeE$.
\end{proof}

\begin{corollary}
\label{cor:upper}
Let $(\M,\P)$ be a body with a dislocation having a Burgers vector $\v$, a finite diameter and a regular inner boundary.
Let $\Gamma$ be the outer-boundary of $\M$, and assume that $\r|_{\Gamma} \subset (0.9R,1.1R)$ for $R\ge 10|\v|$.
Then,
\[
\inf_{f\in \HOne(\M;\R^2)} \int_{\M} \dist^2(df,\SO(\g,\euc))\,\Volume \lesssim |\v|^2 \log \frac{R}{|\v|},
\]
with constant independent of $\v$ and $R$.
\end{corollary}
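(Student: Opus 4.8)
The plan is to transplant the near-minimiser $\hZ_\v$ of the model body onto $\M$ through the enclosure of \propref{prop:M_hM_M}. Since $(\M,\g,Q)$ has finite diameter, a regular inner-boundary and $\r|_\Gamma\subset(0.9R,1.1R)$ with $R\ge 10|\v|$, \propref{prop:M_hM_M} provides an isometric, implant-preserving embedding $\iota:(\M,\g,Q)\hookrightarrow(\hM_\v^{2R},\hg_\v,\hQ_\v)$, i.e.\ $\iota^\#\hg_\v=\g$ and $\iota^\#\hQ_\v=Q$. I would take as competitor
\[
f := \hZ_\v\circ\iota : \M\to\R^2 ,
\]
which has uniformly bounded differential ($\iota$ is an isometry and $|d\hZ_\v|_{\hg_\v,\euc}\le 6/5$ by \eqref{eq:bilipschitz}) on a finite-volume manifold, hence $f\in\HOne(\M;\R^2)$ and is admissible in the infimum.

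The next step is a change of variables. Implant-preservation gives $d\iota_p = (\hQ_\v)_{\iota(p)}^{-1}\circ Q_p$, hence $df_p\circ Q_p^{-1} = (d\hZ_\v)_{\iota(p)}\circ(\hQ_\v)_{\iota(p)}^{-1}$. Because $Q$ and $\hQ_\v$ are fibrewise isometries, right-composition with $Q_p^{-1}$ (resp.\ $(\hQ_\v)_{\iota(p)}^{-1}$) is a linear isometry of the relevant $\Hom$-spaces carrying $\SO(\g,\euc)$ (resp.\ $\SO(\hg_\v,\euc)$) onto $\SO(2)$; thus at every $p\in\M$
\[
\dist\big(df_p,\SO(\g,\euc)\big) = \dist\big((d\hZ_\v)_{\iota(p)}\circ(\hQ_\v)_{\iota(p)}^{-1},\SO(2)\big) = \dist\big((d\hZ_\v)_{\iota(p)},\SO(\hg_\v,\euc)\big) .
\]
Since $\iota$ is a local isometry, $\Volume=\iota^\#\hVolumeV$, so integrating the identity above, using $\iota(\M)\subseteq\hM_\v^{2R}$ and $\hQ_\v\in\SO(\hg_\v,\euc)$,
\[
\int_\M\dist^2(df,\SO(\g,\euc))\,\Volume = \int_{\iota(\M)}\dist^2(d\hZ_\v,\SO(\hg_\v,\euc))\,\hVolumeV \le \int_{\hM_\v^{2R}}|d\hZ_\v-\hQ_\v|^2_{\hg_\v,\euc}\,\hVolumeV .
\]

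It remains to bound this last integral. By \eqref{eq:asymp_par} the integrand is $\lesssim|\v|^2/\r^2$; by \lemref{lem:frakr=r} we have $\r\simeq r$; and by the bilipschitz bound \eqref{eq:bilipschitz} the volume form obeys $\hVolumeV\simeq\hZ_\v^\#\VolumeE = r\,dr\,d\vp$. Therefore
\[
\int_{\hM_\v^{2R}}|d\hZ_\v-\hQ_\v|^2_{\hg_\v,\euc}\,\hVolumeV \;\lesssim\; \int_{|\v|}^{2R}\!\!\int_{S^1}\frac{|\v|^2}{r^2}\,r\,d\vp\,dr \;\simeq\; |\v|^2\log\frac{2R}{|\v|} \;\lesssim\; |\v|^2\log\frac{R}{|\v|} ,
\]
the last step using $\log(2R/|\v|)\lesssim\log(R/|\v|)$ for $R\ge 10|\v|$. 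Since $f\in\HOne(\M;\R^2)$, the infimum in the statement is at most the energy of $f$, which by the two preceding displays is $\lesssim|\v|^2\log(R/|\v|)$.

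There is no serious obstacle here: once \propref{prop:M_hM_M} and \propref{prop:conv_manifolds} are in place the estimate is essentially forced. The only point that is not automatic is verifying that the pointwise distance to the rotations is transported unchanged through the implant-preserving embedding, i.e.\ the identity $\dist(df_p,\SO(\g,\euc))=\dist((d\hZ_\v)_{\iota(p)},\SO(\hg_\v,\euc))$. It is worth noting that, in contrast with the admissible-strain setting, no separate treatment of the dislocation core is required: the radial integral $\int_{|\v|}^{2R}r^{-1}\,dr$ converges at its lower endpoint and produces exactly the logarithm $\log(R/|\v|)$, reflecting the fact that in the geometric model the core size is bounded below by $|\v|$.
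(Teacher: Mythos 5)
Your proposal is correct and follows essentially the same route as the paper: the paper proves Corollary~\ref{cor:upper} by combining the implant-preserving embedding of \propref{prop:M_hM_M} into $\hM_\v^{2R}$ with \propref{prop:4.1}, whose proof is exactly your computation with the competitor $\hZ_\v$ (pointwise bound $\dist(d\hZ_\v,\SO(\hg_\v,\euc))\le|d\hZ_\v-\hQ_\v|\lesssim|\v|/\r$, $\r\simeq r$, $\hVolumeV\simeq r\,dr\,d\vp$, and the logarithmic radial integral). You have merely unrolled \propref{prop:4.1} and phrased the comparison via the explicit pullback $f=\hZ_\v\circ\iota$ rather than comparing infima, which is the same argument.
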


\begin{proof}
By \propref{prop:M_hM_M}, $(\M,\P)$ embeds isometrically in $(\hM_\v^{2R},\hP_\v)$, hence, by \propref{prop:4.1} with outer-radius $2R$ and inner-radius $\delta R = |\v|$,
\[
\begin{split}
\inf_{f\in \HOne(\M;\R^2)} \int_{\M} \dist^2(df,\SO(\g,\euc))\,\Volume &\le \\ 
	&\hspace{-4cm} \le \inf_{f\in \HOne(\hM_\v;\R^2)} \int_{\hM_\v^{2R}} \dist^2(df,\SO(\hg_\v,\euc))\,\hVolumeV  \\
	&\hspace{-4cm} \lesssim |\v|^2 \log \frac{2R}{|\v|} \lesssim  |\v|^2 \log \frac{R}{|\v|},
\end{split}
\]
where in the last inequality we used that $R\ge 10|\v|$.
\end{proof}

We proceed with the lower bound. 
To this end, we establish a Friesecke--James--M\"uller-type (FJM) rigidity estimate for a body with a dislocation, in which parallel sections of $\SO(\g,\euc)$ replace the constant rotations in Euclidean space.  
We first note the existence of a uniform FJM constant for Euclidean half-annuli of large enough aspect ratio:

\begin{lemma}[Uniform FJM constant for half-annuli]
\label{lem:FJM_annulus}
Let $\W\subset\R^2$ be a (Euclidean) half-annulus,
\[
\W = \{(r\cos\vp, r\,\sin\vp) ~:~ r\in (R_1,R_2), \,\,\, \vp\in (0,\pi)\}.
\]
having aspect ratio $R_2/R_1\ge 3/2$. Then there exists a constant $C>0$ independent of $R_1,R_2$, and
there exists for every $f\in \HOne(\W;\R^2)$ a matrix $U\in \SO(2)$, such that
\[
\int_{\W} |df - U|^2\,\VolumeE \le C \int_{\W} \dist^2(df,\SO(2))\,\VolumeE.
\] 
\end{lemma}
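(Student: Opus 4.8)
The plan is to reduce the statement to the standard Friesecke--James--M\"uller (FJM) rigidity estimate applied on a fixed reference domain, and then track the dependence of the constant on the aspect ratio. Since the FJM inequality on any fixed bounded Lipschitz domain $\W_0$ yields, for every $f\in\HOne(\W_0;\R^2)$, a rotation $U\in\SO(2)$ with $\int_{\W_0}|df-U|^2\lesssim\int_{\W_0}\dist^2(df,\SO(2))$, and the constant is scale-invariant (both sides scale the same way under $x\mapsto\lambda x$), the only genuine issue is that our half-annuli $\W$ form a one-parameter family of \emph{non-similar} domains as the ratio $R_2/R_1$ ranges over $[3/2,\infty)$. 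First I would fix a small number $\theta>0$ and cover $\W$ by finitely many overlapping half-annular pieces of bounded (say between $3/2$ and $2$) aspect ratio; rescaling each piece to a fixed model domain and applying FJM there gives a local rotation $U_j$ on each piece with a \emph{uniform} constant. The overlaps are themselves bilipschitz-equivalent to a fixed domain with uniform constants, so on each overlap one compares $U_j$ and $U_{j+1}$: $|U_j-U_{j+1}|^2\lesssim \frac{1}{|\text{overlap}|}\int_{\text{overlap}}|df-U_j|^2 + |df-U_{j+1}|^2$, and the number of pieces needed is comparable to $\log(R_2/R_1)$, which a priori grows.

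To get a constant that does \emph{not} degenerate as $R_2/R_1\to\infty$, the cleaner route is to exploit that a half-annulus of \emph{large} aspect ratio is, after a conformal/logarithmic change of variables $(r,\vp)\mapsto(\log r,\vp)$, bilipschitz-equivalent to a long thin rectangle — but the bilipschitz constant of that map is not uniform, so that does not immediately work either. Instead I would argue directly: the half-annulus is star-shaped with respect to a suitable point (or more simply, it is a Lipschitz domain that can be written as a bilipschitz image, with constant bounded by an absolute constant independent of $R_2/R_1$, of a fixed reference half-annulus, say $\{1<r<2,\ 0<\vp<\pi\}$, via the radial map $x\mapsto x\,|x|^{\alpha}$ with $\alpha=\log(R_2/R_1)/\log 2$ composed with scaling) — wait, again the exponent is unbounded. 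So the honest argument: cover $\W$ by $N\simeq\log(R_2/R_1)$ overlapping pieces as above, get local rotations $U_j$, telescope to control $|U_1-U_N|$, and then note the key point that the \emph{final} rotation can be taken to be $U_1$ and the error accumulated is
\[
\sum_j \int_{\W_j}|df-U_1|^2 \lesssim \sum_j\Big(\int_{\W_j}|df-U_j|^2 + |\W_j|\,|U_j-U_1|^2\Big).
\]
The subtle step is that the geodesic-type telescoping $|U_j-U_1|^2 \lesssim j\sum_{k<j}|U_{k+1}-U_k|^2$ loses a factor of $N$, which would make the final constant grow like $\log^2$ — this is the main obstacle and must be circumvented.

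The way to circumvent it (this is the standard trick, e.g.\ in Friesecke--James--M\"uller's thin-film work and in Conti--Schweizer) is a dyadic/hierarchical decomposition of the half-annulus rather than a linear chain: group the $N$ annular pieces into a binary tree of scales, so that at level $k$ there are $\sim N/2^k$ pieces each made of $\sim 2^k$ elementary pieces and still of bounded \emph{shape} after rescaling (a half-annulus of aspect ratio $2^k$ rescaled to aspect ratio — no, its shape depends on $k$; but the point is that a half-annulus of aspect ratio $2\cdot 2^{k-1}$ is the union of two of aspect ratio $2^{k-1}$ with an overlap that is again a half-annulus of bounded aspect ratio, so FJM applied to the \emph{union domain} — whose constant we must know is uniform — is what we need). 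Thus the real reduction is: FJM holds with a \emph{uniform} constant on the family of half-annuli, provable by induction on $\log_2(R_2/R_1)$ using the two-piece gluing at each step, where each gluing only costs an absolute multiplicative constant because it merges two domains of equal size with a fat overlap. I would carry this out as: (i) base case — bounded aspect ratio, direct FJM + scaling; (ii) inductive step — split $\W$ into $\W'\cup\W''$ of half the log-aspect-ratio with half-annular overlap $O$ of aspect ratio $\le 2$; apply the inductive hypothesis to get $U',U''$ with uniform constant; estimate $|U'-U''|^2\lesssim |O|^{-1}\int_O(|df-U'|^2+|df-U''|^2)$; replace $U''$ by $U'$ on $\W''$ at the cost of $|\W''|\,|U'-U''|^2\lesssim \int_{\W}\dist^2(df,\SO(2))$ using $|\W''|\simeq|O|\cdot(\text{aspect ratio of }\W'')\simeq|O|$ — and here is exactly where one needs $|\W''|/|O|$ bounded, which holds since the overlap has bounded aspect ratio while $\W''$ does \emph{not}: so actually $|\W''|/|O|$ grows, and one instead must use a \emph{graded} overlap (overlap of size comparable to $\W''$, i.e.\ overlap the two halves on a full half of one of them). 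With that modification the ratio $|\W''|/|O|$ is bounded by an absolute constant, the induction closes with a constant independent of the number of levels, and the lemma follows. I expect step (ii), specifically arranging the splitting so that the overlap is a constant fraction of each half while the halves themselves have controlled shape, to be the one requiring care; everything else is bookkeeping on top of the classical FJM estimate.
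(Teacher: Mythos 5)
Your proposal misdiagnoses where the difficulty lies, and this sends the whole argument down the wrong path. A \emph{half}-annulus is simply connected, and after the scale-invariant normalization $R_2=1$ the hypothesis $R_2/R_1\ge 3/2$ means $R_1\le 2/3$; as $R_1\to 0$ the domain does not degenerate at all --- it is the half-disc with a vanishing half-disc removed at a boundary point. The entire family $\{R_1<r<1,\ 0<\vp<\pi\}$, $R_1\in(0,2/3]$, is \emph{uniformly} Lipschitz-equivalent to a fixed reference domain, and since the FJM constant is scale-invariant and stable under uniformly bilipschitz changes of domain (this is the content of \cite[Theorem~5]{FJM06}), the lemma follows from a single application of the standard rigidity estimate. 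That is the paper's three-line proof. You actually brush against this idea (``it is a Lipschitz domain that can be written as a bilipschitz image, with constant bounded by an absolute constant, of a fixed reference half-annulus'') and then abandon it because the particular radial power map you wrote down has an unbounded exponent --- but the failure of one candidate map does not show the domains are not uniformly equivalent; it only shows that map is the wrong one. The genuinely degenerate regime is the opposite limit $R_2/R_1\to 1$ (a thin annular sliver), which is exactly what the hypothesis excludes; large aspect ratio is the \emph{easy} case here. (Contrast this with a \emph{full} annulus, where large aspect ratio really is problematic --- which is precisely why Theorem~\ref{thm:rigidity_single_disloc} covers the dislocated annulus by overlapping \emph{half}-annuli.)

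The substitute argument you construct --- a chain or binary tree of $N\simeq\log(R_2/R_1)$ radial sub-annuli glued by comparing rotations on overlaps --- has an unresolved gap that you yourself flag and then wave away. Each gluing step replaces $U''$ by $U'$ via $|df-U'|^2\lesssim|df-U''|^2+|U'-U''|^2$ and hence multiplies the accumulated rigidity constant by a fixed factor strictly greater than $1$; iterated over the levels of the induction this yields a constant growing like a power of $\log(R_2/R_1)$, not a uniform one. Your closing claim that ``the induction closes with a constant independent of the number of levels'' is precisely the assertion that needs proof, and nothing in the proposal establishes it. Moreover, the geometric premise of the graded-overlap fix fails: the area of a half-annulus of large aspect ratio is concentrated in its outermost dyadic shell, so requiring the overlap to be a constant fraction (by area) of each half forces the ``inner half'' to be essentially all of $\W$, and the desired decomposition does not exist. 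All of this machinery is unnecessary once one recognizes that the rescaled domains converge to a half-disc rather than to anything singular.
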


\begin{proof}
By the FJM rigidity estimate \cite[Theorem~3.1]{FJM02b}, there exists for every domain $\W$  such a constant $C$.
Since the FJM constant is scale-independent \cite[Theorem~5]{FJM06}, we may assume without loss of generality that $R_2=1$. 
The existence of a uniform FJM constant follows from the fact that all half-annuli of inner-radius $R_1<2/3$ and outer-radius $R_2=1$ are uniformly Lipschitz-equivalent \cite[Theorem~5]{FJM06}. 
\end{proof}

\begin{theorem}
\label{thm:rigidity_single_disloc}
Consider $(\hM_\v^R,\hP_\v)$ with $R>3|\v|/2$. Let $\delta>0$ be such that $\delta R\in[|\v|,2R/3)$.
There exists for every $f\in \HOne(\hM_\v^R;\R^2)$ a matrix $U\in\SO(2)$, such that
\beq
\int_{\hM_\v^R\setminus\hM_\v^{\delta R}} |df - U\hP_\v|_{\hg_\v,\euc}^2 \,\hVolumeV  \lesssim \int_{\hM_\v^R\setminus\hM_\v^{\delta R}} \dist^2(df,\SO(\hg_\v,\euc))\,\hVolumeV ,
\label{eq:FJM_single}
\eeq
with constant independent of $\v$, $R$ and $\delta$. 
\end{theorem}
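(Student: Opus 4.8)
The plan is to transfer the problem to Euclidean domains by developing simply-connected pieces of $\hM_\v^R$ into the plane, apply the uniform rigidity estimate of \lemref{lem:FJM_annulus} there, and patch. Since $\hQ_\v$ is closed, on any simply-connected open set $\Sigma\subset\hM_\v^R\setminus\hM_\v^{\delta R}$ it is exact, $\hQ_\v|_\Sigma=d\psi$ for a map $\psi\colon\Sigma\to\R^2$ (a developing map); because $\hg_\v=\hQ_\v^\#\euc$, $\psi$ is a local isometry onto its image, and using the explicit coordinates of \secref{sec:coordinate_construction} together with \eqref{eq:hQv} one may take $\psi=\hZ_\v+\tfrac{\vp}{2\pi}\v$ (up to an additive constant) on $\Sigma$. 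Thus $\psi\circ\hZ_\v^{-1}$ is the map $x\mapsto x+\tfrac{\arg x}{2\pi}\v$ on $\hZ_\v(\Sigma)$, which differs from the identity by an operator of norm $\le|\v|/(2\pi r)\le|\v|/(2\pi\,\delta R)\le1/(2\pi)$, where we used $r\ge\delta R\ge|\v|$; in particular $\psi$ is an embedding, and $\Omega:=\psi(\Sigma)$ is $\tfrac{1+1/2\pi}{1-1/2\pi}$-bilipschitz to $\hZ_\v(\Sigma)$. By \propref{prop:conv_manifolds}, $\hZ_\v(\Sigma)$ is in turn bilipschitz, with a universal constant, to the Euclidean annular sector $\{\delta R\le\rho<R,\ \vp\in I\}$; when $I$ has length $<2\pi$ this sector is uniformly bilipschitz to a half-annulus of aspect ratio $1/\delta$, which is $\ge3/2$ because $\delta R<2R/3$. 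Hence, exactly as in the proof of \lemref{lem:FJM_annulus} (scale- and bilipschitz-invariance of the FJM constant), $\Omega$ carries an FJM constant independent of $\v$, $R$, $\delta$ and $I$.

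The point that makes the estimate hold with \emph{no} error term is that for $h:=f\circ\psi^{-1}\in\HOne(\Omega;\R^2)$ one has $dh=df\circ d\psi^{-1}=df\circ\hQ_\v^{-1}$ exactly, so $\dist(dh,\SO(2))=\dist(df,\SO(\hg_\v,\euc))$ pointwise and, since $\psi$ is an isometry, $\int_\Omega\dist^2(dh,\SO(2))\,dx=\int_\Sigma\dist^2(df,\SO(\hg_\v,\euc))\,\dVol_{\hg_\v}$. Applying the rigidity estimate on $\Omega$ produces $U_\Sigma\in\SO(2)$ with $\int_\Omega|dh-U_\Sigma|^2\,dx\lesssim\int_\Omega\dist^2(dh,\SO(2))\,dx$; transporting back through $\psi$ and using that $\hQ_\v$ is a pointwise isometry (so $|df-U_\Sigma\hQ_\v|_{\hg_\v,\euc}=|dh-U_\Sigma|$) yields
\[
\int_\Sigma|df-U_\Sigma\hQ_\v|_{\hg_\v,\euc}^2\,\dVol_{\hg_\v}\lesssim\int_\Sigma\dist^2(df,\SO(\hg_\v,\euc))\,\dVol_{\hg_\v},
\]
with a constant independent of $\v$, $R$, $\delta$.

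To globalize, cover $\hM_\v^R\setminus\hM_\v^{\delta R}$ by two annular sectors $\Sigma_1,\Sigma_2$ of angular width $\tfrac65\pi$, overlapping in two sectors of total angular width $\tfrac25\pi$; by the bilipschitz bound \eqref{eq:bilipschitz}, $\Vol_{\hg_\v}(\Sigma_1\cap\Sigma_2)\simeq\Vol_{\hg_\v}(\Sigma_i)\simeq\Vol_{\hg_\v}(\hM_\v^R\setminus\hM_\v^{\delta R})$, uniformly. With $U_1,U_2$ the rotations just obtained, $(U_1-U_2)\hQ_\v=(df-U_2\hQ_\v)-(df-U_1\hQ_\v)$ and $|(U_1-U_2)\hQ_\v|_{\hg_\v,\euc}=|U_1-U_2|$; integrating over $\Sigma_1\cap\Sigma_2$ and using the previous estimate on $\Sigma_1$ and on $\Sigma_2$ gives $|U_1-U_2|^2\,\Vol_{\hg_\v}(\Sigma_1\cap\Sigma_2)\lesssim\int_{\hM_\v^R\setminus\hM_\v^{\delta R}}\dist^2(df,\SO(\hg_\v,\euc))\,\dVol_{\hg_\v}$, and hence $|U_1-U_2|^2\,\Vol_{\hg_\v}(\Sigma_2)$ is bounded by the same quantity. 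Then $\|df-U_1\hQ_\v\|_{L^2(\Sigma_2)}^2\lesssim\|df-U_2\hQ_\v\|_{L^2(\Sigma_2)}^2+|U_1-U_2|^2\,\Vol_{\hg_\v}(\Sigma_2)$, and adding the contributions of $\Sigma_1$ and $\Sigma_2$ yields \eqref{eq:FJM_single} with $U=U_1$.

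I expect the only delicate point to be the uniformity of the FJM constant on the developed domains $\Omega$, and this is exactly where both hypotheses on $\delta$ are used: $|\v|\le\delta R$ keeps the shear $\psi-\hZ_\v$ small relative to the inner radius (so that $\Omega$ remains uniformly bilipschitz to a half-annulus and $\psi$ remains an embedding), and $\delta R<2R/3$ keeps the aspect ratio $\ge3/2$ so that \lemref{lem:FJM_annulus} is applicable. The remaining steps — the bilipschitz bookkeeping and the two-chart patching — are routine; the conceptual content is simply that the developing map $\psi$ moves the problem to a fixed Euclidean geometry while carrying $df\circ\hQ_\v^{-1}$ over to $dh$ with no approximation, which is precisely why the error term present in the multi-dislocation estimate \thmref{thm:rigidity} does not occur here.
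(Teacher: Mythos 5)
Your proposal is correct and follows essentially the same route as the paper's proof: cover the annulus by overlapping simply-connected sectors, develop each isometrically into the plane (where the parallel section $U\hQ_\v$ becomes a constant rotation and $df\circ\hQ_\v^{-1}$ becomes $dh$ with no approximation error), invoke the uniform FJM constant for half-annuli of aspect ratio $\ge 3/2$ from \lemref{lem:FJM_annulus}, and reconcile the rotations on the overlaps. The only differences are cosmetic — you use two sectors of angular width $\tfrac65\pi$ and write the developing map $\psi=\hZ_\v+\tfrac{\vp}{2\pi}\v$ explicitly, whereas the paper uses three half-annuli and merely asserts the isometric embeddability of each piece.
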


The equivalent statement for the admissible strain model, is given in \cite[Prop.~3.3]{SZ12}, in which the domain is cut along a line in order to make it simply-connected and consequently,  closed forms are exact.

\begin{proof}
By Comment~2 following \defref{def:single_disloc}, every parallel section of $\SO(\hg_\v,\euc)$ is of the form $U\hP_\v$ for some $U\in\SO(2)$. The idea behind the proof, as in \cite[Prop.~3.3]{SZ12}, is to cover $\hM_\v^R\setminus\hM_\v^{\delta R}$ with overlapping, simply-connected domains. Since $(\hM_\v^R,\hP_\v)$ is locally-flat, each subdomain embeds isometrically in Euclidean plane, hence restrictions of parallel sections of $\SO(\hg_\v,\euc)$ can be viewed as constant matrices. Applications of the standard rigidity theorem \cite[Theorem~3.1]{FJM02b} for each sub-domain, exploiting their overlap, yields the desired bound.

Specifically, consider the following covering of $\hM_\v^R\setminus\hM_\v^{\delta R}$ by half-annuli, as in the figure below:

\begin{figure}[H]
\[
\btkz
\clip (0,0) circle (2cm);
\fill[warmblue!20] (0,-2) -- (0,2) -- (-2,2) -- (-2,-2) -- cycle;
\node at (-1,0) {$\W_1$};
\draw[line width=1pt] (0,0) circle (2cm);
\draw[line width=1pt, fill = white] (0,0) circle (0.2cm);
\etkz
\qquad
\btkz
\clip (0,0) circle (2cm);
\fill[ocre!20] (-2,0) -- (2,0) -- (2,2) -- (-2,2) -- cycle;
\node at (0,1) {$\W_2$};
\fill[warmblue!20] (-2,0) -- (2,0) -- (2,-2) -- (-2,-2) -- cycle;
\node at (0,-1) {$\W_3$};
\draw[line width=1pt] (0,0) circle (2cm);
\draw[line width=1pt, fill = white] (0,0) circle (0.2cm);
\etkz
\]
\caption{Coverage of $\hM_\v^R\setminus\hM_\v^{\delta R}$ by half-annuli.}
\end{figure}

Since $(\hM_\v^R, \hP_\v)$ is not a Euclidean annulus, we have to be more precise about how the half-annuli $\W_i$ are defined; to this end, we use the natural parameterization of $\hM_\v^R$, for example,
\[
\W_1 = \{(r,\vp) ~:~ r\in(\delta R,R), \,\,\, \vp\in(\pi/2,3\pi/2)\}.
\]
Now, $(\W_i,\hP_\v)$ are simply-connected flat manifolds that are isometrically-embeddable in $\R^2$ (meaning that we have an immersion $\Psi:\W_i\to \R^2$ such that $d\Psi = \hP_\v$), but under this embedding they are not Euclidean half annuli (since $\hP_\v$ is not the identity in the coordinates above). 
The important property of the $\W_i$, is that by the bounds on $R$ and $\delta$ and by the properties of the maps $\hZ_\v$ (\propref{prop:conv_manifolds}) they are Lipschitz-equivalent to Euclidean half-annuli of aspect ratio greater than $3/2$, with bilipschitz constants independent of $\v$, $R$ and $\delta$. 
Thus, \lemref{lem:FJM_annulus} applies to each of the $\W_i$.

Given $f\in \HOne(\hM_\v^R;\R^2)$, there exist three matrices $U_i\in\SO(2)$, $i=1,2,3$, such that
\beq
\int_{\W_i} |df - U_i \hP_\v|_{\hg_\v,\euc}^2\,\hVolumeV \lesssim \int_{\hM_\v^R\setminus\hM_\v^{\delta R}} \dist^2(df,\SO(\g_\v,\euc))\,\hVolumeV, 
\qquad i=1,2,3.
\label{eq:Q15}
\eeq
where we bounded the integrals over $\W_i$ on the right-hand side  by an integral over $\hM_\v^R\setminus\hM_\v^{\delta R}$.
Using the fact that 
\[
|U_i - U_j| = |U_i \hP_\v-U_j \hP_\v|_{\hg_\v,\euc}^2 \lesssim 
|df-U_i \hP_\v|_{\hg_\v,\euc}^2 + |df-U_j \hP_\v|_{\hg_\v,\euc}^2,
\]
where the left-hand side
is constant, 
\[
\begin{aligned}
\Vol_{\hg_\v}(\W_1\cap \W_2) \, |U_1 \hP_\v-U_2 \hP_\v|_{\hg_\v,\euc}^2 & \lesssim \int_{\hM_\v^R\setminus\hM_\v^{\delta R}} \dist^2(df,\SO(\hg_\v,\euc))\,\hVolumeV \\
\Vol_{\hg_\v}(\W_1\cap \W_3) |U_1 \hP_\v-U_3 \hP_\v|_{\hg_\v,\euc}^2  & \lesssim \int_{\hM_\v^R\setminus\hM_\v^{\delta R}} \dist^2(df,\SO(\hg_\v,\euc))\,\hVolumeV.
\end{aligned}
\]
Using the inequality $|a-b|^2 \lesssim a^2 + b^2$ several more times, noting that $\Vol_{\hg_\v}(\W_2\setminus \W_1) \simeq \Vol_{\hg_\v}(\W_1\cap \W_2)$ and $\Vol_{\hg_\v}(\W_3\setminus \W_1) \simeq \Vol_{\hg_\v}(\W_1\cap \W_3)$, we obtain that for every $i=2,3$,
\[
\int_{\W_i} |df - U_1 \hP_\v|_{\hg_\v,\euc}^2\,\VolumeE \lesssim \int_{\hM_\v^R\setminus\hM_\v^{\delta R}} \dist^2(df,\SO(\hg_\v,\euc))\,\hVolumeV. 
\]
Summing over $i=2,3$ we obtain the desired result with $U = U_1$.
\end{proof}

\begin{proposition}
\label{prop:4.4}
Consider $(\hM_\v^R,\hP_\v)$  for some $R>3|\v|/2$. Let $\delta>0$ be such $\delta R\in [|\v|,2R/3)$.
Then,
\[
\inf_{f\in \HOne(\hM_\v^R;\R^2)} \int_{\hM_\v^R\setminus \hM_\v^{\delta R}} \dist^2(df,\SO(\hg_\v,\euc))\,\hVolumeV \gtrsim |\v|^2 \log \frac{1}{\delta},
\]
with constant independent of $\v$, $R$ and $\delta$.
\end{proposition}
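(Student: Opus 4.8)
The plan is to run the classical logarithmic lower bound for the energy of a topological defect, adapted to the geometric setting of \secref{sec:coordinate_construction}: the idea is to compare the \emph{exact} form $df$ not to a constant rotation but to a parallel section $U\hQ_\v$ of $\SO(\hg_\v,\euc)$, and to exploit the mismatch in circulations around the core, namely $\oint_C\hQ_\v=\v$ for every loop $C$ encircling the core, whereas $\oint_C df=0$.

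First I would fix $f\in\HOne(\hM_\v^R;\R^2)$ with finite energy and an \emph{arbitrary} $U\in\SO(2)$, and prove a circulation estimate. For a.e.\ radius $t\in(\delta R,R)$, Fubini gives $f|_{C_t}\in\HOne(C_t;\R^2)$ on the coordinate circle $C_t=\{r=t\}$, hence $\oint_{C_t}df=0$ by periodicity, while $\oint_{C_t}\hQ_\v=\v$ by the explicit form \eqref{eq:hQv} of $\hQ_\v$ together with its closedness; therefore $\oint_{C_t}(df-U\hQ_\v)=-U\v$. The triangle inequality for $\R^2$-valued line integrals combined with the operator-norm bound yields
\[
|\v|=\Abs{\oint_{C_t}(df-U\hQ_\v)}\le\oint_{C_t}|df-U\hQ_\v|_{\hg_\v,\euc}\,ds_{\hg_\v},
\]
so by Cauchy--Schwarz and the length bound $\text{Length}_{\hg_\v}(C_t)\lesssim t$ (read off from \eqref{eq:hg}, using $r\ge|\v|$) one obtains $\oint_{C_t}|df-U\hQ_\v|_{\hg_\v,\euc}^2\,ds_{\hg_\v}\gtrsim|\v|^2/t$. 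Integrating in $t$, and using that \eqref{eq:hg} forces $\dVol_{\hg_\v}\gtrsim r\,dr\,d\vp$ (since $\det\hg_\v\gtrsim r^2$, while $\sqrt{\hg_{\vp\vp}}\lesssim r$), gives
\[
\int_{\hM_\v^R\setminus\hM_\v^{\delta R}}|df-U\hQ_\v|_{\hg_\v,\euc}^2\,\dVol_{\hg_\v}\;\gtrsim\;\int_{\delta R}^R\frac{|\v|^2}{t}\,dt\;=\;|\v|^2\log\frac1\delta,
\]
with constant independent of $\v$, $R$, $\delta$ and $U$.

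To finish, I would invoke the single-dislocation rigidity estimate \thmref{thm:rigidity_single_disloc} (its hypotheses coincide with those of the present proposition): it supplies, for the given $f$, some $U\in\SO(2)$ with
\[
\int_{\hM_\v^R\setminus\hM_\v^{\delta R}}|df-U\hQ_\v|_{\hg_\v,\euc}^2\,\dVol_{\hg_\v}\;\lesssim\;\int_{\hM_\v^R\setminus\hM_\v^{\delta R}}\dist^2(df,\SO(\hg_\v,\euc))\,\dVol_{\hg_\v}.
\]
Since the circulation lower bound of the previous paragraph holds for every $U$, it holds in particular for this $f$-dependent one; combining the two displays gives $\int_{\hM_\v^R\setminus\hM_\v^{\delta R}}\dist^2(df,\SO(\hg_\v,\euc))\,\dVol_{\hg_\v}\gtrsim|\v|^2\log(1/\delta)$, and taking the infimum over $f$ proves the proposition.

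The only genuinely nontrivial input is \thmref{thm:rigidity_single_disloc}: it is what upgrades the pointwise-trivial inequality $\dist(df,\SO(\hg_\v,\euc))\le|df-U\hQ_\v|$ to its integrated converse \emph{with a single rotation $U$}, which is exactly what makes the circulation argument bite. Everything else is the elementary vortex computation together with the explicit metric bounds of \secref{sec:coordinate_construction}; the point requiring care is that the length and volume comparison constants are genuinely independent of $\v$, $R$ and $\delta$, which holds because the constraint $r\ge|\v|$ makes all $\v$-dependent correction terms in \eqref{eq:hg} uniformly subordinate.
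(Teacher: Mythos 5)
Your proposal is correct and follows essentially the same route as the paper: apply \thmref{thm:rigidity_single_disloc} to replace $\dist^2(df,\SO(\hg_\v,\euc))$ by $|df-U\hQ_\v|^2$ for a single rotation $U$, then exploit the circulation mismatch $\oint_{C_t}df=0$ versus $\oint_{C_t}\hQ_\v=\v$ on coordinate circles via Cauchy--Schwarz (the paper's Jensen step) and integrate over radii to produce the logarithm. The only cosmetic difference is the order of the two steps (you prove the circulation bound for arbitrary $U$ first and then invoke rigidity), which changes nothing.
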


\begin{proof}
Let $f\in \HOne(\hM_\v^R;\R^2)$. By \thmref{thm:rigidity_single_disloc}, there exists a matrix $U\in\SO(2)$, such that
\[
\begin{split}
\int_{\hM_\v^R\setminus\hM_\v^{\delta R}} \dist^2(df,\SO(\hg_\v,\euc))\,\hVolumeV 
&\gtrsim   \int_{\hM_\v^R\setminus\hM_\v^{\delta R}} |df - U\hP_\v|_{\hg_\v,\euc}^2\,\hVolumeV \\
&\simeq \int_{\delta R}^R \brk{\int_{\{r=s\}}  |df - U\hP_\v|_{\hg_\v,\euc}^2 d\vp} s\, ds \\
&\gtrsim  \int_{\delta R}^R \frac{1}{s} \brk{\int_{\{r=s\}}   |df(\pl_\vp) - U\hP_\v(\pl_\vp)|^2 d\vp}\, ds \\
&\ge \int_{\delta R}^R  \frac{1}{2\pi s} \Abs{\int_{\{r=s\}} (df(\pl_\vp) - U\hP_\v(\pl_\vp)) d\vp}^2 \, ds \\
&=  \int_{\delta R}^R  \frac{1}{2\pi s} \Abs{\int_{\{r=s\}} (df - U\hP_\v)}^2 \, ds \\
&= |\v|^2 \int_{\delta R}^R  \frac{ds}{2\pi s} \\
&= \frac{|\v|^2}{2\pi}\log\frac{1}{\delta}.
\end{split}
\]
In the passage to the second line we used Fubini's theorem and the fact that \eqref{eq:bnd_for_volume} implies that $\hVolumeV = \nu^1\wedge \nu^2 \simeq r\,dr \wedge d\vp$; in the passage to the third line we used the fact that for a linear operator $A$ and a unit vector $x$, $|A|^2\ge |Ax|^2$, and that as a consequence of \eqref{eq:bnd_for_volume}, $r^{-1} \pl_\vp$ is approximately a unit vector; in the passage to the fourth line we used Jensen's inequality;  in the passage to the fifth line we used the definition of the line integral of a one-form; in the passage to the sixth line we used the fact that the integral of $df$ vanishes whereas the integral of $\hP_\v$ equals $\v$. 
\end{proof}

\begin{corollary}
\label{cor:low_bnd_single_disloc}
Let $(\M,\P)$ be a body with a single dislocation $\v$ having a regular inner boundary.
Let $\Gamma$ be the outer-boundary of $\M$, and assume that $\r|_{\Gamma} \subset (0.9R,1.1R)$ for some $R>10|\v|$. 
Then,
\[
\inf_{f\in \HOne(\M;\R^2)} \int_{\M} \dist^2(df,\SO(\g,\euc))\,\Volume \gtrsim |\v|^2\log\frac{R}{|\v|}
\]
with constant independent of $\v$ and $R$.
\end{corollary}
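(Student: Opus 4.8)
The plan is to mirror the proof of \corrref{cor:upper}, using the \emph{first} inclusion in \propref{prop:M_hM_M} in place of the second and the lower bound \propref{prop:4.4} in place of the upper bound \propref{prop:4.1}. Since $(\M,\g,Q)$ has a regular inner-boundary and its outer-boundary satisfies $\r\in(0.9R,1.1R)$ with $R>10|\v|$, \propref{prop:M_hM_M} supplies an isometric, implant-preserving embedding $\iota:(\hM_\v^{R/2}\setminus\hM_\v^{3|\v|},\hg_\v,\hQ_\v)\hookrightarrow(\M,\g,Q)$. Because $\iota$ pulls $\g$ back to $\hg_\v$ and $Q$ back to $\hQ_\v$, one has $d(f\circ\iota)\circ\hQ_\v^{-1}=(df\circ Q^{-1})\circ\iota$, so the integrand $\dist^2(df,\SO(\g,\euc))$ pulls back pointwise to $\dist^2(d(f\circ\iota),\SO(\hg_\v,\euc))$ and $\iota^\#\Volume=\dVol_{\hg_\v}$; hence for every $f\in\HOne(\M;\R^2)$,
\[
\int_\M \dist^2(df,\SO(\g,\euc))\,\Volume \;\ge\; \int_{\hM_\v^{R/2}\setminus\hM_\v^{3|\v|}} \dist^2\big(d(f\circ\iota),\SO(\hg_\v,\euc)\big)\,\dVol_{\hg_\v}.
\]
Extending $f\circ\iota$ to an $\HOne$ map on all of $\hM_\v^{R/2}$ and taking infima reduces the corollary to the lower bound on the model body.

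First I would apply \propref{prop:4.4} to $(\hM_\v^{R/2},\hg_\v,\hQ_\v)$, viewing the embedded annulus as $\hM_\v^{R'}\setminus\hM_\v^{\delta R'}$ with $R'=R/2$ and $\delta R'=3|\v|$, i.e. $\delta=6|\v|/R$. The constraint $\delta R'\in[|\v|,\tfrac23 R')$ reads $3|\v|\in[|\v|,R/3)$ and holds since $R>10|\v|$; the remaining hypothesis $R'>10|\v|$ holds as soon as $R>20|\v|$, and then
\[
\inf_{f\in\HOne(\M;\R^2)}\int_\M\dist^2(df,\SO(\g,\euc))\,\Volume\;\gtrsim\;|\v|^2\log\tfrac1\delta=|\v|^2\log\tfrac{R}{6|\v|}\simeq|\v|^2\log\tfrac R{|\v|},
\]
the last equivalence because $\log\tfrac R{6|\v|}=\log\tfrac R{|\v|}-\log 6$ while $\log\tfrac R{|\v|}>\log 20$, so the ratio of the two sides is bounded below uniformly in $\v$ and $R$. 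When instead $10|\v|<R\le20|\v|$, $\log\tfrac R{|\v|}$ is bounded between absolute positive constants and it suffices to prove $\inf\gtrsim|\v|^2$; here I would use that $\M$ still contains the fixed-aspect-ratio model annulus $\hM_\v^{5|\v|}\setminus\hM_\v^{3|\v|}$ (as $R/2>5|\v|$), which by \propref{prop:conv_manifolds} is bilipschitz to a Euclidean annulus of aspect ratio $5/3>3/2$ with $\v$-independent constant, and run the covering-and-circulation argument from the proofs of \thmref{thm:rigidity_single_disloc} and \propref{prop:4.4} on it to get $\gtrsim|\v|^2\log(5/3)\gtrsim|\v|^2$.

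I do not anticipate a genuine obstacle: the entire analytic content --- the Friesecke--James--M\"uller-type rigidity estimate in which the parallel coframe $\hQ_\v$ takes the place of a constant rotation (\thmref{thm:rigidity_single_disloc}), the slicing into the circles $\{r=s\}$, Jensen's inequality, and the identity $\oint_{\{r=s\}}(df-U\hQ_\v)=-\v$ --- is already packaged inside \propref{prop:4.4}, exactly as the upper-bound content is packaged inside \propref{prop:4.1} for \corrref{cor:upper}. The only point needing care, and the one I would watch, is purely a matter of bookkeeping: the model annulus that \propref{prop:M_hM_M} places inside $\M$ has inner radius $3|\v|$ rather than $|\v|$, which shaves a bounded amount ($\log 6$) off the logarithm and forces the case split above according to whether $R/|\v|$ is large or merely bounded; in neither case does this affect the order of the estimate.
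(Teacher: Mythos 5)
Your proposal is correct and follows essentially the same route as the paper: restrict via the first embedding of \propref{prop:M_hM_M} to $\hM_\v^{R/2}\setminus\hM_\v^{3|\v|}$ and invoke \propref{prop:4.4} with $\delta=6|\v|/R$. Your extra bookkeeping (the $\log 6$ comparison and the separate treatment of $10|\v|<R\le 20|\v|$, where the hypothesis $R/2>10|\v|$ of \propref{prop:4.4} would otherwise fail) is in fact slightly more careful than the paper's own proof, which elides these points.
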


\begin{proof}
By \propref{prop:M_hM_M},  we have an isometric embedding,
\[
(\hM_\v^{R/2}\setminus \hM_\v^{3|\v|},\hP_\v) \hookrightarrow (\M,\P).
\]
Substituting the bound of \propref{prop:4.4}, using the fact that 
the aspect ratio is $\delta = (3|\v|)/(R/2)$, and that the condition $R>10|\v|$ implies that $\delta (R/2)= 3|\v| \in [|\v|, 2(R/2)/3)$,
\[
\begin{split}
\inf_{f\in \HOne(\M;\R^2)} \int_{\M} \dist^2(df,\SO(\g,\euc))\,\Volume  & \ge \\
&\hspace{-6cm}\ge  \inf_{f\in \HOne(\hM_\v;\R^2)} \int_{\hM_\v^{R/2}\setminus \hM_\v^{3|\v|}} \dist^2(df,\SO(\hg_\v,\euc))\,\hVolumeV \\
&\hspace{-6cm}\gtrsim |\v|^2\log\frac{R}{6|\v|} \gtrsim |\v|^2\log\frac{R}{|\v|},
\end{split}
\]
where in the last inequality we used again that $R>10|\v|$. 
\end{proof}

With that, we obtained lower and upper bounds for the infimal energy of an edge-dislocation:

\begin{corollary}[Single dislocation, energy bounds]
\label{corr:Escaling}
Let $(\M,\P)$ be a body with a dislocation $\v$ having a regular inner boundary.
Let $\Gamma$ be the outer-boundary of $\M$, and assume that $\r|_{\Gamma} \subset (0.9R,1.1R)$ for some $R>10|\v|$. 
Then,
\[
|\v|^2\log\frac{R}{|\v|} \lesssim \inf_{f\in \HOne(\M;\R^2)} \int_\M \dist^2(df,\SO(\g,\euc))\,\Volume \simeq E(f) \lesssim |\v|^2\log\frac{R}{|\v|}
\]
with constants independent of $\v$ and $R$.
\end{corollary}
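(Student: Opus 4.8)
The plan is to obtain \corrref{corr:Escaling} by stitching together three facts that are already in hand: the upper bound of \corrref{cor:upper}, the lower bound of \corrref{cor:low_bnd_single_disloc}, and the two-sided growth bound \eqref{eq:bound_calW} on the archetypal energy density $\calW$. The one genuinely new point is the passage between the elastic energy $E(f) = \int_\M \calW(df\circ Q^{-1})\,\Volume$ and the geometric Dirichlet-type functional $\int_\M \dist^2(df,\SO(\g,\euc))\,\Volume$ that appears in the two corollaries.

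First I would carry out that reduction. Since $Q_p$ is, by \defref{def:elastic_body}, a linear isometry $T_p\M\to\R^2$ for a.e.\ $p\in\M$, right-composition with $Q_p^{-1}$ is a norm-preserving bijection of operator spaces, $|A\circ Q_p^{-1}|_{\euc,\euc} = |A|_{\g_p,\euc}$, which carries $\SO(\g,\euc)$ onto $\SO(2)$; hence
\[
\dist^2\brk{df_p\circ Q_p^{-1},\SO(2)} = \dist^2_{\g,\euc}\brk{df_p,\SO(\g,\euc)}
\quad\text{for a.e. }p.
\]
Feeding this into \eqref{eq:bound_calW} gives, pointwise and with absolute constants (in particular independent of $\v$ and $R$), $\calW(df\circ Q^{-1})\simeq \dist^2(df,\SO(\g,\euc))$. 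Integrating against $\Volume$ and taking the infimum over $f\in\HOne(\M;\R^2)$ then yields $\inf_f E(f)\simeq \inf_f \int_\M \dist^2(df,\SO(\g,\euc))\,\Volume$, which is the middle ``$\simeq$'' of the statement. It remains to sandwich the right-hand side: the bound $\lesssim |\v|^2\log(R/|\v|)$ is exactly \corrref{cor:upper} (embed $(\M,\g,Q)\hookrightarrow(\hM_\v^{2R},\hg_\v,\hQ_\v)$ via \propref{prop:M_hM_M}, then apply \propref{prop:4.1} with outer radius $2R$ and inner radius $|\v|$), and the bound $\gtrsim |\v|^2\log(R/|\v|)$ is \corrref{cor:low_bnd_single_disloc} (use the reverse embedding $(\hM_\v^{R/2}\setminus\hM_\v^{3|\v|},\hg_\v,\hQ_\v)\hookrightarrow(\M,\g,Q)$ of \propref{prop:M_hM_M} together with \propref{prop:4.4}). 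The hypotheses $R>10|\v|$ and $\r|_\Gamma\subset(0.9R,1.1R)$ are precisely what make both embeddings available and the aspect-ratio requirements in Propositions~\ref{prop:4.1} and~\ref{prop:4.4} hold. Chaining the three facts gives the claimed two-sided estimate.

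There is no real obstacle here; the corollary is a repackaging of earlier results. The only places deserving a line of care are the identity above — that composing with the pointwise isometry $Q$ does not change operator-norm distance to the rotations, so that $E$ and the geometric energy are comparable with universal constants — and the verification that every constant invoked (from \propref{prop:M_hM_M}, \propref{prop:4.1}, \propref{prop:4.4}, and from \eqref{eq:bound_calW}) is independent of $\v$ and $R$, which is already asserted in those statements.
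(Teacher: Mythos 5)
Your proposal is correct and matches the paper's proof, which simply cites Corollary~\ref{cor:upper}, Corollary~\ref{cor:low_bnd_single_disloc} and the two-sided bound \eqref{eq:bound_calW}. The only addition you make is to spell out the (correct, and left implicit in the paper) pointwise identity $\dist^2(df\circ Q^{-1},\SO(2))=\dist^2_{\g,\euc}(df,\SO(\g,\euc))$, which justifies the middle ``$\simeq$''.
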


\begin{proof}
This is an immediate consequence of \corrref{cor:upper}, \corrref{cor:low_bnd_single_disloc} and the lower and upper bounds in \eqref{eq:bound_calW}.
\end{proof}

\subsection{Asymptotic estimates for small dislocations}\label{sec:asymp_estimates}

In this section we derive more detailed estimates for the infimal energy of a body with an edge-dislocation. 
These estimates will be needed when we consider bodies with multiple edge-dislocations, in regimes where the magnitude of each dislocation tends to zero, whereas their number tends to infinity. 
In these regimes, the distance between neighboring dislocations can tend to zero, which requires us to examine the energetics of bodies with dislocations in which the outer-radius also shrinks to zero, albeit at a slower rate than the magnitude of the dislocation.

Let $\v\in\R^2$, 
let $R>0$ be the outer-radius (in coordinates) and let $\delta\in(0,1/2)$ be the aspect ratio between the inner and the outer radii.
Having set the dimensions of the annulus, we consider a dislocation having Burgers vector $\e\v$, where $\e>0$ is constrained by the geometric requirement that $\e |\v| < \delta R$. 
We define for every $\e\in(0,\delta R/|\v|)$ a \Emph{rescaled energy function}, $\hE_{\e,\delta}^R(\cdot;\v):  \HOne(\hM_{\e\v};\R^2) \to \R$,
\beq
\hE_{\e,\delta}^R(f;\v) = \frac{1}{\e^2\,\log(1/\delta)} \int_{\hM^R_{\e\v}\setminus \hM^{\delta R}_{\e\v}} \calW(df\circ \hP_{\e\v}^{-1})\,\hVolumeEV,
\label{eq:Ehat}
\eeq
and denote its infimum by
\[
\hI_{\e,\delta}^R(\v) = \inf_{f\in \HOne(\hM_{\e\v};\R^2)} \hE_{\e,\delta}^R(f;\v).
\]
It follows from Propositions~\ref{prop:4.1} and \ref{prop:4.4} (note that $R>2\e\v$), and the lower and upper bounds \eqref{eq:bound_calW} that 
\beq\label{eq:hI_e_delta}
\hI_{\e,\delta}^R(\v) \simeq |\v|^2,
\eeq
where the bounding constants are independent of $\e$, $\delta$, $R$ and $\v$.

The rescaled energy functional $\hE_{\e,\delta}^R(f;\v)$ is compared to another functional, which can be viewed as its linearization. We introduce the \Emph{quadratic energy functional},
\beq
\Elin_\delta(\beta;\v) = \frac{1}{\log(1/\delta)} \int_{B_1\setminus B_\delta} \bbW(\beta)\,\VolumeE ,
\label{eq:Ecell}
\eeq
where, as we recall, $\bbW(\beta) = \half D_I^2\calW(\beta,\beta)$,
defined over the set of so-called \Emph{admissible strains}, $X^1_\delta(\v) $, where for $\sigma>0$,
\beq
\label{eq:Xdeltav}
X^\sigma_{\delta\sigma}(\v) = \BRK{\beta\in L^2(B_{\sigma}\setminus B_{\delta\sigma}; \R^2\otimes\R^2) ~:~
\curl\beta=0,
\quad
\oint_C \beta = - \v},
\eeq
where $C$ is any positively-oriented curve homotopic to $\pl B_\delta$ (the vanishing of the distributive curl of $\beta$ guarantees the existence of line integrals of $\beta$, even though it only has $L^2$-regularity \cite{CL05}). We denote the infimum of the quadratic energy functional by
\[
\Ilin_\delta(\v) = \inf_{\beta\in X^1_\delta(\v)} \Elin_\delta(\beta;\v).
\]

The quadratic functional \eqref{eq:Ecell} was studied in \cite{GLP10} (and in references therein), with the following outcomes:
\begin{enumerate}[itemsep=0pt,label=(\alph*)]
\item $\Ilin_\delta$ is a quadratic form \cite[Eqs.~(27)--(28)]{GLP10} and satisfies $\Ilin_\delta(\v) \gtrsim |\v|^2$, where the constant is independent of $\delta$ \cite[Remark 3]{GLP10}.  
\item 
The quadratic variational problem is invariant under a scaling of the domain: more precisely,  for $\sigma>0$,
\[
\beta\in X^1_\delta(\v)
\qquad\text{if and only if}\qquad
\beta_\sigma \in X^{\sigma}_{\delta \sigma}(\v),
\]
where
\[
\beta_\sigma(x) = \frac{1}{\sigma}\beta(x/\sigma),
\]
and
\[
\int_{B_1\setminus B_{\delta}} \bbW(\beta)\,\VolumeE = 
\int_{B_\sigma\setminus B_{\delta\sigma}} \bbW(\beta_\sigma)\,\VolumeE.
\]
This justifies why taking the outer-radius equal to $1$ does not limit the generality of the quadratic energy functional.

\item Let the $\R^2$-valued 1-form $\beta_\v\in \Omega^1(\R^2\setminus \{0\}; \R^2)$ be the distributional solution of 
\beq
\curl\beta_\v = -\v\,\delta_0
\qquad
\div  (D_I^2\calW(\beta_\v,\cdot)) = 0.
\label{eq:betav}
\eeq
By \cite[Corollary 6]{GLP10}, 
\beq
0 \le \Elin_\delta(\beta_\v;\v)  - \Ilin_\delta(\v) \lesssim \frac{|\v|^2}{\log(1/\delta)} \lesssim \frac{\Ilin_\delta(\v)}{\log(1/\delta)},
\label{eq:Edelta_betav}
\eeq
where the constants are independent of $\delta$ and $\v$.
Moreover, by \cite[Eq.~(29)]{GLP10}, $\beta_\v$ is self-similar, that is, for every $\sigma>0$,
\[
\beta_\v(x) = \frac{1}{\sigma} \,\beta_\v(x/\sigma),
\]
and satisfies the bound
\beq\label{eq:beta_bound}
|\beta_\v(x)| \lesssim \frac{|\v|}{r}.
\eeq

\item The function $\Ilin_\delta$ converges pointwise as $\delta\to0$ to a limit
\[
\Ilin_0(\v) = \lim_{\delta\to0} \Ilin_\delta(\v),
\] 
and by \cite[Corollary 6]{GLP10}
\beq
\label{eq:Idelta_I0}
|\Ilin_\delta(\v) - \Ilin_0(\v)| \lesssim \frac{|\v|^2}{\log(1/\delta)},
\eeq
where $\Ilin_0$ is a positive-definite quadratic form \cite[Eq.~(36)]{GLP10}, and thus also\\ $\Ilin_\delta(\v)\simeq  \Ilin_0(\v) \simeq |\v|^2$ with constants independent of $\delta$.
\item It follows from the first three items that $\beta_\v$ (restricted to the relevant domain) satisfies 
\[
|\Elin_\delta(\beta_\v;\v) - \Ilin_0(\v)| \lesssim \frac{|\v|^2}{\log(1/\delta)}.
\]
\end{enumerate}

We proceed to relate between the minimization problems for the nonlinear energy functional \eqref{eq:Ehat} and the quadratic energy functional \eqref{eq:Ecell}. 
As mentioned in the beginning of this section, bounding the difference between $\hI_{\e,\delta}^{R}(\v)$ and $\Ilin_\delta(\v)$ as $\e\to0$ is not sufficient, as we need to account for settings in which $R$ and $\delta$ are $\e$-dependent.

\subsubsection{Lower bounds}

\begin{proposition}
\label{prop:cell_liminf}
Fix an aspect ratio $\delta\in (0,1/10)$ and a sequence $n_\e>1$, satisfying $n_\e\to\infty$ and $\e n_\e \to0$. 
There exists a non-negative sequence $\sigma_{\e,\delta}$ (depending on $n_\e$) satisfying 
\[
\lim_{\e\to 0} \sigma_{\e,\delta} = 0,
\]
such that for every sequence $\v_\e\in\R^2$ satisfying $|\v_\e| \le n_\e$ and 
 every sequence $R_\e>0$ satisfying $\delta R_\e \ge n_\e \e |\v_\e|$, 
\[
\hI_{\e,\delta}^{R_\e}(\v_\e) \ge \Ilin_\delta(\v_\e) - |\v_\e|^2\, \sigma_{\e,\delta}.
\]
In particular, for every $\v\in\R^2$,
\[
\liminf_{\e\to0} \hI_{\e,\delta}^{R_\e}(\v) \ge \Ilin_\delta(\v).
\]
\end{proposition}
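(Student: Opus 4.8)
The plan is to turn every low-energy configuration on $(\hM^{R_\e}_{\e\v_\e}\setminus\hM^{\delta R_\e}_{\e\v_\e},\hg_{\e\v_\e},\hQ_{\e\v_\e})$ into an admissible strain for the quadratic cell problem. By \eqref{eq:hI_e_delta} and $\Ilin_\delta(\v)\gtrsim|\v|^2$ (item~(a) of the list following \eqref{eq:Ecell}) it suffices to treat configurations $f$ with $\hE^{R_\e}_{\e,\delta}(f;\v_\e)\le C|\v_\e|^2$ for a fixed $C$, for which I will produce $\beta\in X^1_\delta(\v_\e)$ with $\Elin_\delta(\beta;\v_\e)\le\hE^{R_\e}_{\e,\delta}(f;\v_\e)+|\v_\e|^2\sigma_{\e,\delta}$, $\sigma_{\e,\delta}\to0$, and then take the infimum over $f$. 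Using $\hM_{\e\v_\e}=\hM_{s\widehat\v_\e}$ with $s:=\e|\v_\e|$, the $2$-homogeneity of $\Ilin_\delta$ and the matching rescalings of $\hE$ and $\hI$, one reduces to unit Burgers vectors $w:=\widehat\v_\e$; then the small parameter is $s\le\e n_\e\to0$, the hypothesis reads $\delta R_\e\ge n_\e s$, and hence $t:=s/R_\e\le\delta/n_\e\to0$ as well.

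\emph{Rigidity and change of variables.} Since $\delta R_\e\in[s,2R_\e/3)$ and $R_\e>10s$, \thmref{thm:rigidity_single_disloc} gives $U\in\SO(2)$ with $\|df-U\hQ_{sw}\|_{L^2(\hg_{sw})}^2\lesssim\int\dist^2(df\circ\hQ_{sw}^{-1},\SO(2))\,\dVol_{\hg_{sw}}\lesssim s^2\log(1/\delta)$ by \eqref{eq:bound_calW}; by frame-indifference take $U=I$. Following \secref{sec:admissible_strain}, push forward by $\hZ_{sw}$ and set $\hat\gamma:=(df-\hQ_{sw})\circ d\hZ_{sw}^{-1}$ on $B_{R_\e}\setminus B_{\delta R_\e}=\hZ_{sw}(\hM^{R_\e}_{sw}\setminus\hM^{\delta R_\e}_{sw})$; it is curl-free, $\oint_C\hat\gamma=-sw$, so $\hat\gamma\in X^{R_\e}_{\delta R_\e}(sw)$, and $\|\hat\gamma\|_{L^2}^2\lesssim s^2\log(1/\delta)$. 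On $r\in(\delta R_\e,R_\e)$ one has $\r\simeq r\ge n_\e s$ (\lemref{lem:frakr=r}), so by \propref{prop:conv_manifolds} the discrepancies $|d\hZ_{sw}^{-1}-\hQ_{sw}^{-1}|$ and $|(\hZ_{sw}^{-1})^\#\dVol_{\hg_{sw}}/\VolumeE-1|$ are $O(1/n_\e)$ \emph{uniformly}; this is exactly where the separation hypothesis $\delta R_\e\ge n_\e\e|\v_\e|$ enters. Combining these with $\calW\simeq\dist^2(\cdot,\SO(2))$, with $\calW(I)=0$ and $D_I\calW=0$ (as $\SO(2)$ minimizes $\calW$), and with the regularity of $\calW$ near $\SO(2)$, then rescaling $B_{R_\e}\setminus B_{\delta R_\e}$ to $B_1\setminus B_\delta$ by $x=R_\e y$ (permitted by the scaling invariance of the cell problem, and turning the common prefactor of $\calW$ into the genuinely small number $t$), one obtains $\beta\in X^1_\delta(w)$ with $\|\beta\|_{L^2(B_1\setminus B_\delta)}^2\lesssim\log(1/\delta)$, $\Elin_\delta(\beta;w)=\tfrac{1}{\log(1/\delta)}\int_{B_1\setminus B_\delta}\Quad(\beta)$, and
\[
\hE^{R_\e}_{s,\delta}(f;w)\ \ge\ \frac{1-C/n_\e}{t^2\log(1/\delta)}\int_{B_1\setminus B_\delta}\calW(I+t\beta)\,\VolumeE\ -\ C\Big(\tfrac{1}{n_\e}+\omega\big(\tfrac{1}{n_\e}\big)\Big),
\]
where $\omega$ is a modulus of continuity.

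\emph{The core linearization inequality (the main obstacle).} It remains to show
\[
\tfrac{1}{t^2}\int_{B_1\setminus B_\delta}\calW(I+t\beta)\,\VolumeE\ \ge\ \int_{B_1\setminus B_\delta}\Quad(\beta^{\mathrm{eq}})\,\VolumeE\ -\ o(1)
\]
for a suitable admissible $\beta^{\mathrm{eq}}\in X^1_\delta(w)$. Since $\calW(I+H)=\Quad(H)+o(|H|^2)$, this would be routine if $\beta$ were bounded in $L^\infty$; the difficulty is that $\beta$ is only bounded — not equi-integrable — in $L^2$. I would overcome this by not using $\beta$ itself but its equi-integrable part: because $\beta$ is a gradient plus the fixed closed form $-\tfrac{1}{2\pi}d\vp\otimes w$, the decomposition lemma of Fonseca--M\"uller--Pedregal (applied to the gradient part) gives $\beta=\beta^{\mathrm{eq}}+\rho$ with $\beta^{\mathrm{eq}}\in X^1_\delta(w)$, $\{|\beta^{\mathrm{eq}}|^2\}$ equi-integrable, and $|\{\rho\ne0\}|\to0$. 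Discarding $\{\rho\ne0\}\cup\{|\beta^{\mathrm{eq}}|>t^{-1/2}\}$ by $\calW\ge0$, Taylor-expanding $\calW(I+t\beta^{\mathrm{eq}})=t^2\Quad(\beta^{\mathrm{eq}})+o(t^2|\beta^{\mathrm{eq}}|^2)$ on the complement (valid since there $t|\beta^{\mathrm{eq}}|\le t^{1/2}\to0$), and using equi-integrability of $\{|\beta^{\mathrm{eq}}|^2\}$ on the two discarded sets (both of vanishing measure) gives the displayed inequality. Together with the previous display, $\|\beta^{\mathrm{eq}}\|_{L^2}^2\lesssim\log(1/\delta)$ and $\int\Quad(\beta^{\mathrm{eq}})\lesssim\log(1/\delta)$, this yields $\Ilin_\delta(w)\le\Elin_\delta(\beta^{\mathrm{eq}};w)\le\hE^{R_\e}_{s,\delta}(f;w)+o(1)$. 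The geometric steps are routine once Sections~\ref{sec:single_dislocation}--\ref{sec:energy_single} are available; the real work is this passage from $\calW$ to its quadratic approximation $\Quad$ for non-equi-integrable strains.

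\emph{Uniformity.} The previous paragraph already proves the ``in particular'' statement directly (fix $\v$; eventually $|\v|\le n_\e$, and $\e\to0$ forces $s,t\to0$, so all errors vanish). To extract a single sequence $\sigma_{\e,\delta}$ valid for every $\v_\e$ with $|\v_\e|\le n_\e$ and every $R_\e$ with $\delta R_\e\ge n_\e\e|\v_\e|$, I would argue by contradiction: if $\sup\{(\Ilin_\delta(w)-\hI^{R}_{s,\delta}(w))^+:|w|=1,\ s\le\e n_\e,\ R\ge n_\e s/\delta\}$ did not tend to $0$, choose $\e_k\to0$, unit vectors $w_k$, parameters $s_k,R_k$ and near-minimizers $f_k$ exceeding some fixed $\sigma_0>0$; the associated strains $\beta_k$ form an $L^2(B_1\setminus B_\delta)$-bounded sequence, so, after extracting a subsequence, the Fonseca--M\"uller--Pedregal decomposition and the estimate above apply to the genuine sequence $\beta_k$, all explicit errors tend to $0$ and $\int_{\{\rho_k\ne0\}}|\beta^{\mathrm{eq}}_k|^2\to0$ by equi-integrability, and one reaches $\Ilin_\delta(w_k)\le\hE^{R_k}_{s_k,\delta}(f_k;w_k)+o_k(1)<\Ilin_\delta(w_k)-\tfrac{\sigma_0}{2}+o_k(1)$, a contradiction.
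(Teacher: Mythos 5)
Your proposal is correct, and its overall architecture --- rigidity via \thmref{thm:rigidity_single_disloc}, push-forward to a Euclidean annulus through $\hZ_{\e\v_\e}$, truncation of the rescaled strain at level $t^{-1/2}$, quadratic expansion of $\calW$ on the truncated set, and a compactness/contradiction argument to extract a single $\sigma_{\e,\delta}$ --- is the same as the paper's. The genuinely different ingredient is how you pass from the truncated nonlinear energy to the quadratic cell problem. The paper never builds an admissible competitor: it keeps the (non-admissible) truncated strains, observes that they converge weakly in $L^2$ to the same limit $\eta_0$ as the untruncated ones (product of a weakly convergent sequence with a sequence converging to $1$ boundedly in measure), verifies that curl-freeness and the circulation condition survive the weak limit so that $\eta_0\in X^1_\delta(\v)$, and concludes by weak lower semicontinuity of $\int\Quad$. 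You instead restore admissibility \emph{before} truncating, via the Fonseca--M\"uller--Pedregal decomposition applied to the exact part of $\beta$; since the non-exact part $-\frac{1}{2\pi}d\vp\otimes w$ is smooth and the modification of the gradient part preserves closedness and circulation, $\beta^{\mathrm{eq}}$ is a genuine element of $X^1_\delta(w)$, equi-integrability renders the two discarded sets harmless, and you obtain $\Ilin_\delta(w)\le\Elin_\delta(\beta^{\mathrm{eq}};w)\le \hE^{R_\e}_{s,\delta}(f;w)+o(1)$ directly. What each route buys: yours avoids the paper's somewhat delicate identification of the weak limit's admissibility (the comparison of $\teta_\e\circ\hQ_{\e\v_\e}\circ d\hZ_{\e\v_\e}^{-1}\circ dm_\e$ with the rescaled strain) and needs neither a convergent subsequence of the directions $\v_\e/|\v_\e|$ nor the continuity of $\Ilin_\delta$; the paper's avoids importing the decomposition lemma and uses only soft weak-compactness facts. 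Since the decomposition lemma is itself a subsequence statement, neither route gives an explicit rate for $\sigma_{\e,\delta}$, and both must close with the same contradiction step. Two points worth making explicit in a full write-up: (i) the discrepancy $\hat\gamma\cdot(d\hZ_{sw}\circ\hQ_{sw}^{-1}-I)$ separating $df\circ\hQ_{sw}^{-1}$ from $I+\hat\gamma$ must be absorbed on the good set using the $C^2$ regularity of $\calW$ near the identity together with the bound $|d\hZ_{sw}\circ\hQ_{sw}^{-1}-I|\lesssim 1/n_\e$ on $\{r\ge\delta R_\e\}$, at a cost of order $\|\beta^{\mathrm{eq}}\|_{L^2}^2/(n_\e\log(1/\delta))=O(1/n_\e)$, which is exactly where the hypothesis $\delta R_\e\ge n_\e\e|\v_\e|$ is consumed; (ii) the degenerate case $\v_\e=0$, excluded by your normalization to unit Burgers vectors, is trivial since then $\Ilin_\delta(0)=0\le\hI_{\e,\delta}^{R_\e}(0)$.
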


\begin{proof}
First, note that the assumptions on $R_\e$ guarantee that the inner-radius $\delta R_\e$ is indeed greater than $\e|\v_\e|$, hence $\hE_{\e,\delta}^{R_\e}(\cdot;\v_\e)$ is well-defined.

The proof, whose strategy is similar to step 2 in the proof of \cite[Proposition 3.11]{SZ12}, is by contradiction: suppose that there exist sequences $|\v_\e| \le n_\e$ and  $\delta R_\e \ge n_\e \e \v_\e$, such that
\[
\liminf_{\e\to0} \frac{\hI_{\e,\delta}^{R_\e}(\v_\e) -  \Ilin_\delta(\v_\e)}{|\v_\e|^2} < 0.
\]
I.e., the exists a constant $c_\delta > 0$ and a (not relabeled) subsequence $\e\to0$, such that
\[
\frac{\hI_{\e,\delta}^{R_\e}(\v_\e) }{|\v_\e|^2} < \frac{\Ilin_\delta(\v_\e)}{|\v_\e|^2} - c_\delta.
\]
We may take a further subsequence such that $\v_\e/|\v_\e|\to\v$ in $\R^2$. Since $\Ilin_\delta$ is a continuous quadratic form, it follows that 
\beq
\label{eq:Iquad_contradiction}
\LiminfEps \frac{\hI_{\e,\delta}^{R_\e}(\v_\e) }{|\v_\e|^2} < \Ilin_\delta(\v) - c_\delta.
\eeq
We will show that
\[
\LiminfEps \frac{\hI_{\e,\delta}^{R_\e}(\v_\e) }{|\v_\e|^2} \ge \Ilin_\delta(\v),
\]
whence the contradiction.

Let 
$f_\e\in H^1(\hM_{\e\v_\e};\R^2)$
be a sequence of approximate minimizers for $\hE_{\e,\delta}^{R_\e}(\cdot;\v_\e)$, satisfying
\[
\LimEps \frac{\hE_{\e,\delta}^{R_\e}(f_\e;\v_\e) - \hI_{\e,\delta}^{R_\e}(\v_\e)}{|\v_\e|^2} = 0.
\]
Since from \eqref{eq:hI_e_delta} we have that $\hI_{\e,\delta}^{R_\e}(\v_\e)\simeq |\v_\e|^2$, it follows that $\hE_{\e,\delta}^{R_\e}(f_\e;\v_\e)/|\v_\e|^2$ is bounded as $\e\to0$.

From the lower bound in \eqref{eq:bound_calW} and \thmref{thm:rigidity_single_disloc} (using the bounds $\delta<1/10$ and $R_\e \ge 10 n_\e \e |\v_\e| > 10\e |\v_\e|$), there exist matrices $U_\e\in \SO(2)$, such that
\beq
\frac{1}{\e^2\log(1/\delta)} 
\int_{\hM^{R_\e}_{\e\v_\e}\setminus \hM^{\delta R_\e}_{\e\v_\e}} |df_\e - U_\e\hP_{\e\v_\e}|^2 \hVolumeEVE \lesssim
\hE_{\e,\delta}^{R_\e}(f_\e;\v_\e) \lesssim |\v_\e|^2,
\label{eq:FJM_cell}
\eeq
where the constants in both inequalities are independent of $\e$, $\delta$, $\v_\e$ and $R_\e$.
Since we can replace $f_\e$ with $U_\e f_\e$ without changing the energy, we can assume without loss of generality that $U_\e = \id$.

Denote
\beq
\eta_\e = \frac{df_\e \circ \hP^{-1}_{\e\v_\e} - \id}{\e |\v_\e|} \in L^2(\hM_{\e\v_\e};\R^2\otimes\R^2),
\label{eq:eta_e}
\eeq
and rewrite
\[
\hE_{\e,\delta}^{R_\e}(f_\e;\v_\e)  = 
 \frac{1}{\e^2\,\log(1/\delta)} \int_{\hM^{R_\e}_{\e\v_\e}\setminus \hM^{\delta R_\e}_{\e\v_\e}} \calW(I + \e |\v_\e| \eta_\e)\,\hVolumeEVE.
\]
Note that \eqref{eq:FJM_cell} implies that $\| \eta_\e\|_{L^2(\hM^{R_\e}_{\e\v}\setminus \hM^{\delta R_\e}_{\e\v})} \lesssim \log(1/\delta)$.

We proceed to change variables to a Euclidean domain,  using the maps $\hZ_{\e\v_\e}: \hM^{R_\e}_{\e\v_\e}\setminus \hM^{\delta R_\e}_{\e\v_\e} \to B_{R_\e} \setminus B_{\delta R_\e}$.
It follows from \eqref{eq:asymp_par} that
\beq
\label{eq:volume_change}
\Abs{\frac{\hVolumeEVE}{(\hZ_{\e\v_\e}^{-1})^\#\VolumeE}  -1} \lesssim \frac{\e |\v_\e|}{\delta R_\e} \le \frac{1}{n_\e},
\eeq
hence, since $n_\e\to\infty$,  there exists a  non-negative sequence $\alpha_{\e,\delta}$, satisfying
\[
\LimEps \alpha_{\e,\delta} = 0,
\]
such that
\[
\hE_{\e,\delta}^{R_\e}(f_\e;\v_\e) \ge  \frac{1}{\e^2\,\log(1/\delta)} \int_{B_{R_\e} \setminus B_{\delta R_\e}} \calW(I + \e |\v_\e| \teta_\e)\,\VolumeE
- \alpha_{\e,\delta} |\v_\e|^2,
\]
where $\teta: B_{R_\e} \setminus B_{\delta R_\e} \to \R^2\otimes \R^2$ is given by
$\teta_\e = \eta_\e\circ \hZ_{\e\v_\e}^{-1}$.
Since $\hZ_\e$ are uniformly bilipschitz, it follows that 
\[
\|\teta_\e\|_{L^2(B_{R_\e} \setminus B_{\delta R_\e})} \lesssim \log(1/\delta).
\]
We linearize the energy by introducing a cutoff function $\chi_\e:L^2(B_{R_\e} \setminus B_{\delta R_\e}) \to \R$,
\[
\chi_\e = \ind_{|\teta_\e|  \le (\sqrt{n_\e}\e |\v_\e|)^{-1}}.
\]
Note that by Chebyshev's inequality, 
\beq
\label{eq:supp_chi_e}
\frac{|\{|\teta_\e|  \ge (\sqrt{n_\e}\e |\v_\e|)^{-1}\}|}{|B_{R_\e} \setminus B_{\delta R_\e}|} 
\le \frac{n_\e \e^2 |\v_\e|^2\|\teta_\e\|_{L^2}^2}{R_\e^2(1-\delta^2) }
\le \frac{n_\e \e^2 |\v_\e|^2\|\teta_\e\|_{L^2}^2}{n_\e^2 \e^2 |\v_\e|^2(\delta^{-2}-1)} \lesssim \frac{1}{n_\e} \to 0.
\eeq
Then,
\[
\begin{split}
\frac{\hE_{\e,\delta}^{R_\e}(f_\e;\v_\e)}{|\v_\e|^2} &\ge  
\frac{1}{\e^2|\v_\e|^2\,\log(1/\delta)} \int_{B_{R_\e} \setminus B_{\delta R_\e}} \chi_\e \calW(I + \e |\v_\e| \teta_\e)\,\VolumeE - \alpha_{\e,\delta} \\
&\ge \frac{1}{\log(1/\delta)}  
\int_{B_{R_\e} \setminus B_{\delta R_\e}} \chi_\e 
\bbW(\teta_\e)\,\VolumeE \\
&\qquad -\frac{1}{\log(1/\delta)} \int_{B_{R_\e} \setminus B_{\delta R_\e}} \chi_\e |\teta_\e|^2 
\frac{\omega(\e |\v_\e| |\teta_\e|)}{\e^2 |\v_\e|^2 |\teta_\e|^2}\,\VolumeE - \alpha_{\e,\delta},
\end{split}
\]
where $\omega:\R_+\to\R$ satisfies $\omega(x)/x^2\to0$ as $x\to0$.
Since the condition that $n_\e\to \infty$ implies that $\e |\v_\e| |\teta_\e|\to0$ uniformly in the support of $\chi_\e$, and since the sequence $\teta_\e$ is bounded in $L^2$, we may modify the infinitesimal sequence $\alpha_{\e,\delta}$ to obtain that 
\[
\frac{\hE_{\e,\delta}^{R_\e}(f_\e;\v_\e)}{|\v_\e|^2} \ge
\frac{1}{\log(1/\delta)}  
\int_{B_{R_\e} \setminus B_{\delta R_\e}} 
\bbW(\chi_\e \teta_\e)\,\VolumeE  - \alpha_{\e,\delta},
\]
where we also used the fact that $\bbW$ quadratic and that $\chi_\e$ is an indicator function. Letting $\e\to0$,
\[
\LiminfEps \frac{\hI_{\e,\delta}^{R_\e}(\v_\e)}{|\v_\e|^2} \ge
\LiminfEps \frac{1}{\log(1/\delta)}  
\int_{B_{R_\e} \setminus B_{\delta R_\e}} 
\bbW(\chi_\e \teta_\e)\,\VolumeE.
\]

At this stage we focus on the right-hand side. 
We rescale the domain by a factor $R_\e$ in order to obtain an $\e$-independent domain, so that we can use properties of weak convergence and convergence in measure.
Denote by $m_\e: B_1 \to B_{R_\e}$ the rescaling function. 
Then,
\[
\frac{1}{\log(1/\delta)}  
\int_{B_{R_\e} \setminus B_{\delta R_\e}} 
\bbW(\chi_\e \teta_\e)\,\VolumeE = 
\frac{1}{\log(1/\delta)}  
\int_{B_1 \setminus B_\delta} 
\bbW(\tilde{\chi}_\e \tilde{\teta}_\e)\,\VolumeE,
\]
where $\tilde{\chi}_\e : B_1 \setminus B_\delta\to\R$ is given by
\[
\tilde{\chi}_\e = \chi_\e \circ m_\e,
\]
and $\tilde{\teta}_\e : B_1 \setminus B_\delta\to\R^2\otimes\R^2$ is given by
\[
\tilde{\teta}_\e = \teta_\e\circ dm_\e = R_\e\, \eta_\e\circ \hZ_{\e\v_\e}^{-1}\circ m_\e.
\]
From \eqref{eq:supp_chi_e} it follows that the cutoff function $\tilde{\chi}_\e$ converges to 1 boundedly in measure, whereas $\tilde{\teta}_\e$ is bounded in $L^2(B_1\setminus B_\delta)$, hence has a weakly-converging subsequence, converging to, say, $\eta_0$. 
Since the product of an $L^2$-weakly-converging sequence and a sequence converging boundedly in measure converges weakly in $L^2$ to the product of the limits, it follows that
\[
\tilde{\chi}_\e \tilde{\teta}_\e \weakly \eta_0 
\qquad \text{in $L^2(B_1\setminus B_{\delta};\R^2\otimes R^2)$}.
\]
It further follows from the weak lower-semicontinuity of quadratic functionals that
\[
\begin{split}
\LiminfEps  \frac{ \hI_{\e,\delta}^{R_\e}(\v_\e)}{|\v_\e|^2} 
& \ge \frac{1}{\log(1/\delta)}\int_{B_1\setminus B_\delta}  
\bbW\brk{\eta_0} \VolumeE.
\end{split}
\]

We now show that 
\[
d\eta_0 = 0
\Textand
\oint_{C} \eta_0 = - \v,
\]
for any positively-oriented loop $C \subset B_1\setminus B_\delta$ homotopic to $\pl B_\delta$.
That is, $\eta_0\in X_\delta^1(\v)$ as defined in \eqref{eq:Xdeltav}.
This will complete the proof as it will follow that
\[
\begin{split}
\LiminfEps  \frac{ \hI_{\e,\delta}^{R_\e}(\v_\e)}{|\v_\e|^2} 
& \ge \inf_{\beta\in X_\delta^1(\v)} \frac{1}{\log(1/\delta)}\int_{B_1\setminus B_\delta}  
\bbW\brk{\beta} \VolumeE = \Ilin_\delta(\v),
\end{split}
\]
which contradicts \eqref{eq:Iquad_contradiction}.

To show that $\eta_0\in X_\delta^1(\v)$, note first that from the definition \eqref{eq:eta_e} of $\eta_\e$, the $\R^2$-valued one forms $\eta_\e \circ \hP_{\e\v_\e}$ satisfy
\[
d(\eta_\e \circ \hP_{\e\v_\e}) = 0
\Textand
\oint_{C_\e} (\eta_\e \circ \hP_{\e\v_\e}) =  -\frac{1}{\e |\v_\e|} \oint_{C_\e} \hP_{\e\v_\e} = - \frac{\v_\e}{|\v_\e|},
\]
for any positively-oriented loop $C_\e$ homotopic to the inner boundary of $\hM^{R_\e}_{\e\v_\e}\setminus \hM^{\delta R_\e}_{\e\v_\e}$.
Changing variables, the same is true for the $\R^2$-valued one forms $\tilde{\eta}_\e \circ \hP_{\e\v_\e} \circ d\hZ_{\e\v_\e}^{-1}$ on $B_{R_\e} \setminus B_{\delta R_\e}$.
By the same argument, the $\R^2$-valued one forms $\teta_\e \circ \hP_{\e\v_\e} \circ d\hZ_{\e\v_\e}^{-1} \circ dm_\e$ on $B_{1} \setminus B_{\delta}$ satisfy
\[
d(\teta_\e \circ \hP_{\e\v_\e} \circ d\hZ_{\e\v_\e}^{-1} \circ dm_\e) = 0
\Textand
\oint_C \teta_\e \circ \hP_{\e\v_\e} \circ d\hZ_{\e\v_\e}^{-1} \circ dm_\e = - \frac{\v_\e}{|\v_\e|},
\]
for any positively-oriented loop $C$ homotopic to the inner boundary of $B_1\setminus B_\delta$.
From Proposition~\ref{prop:conv_manifolds}, 
\[
\begin{split}
&\|\tilde{\teta}_\e - \teta_\e \circ \hP_{\e\v_\e} \circ d\hZ_{\e\v_\e}^{-1} \circ dm_\e\|_{L^2(B_1\setminus B_\delta)} \\
&\qquad\le \|\tilde{\eta}_\e\|_{L^2(B_{R_\e}\setminus B_{\delta R_\e})}\,\|I - \hP_{\e\v_\e} \circ d\hZ_{\e\v_\e}^{-1}\|_{L^\infty(B_{R_\e}\setminus B_{\delta R_\e})} \|dm_\e\|_{L^\infty(B_1\setminus B_\delta)} \\
&\qquad \lesssim \frac{\e\v_\e}{\delta R_\e} R_\e\le \frac{\e n_\e}{\delta},
\end{split}
\]
which is negligible as $\e\to 0$.
Thus $\teta_\e \circ \hP_{\e\v_\e} \circ d\hZ_{\e\v_\e}^{-1} \circ dm_\e \weakly \eta_0$ in $L^2$.

Since being curl-free is preserved under weak-$L^2$-convergence, and since the circulation of a curl-free vector field on an annulus is weak-$L^2$-continuous, it follows that $\eta_0\in X_\delta^1(\v)$.
\end{proof}

The following proposition will be a main component in obtaining a lower bound in the $\Gamma$-convergence analysis in \secref{sec:GammaConv}. At this point, the roles of $n_\e$ and $R_\e$ may be obscure, however they will clarify when we consider bodies with multiple dislocations.

\begin{proposition}\label{prop:lower_bnd_single2}
Fix $\delta\in (0,1/10)$, fix a sequence $n_\e>1$ satisfying $n_\e\to\infty$ and $\log n_\e \ll \log(1/\e)$, and fix $s\in(1/2,1)$.
Then for any sequence $\v_\e \in \R^2$ satisfying $|\v_\e|\le n_\e$,
and every sequence $R_\e \ge \e n_\e^{(2-s)/(1-s)}$, 
\[
\hI_{\e,3\e|\v_\e|/R_\e}^{R_\e}(\v_\e) \ge s\, \Ilin_0(\v_\e)\brk{1 - C \brk{\frac{1}{\log(1/\delta)} + \tilde{\sigma}_{\e,\delta}}},
\]
for some universal constant $C>0$, where $\tilde{\sigma}_{\e,\delta}$ is non-negative sequence (depending only on $n_\e$) satisfying $\lim_{\e\to 0} \tilde{\sigma}_{\e,\delta} = 0$.
\end{proposition}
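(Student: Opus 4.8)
The plan is to bound the energy over the (very thin) annulus $\hM^{R_\e}_{\e\v_\e}\setminus\hM^{3\e|\v_\e|}_{\e\v_\e}$ from below by a sum of energies over dyadic sub-annuli of the \emph{fixed} aspect ratio $\delta$, to apply \propref{prop:cell_liminf} on each of them, and then to count how many such sub-annuli the constraint $R_\e\ge\e n_\e^{(2-s)/(1-s)}$ makes available. The exponent $(2-s)/(1-s)$ is engineered precisely so that the number of usable shells, divided by the normalizing factor $\log(R_\e/(3\e|\v_\e|))$ that defines $\hE^{R_\e}_{\e,3\e|\v_\e|/R_\e}$, is a fraction $s+o(1)$; this is why the constant in front of $\Ilin_0$ comes out exactly $s$.

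First I would fix an arbitrary $f\in\HOne(\hM_{\e\v_\e};\R^2)$ and, using $\calW\ge0$, discard the region $\{3\e|\v_\e|\le r\le n_\e\e|\v_\e|\}$ (legitimate once $\e$ is small enough that $n_\e\ge3$ and $n_\e\e|\v_\e|<R_\e$, both guaranteed here), so that $\hE^{R_\e}_{\e,3\e|\v_\e|/R_\e}(f;\v_\e)$ is bounded below by $(\e^2\log(R_\e/(3\e|\v_\e|)))^{-1}\int_{\hM^{R_\e}_{\e\v_\e}\setminus\hM^{n_\e\e|\v_\e|}_{\e\v_\e}}\calW(df\circ\hQ_{\e\v_\e}^{-1})\,\dVol_{\hg_{\e\v_\e}}$. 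Then, with $a_j=n_\e\e|\v_\e|\,\delta^{-j}$ and $N_\e=\lfloor\log(R_\e/(n_\e\e|\v_\e|))/\log(1/\delta)\rfloor$, the shells $\hM^{a_j}_{\e\v_\e}\setminus\hM^{a_{j-1}}_{\e\v_\e}$ ($j=1,\dots,N_\e$) are pairwise disjoint, each of aspect ratio $\delta$ (since $\delta a_j=a_{j-1}$), and all contained in $\hM^{R_\e}_{\e\v_\e}\setminus\hM^{n_\e\e|\v_\e|}_{\e\v_\e}$ (as $a_{N_\e}\le R_\e$). Since $f$ is itself admissible in the variational problem defining $\hI^{a_j}_{\e,\delta}(\v_\e)$, and since the hypotheses of \propref{prop:cell_liminf} hold ($\delta\in(0,1/10)$; $\log n_\e\ll\log(1/\e)$ forces $\e n_\e=\e^{1-o(1)}\to0$; $|\v_\e|\le n_\e$; and $\delta a_j=n_\e\e|\v_\e|\,\delta^{-(j-1)}\ge n_\e\e|\v_\e|$ for all $j\ge1$), the $j$-th shell contributes at least $\e^2\log(1/\delta)\,\hI^{a_j}_{\e,\delta}(\v_\e)\ge\e^2\log(1/\delta)(\Ilin_\delta(\v_\e)-|\v_\e|^2\sigma_{\e,\delta})$. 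Summing over the $N_\e$ shells gives
\[
\hI^{R_\e}_{\e,3\e|\v_\e|/R_\e}(\v_\e)\ \ge\ \frac{N_\e\log(1/\delta)}{\log(R_\e/(3\e|\v_\e|))}\,\bigl(\Ilin_\delta(\v_\e)-|\v_\e|^2\sigma_{\e,\delta}\bigr).
\]

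Next comes the counting of shells, which is where the exponent in the hypothesis on $R_\e$ is used. Writing $M_\e=\log(R_\e/(n_\e\e|\v_\e|))$, one has $N_\e\log(1/\delta)\ge M_\e-\log(1/\delta)$ and $\log(R_\e/(3\e|\v_\e|))=M_\e+\log(n_\e/3)$, while $R_\e\ge\e n_\e^{(2-s)/(1-s)}$ and $|\v_\e|\le n_\e$ give $R_\e/(n_\e\e|\v_\e|)\ge n_\e^{(2-s)/(1-s)-2}=n_\e^{s/(1-s)}$, i.e.\ $M_\e\ge\tfrac{s}{1-s}\log n_\e$. Since $x\mapsto(x-\log(1/\delta))/(x+\log(n_\e/3))$ is increasing and at $x=\tfrac{s}{1-s}\log n_\e$ equals $\tfrac{s}{1-s}/(\tfrac{s}{1-s}+1)+o(1)=s+o(1)$, the prefactor above is $\ge s-\tau_{\e,\delta}$ for a sequence $\tau_{\e,\delta}\ge0$ depending only on $n_\e,\delta,s$, with $\tau_{\e,\delta}\to0$ for each fixed $\delta$, and satisfying $\tau_{\e,\delta}\lesssim(\log(1/\delta)+1)/\log n_\e$ (constant depending only on $s$).

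Finally I would pass from $\Ilin_\delta$ to $\Ilin_0$: by \eqref{eq:Idelta_I0} and the positive-definiteness of $\Ilin_0$ (so $|\v_\e|^2\lesssim\Ilin_0(\v_\e)$), $\Ilin_\delta(\v_\e)-|\v_\e|^2\sigma_{\e,\delta}\ge\Ilin_0(\v_\e)\bigl(1-C(\tfrac{1}{\log(1/\delta)}+\sigma_{\e,\delta})\bigr)$, and combined with $s-\tau_{\e,\delta}\ge s(1-\tau_{\e,\delta}/s)$ this yields the asserted bound, after absorbing $\tau_{\e,\delta}$ into the bracket or, equivalently, enlarging the null sequence $\sigma_{\e,\delta}$ of \propref{prop:cell_liminf} to $\sigma_{\e,\delta}+\tau_{\e,\delta}$. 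I expect the main obstacle to be the bookkeeping in the two middle steps: placing the inner edge of the usable region exactly at $r\sim n_\e\e|\v_\e|$ so that \propref{prop:cell_liminf} applies to \emph{every} dyadic shell rather than to all but the innermost $\sim\log n_\e/\log(1/\delta)$ of them, and then recognizing that the $\log n_\e$ thereby lost in the denominator is precisely compensated, up to a vanishing error, by the reserve $M_\e\ge\tfrac{s}{1-s}\log n_\e$ built into the hypothesis $R_\e\ge\e n_\e^{(2-s)/(1-s)}$. It is this balance that pins the prefactor at $s$ and no larger.
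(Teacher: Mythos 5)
Your proof is correct and is essentially the paper's argument: both decompose the annulus into shells of fixed aspect ratio $\delta$, apply \propref{prop:cell_liminf} to each, and extract the factor $s$ from the ratio of the covered logarithmic thickness to the total one, with the hypothesis $R_\e\ge\e n_\e^{(2-s)/(1-s)}$ serving precisely to reconcile the inner-radius constraint $\delta R\ge n_\e\e|\v_\e|$ of \propref{prop:cell_liminf} with that ratio being at least $s$. The only difference is bookkeeping: the paper anchors the shells at the outer radius and caps their number at $\kmax=\lfloor s\log(R_\e/3\e|\v_\e|)/\log(1/\delta)\rfloor$, which produces the factor $s$ exactly (via $\hE\ge s\min_k\hI^{R_\e\delta^{k-1}}_{\e,\delta}$), whereas you anchor them at $r=n_\e\e|\v_\e|$ and obtain $s$ only up to an additional vanishing error $\tau_{\e,\delta}\lesssim\log(1/\delta)/\log n_\e$, which you correctly absorb by enlarging $\sigma_{\e,\delta}$ --- harmless, since all that is used downstream (in \propref{prop:liminf2}) is that $\sigma_{\e,\delta}\to0$ as $\e\to0$ for each fixed $\delta$.
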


\begin{proof}
This proof also follows a strategy similar to step 2 in the proof of \cite[Proposition 3.11]{SZ12}.
Let $f_\e \in H^1(\hM_{\e \v_\e};\R^2)$.
Define the annuli of aspect ratio $\delta$,
\[
A_{\e}^k = \{(r,\vp) ~:~ R_\e \delta^k < r < R_\e \delta^{k-1}\},
\qquad k=1,\dots,\kmax,
\]
where
\[
\kmax = \left\lfloor s \frac{\log(R_\e/3\e|\v_\e|)}{\log(1/\delta)} \right\rfloor.
\]
That is, the union of these annuli covers the submanifold
\[
\{(r,\vp) ~:~ R_\e^{1-s} (3\e|\v_\e|)^s\lesssim r \le R_\e\} \subset \hM_{\e\v_\e}^{R_\e}.
\]
Note that by the bounds on $\v_\e$, $R_\e$ and $s$,
\beq
\kmax \ge \frac{s}{1-s}\frac{\log(n_\e/2)}{\log(1/\delta)} -1 \ge \frac{\log(n_\e/2)}{\log(1/\delta)} -1.
\label{eq:we_should_refer_to_it}
\eeq
Now,
\[
\begin{split}
\hE_{\e,3\e|\v_\e|/R_\e}^{R_\e}(f_\e;\v_\e) 
&> \frac{1}{\e^2\log(R_\e/3\e|\v_\e|)} \sum_{k=1}^{\kmax} \int_{A_\e^k} \calW(df_\e\circ\hP_{\e\v}^{-1})\,\hVolumeEVE \\
&= s \frac{\log(1/\delta)}{s\,\log(R_\e/3\e|\v_\e|)} \sum_{k=1}^{\kmax} \frac{1}{\e^2\,\log(1/\delta)} \int_{A_\e^k} \calW(df_\e\circ\hP_{\e\v_\e}^{-1})\,\hVolumeEVE \\
&\ge s \frac{1}{\kmax+1} \sum_{k=1}^{\kmax} \frac{1}{\e^2\,\log(1/\delta)} \int_{A_\e^k} \calW(df_\e\circ\hP_{\e\v_\e}^{-1})\,\hVolumeEVE \\
&\ge s  \frac{\kmax}{\kmax+1}\min_{k=1}^{\kmax} \frac{1}{\e^2\,\log(1/\delta)} \int_{A_\e^k} \calW(df_\e\circ\hP_{\e\v_\e}^{-1})\,\hVolumeEVE \\
&\ge s \frac{\kmax}{\kmax+1}\min_{k=1}^{\kmax} \frac{1}{\e^2\,\log(1/\delta)} \inf_{f\in \HOne(A_\e^k;\R^2)} \int_{A_\e^k} \calW(df\circ\hP_{\e\v_\e}^{-1})\,\hVolumeEVE \\
&\ge s \frac{\kmax}{\kmax+1}\min_{k=1}^{\kmax} \hI^{R_\e\delta^{k-1}}_{\e,\delta}(\v_\e) \\
&\ge s \brk{1- \frac{\log(1/\delta)}{\log(n_\e/2)}}\min_{k=1}^{\kmax} \hI^{R_\e\delta^{k-1}}_{\e,\delta}(\v_\e),
\end{split} 
\]
where in the last passage we used \eqref{eq:we_should_refer_to_it}.
By our assumptions on $\v_\e$ and $R_\e$, and since by the definition of $\kmax$,
\[
\log(1/\delta^{\kmax}) \le s\log(R_\e/3\e|\v_\e|),
\]
it follows that
\[
\delta(R_\e \delta^{\kmax-1})
\ge R_\e^{1-s}\e^s |\v_\e|^s 
\ge \e n_\e^{2-s} |\v_\e|^s 
\ge n_\e \e |\v_\e|,
\]
hence Proposition~\ref{prop:cell_liminf} can be applied to each $\hI^{R_\e\delta^{k-1}}_{\e,\delta}(\v_\e)$, $k=1,\ldots, \kmax$, yielding
\[
\hE_{\e,3\e|\v_\e|/R_\e}^{R_\e}(f_\e;\v_\e) \ge s\brk{1- \frac{\log(1/\delta)}{\log(n_\e/2)}}\brk{\Ilin_\delta(\v_\e) - |\v_\e|^2\, \sigma_{\e,\delta}} \ge s\brk{\Ilin_\delta(\v_\e) - |\v_\e|^2\, \tilde{\sigma}_{\e,\delta}},
\]
where 
\[
\tilde{\sigma}_{\e,\delta} = \brk{1- \frac{\log(1/\delta)}{\log(n_\e/2)}}\sigma_{\e,\delta} + \frac{\log(1/\delta)}{\log(n_\e/2)}\max_{\v\ne 0}\frac{\Ilin_\delta(\v)}{|\v|^2}.
\]
Using \eqref{eq:Idelta_I0} and the positive definiteness of $\Ilin_0$, 
\[
\begin{split}
\hE_{\e,3\e|\v_\e|/R_\e}^{R_\e}(f_\e;\v_\e) 
&\ge s\brk{\Ilin_0(\v_\e) - |\v_\e|^2\brk{\frac{C}{\log(1/\delta)}+ \tilde{\sigma}_{\e,\delta}}} \\
&\ge s\Ilin_0(\v_\e)\brk{1 - C\brk{\frac{1}{\log(1/\delta)}+ \tilde{\sigma}_{\e,\delta}}}.
\end{split}
\]
Taking the infimum over $f_\e \in H^1(\hM_{\e \v_\e};\R^2)$ completes the proof.
\end{proof}

\subsubsection{Upper bounds}

The following proposition shows that the lower bound obtained in Proposition~\ref{prop:cell_liminf} is asymptotically tight for small $\delta$:

\begin{proposition}
\label{prop:cell_limsup}
Let $\beta_\v \in \Omega^1(\R^2\setminus \{0\}; \R^2)$ be the distributional solution of \eqref{eq:betav}. 
Define $f_\e\in C^\infty(\hM_{\e\v};\R^2)$ by
\[
df_\e = \hP_{\e\v} + \e\beta_\v \circ d\hZ_{\e\v}.
\]
Then
\[
\LimEps \hE^R_{\e,\delta}(f_\e;\v)  - \Ilin_\delta(\v) \lesssim \frac{|\v|^2}{\log(1/\delta)},
\]
from which follows that
\[
\LimsupEps \hI_{\e,\delta}^R(\v)  - \Ilin_\delta(\v) \lesssim \frac{|\v|^2}{\log(1/\delta)}.
\]
\end{proposition}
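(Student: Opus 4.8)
The plan is to verify that the prescribed $f_\e$ is an admissible competitor, to show that $\hE^R_{\e,\delta}(f_\e;\v)\to\Elin_\delta(\beta_\v;\v)$ as $\e\to0$, and then to conclude via \eqref{eq:Edelta_betav} together with the trivial bound $\hI^R_{\e,\delta}(\v)\le\hE^R_{\e,\delta}(f_\e;\v)$. First, $f_\e$ is well defined: the $\R^2$-valued $1$-form $\hQ_{\e\v}+\e\,\hZ_{\e\v}^\#\beta_\v$ is smooth on $\hM_{\e\v}$, since $\hZ_{\e\v}$ takes values in $\R^2\setminus B_{\e|\v|}$ while the sole singularity of $\beta_\v$ sits at the origin, and it is closed, since $\hQ_{\e\v}$ is closed by construction and $\curl\beta_\v=-\v\,\delta_0$ makes $\beta_\v$ closed on $\R^2\setminus\{0\}$. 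For a loop $C$ homotopic to $\dM$, using \eqref{eq:def_burgers} and the fact that $\hZ_{\e\v}(C)$ winds once around the origin,
\[
\oint_C\bigl(\hQ_{\e\v}+\e\,\hZ_{\e\v}^\#\beta_\v\bigr)=\oint_C\hQ_{\e\v}+\e\oint_{\hZ_{\e\v}(C)}\beta_\v=\e\v-\e\v=0;
\]
since $\hM_{\e\v}$ has the homotopy type of a circle, the form is exact, hence equal to $df_\e$ for a smooth $f_\e$, unique up to an (energetically irrelevant) additive constant.

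Next I would transport the energy to a Euclidean annulus. We have $df_\e\circ\hQ_{\e\v}^{-1}=I+\e\,(\hZ_{\e\v}^\#\beta_\v)\circ\hQ_{\e\v}^{-1}$, whose value at $p$ is $I+\e\,\beta_\v(\hZ_{\e\v}(p))\,\bigl(d(\hZ_{\e\v})_p\circ(\hQ_{\e\v})^{-1}_p\bigr)$. Writing $d\hZ_{\e\v}\circ\hQ_{\e\v}^{-1}=I+E_\e$, Proposition~\ref{prop:conv_manifolds} and the fact that $\hQ_{\e\v}$ is an isometry give $|E_\e|\lesssim\e|\v|/\r$. Changing variables by $x=\hZ_{\e\v}(p)\in B_R\setminus B_{\delta R}$, using $\r\simeq r=|x|$ (Lemma~\ref{lem:frakr=r}) and $\dVol_{\hg_{\e\v}}=(1+\rho_\e)\,\hZ_{\e\v}^\#\VolumeE$ with $|\rho_\e|\lesssim\e|\v|/\r$ (again Proposition~\ref{prop:conv_manifolds}), we obtain
\[
\hE^R_{\e,\delta}(f_\e;\v)=\frac{1}{\e^2\log(1/\delta)}\int_{B_R\setminus B_{\delta R}}\calW\bigl(I+\e\beta_\v(x)(I+\tilde E_\e(x))\bigr)\,(1+\tilde\rho_\e(x))\,dx,
\]
with $|\tilde E_\e(x)|,|\tilde\rho_\e(x)|\lesssim\e|\v|/|x|$. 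Since $R$ and $\delta$ are fixed, $\e|\v|/(\delta R)\to0$, and the pointwise bound $|\beta_\v(x)|\lesssim|\v|/|x|$ from \eqref{eq:beta_bound} then gives $\sup_{B_R\setminus B_{\delta R}}|A_\e|\lesssim\e|\v|/(\delta R)\to0$ for the argument $A_\e(x):=\e\beta_\v(x)(I+\tilde E_\e(x))$.

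The remaining step is a uniform Taylor expansion of $\calW$ about $I$. Near $I$, $\calW$ is finite and $C^2$, with $\calW(I)=0$ and $D\calW(I)=0$ (as $I$ minimizes $\calW$ by \eqref{eq:bound_calW}), so $\tfrac12 D^2\calW(I)[A,A]=\Quad(A)$ and $|\calW(I+A)-\Quad(A)|\le|A|^2\omega(|A|)$ for a modulus $\omega$ with $\omega(t)\to0$. Expanding the quadratic form $\Quad$ in $A_\e=\e\beta_\v+\e\beta_\v\tilde E_\e$ and inserting everything into the integral, and using $|\beta_\v(x)|\lesssim|\v|/|x|$, $\int_{B_R\setminus B_{\delta R}}|\beta_\v|^2\,dx\lesssim|\v|^2\log(1/\delta)$, $\int_{B_R\setminus B_{\delta R}}|\beta_\v|^2\,|x|^{-1}\,dx\lesssim|\v|^2/(\delta R)$, $|\tilde E_\e|,|\tilde\rho_\e|\lesssim\e|\v|/|x|$ and $\calW(I+A_\e)\lesssim|A_\e|^2$ (upper bound in \eqref{eq:bound_calW}), one checks that the terms involving $\tilde E_\e$, the volume-distortion term $\propto\tilde\rho_\e$, and the Taylor remainder are each $\le C|\v|^2\bigl(\omega(\e|\v|/(\delta R))+\e|\v|/(\delta R)\bigr)$, and hence vanish after dividing by $\e^2\log(1/\delta)$. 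The surviving term is $\frac{1}{\e^2\log(1/\delta)}\int\Quad(\e\beta_\v(x))\,dx=\frac{1}{\log(1/\delta)}\int_{B_R\setminus B_{\delta R}}\Quad(\beta_\v(x))\,dx$ by $2$-homogeneity of $\Quad$, and by self-similarity of $\beta_\v$ (again with $2$-homogeneity) this equals $\frac{1}{\log(1/\delta)}\int_{B_1\setminus B_\delta}\Quad(\beta_\v)\,dx=\Elin_\delta(\beta_\v;\v)$. Therefore $\LimEps\hE^R_{\e,\delta}(f_\e;\v)=\Elin_\delta(\beta_\v;\v)$, so \eqref{eq:Edelta_betav} gives $\LimEps\hE^R_{\e,\delta}(f_\e;\v)-\Ilin_\delta(\v)\lesssim|\v|^2/\log(1/\delta)$; and since $\hI^R_{\e,\delta}(\v)\le\hE^R_{\e,\delta}(f_\e;\v)$ for every admissible $\e$, taking $\LimsupEps$ yields the last assertion.

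I expect the one genuinely delicate point — the main, though modest, obstacle — to be the uniform control of the Taylor remainder over the entire annulus, in particular near its inner edge $|x|\sim\delta R$ where $|\beta_\v|$ is largest; this is handled by the two observations $\e|\v|/(\delta R)\to0$ (so $\e\beta_\v$, and hence $A_\e$, is uniformly small there) and $|\beta_\v(x)|\lesssim|\v|/|x|$ (which also forces all the error integrals to converge).
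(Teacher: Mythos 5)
Your proposal is correct and follows essentially the same route as the paper's proof: verify that the prescribed one-form is closed with vanishing circulation (hence exact), change variables via $\hZ_{\e\v}$ to the Euclidean annulus, linearize $\calW$ about $I$ using the uniform smallness of the argument on $B_R\setminus B_{\delta R}$ guaranteed by \eqref{eq:asymp_par} and \eqref{eq:beta_bound}, identify the limit with $\Elin_\delta(\beta_\v;\v)$ via the self-similarity of $\beta_\v$, and conclude with \eqref{eq:Edelta_betav}. Your treatment is somewhat more explicit than the paper's about the Taylor remainder and the error integrals, but the substance is identical.
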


\begin{proof}
The $\R^2$-valued 1-forms $\hP_{\e\v} + \e\beta_\v \circ d\hZ_{\e\v}$ are closed, since $\beta_\v$ is curl-free. Furthermore, for every curve $C$ homotopic to $\pl B_\delta$,
\[
\oint_{\hZ_{\e\v}^{-1}(C)} (\hP_{\e\v} + \e\beta_\v \circ d\hZ_{\e\v}) = \e\v + \e\oint_C\beta_\v = 0.
\]
It follows that $\hP_{\e\v} + \e\beta_\v \circ d\hZ_{\e\v}$ is exact, i.e., it is the differential of a mapping $f_\e$, which is smooth.

We write
\[
\begin{split}
\hE_{\e,\delta}^R(f_\e;\v) &= 
\frac{1}{\e^2\,\log(1/\delta)} \int_{\hM^R_{\e\v}\setminus \hM^{\delta R}_{\e\v}} \calW(I + \e \eta_\e)\,\hVolumeEV 
\end{split}
\]
where 
\[
\eta_\e = \beta_\v \circ d\hZ_{\e\v}\circ \hP_{\e\v}^{-1} \in C^\infty(\hM_{\e\v};\R^2\otimes \R^2). 
\]
Changing variables using $\hZ_{\e\v}:\hM^R_{\e\v}\setminus \hM^{\delta R}_{\e\v} \to B_R\setminus B_{\delta R}$, and denoting $\tilde\eta_\e = \eta_\e\circ \hZ_{\e\v}^{-1}$,
\[
\hE_{\e,\delta}^R(f_\e;\v)
= \frac{1}{\e^2\,\log(1/\delta)} \int_{B_R\setminus B_{\delta R}} \calW(I + \e \tilde\eta_\e)\,(\hZ_{\e\v}^{-1})^\#\hVolumeEV.
\]
By \eqref{eq:asymp_par}, $\tilde{\eta}_\e$ converges uniformly on $B_R\setminus B_{\delta R}$ to $\beta_\v$ as $\e\to0$, hence so does
\[
\frac{1}{\e^2}  \calW(I + \e \tilde\eta_\e) \to \bbW(\beta_\v).
\]
Since, furthermore,
\[
(\hZ_{\e\v}^{-1})^\#\hVolumeEV \to \VolumeE
\]
uniformly on $B_R\setminus B_{\delta R}$, we obtain, using the self similarity of $\beta_\v$, that
\[
\LimEps \hE_{\e,\delta}^R(f_\e;\v) = \Elin_\delta(\beta_\v;\v) \le \Ilin_\delta(\v) + \frac{Cb^2}{\log(1/\delta)},
\]
where the last inequality follows from \eqref{eq:Edelta_betav}.
This completes the proof.
\end{proof}

So far, we have been considering two energy functions: a nonlinear energy function (for configurations) with density $\calW$ on the dislocated body, and a linear energy function (for admissible strains) with density $\bbW$ on a Euclidean domain. We now analyze an intermediate energy with density $\bbW$ on the dislocated body, acting on displacements,
and obtain more detailed bounds for a fixed $\e$.
These will be needed for constructing a recovery sequence in Section~\ref{sec:recovery_seq}.

\begin{proposition}
\label{prop:cell_limsup_Quad}
For $\v_\e\in \R^2$, let $\beta_{\v_\e}\in \Omega^1(\R^2\setminus\{0\};\R^2)$ be the distributional solution of \eqref{eq:betav},
and define $f_\e\in \HOne(\hM_{\e\v_\e};\R^2)$ by
\[
df_\e = \hP_{\e\v_\e} + \e\beta_{\v_\e} \circ d\hZ_{\e\v_\e}.
\]
For every $\delta\in (0,1/10)$, for every sequence $\v_\e\in\R^2$ and for every sequence $R_\e>0$ satisfying $\delta R_\e \ge \e |\v_\e|$,
\[
\frac{1}{\e^2\log(1/\delta)}\int_{\hM_{\e\v_\e}^{R_\e}\setminus \hM_{\e\v_\e}^{\delta R_\e}} \bbW(df_\e\circ \hP_{\e\v_\e}^{-1}-I)\, \hVolumeEVE = \Ilin_\delta(\v_\e)\brk{1 + O\brk{\frac{\e|\v_\e|}{\delta R_\e} + \frac{1}{\log(1/\delta)}}}.
\]
\end{proposition}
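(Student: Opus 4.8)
The plan is to exploit that $\Quad$ is a quadratic form together with the explicit structure of $f_\e$, so that everything reduces to the known estimate~\eqref{eq:Edelta_betav} for $\beta_{\v_\e}$ plus two controllable perturbations coming from Proposition~\ref{prop:conv_manifolds}. First I would observe that $df_\e\circ\hQ_{\e\v_\e}^{-1}-I=\e\,\beta_{\v_\e}\circ d\hZ_{\e\v_\e}\circ\hQ_{\e\v_\e}^{-1}$ (that this is the differential of an $\HOne$ map is established in the proof of Proposition~\ref{prop:cell_limsup}), so by $2$-homogeneity of $\Quad$ the left-hand side equals $\frac{1}{\log(1/\delta)}\int_{\hM_{\e\v_\e}^{R_\e}\setminus\hM_{\e\v_\e}^{\delta R_\e}}\Quad\brk{\beta_{\v_\e}\circ d\hZ_{\e\v_\e}\circ\hQ_{\e\v_\e}^{-1}}\,\dVol_{\hg_{\e\v_\e}}$. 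Changing variables by the bilipschitz diffeomorphism $\hZ_{\e\v_\e}$ turns this into $\frac{1}{\log(1/\delta)}\int_{B_{R_\e}\setminus B_{\delta R_\e}}\Quad(\tilde\eta_\e)\,J_\e\,\VolumeE$, where $\tilde\eta_\e(x)=(\beta_{\v_\e})_x\circ\brk{d\hZ_{\e\v_\e}\circ\hQ_{\e\v_\e}^{-1}}_{\hZ_{\e\v_\e}^{-1}(x)}$ and $J_\e\,\VolumeE=(\hZ_{\e\v_\e}^{-1})^\#\dVol_{\hg_{\e\v_\e}}$ is the pulled-back volume form.

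Next I would quantify the two perturbations. On the annulus $\{r\ge\delta R_\e\}$ one has $\r\gtrsim\delta R_\e$ by Lemma~\ref{lem:frakr=r}, so Proposition~\ref{prop:conv_manifolds} applied with Burgers vector $\e\v_\e$ gives $\Abs{d\hZ_{\e\v_\e}\circ\hQ_{\e\v_\e}^{-1}-I}\lesssim\e|\v_\e|/(\delta R_\e)$ (using that $\hQ_{\e\v_\e}$ is an isometry), and the estimate underlying~\eqref{eq:volume_change} gives a Jacobian factor with $|J_\e-1|\lesssim\e|\v_\e|/(\delta R_\e)$; by the hypothesis $\delta R_\e\ge\e|\v_\e|$ this quantity is $\le 1$. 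Hence $\tilde\eta_\e=\beta_{\v_\e}+\mathrm{err}_\e$ with $|\mathrm{err}_\e(x)|\lesssim|\beta_{\v_\e}(x)|\,\e|\v_\e|/(\delta R_\e)$, and using the decay bound~\eqref{eq:beta_bound}, $|\beta_{\v_\e}(x)|\lesssim|\v_\e|/|x|$, one expands the quadratic form as $\Abs{\Quad(\tilde\eta_\e)-\Quad(\beta_{\v_\e})}\lesssim|\beta_{\v_\e}|\,|\mathrm{err}_\e|+|\mathrm{err}_\e|^2\lesssim\frac{|\v_\e|^2}{|x|^2}\cdot\frac{\e|\v_\e|}{\delta R_\e}$. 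Since $\int_{B_{R_\e}\setminus B_{\delta R_\e}}|x|^{-2}\,\VolumeE=2\pi\log(1/\delta)$, multiplying by $J_\e$ and integrating yields $\Abs{\int_{B_{R_\e}\setminus B_{\delta R_\e}}\Quad(\tilde\eta_\e)\,J_\e\,\VolumeE-\int_{B_{R_\e}\setminus B_{\delta R_\e}}\Quad(\beta_{\v_\e})\,\VolumeE}\lesssim\frac{\e|\v_\e|}{\delta R_\e}\,|\v_\e|^2\log(1/\delta)$.

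It then remains to identify the main term. By the self-similarity $\beta_{\v_\e}(x)=R_\e^{-1}\beta_{\v_\e}(x/R_\e)$, the substitution $x\mapsto x/R_\e$ gives $\int_{B_{R_\e}\setminus B_{\delta R_\e}}\Quad(\beta_{\v_\e})\,\VolumeE=\int_{B_1\setminus B_\delta}\Quad(\beta_{\v_\e})\,\VolumeE$, so after division by $\log(1/\delta)$ the main term is exactly $\Elin_\delta(\beta_{\v_\e};\v_\e)$, which by~\eqref{eq:Edelta_betav} equals $\Ilin_\delta(\v_\e)+O\brk{|\v_\e|^2/\log(1/\delta)}$. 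Finally, invoking $\Ilin_\delta(\v_\e)\simeq|\v_\e|^2$ with $\delta$-independent constants (item (d) of the list above), the $O(|\v_\e|^2/\log(1/\delta))$ term together with the perturbation error divided by $\log(1/\delta)$ are both $O\brk{\Ilin_\delta(\v_\e)\brk{\e|\v_\e|/(\delta R_\e)+1/\log(1/\delta)}}$, which is the claimed identity; the case $\v_\e=0$ is trivial since both sides vanish.

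The step requiring the most care is the error bookkeeping in the second paragraph: each perturbation must be kept at the pointwise level with the weight $|\v_\e|^2/|x|^2$, so that integration produces exactly one factor $\log(1/\delta)$, which cancels against the prefactor, and the resulting absolute $|\v_\e|^2$-errors must be converted into relative errors via $\Ilin_\delta(\v_\e)\simeq|\v_\e|^2$. Unlike Proposition~\ref{prop:cell_limsup}, here $\e$ is fixed, so no uniform-convergence argument is available and it is precisely the explicit decay~\eqref{eq:beta_bound} of $\beta_{\v_\e}$ (rather than mere $L^2$-boundedness) that makes the estimate quantitative.
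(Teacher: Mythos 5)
Your proof is correct and follows essentially the same route as the paper's: write $df_\e\circ\hQ_{\e\v_\e}^{-1}-I=\e\,\beta_{\v_\e}\circ d\hZ_{\e\v_\e}\circ\hQ_{\e\v_\e}^{-1}$, expand the quadratic form $\Quad$ about $\beta_{\v_\e}$ using the pointwise bounds of Proposition~\ref{prop:conv_manifolds} together with the decay \eqref{eq:beta_bound}, change variables with $\hZ_{\e\v_\e}$ controlling the Jacobian via \eqref{eq:volume_change}, and identify the main term through self-similarity, \eqref{eq:Edelta_betav} and $\Ilin_\delta(\v)\simeq|\v|^2$. The only (harmless) differences are that you change variables before expanding the quadratic form and use the uniform bound $\e|\v_\e|/(\delta R_\e)$ on the annulus instead of the pointwise $\e|\v_\e|/r$, which costs an extra $\log(1/\delta)$ in the absolute error but still lands within the claimed $O(\e|\v_\e|/\delta R_\e)$ relative error.
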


\begin{proof}
First, by the uniform bilipschitz bound \eqref{eq:bilipschitz}  of $\hZ_{\e\v_\e}$ and the property \eqref{eq:beta_bound} of $\beta_{\v_\e}$,
\beq
\label{eq:beta_v_bound}
|df_\e - \hP_{\e\v_\e}|_{\hg_{\e\v_\e},\euc} = 
\e |\beta_\v|_{\euc,\euc} | d\hZ_{\e\v_\e}|_{\hg_{\e\v_\e},\euc} \lesssim \frac{\e|\v_\e|}{r}.
\eeq
Using the fact that $\bbW$ is quadratic, 
\[
\begin{split}
\frac{1}{\e^2}\bbW(df_\e\circ \hP_{\e\v_\e}^{-1}-I)
&= \bbW(\beta_{\v_\e} \circ d\hZ_{\e\v_\e}\circ \hP_{\e\v_\e}^{-1}) \\
&= \bbW(\beta_{\v_\e})\circ\hZ_{\e\v_\e} + 
O(|\beta_{\v_\e}|_{\euc,\euc}^2 |d\hZ_{\e\v_\e}-\hP_{\e\v_\e}|_{\hg_{\e\v_\e},\euc}) \\
&= \bbW(\beta_{\v_\e})\circ\hZ_{\e\v_\e} +
O(\e|\v_\e|^3 /r^3),
\end{split}
\]
where in the passage to the third line we used \eqref{eq:asymp_par}.
Integrating,
\[
\begin{split}
&\frac{1}{\e^2\log(1/\delta)}\int_{\hM_{\e\v_\e}^{R_\e}\setminus \hM_{\e\v_\e}^{\delta R_\e}} 
\bbW(df_\e\circ \hP_{\e\v_\e}^{-1}-I)\, \hVolumeEVE \\
&\qquad = \frac{1}{\log(1/\delta)}\int_{\hM_{\e\v_\e}^{R_\e}\setminus \hM_{\e\v_\e}^{\delta R_\e}} \bbW(\beta_{\v_\e}) \circ \hZ_{\e\v_\e}\, \hVolumeEVE + 
O\brk{\frac{1}{\log(1/\delta)} \frac{\e|\v_\e|^3}{\delta R_\e}} \\
&\qquad = \frac{1}{\log(1/\delta)}\int_{\hM_{\e\v_\e}^{R_\e}\setminus \hM_{\e\v_\e}^{\delta R_\e}} \bbW(\beta_{\v_\e}) \circ \hZ_{\e\v_\e} \, \hVolumeEVE + 
 \Ilin_\delta(\v_\e) O\brk{\frac{1}{\log(1/\delta)}},
\end{split}
\]
where in the passage to the last line we used the fact that $ \Ilin_\delta(\v_\e) \gtrsim |\v_\e|^2$ and the bound $\delta R_\e \ge \e |\v_\e|$.
Changing variables with $\hZ_{\e\v_\e}$, using the first inequality in \eqref{eq:volume_change} and \eqref{eq:Edelta_betav}, we obtain
\[
\begin{split}
&\frac{1}{\e^2\log(1/\delta)}\int_{\hM_{\e\v_\e}^{R_\e}\setminus \hM_{\e\v_\e}^{\delta R_\e}} \bbW(df_\e\circ \hP_{\e\v_\e}^{-1}-I)\, \hVolumeEVE \\
&\qquad = \frac{1+O(\e|\v_\e|/\delta R_\e)}{\log(1/\delta)}\int_{B_{R_\e}\setminus B_{\delta R_\e}} \bbW(\beta_{\v_\e})\, \VolumeE +  \Ilin_\delta(\v_\e) O\brk{\frac{1}{\log(1/\delta)}}\\
&\qquad = \Ilin_\delta(\v_\e)\brk{\brk{1 + O\brk{\frac{1}{\log(1/\delta)}}}\brk{1 + O\brk{\frac{\e|\v_\e|}{\delta R_\e}} }+ O\brk{\frac{1}{\log(1/\delta)}}},
\end{split}
\]
from which the claim follows.
\end{proof}

\begin{proposition}
\label{prop:cell_limsup_Quad2}
Fix $C>0$, fix $s\in (0,1)$ and fix a compact set $K\subset \R^2 \setminus \{0\}$.
Then for every sequence $\v_\e\in K$ and every bounded sequence $R_\e>0$ satisfying $\log(1/R_\e) \ll \log(1/\e)$,
\beq\label{eq:Quad_df_e_optimal}
\begin{split}
&\frac{1}{\e^2\log(1/\e)}\int_{\hM_{\e\v_\e}^{R_\e}\setminus \hM_{\e\v_\e}^{C\e^s}} \bbW(df_\e\circ \hP_{\e\v_\e}^{-1}-I)\, \hVolumeEVE \\
&\qquad = \Ilin_0(\v_\e)\brk{s + O\brk{\e^{1-s}+ \frac{|\log R_\e|+1}{\log(1/\e)}}},
\end{split}
\eeq
where $f_\e$ is as in Proposition~\ref{prop:cell_limsup_Quad}, and the constants are independent of $s$ (they depend  on $C$ and $K$). 
\end{proposition}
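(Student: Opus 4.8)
The strategy is to decompose the annulus $\hM_{\e\v_\e}^{R_\e}\setminus \hM_{\e\v_\e}^{C\e^s}$ into a chain of concentric sub-annuli of fixed aspect ratio $\delta$, apply the sharp estimate of Proposition~\ref{prop:cell_limsup_Quad} to each one, and sum, tracking how the accumulated error terms behave as $\e\to 0$. Concretely, fix a small $\delta$ and set $k_{\max} = \lfloor \log(R_\e/(C\e^s)) / \log(1/\delta)\rfloor$; let $A_\e^k = \hM_{\e\v_\e}^{R_\e\delta^{k-1}}\setminus\hM_{\e\v_\e}^{R_\e\delta^k}$ for $k=1,\dots,k_{\max}$, so that $\bigcup_k A_\e^k$ exhausts the given annulus up to a thin residual shell near $r = C\e^s$ whose $\Quad$-integral, by the bound \eqref{eq:beta_v_bound} $|df_\e-\hQ_{\e\v_\e}| \lesssim \e|\v_\e|/r$ and the quadratic growth \eqref{eq:D2calW_bound} of $\Quad$, is $O(\e^2|\v_\e|^2)$ and hence negligible after dividing by $\e^2\log(1/\e)$. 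On each $A_\e^k$ with inner-to-outer ratio $\delta$ and outer radius $R_\e\delta^{k-1} \ge C\e^s \ge \e|\v_\e|$ (using $\v_\e\in K$ compact and $s<1$, so $\e|\v_\e| \ll \e^s$ for small $\e$), Proposition~\ref{prop:cell_limsup_Quad} gives
\[
\frac{1}{\e^2\log(1/\delta)}\int_{A_\e^k}\Quad(df_\e\circ\hQ_{\e\v_\e}^{-1}-I)\,\dVol_{\hg_{\e\v_\e}} = \Ilin_\delta(\v_\e)\brk{1 + O\brk{\tfrac{\e|\v_\e|}{\delta R_\e\delta^{k-1}} + \tfrac{1}{\log(1/\delta)}}}.
\]

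Next I would sum over $k$. The leading term contributes $k_{\max}\log(1/\delta)\,\Ilin_\delta(\v_\e)$, and since $k_{\max}\log(1/\delta) = \log(R_\e/(C\e^s)) + O(\log(1/\delta)) = s\log(1/\e) + O(|\log R_\e| + 1)$, dividing by $\log(1/\e)$ gives $\Ilin_\delta(\v_\e)(s + O((|\log R_\e|+1)/\log(1/\e)))$. The multiplicative $O(1/\log(1/\delta))$ errors, summed and normalized, contribute at most $\Ilin_\delta(\v_\e) \cdot O(s/\log(1/\delta))$. The geometric error terms $\e|\v_\e|/(\delta R_\e\delta^{k-1})$ form a geometric series in $\delta^{-(k-1)}$ dominated by its largest term at $k=k_{\max}$, which is comparable to $\e|\v_\e|/(C\e^s) \simeq \e^{1-s}$ (up to the fixed constants $C$, $\delta$); summed and divided by $\log(1/\e)$ this is $O(\e^{1-s})$. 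Finally I replace $\Ilin_\delta(\v_\e)$ by $\Ilin_0(\v_\e)$ using \eqref{eq:Idelta_I0}, $|\Ilin_\delta(\v_\e)-\Ilin_0(\v_\e)| \lesssim |\v_\e|^2/\log(1/\delta) \lesssim \Ilin_0(\v_\e)/\log(1/\delta)$, so that all the remaining $\delta$-dependence is absorbed into a single $O(1/\log(1/\delta))$ factor multiplying $\Ilin_0(\v_\e)$.

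At this point the estimate reads: for every fixed $\delta\in(0,1/10)$,
\[
\frac{1}{\e^2\log(1/\e)}\int_{\hM_{\e\v_\e}^{R_\e}\setminus\hM_{\e\v_\e}^{C\e^s}}\Quad(df_\e\circ\hQ_{\e\v_\e}^{-1}-I)\,\dVol_{\hg_{\e\v_\e}} = \Ilin_0(\v_\e)\brk{s + O\brk{\e^{1-s} + \tfrac{|\log R_\e|+1}{\log(1/\e)} + \tfrac{1}{\log(1/\delta)}}}.
\]
To remove the residual $1/\log(1/\delta)$ term and obtain the stated bound, I would use that the left-hand side does not depend on $\delta$: send $\delta\to 0$ in the error estimate (the left side is fixed, so the $O(1/\log(1/\delta))$ contribution must in fact be absorbable), or more carefully, run the decomposition with $\delta = \delta_\e\to 0$ slowly enough that $\log(1/\delta_\e) \ll \log(1/\e)$ and $k_{\max}$ is still large, which makes $1/\log(1/\delta_\e) \to 0$ while preserving all other error rates. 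Either way one concludes \eqref{eq:Quad_df_e_optimal}, with constants depending only on $C$ and $K$ (through the uniform bounds $\Ilin_0(\v_\e)\simeq|\v_\e|^2$ on $K$ and the constant in \eqref{eq:beta_bound}) and not on $s$.

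The main obstacle I anticipate is bookkeeping the error terms carefully enough to see that they are genuinely uniform in $s\in(0,1)$ and that the boundary effects — the residual shell near $r=C\e^s$, the floor in the definition of $k_{\max}$, and the mismatch between $\log(1/\delta)\cdot k_{\max}$ and $s\log(1/\e)$ — all contribute only to the claimed $O(|\log R_\e|+1)/\log(1/\e)$ and $O(\e^{1-s})$ terms and not, say, an additive constant independent of $s$ that would spoil the $s\to 0$ behavior. The geometric series for the volume-distortion errors must be summed from the inner radius outward (largest term at the core end), and one must check that its sum is $O(\e^{1-s})$ rather than $O(\e^{1-s}\log(1/\e))$; this works because the ratio of consecutive terms is the fixed constant $\delta$, independent of $\e$. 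The $\delta\to 0$ (or $\delta=\delta_\e$) step at the end is routine once the $\delta$-dependence has been isolated into a single clean factor.
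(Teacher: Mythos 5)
Your dyadic decomposition into sub-annuli of fixed aspect ratio $\delta$ is a genuinely different route from the paper's, and as written it does not reach the error rate claimed in the statement. The obstruction is the term you yourself flag: each application of \propref{prop:cell_limsup_Quad} to a sub-annulus $A_\e^k$ carries a relative error $O(1/\log(1/\delta))$, i.e.\ an \emph{additive} error $O(|\v_\e|^2)$ in the un-normalized integral over $A_\e^k$ (this comes from \eqref{eq:Edelta_betav}: comparing $\int\Quad(\beta_{\v_\e})$ over a sub-annulus to $\log(1/\delta)\,\Ilin_\delta(\v_\e)$ costs $O(|\v_\e|^2)$ each time, and these discrepancies do not telescope). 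Summing over $k_{\max}\simeq s\log(1/\e)/\log(1/\delta)$ annuli and dividing by $\log(1/\e)$ leaves a residual $O(s/\log(1/\delta))$, which for fixed $\delta$ does not vanish. Your proposed remedy --- ``the left-hand side does not depend on $\delta$, so send $\delta\to0$'' --- is not valid, because the estimate you have proved for fixed $\e$ and variable $\delta$ contains \emph{other} $\delta$-dependent errors that blow up as $\delta\to0$: the mismatch between $k_{\max}\log(1/\delta)$ and $\log(R_\e/C\e^s)$ coming from the floor, and the residual shell near $r=C\e^s$ (whose aspect ratio can be as large as $1/\delta$, so its contribution is $O(\e^2|\v_\e|^2\log(1/\delta))$, not $O(\e^2|\v_\e|^2)$). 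Both contribute $O(\log(1/\delta)/\log(1/\e))$ after normalization. Optimizing $\log(1/\delta_\e)\simeq\sqrt{\log(1/\e)}$ gives at best an error $O(1/\sqrt{\log(1/\e)})$, which is strictly weaker than the claimed $O((|\log R_\e|+1)/\log(1/\e))$ when, say, $R_\e\equiv 1$.

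The paper's proof sidesteps the decomposition entirely: \propref{prop:cell_limsup_Quad} is already stated for an arbitrary, $\e$-dependent aspect ratio, so one applies it \emph{once} with $\delta_\e := C\e^s/R_\e$, for which $\hM_{\e\v_\e}^{R_\e}\setminus\hM_{\e\v_\e}^{C\e^s}=\hM_{\e\v_\e}^{R_\e}\setminus\hM_{\e\v_\e}^{\delta_\e R_\e}$. Then $\e|\v_\e|/(\delta_\e R_\e)=O(\e^{1-s})$, $\log(1/\delta_\e)=s\log(1/\e)+O(|\log R_\e|+1)$, and the single $O(1/\log(1/\delta_\e))$ relative error, multiplied by the prefactor $\log(1/\delta_\e)/\log(1/\e)$, becomes $O(1/\log(1/\e))$; the passage from $\Ilin_{\delta_\e}$ to $\Ilin_0$ via \eqref{eq:Idelta_I0} costs the same. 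If you want to salvage a decomposition-style argument, you would have to avoid invoking \propref{prop:cell_limsup_Quad} per annulus and instead exploit that the $\beta_{\v_\e}$-integrals over the $A_\e^k$ telescope exactly by self-similarity --- which amounts to reproducing the single-application proof.
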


\begin{proof}
Let $b = \max_{\v \in K}|\v|$.
First, note that since $R_\e \gg \e^s \gg \e b \ge \e|\v|$, the domain $\hM_{\e\v_\e}^{R_\e}\setminus \hM_{\e\v_\e}^{C\e^s}$ is well-defined.
Let $\delta\in(0,1)$ be such that $\delta R_\e = C\e^s$, and note that
\[
\frac{\log(1/\delta)}{\log(1/\e)} = s + O\brk{\frac{|\log R_\e|+1}{\log(1/\e)}} = O(1).
\]
Applying Proposition~\ref{prop:cell_limsup_Quad} for this choice of $\delta$, we obtain
\[
\begin{split}
&\frac{1}{\e^2\log(1/\e)}\int_{\hM_{\e\v_\e}^{R_\e}\setminus \hM_{\e\v_\e}^{C\e^s} }\bbW(df_\e\circ \hP_{\e\v_\e}^{-1}-I)\, \hVolumeEVE\\
& \qquad = \Ilin_{\delta}(\v_\e)\brk{1 + O\brk{\e^{1-s} + \frac{1}{\log(1/\delta)}}}\brk{s + O\brk{\frac{|\log R_\e|+1}{\log(1/\e)}}} \\
& \qquad = \Ilin_{\delta}(\v_\e)\brk{s + O\brk{\e^{1-s} + \frac{|\log R_\e|+1}{\log(1/\e)}}} \\
& \qquad = \Ilin_{0}(\v_\e)\brk{s + O\brk{\e^{1-s} + \frac{|\log R_\e|+1}{\log(1/\e)}}},
\end{split}
\]
where the last line follows from \eqref{eq:Idelta_I0}.
\end{proof}

The following lemma allows us to adjust the boundary values of the asymptotically-optimal maps $f_\e$ of Proposition~\ref{prop:cell_limsup_Quad2}:

\begin{lemma}
Let $\v_\e \in \R^2$, and let $R_\e >0$ be a bounded sequence satisfying $R_\e > 10 |\v_\e|$.
Let $\ZEps\in H^1(\hM_{\e\v_\e}^{R_\e};\R^2)$ be a map satisfying
\beq\label{eq:dZ_e_bound}
|d\ZEps - \hP_{\e \v_\e}| \lesssim \frac{\e|\v_\e|}{\r}.
\eeq
Then, for every $f_\e\in \HOne(\hM_{\e\v_\e}^{R_\e};\R^2)$ satisfying
\beq\label{eq:df_e_bound}
|df_\e - \hP_{\e \v_\e}| \lesssim \frac{\e|\v_\e|}{\r},
\eeq
there exists a $\tf_\e\in \HOne(\hM_{\e\v_\e}^{R_\e};\R^2)$ satisfying the bound \eqref{eq:df_e_bound}, such that
\[
\tf_\e = \ZEps \qquad \text{for $r = R_\e$}, 
\]
and
\beq
\label{eq:tf_energy}
\int_{\hM_{\e\v_\e}^{R_\e}} \bbW(d\tf_\e\circ \hP_{\e\v_\e}^{-1}-I)\, \hVolumeEVE < 
\int_{\hM_{\e\v_\e}^{R_\e}} \bbW(df_\e\circ \hP_{\e\v_\e}^{-1}-I)\, \hVolumeEVE + C|\v_\e|^2,
\eeq
where $C$ is independent of $\e$, $R_\e$, $\v_\e$, and depends on $f_\e$ and $\ZEps$ only through the constants in their pointwise bounds \eqref{eq:dZ_e_bound}--\eqref{eq:df_e_bound}.
\end{lemma}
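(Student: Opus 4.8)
\emph{Proof sketch.}
The plan is to leave $f_\e$ essentially unchanged on the bulk $\hM_{\e\v_\e}^{R_\e/2}$ and to interpolate to $\ZEps$ inside the fixed-ratio boundary layer $L_\e := \hM_{\e\v_\e}^{R_\e}\setminus\hM_{\e\v_\e}^{R_\e/2} = \{(r,\vp)\,:\,R_\e/2\le r<R_\e\}$ by a radial cut-off. The hypothesis $R_\e>10|\v_\e|$ ensures $R_\e/2>\e|\v_\e|$, so $L_\e$ is a genuine sub-annulus; by \lemref{lem:frakr=r}, $\r\simeq R_\e$ on $L_\e$, hence the pointwise bounds \eqref{eq:dZ_e_bound}--\eqref{eq:df_e_bound} read $|d\ZEps-\hQ_{\e\v_\e}|\lesssim\e|\v_\e|/R_\e$ and $|df_\e-\hQ_{\e\v_\e}|\lesssim\e|\v_\e|/R_\e$ on $L_\e$; in particular $g_\e:=f_\e-\ZEps$ satisfies $|dg_\e|\lesssim\e|\v_\e|/R_\e$ there. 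Since both the energy in \eqref{eq:tf_energy} and the validity of \eqref{eq:df_e_bound} are unaffected by adding a constant to $f_\e$, I may assume $f_\e(p_0)=\ZEps(p_0)$ for some fixed base point $p_0\in L_\e$.

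The first step is to control the oscillation of $g_\e$ on $L_\e$. By the scale-invariant bilipschitz estimate \eqref{eq:bilipschitz}, $(L_\e,\hg_{\e\v_\e})$ is bilipschitz equivalent, with constants independent of $\v_\e$ and $R_\e$, to the Euclidean annulus $B_{R_\e}\setminus B_{R_\e/2}$, whose intrinsic diameter is $\simeq R_\e$; so any $p\in L_\e$ is joined to $p_0$ by a path in $L_\e$ of $\hg_{\e\v_\e}$-length $\lesssim R_\e$. Integrating $dg_\e$ along such a path and using the normalization gives $|g_\e(p)|=|g_\e(p)-g_\e(p_0)|\lesssim R_\e\cdot\e|\v_\e|/R_\e=\e|\v_\e|$ for all $p\in L_\e$.

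Next, fix a Lipschitz cut-off $\chi=\chi(r)$ with $\chi\equiv1$ for $r\le R_\e/2$, $\chi(R_\e)=0$, $0\le\chi\le1$, $|\chi'|\lesssim1/R_\e$, and set
\[
\tf_\e=\begin{cases} f_\e, & r\le R_\e/2,\\ \ZEps+\chi(r)\,g_\e, & R_\e/2\le r\le R_\e.\end{cases}
\]
This is continuous across $\{r=R_\e/2\}$ (there $\chi\equiv1$, so $\tf_\e=\ZEps+g_\e=f_\e$), lies in $\HOne(\hM_{\e\v_\e}^{R_\e};\R^2)$, and equals $\ZEps$ on $\{r=R_\e\}$. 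On $L_\e$,
\[
d\tf_\e-\hQ_{\e\v_\e}=(d\ZEps-\hQ_{\e\v_\e})+\chi\,(df_\e-d\ZEps)+\chi'\,dr\otimes g_\e ,
\]
and each summand has norm $\lesssim\e|\v_\e|/R_\e$ (for the third, $|\chi'|\lesssim1/R_\e$, $|dr|_{\hg_{\e\v_\e}}\lesssim1$ by \eqref{eq:bnd_dr_dvp}, and $|g_\e|\lesssim\e|\v_\e|$ on $L_\e$); since $\r\le R_\e$ on $L_\e$, this gives $|d\tf_\e-\hQ_{\e\v_\e}|\lesssim\e|\v_\e|/R_\e\le\e|\v_\e|/\r$ there, while on $\{r\le R_\e/2\}$ the bound \eqref{eq:df_e_bound} holds because $\tf_\e=f_\e$. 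Hence $\tf_\e$ obeys \eqref{eq:df_e_bound}.

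Finally, the energy. As $\hQ_{\e\v_\e}$ is a fibrewise isometry and $\Quad=\tfrac12 D_I^2\calW(\cdot,\cdot)$ is a fixed quadratic form with $0\le\Quad(A)\lesssim|A|^2$ (nonnegativity by \eqref{eq:D2calW_bound}, boundedness since $D_I^2\calW$ exists by the regularity of $\calW$), on $L_\e$ we get $\Quad(d\tf_\e\circ\hQ_{\e\v_\e}^{-1}-I)\lesssim|d\tf_\e-\hQ_{\e\v_\e}|_{\hg_{\e\v_\e},\euc}^2\lesssim\e^2|\v_\e|^2/R_\e^2$, while $\Vol_{\hg_{\e\v_\e}}(L_\e)\lesssim R_\e^2$; thus the $L_\e$-contribution to the $\Quad$-energy of $\tf_\e$ is $\lesssim\e^2|\v_\e|^2\le C|\v_\e|^2$. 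Since $\tf_\e=f_\e$ on $\hM_{\e\v_\e}^{R_\e/2}$ and $\Quad\ge0$, splitting both integrals in \eqref{eq:tf_energy} over $\hM_{\e\v_\e}^{R_\e/2}$ and $L_\e$ and discarding the nonnegative $L_\e$-contribution of $f_\e$ yields \eqref{eq:tf_energy}, with constants depending on $f_\e,\ZEps$ only through the implicit constants in \eqref{eq:dZ_e_bound}--\eqref{eq:df_e_bound}. There is no serious obstacle here: this is a routine cut-off/gluing argument, and the only point requiring care is the uniformity of all constants in $R_\e$ (which may tend to $0$) and $\v_\e$, which is exactly what the scale-invariant comparison \eqref{eq:bilipschitz} with a Euclidean annulus provides.
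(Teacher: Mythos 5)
Your proof is correct, and it takes a genuinely different---and in fact more direct---route than the paper's. The paper partitions $\hM_{\e\v_\e}^{R_\e}$ into dyadic annuli $A_k$ of aspect ratio $2$, selects the outermost $A_k$ on which $f_\e$ has strictly smaller $\Quad$-energy than $\ZEps$, sets $\tf_\e=\ZEps$ on all annuli outside $A_k$ (where, by minimality of $k$, this does not increase the energy), and interpolates only inside $A_k$; the interpolation cost is bounded by $\lesssim\e^2|\v_\e|^2$ by essentially the same oscillation-plus-cutoff estimate you use. You instead interpolate once, in the fixed outermost layer $L_\e=\{R_\e/2\le r<R_\e\}$, and replace the paper's comparison between $f_\e$ and $\ZEps$ by the absolute bound $\int_{L_\e}\Quad(d\tf_\e\circ\hQ_{\e\v_\e}^{-1}-I)\,\dVol_{\hg_{\e\v_\e}}\lesssim\e^2|\v_\e|^2$ together with the nonnegativity of $\Quad$, which lets you simply discard the $L_\e$-contribution of $f_\e$. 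This shortcut is legitimate precisely because both \eqref{eq:dZ_e_bound} and \eqref{eq:df_e_bound} are assumed pointwise and $\r\simeq r\simeq R_\e$ on $L_\e$, so the layer's energy is controlled in absolute terms without needing to know which of $f_\e$, $\ZEps$ is better there; it also yields the sharper error $C\e^2|\v_\e|^2$, which is what the paper's proof delivers as well and what the application in Corollary~\ref{cor:cell_limsup_Quad} actually uses. The only difference in output is that the paper's $\tf_\e$ coincides with $\ZEps$ on a full outer collar rather than only on the circle $r=R_\e$; the stated boundary condition and the gluing in Lemma~\ref{lem:tf_e} only require matching of traces and the pointwise bound \eqref{eq:df_e_bound}, both of which your construction provides, and all your constants depend on $f_\e,\ZEps$ only through the implicit constants in \eqref{eq:dZ_e_bound}--\eqref{eq:df_e_bound}, as required.
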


\begin{proof}
Partition the domain into annuli having (in coordinates) fixed aspect ratio,
\[
A_k = \{(r,\vp) ~:~ 2^{-k-1}R_\e < r < 2^{-k}R_\e\}, \qquad k= 0,\dots,k_{\max}.
\]
Suppose that for every $k$
\[
\int_{A_k} \bbW(\ZEps\circ \hP_{\e\v_\e}^{-1}-I)\,\hVolumeEVE \le \int_{A_k} \bbW(df_\e\circ \hP_{\e\v_\e}^{-1}-I)\,\hVolumeEVE.
\]
In such case, the claim follows trivially letting $\tf_\e = \ZEps$.
Otherwise, let $k$ be the smallest natural number for which
\[
\int_{A_k} \bbW(df_\e\circ \hP_{\e\v_\e}^{-1}-I)\,\hVolumeEVE < \int_{A_k} \bbW(d\ZEps\circ \hP_{\e\v_\e}^{-1}-I)\,\hVolumeEVE\lesssim \e^2 |\v_\e|^2,
\]
where the second inequality always holds and follows from \eqref{eq:dZ_e_bound} and the fact that $r\simeq \r$ by Lemma~\ref{lem:frakr=r}.
By \eqref{eq:dZ_e_bound}--\eqref{eq:df_e_bound}, 
\[
|df_\e - d\ZEps|{_{\hg_{\e\v_\e},\euc}} \lesssim \frac{\e|\v_\e|}{r},
\]
and thus, by possibly translating $f_\e$, we have
\[
\|f_\e - \ZEps\|_{L^\infty(A_k)}\lesssim \e|\v_\e|,
\]
since on $A_k$ we have $r \simeq 2^{-k} R_\e$, whereas the diameter of $A_k$ is of the same order.

Define $\tf_\e:\hM_{\e\v_\e}^{R_\e}\to\R^2$ via the requirement that
\[
d\tf_\e = \Cases{
df_\e & r < 2^{-k-1} R_\e  \\
df_\e + d(\vp(r) (\ZEps - f_\e)) &  r\in A_k \\
d\ZEps & r > 2^{-k} R_\e ,
}
\]
where $\vp(r)$ is smooth, equals zero in some neighborhood of $r = 2^{-k-1} R_\e$, equals one in some neighborhood of $r = 2^{-k} R_\e$, and satisfies $|\vp'(r)| \lesssim 2^{k}R_\e^{-1}$.  
It is easy to see that indeed
\[
|d\tf_\e - \hP_{\e\v_\e}| \lesssim \frac{\e|\v_\e|}{\r}.
\]
By the very definition of $k$,
\[
\int_{r\not\in A_k} \bbW(d\tf_\e\circ \hP_{\e\v_\e}^{-1}-I) \,\hVolumeEVE \le \int_{r\not\in A_k} \bbW(df_\e\circ \hP_{\e\v_\e}^{-1}-I) \,\hVolumeEVE.
\]
In the transition annulus, $A_k$, 
\[
d\tf_\e  = \hP_{\e\v_\e} + J_\e,
\]
where
\[
\begin{split}
J_\e &= (df_\e - \hP_{\e\v_\e})  + d(\vp(r) (\ZEps - f_\e)) \\
&= (df_\e - \hP_{\e\v_\e})  + (d\ZEps - df_\e)\vp(r) + (\ZEps - f_\e) \vp'(r) dr,
\end{split}
\]
hence
\[
|J_\e|^2 \lesssim 
|df_\e - \hP_{\e\v_\e}|^2  + |d\ZEps - df_\e|^2 +  \frac{1}{(2^{-k}\rho_\e)^2} |\ZEps - f_\e|^2,
\]
from which we obtain that
\[
\int_{A_k} \bbW(d\tf_\e\circ \hP_{\e\v_\e}^{-1}-I) \,\hVolumeEVE \lesssim \int_{A_k} |J_\e|^2 \,\hVolumeEVE \lesssim |\v_\e|^2 \e^2.
\]
Putting everything together, $\tf_\e$ satisfies the energy bound \eqref{eq:tf_energy}.
\end{proof}

\begin{corollary}
\label{cor:cell_limsup_Quad}
Fix $C>0$, fix $s\in (0,1)$ and fix a compact set $K\subset \R^2 \setminus \{0\}$.
Let $\v_\e\in K$ and let $R_\e>0$ be a bounded sequence satisfying $\log(1/R_\e) \ll \log(1/\e)$.
Let $\ZEps\in H^1(\hM_{\e\v_\e}^{R_\e};\R^2)$ satisfy 
\[
|d\ZEps - \hP_{\e \v_\e}| \lesssim \frac{\e|\v_\e|}{\r}.
\]
Then there exists a function $f_\e\in H^1(\hM_{\e\v_\e}^{R_\e};\R^2)$ satisfying 
\[
f_\e = \ZEps \qquad \text{for $r = R_\e$},
\]
and
\[
\begin{split}
&\frac{1}{\e^2\log(1/\e)}\int_{\hM_{\e\v_\e}^{R_\e}\setminus \hM_{\e\v_\e}^{C\e^s}} \bbW(df_\e\circ \hP_{\e\v_\e}^{-1}-I)\, \hVolumeEVE \\
&\qquad = \Ilin_0(\v_\e)\brk{s + O\brk{\e^{1-s}+ \frac{|\log R_\e|+1}{\log(1/\e)}}},
\end{split}
\]
where the constants are independent of $s$ (they depend on $C$, $K$ and the constant in the pointwise bound on $d\ZEps-\hP_{\e\v_\e}$). 
\end{corollary}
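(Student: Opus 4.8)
The plan is to combine Proposition~\ref{prop:cell_limsup_Quad2}, which supplies a map realizing the sharp energy asymptotic on $\hM_{\e\v_\e}^{R_\e}\setminus\hM_{\e\v_\e}^{C\e^s}$ but with the wrong trace on $\{r=R_\e\}$, with the preceding lemma, which repairs the trace at the cost of an $O(\e^2|\v_\e|^2)$ energy error.

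Concretely, I would let $f_\e^0$ be the map $f_\e$ of Proposition~\ref{prop:cell_limsup_Quad2}, so that $df_\e^0=\hQ_{\e\v_\e}+\e\beta_{\v_\e}\circ d\hZ_{\e\v_\e}$. By \eqref{eq:beta_v_bound} and Lemma~\ref{lem:frakr=r} it obeys $|df_\e^0-\hQ_{\e\v_\e}|\lesssim\e|\v_\e|/\r$; since $K$ is compact in $\R^2\setminus\{0\}$ and $R_\e$ is bounded with $\log(1/R_\e)\ll\log(1/\e)$, one has $\e|\v_\e|\ll C\e^s\ll R_\e$ and $R_\e>10|\v_\e|$ for small $\e$, so the preceding lemma applies to $(f_\e^0,\ZEps)$. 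It produces $f_\e\in\HOne(\hM_{\e\v_\e}^{R_\e};\R^2)$ with $f_\e=\ZEps$ on $\{r=R_\e\}$, still obeying $|df_\e-\hQ_{\e\v_\e}|\lesssim\e|\v_\e|/\r$; inspecting its proof, $f_\e$ coincides with a translate of $f_\e^0$ on $\{r<2^{-k-1}R_\e\}$, with $\ZEps$ on $\{r>2^{-k}R_\e\}$, carries $\Quad$-energy $\lesssim\e^2|\v_\e|^2$ on the single transition annulus, and (by the very choice of the index $k$) satisfies $\int_{A_j}\Quad(d\ZEps\circ\hQ_{\e\v_\e}^{-1}-I)\,\dVol_{\hg_{\e\v_\e}}\le\int_{A_j}\Quad(df_\e^0\circ\hQ_{\e\v_\e}^{-1}-I)\,\dVol_{\hg_{\e\v_\e}}$ on every outer annulus $A_j$, $j<k$.

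For the upper bound I would split $\hM_{\e\v_\e}^{R_\e}\setminus\hM_{\e\v_\e}^{C\e^s}$ into the octave annuli it meets and combine these three facts with the disjointness of the annuli to get
\[
\int_{\hM_{\e\v_\e}^{R_\e}\setminus\hM_{\e\v_\e}^{C\e^s}}\Quad(df_\e\circ\hQ_{\e\v_\e}^{-1}-I)\,\dVol_{\hg_{\e\v_\e}}\le\int_{\hM_{\e\v_\e}^{R_\e}\setminus\hM_{\e\v_\e}^{C\e^s}}\Quad(df_\e^0\circ\hQ_{\e\v_\e}^{-1}-I)\,\dVol_{\hg_{\e\v_\e}}+C\e^2|\v_\e|^2 .
\]
Inserting Proposition~\ref{prop:cell_limsup_Quad2} for the right-hand side and absorbing $C\e^2|\v_\e|^2=\e^2\log(1/\e)\,\Ilin_0(\v_\e)\,O(1/\log(1/\e))$ (using that $\Ilin_0$ is positive-definite, hence $\Ilin_0(\v_\e)\simeq|\v_\e|^2$ on $K$) yields the $\le$ half. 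The step I expect to be the crux is precisely this localization: the preceding lemma only bounds the energy of $f_\e$ over \emph{all} of $\hM_{\e\v_\e}^{R_\e}$, whereas the contribution of the core region $\hM_{\e\v_\e}^{C\e^s}$ to the energy of $f_\e^0$ is of order $(1-s)\,\e^2\log(1/\e)|\v_\e|^2$, which is \emph{not} negligible next to the target $s\,\e^2\log(1/\e)\Ilin_0(\v_\e)$; so one cannot simply estimate the energy over the annulus by the energy over the whole body, and must track \emph{where} the lemma's gluing takes place and invoke the minimality of $k$ to keep the comparison annulus by annulus.

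For the lower bound I would change variables via $\hZ_{\e\v_\e}$ onto $B_{R_\e}\setminus B_{C\e^s}$. As explained in Section~\ref{sec:admissible_strain}, the pushforward of $(df_\e-\hQ_{\e\v_\e})\circ d\hZ_{\e\v_\e}^{-1}$ is a curl-free matrix field $\beta$ with $\oint_C\beta=-\e\v_\e$, i.e.\ an admissible strain in the sense of \eqref{eq:Xdeltav}, so by the scaling invariance of the cell problem (item (b) following \eqref{eq:Ecell}) and the definition of $\Ilin_\delta$ with $\delta=C\e^s/R_\e$,
\[
\int_{B_{R_\e}\setminus B_{C\e^s}}\Quad(\beta)\,\dVol_\euc\ \ge\ \e^2\,\Ilin_\delta(\v_\e)\,\log(1/\delta).
\]
The pointwise bound on $f_\e$ forces $|\beta|\lesssim\e|\v_\e|/r$, and $df_\e\circ\hQ_{\e\v_\e}^{-1}-I$ differs from $\beta$ (after pullback) only through composition with $d\hZ_{\e\v_\e}\circ\hQ_{\e\v_\e}^{-1}=I+O(\e|\v_\e|/r)$ and through the volume distortion $\dVol_{\hg_{\e\v_\e}}=(1+O(\e|\v_\e|/r))\,(\hZ_{\e\v_\e}^{-1})^\#\dVol_\euc$ of \eqref{eq:volume_change}; since $r\gtrsim C\e^s$ throughout, both produce only an $O(\e^{1-s})$ relative error. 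Finally using $\log(1/\delta)=s\log(1/\e)+O(|\log R_\e|+1)$ together with \eqref{eq:Idelta_I0} (and again $\Ilin_0(\v_\e)\simeq|\v_\e|^2$) turns the displayed bound into
\[
\frac{1}{\e^2\log(1/\e)}\int_{\hM_{\e\v_\e}^{R_\e}\setminus\hM_{\e\v_\e}^{C\e^s}}\Quad(df_\e\circ\hQ_{\e\v_\e}^{-1}-I)\,\dVol_{\hg_{\e\v_\e}}\ \ge\ \Ilin_0(\v_\e)\Big(s+O\Big(\e^{1-s}+\tfrac{|\log R_\e|+1}{\log(1/\e)}\Big)\Big),
\]
which matches the upper bound and proves the corollary.
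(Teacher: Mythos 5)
Your proposal is correct and follows the route the paper intends: the corollary is stated without an explicit proof and is meant to be the combination of Proposition~\ref{prop:cell_limsup_Quad2} with the preceding boundary-adjustment lemma, which is exactly what you do. You also correctly identify the two points this combination leaves implicit. First, the lemma only controls the $\Quad$-energy of the glued map over all of $\hM_{\e\v_\e}^{R_\e}$, whereas the corollary needs it on $\hM_{\e\v_\e}^{R_\e}\setminus\hM_{\e\v_\e}^{C\e^s}$, and the core contribution of the comparison map is of order $(1-s)\,\e^2\log(1/\e)|\v_\e|^2$, hence not absorbable into the error; your annulus-by-annulus appeal to the minimality of the index $k$ in the lemma's proof, together with the $O(\e^2|\v_\e|^2)$ cost of the single transition annulus and of at most one partial octave at the inner edge, is precisely the localization needed (note in passing that the lemma's stated error $C|\v_\e|^2$ should be read as the $C\e^2|\v_\e|^2$ its proof actually yields, which is what you use). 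Second, since the asserted asymptotic is a two-sided equality, a matching lower bound for the glued map is required and is not supplied by the lemma; your argument---push forward by $\hZ_{\e\v_\e}$, observe that $\e^{-1}(df_\e-\hQ_{\e\v_\e})\circ d\hZ_{\e\v_\e}^{-1}$ lies in $X^{R_\e}_{C\e^s}(\v_\e)$, and invoke the scale-invariant cell problem together with \eqref{eq:Idelta_I0} and the pointwise bound to control the change-of-variables errors---is robust because it uses only closedness, the circulation condition and the pointwise bound, so it holds regardless of where the gluing took place. No gaps.
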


\subsection{The self-energy function}

\begin{definition}
Let $\bbS$ be a dislocation structure.
The \Emph{self-energy function of a dislocation structure} is a function $\SEF:\R^2\to [0,\infty)$ given by
\beq
\SEF(\v) = \inf \BRK{\sum_{i=1}^N \lambda_i \Ilin_0(\v_i) ~:~ \v_i \in \bbS, \, N\in \mathbb{N}, \, \lambda_i >0,\, \sum_{i=1}^N \lambda_i \v_i = \v}.
\label{eq:SigmaS}
\eeq
\end{definition} 

\begin{lemma}
\label{lem:4.15}
The function $\SEF$ satisfies the following properties:
\begin{enumerate}[itemsep=0pt,label=(\alph*)]
\item It is positively 1-homogeneous, i.e., $\SEF(\alpha \v) = \alpha \,\SEF(\v)$ for every $\alpha>0$.
\item It is convex.
\item There exists a constant $K>0$ such that the infimum in \eqref{eq:SigmaS} can be limited to $\v_i\in \bbS$ satisfying $|\v_i|<K$.
\item The infimum in \eqref{eq:SigmaS} is in fact a minimum (with $|\v_i|<K$).
\end{enumerate}
\end{lemma}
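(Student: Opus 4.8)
The plan is to establish the four properties in order, building on the fact (item (a) after \eqref{eq:Ecell} and item (d) of the subsequent list) that $\Ilin_0$ is a positive-definite quadratic form, so $\Ilin_0(\v) \simeq |\v|^2$.

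First, for \textbf{1-homogeneity}: given $\alpha > 0$, there is a bijection between admissible decompositions of $\v$ and of $\alpha\v$, obtained by rescaling the coefficients $\lambda_i \mapsto \alpha\lambda_i$ (keeping the $\v_i\in\bbS$ fixed). Since $\sum \alpha\lambda_i \Ilin_0(\v_i) = \alpha \sum\lambda_i \Ilin_0(\v_i)$, taking infima gives $\SEF(\alpha\v) = \alpha\SEF(\v)$ immediately.

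Second, for \textbf{convexity}: it suffices, given 1-homogeneity, to show subadditivity, $\SEF(\v + \w) \le \SEF(\v) + \SEF(\w)$. Take near-optimal decompositions $\v = \sum_i \lambda_i \v_i$ and $\w = \sum_j \mu_j \w_j$; their concatenation is an admissible decomposition of $\v+\w$ with total energy $\sum_i\lambda_i\Ilin_0(\v_i) + \sum_j\mu_j\Ilin_0(\w_j)$, and letting the two approximations tend to optimal yields the inequality. Convexity of $\SEF$ on $\R^2$ then follows from positive 1-homogeneity together with subadditivity in the standard way.

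Third, for the \textbf{a priori bound} $|\v_i| < K$ in item (c): this is where the quantitative estimate $\Ilin_0(\v_i)\gtrsim |\v_i|^2$ does the work. Fix a reference competitor, e.g. for $\v\in\bbS$ take the trivial decomposition $N=1$, $\lambda_1=1$, $\v_1=\v$, so $\SEF(\v)\le \Ilin_0(\v) \lesssim |\v|^2$; for general $\v\in\R^2$, write $\v$ as a bounded combination of at most two basis vectors from $S$ (the generators of $\bbS$) and get $\SEF(\v)\le C_1|\v|^2$ for a universal $C_1$. Now in any decomposition $\v=\sum_i\lambda_i\v_i$ whose energy is within, say, a factor $2$ of the infimum, each term satisfies $\lambda_i\Ilin_0(\v_i) \le 2C_1|\v|^2$, hence $\lambda_i |\v_i|^2 \lesssim |\v|^2$. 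Crucially, since $\bbS = \operatorname{span}_{\mathbb Z} S\setminus\{0\}$ is discrete, there is a minimal length $\ell_0>0$ among its elements, so $\lambda_i\ge \lambda_i |\v_i|^2/(\text{stuff}) \gtrsim$ a lower bound; more to the point, $\lambda_i|\v_i| \le (\lambda_i|\v_i|^2)/\ell_0 \lesssim |\v|^2/\ell_0$, and combined with $\sum_i\lambda_i\v_i = \v$ one can show that any $\v_i$ with $|\v_i|$ very large contributes a term $\lambda_i\Ilin_0(\v_i)\gtrsim \lambda_i|\v_i|^2 = (\lambda_i|\v_i|)\,|\v_i| \gtrsim |\v_i|$ (using $\lambda_i|\v_i| \gtrsim$ a bound coming from how it must be cancelled by other terms) — so once $|\v_i| > K := C_2|\v|$ for a suitable constant, dropping that term and redistributing, or simply observing the term alone exceeds $2C_1|\v|^2$, contradicts near-optimality. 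Hence only $\v_i$ with $|\v_i| \le K$ can appear. (The cleanest route: any single term with $\Ilin_0(\v_i) > C_1|\v|^2$ is already too expensive, and since $\Ilin_0(\v_i)\ge c|\v_i|^2$ this forces $|\v_i|^2 \le (C_1/c)|\v|^2$, giving $K$.) I expect \emph{this} step — pinning down the truncation bound cleanly — to be the main obstacle, since one must argue that restricting the index set does not raise the infimum, which really does use discreteness of $\bbS$ to prevent an infinite proliferation of small-coefficient, large-vector terms that telescope.

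Fourth, for \textbf{attainment} in item (d): by item (c) we may restrict to $\v_i\in \bbS\cap \overline{B_K}$, a \emph{finite} set since $\bbS$ is a lattice (minus the origin). So $\SEF(\v)$ is an infimum over tuples $(\lambda_1,\dots,\lambda_m)\in\R_{\ge 0}^m$ (with $m$ the fixed cardinality of that finite set) subject to the single linear constraint $\sum_i \lambda_i \v_i = \v$. The objective $\sum_i \lambda_i \Ilin_0(\v_i)$ is linear — in fact a finite-dimensional linear program — and from $\lambda_i\Ilin_0(\v_i)\le \SEF(\v)+1 \lesssim |\v|^2$ together with $\Ilin_0(\v_i)\gtrsim 1$ we get that the relevant feasible set is a bounded (hence compact) polytope; a linear functional attains its minimum on a compact polytope, so the infimum is a minimum. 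This completes the proof.

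Throughout, the only inputs are: positive-definiteness and quadratic growth of $\Ilin_0$ (established before \eqref{eq:Quad_df_e_optimal}), and discreteness/lattice structure of $\bbS$ from Definition \ref{def:disloc_structure}.
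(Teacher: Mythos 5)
Parts (a), (b) and (d) of your proposal are correct and essentially the paper's argument: (a) is the same rescaling of coefficients, (b) via subadditivity plus 1-homogeneity is equivalent to the paper's direct concatenation of near-optimal decompositions, and (d) is the same compactness argument (finitely many admissible $\v_i$ by (c) and discreteness of $\bbS$, coefficients bounded by $\SEF(\v)/\Ilin_0(\v_i)$, linear objective on a compact set).

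Part (c), however, has a genuine gap, and it is exactly at the step you flagged as the main obstacle. Your ``cleanest route'' asserts that a term with $\Ilin_0(\v_i)>C_1|\v|^2$ is ``too expensive,'' but the term in the sum is $\lambda_i\,\Ilin_0(\v_i)$, not $\Ilin_0(\v_i)$: a near-optimal decomposition only gives $\lambda_i\Ilin_0(\v_i)\lesssim|\v|^2$, hence $\lambda_i|\v_i|^2\lesssim|\v|^2$, which places no bound at all on $|\v_i|$ when $\lambda_i$ is small. So the contradiction you invoke does not arise, and the ``infinite proliferation of small-coefficient, large-vector terms'' you worry about is not excluded by anything you wrote. (Your candidate $K\simeq C_2|\v|$ would also not serve the paper's later use of the lemma, where $K$ must be a universal constant depending only on the dislocation structure.) The argument that actually works is a \emph{substitution} argument: fix a basis $\{\u_1,\u_2\}\subset\bbS$ and write any $\v_i\in\bbS$ as $\sum_{j=1}^2\lambda_j\xi_j\u_j$ with $\xi_j=\pm1$, $\lambda_j>0$; since $\v_i\mapsto\lambda_1+\lambda_2$ is a norm, the replacement cost $\sum_j\lambda_j\Ilin_0(\u_j)$ grows only \emph{linearly} in $|\v_i|$, whereas the term it replaces satisfies $\Ilin_0(\v_i)\gtrsim|\v_i|^2$. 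Hence for $|\v_i|$ larger than a universal $K$ the replacement strictly decreases the energy of any decomposition in which $\v_i$ appears (with whatever coefficient $\lambda_i$, since both sides scale by $\lambda_i$), so such $\v_i$ can be excluded from the infimum. This linear-versus-quadratic comparison is the missing idea.
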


\begin{proof}
\begin{enumerate}[itemsep=0pt,label=(\alph*)]
\item For every $\v\in\R^2$ and $\alpha>0$,
\[
\begin{split}
\SEF(\alpha\v) &= \inf \Big\{\sum_{i=1}^n \lambda_i \Ilin_0(\v_i) ~:~ \v_i \in \bbS, \, n\in \mathbb{N}, \, \lambda_i >0,\, \sum_{i=1}^n \lambda_i \v_i = \alpha \v\Big\} \\
&= \alpha \inf \Big\{\sum_{i=1}^n \frac{\lambda_i}{\alpha} \Ilin_0(\v_i) ~:~ \v_i \in \bbS, \, n\in \mathbb{N}, \, \frac{\lambda_i}{\alpha} >0,\, \sum_{i=1}^n \frac{\lambda_i}{\alpha} \v_i = \v\Big\} \\
&= \alpha \, \SEF(\v).
\end{split}
\]
\item Let $\u,\v\in\R^2$, let $\e>0$ and let $\u_1,\dots,\u_m\in \bbS$, $\mu_1,\dots,\mu_m>0$,  $\v_1,\dots,\v_n\in \bbS$ and $\lambda_1,\dots,\lambda_n>0$, such that
\[
\sum_{i=1}^m \mu_i \Ilin_0(\u_i) < \SEF(\u) + \e
\Textand
\sum_{i=1}^n \lambda_i \Ilin_0(\v_i) < \SEF(\v) + \e,
\]
where
\[
\sum_{i=1}^m \mu_i \u_i = \u
\Textand
\sum_{i=1}^n \lambda_i \v_i = \v.
\]
Then, for $t\in(0,1)$,
\[
\sum_{i=1}^m t \mu_i \u_i + \sum_{i=1}^n (1-t) \lambda_i \v_i = t\u + (1-t)\v,
\]
whereas
\[
\sum_{i=1}^m t \mu_i \Ilin_0(\u_i) +  \sum_{i=1}^n (1-t) \lambda_i \Ilin_0(\v_i) < t \SEF(\u) + (1-t) \SEF(\v) + \e,
\]
i.e.,
\[
\SEF(t\u + (1-t)\v) < t \SEF(\u) + (1-t) \SEF(\v) + \e.
\]
Since this holds for every $\e>0$, it follows that $\SEF$ is convex. 

\item  
Since $\Ilin_0(\v)\simeq |\v|^2$, there exists a $c>0$ such that
\[
c|\v|^2  \le \Ilin_0(\v)
\qquad
\text{for every $\v\in\R^2$}.
\]
Let $\{\u_1,\u_2\}\subset\bbS$ be a basis for $\R^2$ and let $\v\in\R^2$ be given by
\[
\v = \sum_{i=1}^2 \lambda_i \xi_i \u_i, \qquad\lambda_i>0, \,\, \xi_i=\pm1.
\]
Note that the map $\v \mapsto \lambda_1 + \lambda_2$ is a norm on $\R^2$, and thus $|\v| \ge c_1 \sum_{i=1}^2\lambda_i$ for some $c_1>0$.
Then, 
\[
\begin{split}
\sum_{i=1}^2 \lambda_i \Ilin_0(\xi_i\u_i) 
&= \sum_{i=1}^2 \lambda_i \Ilin_0(\u_i) \\
&\le \max_i\{\Ilin_0(\u_i)\} \sum_{i=1}^{2} \lambda_i \\
&\le c_1^{-1}\max_i\{\Ilin_0(\u_i)\} |\v| \\
&\le \frac{\max_i\{\Ilin_0(\u_i)\}}{c_1 c |\v|} \Ilin_0(\v)
\end{split}
\]
Thus, 
\[
\sum_{i=1}^2 \lambda_i \Ilin_0(\u_i) < \Ilin_0(\v)
\]
for every $\v\in\bbS$ satisfying
\[
|\v| > \frac1{c_1c} \max_i\{\Ilin_0(\u_i)\} \equiv K.
\]
Hence, the infimum defining $\SEF$ can be taken over $\bbS\cap \overline{B_K}$.

\item Since every bounded subset of $\bbS$ is finite,  by the previous item, 
the infimum in \eqref{eq:SigmaS} can be limited to $\v_i\in \bbS$ in a finite set $\{\u_1,\dots,\u_k\}$. For every 
$\v \in \R^2$, the linear combinations
\[
\sum_{i=1}^k \lambda_i \u_i
\]
can be limited to the compact set
\[
0\le  \lambda_i \le \SEF(\v)/\Ilin_0(\u_i),
\]
hence the infimum is a minimum.
\end{enumerate}
\end{proof}

\section{The geometry of multiple edge-dislocations}

Having constructed body manifolds containing one edge-dislocation, we generalize this construction to bodies containing multiple edge-dislocations, and eventually take their number to infinity.

\subsection{Bodies with multiple edge-dislocations}
\label{sec:def_bodies}

\begin{definition}\label{def:body_w_disloc}
\Emph{A body with $m$ edge-dislocations} 
is a two-dimensional elastic body $(\M,\P)$ (\defref{def:elastic_body}) having finite diameter, and satisfying the following additional properties:
\begin{enumerate}[itemsep=0pt,parsep=0pt,label=(\alph*)]
\item $M$ is a manifold with boundary, diffeomorphic to a plane with $m$ open holes,
\[
\R^2 \setminus \brk{\bigcup_{i=1}^{m} B_{r}(x_i)}.
\]
\item $\P$ is closed, $d\P=0$.
\item Any hole in $\M$ has a neighborhood $\M' \subset \M$ such that $(\M',\P|_{\M'})$
is a body with an edge-dislocation (\defref{def:single_disloc}), having an regular inner boundary (\defref{def:regular_dM}).
In particular, every hole is associated with a Burgers vector $\v^i\in\R^2$, $i=1,\ldots,m$. 

\item Denote by $r_{ij}$, $1\le i < j < m$ the distance between the boundary of the $i$-th and $j$-th hole, and by $\ell_i$ the distance between the $i$-th hole and the outer-boundary of $\M$. Let
\[
\rho =  \min \{r_{ij},\ell_i ~:~ i,j\}.
\]
We assume that 
\[
\rho > \max_i 20|\v^i|.
\]
\end{enumerate}
\end{definition}

To every point $p\in M$, we associate by \eqref{eq:dist_disloc} its distance $\r_i(p)$ to the $i$-th dislocation.
We further denote by $i(p)$ the index of the dislocation closest to $p\in M$ (which is well-defined for almost every $p\in M$), and by $\r(p) = \r_{i(p)}(p)$ the distance to the closest dislocation.

In line with the comments following \defref{def:single_disloc} we note the following:

\begin{enumerate}
\item The Burgers vector associated with the $i$-th dislocation can alternatively be represented by a parallel vector field $\b^i$, where 
\[
\oint_C \Pi^p = \b^i_p
\]
for every simple, closed, oriented loop $C$ surrounding (only) the $i$-th dislocation, and every reference point $p\in\M$. The relation between $\b^i$ and $\v^i$ is $\v^i = \P(\b^i)$, where the right-hand side is a constant function on $\M$.

\item Alternatively, one can think of the Burgers vector as a bounded linear functional (i.e., an $\R^2$-valued measure) , $\torsion:C_c(\M;\R^2)\to\R$, defined by 
\[
\torsion(\psi) =  \int_{\dM} \TR(\P\otimes\psi).
\]
For $\psi\in C_c(\M;\R^2)$ equal to a constant vector $\u^i\in\R^2$ on the boundary of the $i$-th dislocation for $i=1,\dots,m$,
\beq
\label{eq:Tpsi_const_on_bdry}
\torsion(\psi) = \sum_{i=1}^{m} \ip{\v^i,\u^i}.
\eeq
This interpretation of the Burgers vectors will come out handy when considering the limit of infinitely-many dislocations.
Note that for $\|\psi\|_\infty \le 1$,
\[
|\torsion(\psi)| \le \operatorname{Length}(\dM) \lesssim \sum_{i=1}^m |\v^i|,
\]
where $\dM$ in the middle term only accounts for the inner boundary, and the last inequality follows from the regularity of the inner boundary (the Lipschitz equivalence assumption in \defref{def:regular_dM}). 
On the other hand, for $\psi\in C_c(\M;\R^2)$, with $\|\psi\|_\infty\le 1$, satisfying $\psi = \v_i/|\v_i|$ on the boundary of the $i$-th dislocation,
\[
\torsion(\psi) = \sum_{i=1}^m |\v^i|,
\]
hence
\beq
\|\torsion\|_{\calM(\M;\R^2)} \simeq \sum_{i=1}^{m} |\v^i|.
\label{eq:norm_calT}
\eeq
Finally, for $\psi\in \HOneZero(\M)$, since $\P$ is closed,
\beq
\label{eq:Tpsi_H_1_0}
\torsion(\psi) = - \int_\M d \TR (\P\otimes \psi) = \int_{\M} \TR(\P\wedge d\psi).
\eeq

\item If the system is endowed with a dislocation structure $\bbS$ (\defref{def:disloc_structure}), we further impose that $\v^i\in\bbS$ for all $i=1,\dots,m$, or, if in addition a length-scale $\e$ is introduced, $\v^i \in \e \bbS$. 

\item 
Condition (d) implies that the annuli $A_i$ in the definition of a regular inner boundary are separated from each other and from the outer-boundary.
\end{enumerate}

\subsection{Constructing bodies with multiple dislocations}
\label{sec:disloc_construction}

This section is analogous to Sections~\ref{sec:coordinate_construction}--\ref{sec:dev_from_euc}:
we construct bodies with multiple edge-dislocations, and estimate their geometric deviation from a Euclidean, defect-free body.

One way of constructing bodies with multiple dislocations is by smoothly gluing bodies, each having a single edge-dislocation. 
For example, one can construct bodies with dislocations as in \secref{sec:coordinate_construction}, cut out subsets having rectangular boundaries, and smoothly glue one rectangle to the other. 
Such an approach was used in previous work \cite{KM15,KM16,EKM20}. 
Its upside is that the geometry of the inner boundaries is known explicitly; its drawback, however, is the difficulty to obtain sharp enough energy estimates on the geometric deviation of such bodies from a Euclidean domain.
Thus, we use here a different approach, borrowing ideas from the construction of strains in \cite[Theorem~4.6]{MSZ14} to construct an implant map $\P$.

Let $\W\subset \R^2$ be a bounded Lipschitz domain; a subset of $\W$ will serve both as a body manifold, and as the Euclidean domain to which the body is compared.
Let $p_1,\ldots,p_m\in \W$ and $\v^1,\ldots, \v^m \in \R^2$ be given. 
We think of $p_i$ as the locus  of the $i$-th dislocations and of $\v^i$ as its Burgers vector .
Denote 
\[
b = \max_{i} |\v^i|,
\]
and 
\[
a = \min\BRK{\frac{1}{3} \min_{i\ne j} |p_i - p_j|, \, \frac12 \min_{i} \dist(p_i,\pl \W)}.
\]
That is, the discs $B_a(p_i)$ are disjoint, separated from each other and from the boundary by a distance of at least $a$. We further assume that $10b<a$.

By Comment 2 following \defref{def:body_w_disloc}, burgers vectors can be identified with $\R^2$-valued measures on the body manifold. We introduce two $\R^2$-valued Radon measures on $\W$,
\[
\mu = \sum_{i=1}^m \v^i  \otimes \delta_{p_i}  
\Textand
d\tmu = \sum_{i=1}^m  \v^i \otimes \frac{\chi_{B_a(p_i)}}{\pi a^2} \,\VolumeE  ,
\]
where the second measure is a ``smeared" version of the first.

In order to define a body with edge-dislocations, we specify a manifold $M\subset\W$, along with a closed frame field $\P\in\W^1(\M;\R^2)$.
For $i=1,\ldots,m$, we introduce shifted coordinates $(x_i,y_i) = (x,y) - p_i$, and set $r_i = (x_i^2 + y_i^2)^{1/2}$.
Define first the following 1-forms on $\W\setminus \{p_1,\ldots,p_m\}$,
\[
\alpha_i = \frac{1}{2\pi}\brk{-\frac{y_i}{r_i^2}dx + \frac{x_i}{r_i^2}dy} \chi_{B_a(p_i)} 
\Textand
\alpha = \sum_{i=1}^m   \v^i \otimes \alpha_i,
\]
i.e., $\alpha$ is a discontinuous $\R^2$-valued 1-form, whose support is a disjoint union of $a$-neighborhoods of the points $p_i$. 
The forms $\alpha_i$ are closed in each of the punctured balls $B_a(p_i)$ and trivially closed in their complements. For a positively-oriented loop $C_i$ in $B_a(p_i)$ homotopic to $\pl B_a(p_i)$,
\[
\oint_{C_i} \alpha_i = 1,
\qquad\text{hence}\qquad
\oint_{C_i} \alpha = \v_i.
\]
Thus, the $\R^2$-valued form $\alpha$ satisfies the circulation condition required by $\P$ when restricted to the union of the sets $\{r_i<a\}$. 
The problem is that $\alpha$ is discontinuous on the circles $r_i = a$. To correct this, we define 1-forms on $\W$,
\[
\beta_i = \frac{1}{2\pi}\brk{-\frac{y_i}{a^2}dx + \frac{x_i}{a^2}dy} \chi_{B_a(p_i)}
\Textand
\beta = \sum_{i=1}^m   \v^i \otimes \beta_i,
\]
which coincide with $\alpha_i$ and $\alpha$, respectively, on the circles $r_i = a$. 
The ``corrected" $\R^2$-valued 1-form $\alpha-\beta$ is continuous in $\W\setminus \{p_1,\ldots,p_m\}$, however it is not closed and does not satisfy the required circulation around the points $p_i$. To retrieve the closedness and the circulation while retaining continuity, we introduce an additional correction, defining 
1-forms $\gamma_i$ on $\W$ solving the elliptic first-order differential system
\[
\begin{cases}
d\gamma_i = \frac{1}{\pi a^2}\chi_{B_a(p_i)} \VolumeE
\textand d^*\gamma_i=0 & \text{in $\W$} \\
\gamma_i(\frakn) = 0 & \text{on $\pl\W$},
\end{cases}
\]
where $d^*$ denotes the codifferential and $\frakn$ is the unit normal to the boundary. 
Then, we set
\[
\gamma = \sum_{i=1}^m  \v^i \otimes \gamma_i.
\]
Note that $d\gamma_i = d\beta_i$ in the sets $\{r_i<a\}$ and $\{r_i>a\}$, however, $\gamma_i$, unlike $\beta_i$ is continuous. 
Finally, let 
\[
\P = \IdRtwo + \alpha - \beta + \gamma.
\]
By construction, $\P$ is continuous in $\W\setminus \{p_1,\ldots,p_m\}$, and 
\[
d\P = 0,
\]
i.e., $\P$ is closed in $\W\setminus \{p_1,\ldots,p_m\}$. 
Moreover, since $\IdRtwo$ and $\beta - \gamma$ are exact in the discs $B_a(p_i)$, it follows that for every positively-oriented loop $C_i$ homotopic to $\pl B_a(p_i)$, 
\[
\oint_{C_i} \P  =  \v_i.
\]
In terms of distributional derivatives, $d\P  =\mu$ in $\W$. 

For $\P$ to qualify as an implant map of a body with $m$ dislocations, it must be a frame field, which is only guaranteed far enough from the points $p_i$, i.e., on a submanifold $\M\subset \W\setminus \{p_1,\ldots,p_m\}$. To determine $\M$, we need uniform estimates on $\alpha-\beta+\gamma$:

\begin{lemma}
\label{lem:uniform_estimates}
The following inequalities hold,
\[
\begin{gathered}
\|\alpha\|_{L^\infty\brk{\W\setminus \brk{\bigcup_{i=1}^m B_{3|\v^i|/2}(p_i)}}} < \frac19 \\
\|\beta\|_{L^\infty(\W)} \le \frac{b}{2\pi a} < \frac{1}{60}  \\
\|\gamma\|_{L^\infty(\W)} \lesssim \frac{b}{a^2},
\end{gathered}
\]
where all the norms are with respect to the Euclidean metric on $\W$.
\end{lemma}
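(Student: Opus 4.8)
The plan is to prove the three bounds separately: the ones on $\alpha$ and $\beta$ are pointwise, since their summands have disjoint support, while the bound on $\gamma$ must be extracted from the fact that $\gamma$ as a whole solves a single elliptic system.

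\textbf{Estimates on $\alpha$ and $\beta$.} Since $a\le\tfrac13\min_{i\ne j}|p_i-p_j|$, the balls $B_a(p_i)$ are pairwise disjoint, so at any $x\in\W$ at most one summand of $\alpha$, resp.\ of $\beta$, is nonzero. A direct computation of Euclidean covector norms gives $|\alpha_i(x)| = \tfrac{1}{2\pi r_i}$ and $|\beta_i(x)| = \tfrac{r_i}{2\pi a^2}$ on $B_a(p_i)$, hence $|(\v^i\otimes\alpha_i)(x)| = \tfrac{|\v^i|}{2\pi r_i}$ and $|(\v^i\otimes\beta_i)(x)| = \tfrac{|\v^i|\,r_i}{2\pi a^2}$. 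For $\beta$ we use $r_i<a$ on $B_a(p_i)$ together with $10b<a$ to get $\|\beta\|_{L^\infty(\W)}\le\tfrac{b}{2\pi a}<\tfrac{1}{20\pi}<\tfrac{1}{60}$. For $\alpha$, on $\W\setminus\bigcup_j B_{3|\v^j|/2}(p_j)$ the unique relevant index satisfies $r_i\ge\tfrac32|\v^i|$ (and $\tfrac32|\v^i|\le\tfrac32 b<a$, consistent with $x\in B_a(p_i)$), giving $\|\alpha\|_{L^\infty}\le\tfrac{1}{3\pi}<\tfrac19$, while at points lying in no ball $\alpha$ vanishes.

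\textbf{The estimate on $\gamma$: the main point.} Here the $\gamma_i$ do not have disjoint support, so a pointwise argument is unavailable. Instead we observe that $\gamma$ itself solves the elliptic system $d\gamma = \sum_i\v^i\otimes d\gamma_i = \tmu$ with $\delta\gamma = 0$ in $\W$ and $\gamma(\frakn)=0$ on $\pl\W$. By the disjointness of the $B_a(p_i)$, the right-hand side $\tmu$ has a single nonzero summand at each point, so $\|\tmu\|_{L^\infty(\W)}\le\tfrac{b}{\pi a^2}$ and $\operatorname{supp}\tmu\subset\bigcup_iB_a(p_i)$. The combinatorial input is that the balls $B_{a/2}(p_i)$ are pairwise disjoint and contained in $\W$ (the latter because $\dist(p_i,\pl\W)\ge 2a$), whence $m\le 4|\W|/(\pi a^2)$. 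Consequently, for any fixed $q>2$,
\[
\|\tmu\|_{L^q(\W)}^q = \sum_{i=1}^m |\v^i|^q(\pi a^2)^{1-q} \le m\,b^q(\pi a^2)^{1-q} \le 4|\W|\,b^q(\pi a^2)^{-q},
\]
so that $\|\tmu\|_{L^q(\W)}\lesssim b/a^2$ with a constant depending only on $\W$ and $q$, not on $m$, $a$ or the $p_i$. To pass from this to $\|\gamma\|_{L^\infty}$, fix some $q>2$ and apply the $L^q$ theory for the div--curl (Hodge) system on the bounded Lipschitz domain $\W$ to the well-posed system above: $\|\gamma\|_{W^{1,q}(\W)}\lesssim\|d\gamma\|_{L^q(\W)} = \|\tmu\|_{L^q(\W)}$, and then Morrey's embedding $W^{1,q}(\W)\hookrightarrow C^{0,1-2/q}(\overline\W)$ yields $\|\gamma\|_{L^\infty(\W)}\lesssim\|\tmu\|_{L^q(\W)}\lesssim b/a^2$. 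An equivalent, more elementary route avoiding $L^q$ elliptic estimates is to write $\gamma_i=\star du_i$ with $u_i$ the Green-potential solution of $\Delta u_i = \tfrac{1}{\pi a^2}\chi_{B_a(p_i)}$ with locally constant boundary values, invoke the Green's-function bound $|\nabla_xG_\W(x,y)|\lesssim|x-y|^{-1}$ to obtain $|\gamma_i(x)|\lesssim 1/\max(a,|x-p_i|)$, and sum over $i$ by grouping the $p_i$ into dyadic annuli about $x$, where the $a$-separation bounds the number of $p_i$ with $2^ka\le|x-p_i|<2^{k+1}a$ by $O(4^k)$ and $\diam\W<\infty$ bounds the number of shells, giving $|\gamma(x)|\lesssim b/a^2$.

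\textbf{The main obstacle.} The $\alpha$ and $\beta$ bounds are bookkeeping with disjoint supports; the substance is the $\gamma$ bound, and within it the delicate point is the elliptic estimate on a merely Lipschitz domain — whether through the admissible range of $q$ in the $L^q$ div--curl estimate or through pointwise control of $\nabla_xG_\W$. This is where the implicit constant acquires its dependence on $\W$, and the care required is to confirm that it does not deteriorate with the number of dislocations $m$, with $a$, or with the positions $p_i$; this is precisely what the bound $m\lesssim a^{-2}$ and the $a$-separation of the $B_a(p_i)$ secure.
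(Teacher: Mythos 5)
Your proof is correct and follows essentially the same route as the paper: pointwise disjoint-support bookkeeping for $\alpha$ and $\beta$, and for $\gamma$ the chain $\|\gamma\|_{L^\infty}\lesssim\|\gamma\|_{W^{1,q}}\lesssim\|d\gamma\|_{L^q}\lesssim (ma^2)^{1/q}\,b/a^2\lesssim b/a^2$ for a fixed $q>2$, using elliptic regularity for the div--curl system and the packing bound $ma^2\lesssim 1$. Your only additions are welcome ones — you actually derive $ma^2\lesssim|\W|$ from the disjointness of the balls $B_{a/2}(p_i)$ rather than asserting it, and you sketch an alternative Green's-function argument — but the substance is identical to the paper's proof.
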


While the ratio $b/a^2$ may look strange in terms of dimensions, the constants in the third inequality involve geometric properties of $\W$ that make the estimate of $\|\gamma\|_{L^\infty(\W)}$ dimension-free.

\begin{proof}
The estimates for $\alpha$ and $\beta$ are immediate, using the fact that the supports of $\{\alpha_i\}_{i=1}^m$ and $\{\beta_i\}_{i=1}^m$ are pairwise-disjoint.

Fix $q>2$. Then,
\[
\begin{split}
\|\gamma\|_{L^\infty(\W)} &\lesssim \|\gamma\|_{W^{1,q}(\W)} \lesssim \|d\gamma\|_{L^q(\W)} 
\lesssim \brk{\sum_{i=1}^m \brk{\frac{|\v^i|}{a^2}}^q a^2}^{1/q} 
\lesssim (m  a^2)^{1/q} \frac{b}{a^2} 
\lesssim \frac{b}{a^2},
\end{split}
\]
where the first estimate follows from the Sobolev embedding $L^\infty \hookrightarrow W^{1,q}$, the second estimate follows from elliptic regularity \cite[Theorem 3.2.5]{Sch95}, the third passage follows from an explicit substitution of $d\gamma$, and in the last passage we used the geometric volume bound $m a^2 \lesssim 1$. 
\end{proof}

\begin{proposition}
\label{prop:construct_many_disloc}
There exists a constant $c = c(\W)>0$ such that if $b/a, b/a^2<c$, then $\|-\beta + \gamma\|_{L^\infty(\W)} < 1/9$,
$\P$ is non-degenerate in $\W_\v = \W \setminus \brk{\bigcup_{i=1}^m B_{3|\v^i|/2}(p_i)}$, and $(\W_\v, \P)$ is a body with $m$ dislocations according to \defref{def:body_w_disloc}.
\end{proposition}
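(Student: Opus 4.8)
The plan is to verify the three assertions of \propref{prop:construct_many_disloc} in turn, using \lemref{lem:uniform_estimates} as the main quantitative input. First I would establish the $L^\infty$ bound on the corrector $-\beta+\gamma$: by \lemref{lem:uniform_estimates}, $\|\beta\|_{L^\infty(\W)} < 1/60$ and $\|\gamma\|_{L^\infty(\W)} \lesssim b/a^2$, so choosing $c = c(\W)$ small enough (depending only on the implicit constant in the $\gamma$-estimate, which depends only on $\W$ through the elliptic regularity and Sobolev embedding constants) forces $\|\gamma\|_{L^\infty(\W)} < 1/9 - 1/60$, hence $\|-\beta+\gamma\|_{L^\infty(\W)} < 1/9$. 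Next, on $\W_\v$ the discontinuous form $\alpha$ is supported in $\bigcup_i (B_a(p_i)\setminus B_{3|\v^i|/2}(p_i))$ and there $\|\alpha\|_{L^\infty} < 1/9$ by the first inequality of \lemref{lem:uniform_estimates}. Therefore on all of $\W_\v$ we have $\|\alpha - \beta + \gamma\|_{L^\infty} < 2/9 < 1$, so $Q = \id_{\R^2} + (\alpha-\beta+\gamma)$ is a perturbation of the identity by an operator of norm $< 1$ at each point, hence invertible; thus $Q$ is non-degenerate on $\W_\v$, and $\g := Q^\#\euc$ is a genuine Riemannian metric, making $(\W_\v,\g,Q)$ an elastic body in the sense of \defref{def:elastic_body}.

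It then remains to check the four defining properties of \defref{def:body_w_disloc}. Property (a): $\W_\v = \W \setminus \bigcup_i B_{3|\v^i|/2}(p_i)$ is, since $\W$ is a bounded Lipschitz domain and the discs $B_{3|\v^i|/2}(p_i)$ are pairwise disjoint and compactly contained in $\W$ (as $3|\v^i|/2 < a$ and the $B_a(p_i)$ are disjoint and separated from $\pl\W$), diffeomorphic to a plane with $m$ holes; $\W_\v$ has finite diameter since $\W$ is bounded. Property (b): $dQ = 0$ on $\W\setminus\{p_1,\dots,p_m\} \supset \W_\v$ was shown in the construction (each of $\id_{\R^2}$, $\alpha-\beta$, and $\gamma$ is closed away from the $p_i$, the latter because $d\gamma_i = \tfrac{1}{\pi a^2}\chi_{B_a(p_i)}\VolumeE = d\beta_i$ on $\{r_i\ne a\}$). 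Property (c): any annular submanifold $\M'\subset\W_\v$ containing the single hole $B_{3|\v^i|/2}(p_i)$ has $\oint_{C}Q = \v^i$ for $C$ homotopic to its inner boundary (again from the construction, since $\beta-\gamma$ is exact on $B_a(p_i)$), so $\M'$ satisfies (a)--(c) of \defref{def:single_disloc}; the winding-number condition (d) holds because $\pl B_{3|\v^i|/2}(p_i)$ is a convex Euclidean curve and $\g$ is close to Euclidean there; and one must exhibit an annular neighborhood $A$ of $\pl B_{3|\v^i|/2}(p_i)$ satisfying the three conditions of \defref{def:regular_dM} — here one takes $A$ to be a thin collar, uses that $\g = Q^\#\euc$ with $\|Q - \id\|_{L^\infty}$ small to get the bilipschitz bound (condition (c)) and the inclusion (condition (a)), and uses \thmref{thm:disloc_unique} together with the explicit model $(\hM_{\v^i},\hg_{\v^i},\hQ_{\v^i})$ to realize the required isometric embedding into $\hM_{\v^i}^{4|\v^i|}$ (condition (b)), after possibly shrinking $A$. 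Property (d): the separation parameter $\rho$ is bounded below by a fixed fraction of $a$ (the discs $B_{3|\v^i|/2}(p_i)$ sit inside the well-separated $B_a(p_i)$), and $\max_i 20|\v^i| = 20b < 2a$, so the smallness $b \ll a$ gives $\rho > 20 b \ge \max_i 20|\v^i|$.

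The main obstacle I anticipate is verifying condition (b) of the regular-inner-boundary definition, \defref{def:regular_dM} — namely producing the \emph{isometric} embedding of a collar $A$ of each hole into the model annulus $\hM_{\v^i}^{4|\v^i|}$. The difficulty is that the actual metric $\g = Q^\#\euc$ near $\pl B_{3|\v^i|/2}(p_i)$ is \emph{not} the model metric $\hg_{\v^i}$: it is built from $\id_{\R^2} + \alpha - \beta + \gamma$ rather than from $\hQ_{\v^i}$, and although $\alpha$ carries the same circulation as the model's singular part, the corrector terms $-\beta+\gamma$ perturb it. The resolution is to invoke \thmref{thm:disloc_unique}: since $(\M',\g,Q)$ restricted to a suitable annulus is a complete (after extension) body with an edge-dislocation of Burgers vector $\v^i$, it is isometric, up to removing a piece of the core, to a corresponding submanifold of \emph{any} other such body — in particular of $(\hM_{\v^i},\hg_{\v^i},\hQ_{\v^i})$ — and one must check that the scale of this identification places the collar inside $\hM_{\v^i}^{4|\v^i|}$, which follows from \lemref{lem:frakr=r} and \propref{prop:M_hM_M}-type enclosure estimates once one confirms the inner boundary $\pl B_{3|\v^i|/2}(p_i)$ is (nearly) convex and at distance comparable to $|\v^i|$ from the dislocation. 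Everything else reduces to bookkeeping with the uniform estimates and the smallness of $b/a$ and $b/a^2$.
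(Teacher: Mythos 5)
Your outline of the easy parts (the $L^\infty$ bounds from \lemref{lem:uniform_estimates}, the invertibility of $Q=\id_{\R^2}+(\alpha-\beta+\gamma)$ via the Neumann series, the topology, closedness, circulation, and the separation condition (d)) matches the paper and is fine. The gap is exactly at the point you flag as the "main obstacle": condition (b) of \defref{def:regular_dM}, the isometric embedding of a collar $A$ of each hole into $\hM_{\v^i}^{4|\v^i|}$. Your proposed resolution via \thmref{thm:disloc_unique} does not work. That theorem applies only to \emph{complete} bodies (your parenthetical "after extension" hides a real step, since $Q$ is not defined outside $\W$), and, more seriously, it is purely qualitative: it produces an isometry only after removing subsets of \emph{finite volume} from both bodies, with no control on how large those removed neighborhoods of the cores are relative to $|\v^i|$. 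Condition (b) is a quantitative statement at scale $|\v^i|$ — the collar containing all points within distance $|\v^i|$ of the inner boundary must land inside the model truncated at radius $4|\v^i|$ — and nothing in the uniqueness theorem prevents the discarded sets from being much larger than that. Worse, your appeal to "\propref{prop:M_hM_M}-type enclosure estimates" is circular: \propref{prop:M_hM_M} takes the regular inner-boundary property as a hypothesis, which is precisely what you are trying to verify.

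What the paper does instead is an explicit developing-map argument. It cuts the model annulus $\hM_{\v}^{4|\v|}$ along a geodesic $\hat\gamma$ emanating perpendicularly from the boundary in the direction of the Burgers vector and defines $\hat f(q)=\v+\int_{p_0}^q\hQ_\v$; it does the same for $A=B_{3|\v|}\setminus B_{3|\v|/2}$ equipped with $Q$, cutting along the analogous geodesic $\gamma$ and setting $f(q)=\tfrac32\v+\int_{p_1}^q Q$. Both maps are isometric immersions into $\R^2$ because the respective one-forms are closed isometries, and the two constructions differ only by the exact, uniformly small perturbation $\delta Q=-\beta+\gamma$. One then checks, by estimating the images of the inner and outer boundaries of $A$ against the curves $\sigma_{\text{in}}$ and $\sigma_{\text{out}}$ bounding $\hat f(\hM_\v^{4|\v|}\setminus\hat\gamma)$, that $f(A\setminus\gamma)\subset\hat f(\hM_\v^{4|\v|}\setminus\hat\gamma)$ once $\|\delta Q\|_{L^\infty}$ is small; the composition $\iota=\hat f^{-1}\circ f$ is then the required isometric embedding, glued across the cuts using that $Q$ and $\hQ_\v$ have the same circulation. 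Some such direct quantitative comparison is unavoidable here, and your proposal is missing it.
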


\begin{proof}
Denote $\Delta \P = -\beta + \gamma$.
Let $c$ be such that $b/a, b/a^2<c$ implies that $\|\Delta \P\|_{L^\infty(\W)} < 1/9$. 
Then, 
\[
\|\alpha - \beta + \gamma\|_{L^\infty(\W_\v)} 
\le \|\alpha\|_{L^\infty(\W_\v)} + \|\Delta \P\|_{L^\infty(\W)} < \tfrac{1}{9} + \tfrac{1}{9} < 1, 
\]
which implies that $\P = \IdRtwo + (\alpha - \beta + \gamma)$ is invertible, i.e., it is an implant map.
It remains to verify that each hole satisfies the requirements of a regular inner boundary, when $\W_\v$ is endowed with the implant map $\P$.

Consider the $i$-th dislocation: without loss of generality we may set $p_i=0$ and write $\v_i = \v$. For simplicity, assume that $\v = v \,\pl_1$ for $v>0$ (the general case can be obtained by rotation). By the definition of $\alpha$, comparing with \eqref{eq:hQv},
\[
(B_a \setminus B_{3|\v|/2}, \id + \alpha) = (\hM_\v^a \setminus \hM_\v^{3|\v|/2},\hP_\v).
\]
Henceforth, we will write $\hP_\v$ instead of $\id+\alpha$.

The annulus $B_a \setminus B_{3|\v|/2}$ can be endowed with three different metrics: the Euclidean metric $\euc$, the metric induced by $\hP_\v$ and the metric induced by $\P = \hP_\v + \Delta \P$. 
The notations below distinguish between the various metrics. 

The uniform estimates on $\alpha$ and on $-\beta + \gamma$ imply that that in $\W_\v$,
\beq
|\hP_\v - \id|_{\euc,\euc} \le \tfrac19
\qquad
|\Delta \P|_{\euc,\euc} \le \tfrac19
\Textand
|\P - \id|_{\euc,\euc} \le \tfrac29,
\label{eq:estimateQ0}
\eeq
hence
\beq
|\hP_\v|_{\euc,\euc} \le \tfrac{10}{9}
\Textand
|\P|_{\euc,\euc}  \le \tfrac{11}{9}.
\label{eq:estimateQ1}
\eeq
Moreover, using the Neumann series representations, of $\hP_\v^{-1}$ and $\P^{-1}$, e.g.,
$\P^{-1} = \sum_{k=0}^\infty (I - \P)^k$,
\beq
|\hP_\v^{-1}|_{\euc,\euc} \le \tfrac{9}{8}
\Textand
|\P^{-1}|_{\euc,\euc}  \le \tfrac{9}{7}.
\label{eq:estimateQ2}
\eeq

For $u\in T\W_\v$ and $\omega\in T^*\W_\v$,
\[
|u|_{\P} = |\P u|_\euc
\Textand
|\omega|_{\P} = |\omega \P^{-1}|_\euc. 
\]
Hence 
\[
|dr|_\P = |dr \P^{-1}|_\euc \le |dr|_\euc |\P^{-1}|_{\euc,\euc} \le 
\tfrac{9}{7}  < \tfrac32,
\]
and
\[
|\pl_r|_\P = |\P \pl_r|_\euc \le |\P|_{\euc,\euc} |\pl_r|_\euc \le \tfrac{11}{9} < \tfrac54.
\]
By the same argument as in Lemma~\ref{lem:frakr=r}, we obtain that for a point $p=(r,\vp)\in B_a \setminus B_{3|\v|/2}$, 
\beq\label{eq:frakr_Q}
\dist_\P(p,B_{3|\v|/2}) \in \brk{ \tfrac{2}{3} \brk{r - \tfrac{3|\v|}{2}}, \tfrac{5}{4} \brk{r - \tfrac{3|\v|}{2}}}.
\eeq

Consider the set
\[
A' =  \{p\in B_a \setminus B_{3|\v|/2} ~:~ \dist_\P(p,B_{3|\v|/2}) < |\v|\}.
\]
We need to show that some set $A\supset A'$, endowed with $\P$, can be embedded isometrically in $(B_{4|\v|} \setminus B_{|\v|}, \hP_\v) = \hM_\v^{4|\v|}$.
By \eqref{eq:frakr_Q}, for every $p\in A'$,
\[
|\v| > \dist_\P(p,B_{3|\v|/2}) > \tfrac{2}{3} \brk{r - \tfrac{3|\v|}{2}},
\]
from which follows that 
\[
A' \subset B_{3|\v|} \setminus B_{3|\v|/2} \equiv A.
\]
We now show that $(A,\P)$ can be isometrically embedded in $\hM_\v^{4|\v|}$.

The inclusion map (in coordinates) is not an isometry since the metrics in the domain and the target are different.
However, we will use the fact that they differ by $\Delta \P$, which is exact and sufficiently small, to construct such an embedding.
To this end, we use an isometric immersion similar to the one used in proof of the uniqueness theorem (Theorem~\ref{thm:disloc_unique}):
Let $p_0 \in \pl \hM_\v$ be the point on the boundary for which the Burgers vector $\hat \b_{p_0} = \hP_\v^{-1}|_{p_0}(\v)$ is perpendicular to $\pl \hM_\v$ and pointing inwards.
Let $\hat \gamma:[0,t_0)\to \hM_\v^{4|\v|}$ be the unit speed geodesic emanating from $p_0$ in the direction $\hat \b_{p_0}$, where $t_0$ is such that $\hat\gamma(t_0)$ hits the outer-boundary of $\hM_\v^{4|\v|}$.
It follows from Lemma~\ref{lem:frakr=r} that 
\[
\brk{1 - \tfrac{1}{2\pi}} 3|\v| + \tfrac{1}{2\pi}|\v| \le t_0 = \r(\hat{\gamma}(t_0)) - |\v| \le 3|\v|,
\]
implying that
$t_0 \in [2.5|\v|,3|\v|]$.
Define the map $\hat{f}: \hM_\v^{4|\v|} \setminus \hat \gamma \to \R^2$,
\[
\hat{f}(q) = \v + \int_{p_0}^q \hP_\v.
\]
If we extend $\hat{f}$ from $\hM_\v^{4|\v|}\setminus \hat \gamma$ to $\hat \gamma$ by moving clockwise, then $\hat{f}(\hat \gamma(t)) = \v + t\v/|\v|$, whereas, if we extend it by moving counter-clockwise, we obtain
$\hat{f}(\hat \gamma(t)) = 2\v + t\v/|\v|$ (because the circulation of $\hP_\v$ is $\v$).

We construct the analogous map for the set $A$ endowed with the implant map $\P$.
Let $p_1 = (3|\v|/2,\vp_1) \in A$ be the point on the inner boundary of $A$ for which the Burgers vector $\b_{p_1} = \P^{-1}|_{p_1}(\v)$ is perpendicular to the inner boundary (with respect to $\P$) and pointing inwards.
Let $\gamma:[0,t_1) \to A$ be the unit speed geodesic emanating from $p_1$ in the direction $\b_{p_1}$, where $t_1$ is such that $\gamma(t_1)$ hits the outer-boundary of $A$.
By \eqref{eq:frakr_Q}, 
\[
t_1 \in \brk{ \tfrac{2}{3} \brk{3|\v| - \tfrac{3|\v|}{2}}, \tfrac{5}{4} \brk{3|\v| - \tfrac{3|\v|}{2}}},
\]
i.e., $t_1 < 15|\v|/8$, and in particular $t_1 + 3|\v|/2 < t_0 + |\v|$.

Define the map $f: A \setminus \gamma \to \R^2$ by
\[
f(q) = \frac{3}{2}\v + \int_{p_1}^q \P.
\]
As for $\hat{f}$, extending $f$ from $A\setminus\gamma$ to $\gamma$ clockwise yields $f(\gamma(t)) = \frac{3}{2}\v +t\v/|\v|$, whereas counter-clockwise $f(\gamma(t)) = \frac{5}{2}\v +t\v/|\v|$ ($\P$ has the same circulation as $\hP$).

Since both $\hat{f}$ and $f$ are isometric embeddings, it suffices to show that the image of $f$ is contained in the image of $\hat{f}$. 
In that case, the map $\iota := \hat{f}^{-1} \circ f : A \setminus \gamma \to \hM_\v^{3|\v|} \setminus \hat \gamma$ is an isometric embedding, that can be extended to $\gamma$ smoothly by considering the extensions to $\gamma$ and $\hat{\gamma}$ as discussed above (since  $\hP_\v$ and $\P$ differ by an exact one-form, there is no problem with the gluing of these two extensions).

Note first that $\iota(\gamma) \subset \hat{\gamma}$ (choose, say, the counter-clockwise extensions),
as
\[
f(\gamma) = \{\tfrac{3}{2}\v +t\v/|\v| ~:~ t\in [0,t_1)\} \subset  \{\v +t\v/|\v| ~:~ t\in [0,t_0)\} = \hat f(\hat \gamma),
\]
where we used the bound $t_1 + 3|\v|/2 < t_0 + |\v|$.

Next, consider the boundaries of $\hM^{4|\v|}_\v$:
The inner boundary of $\hM^{4|\v|}_\v$ is parametrized, in polar coordinates, by $\{(|\v|,\vp) ~:~ \vp\in [0,2\pi)\}$, and is mapped via $\hat f$ to the set 
\[
\sigma_{\text{in}} = \BRK{|\v|(\cos\vp,\sin\vp) + \frac{\vp}{2\pi} \v ~:~ \vp\in [0,2\pi)}
\]
(in Euclidean coordinates on $\R^2$).
Similarly, the outer-boundary of $\hM^{4|\v|}_\v$ is mapped to
\[
\sigma_{\text{out}} = \BRK{(t_0 + |\v|)(\cos\vp,\sin\vp) + \frac{\vp}{2\pi} \v ~:~ \vp\in [0,2\pi)}.
\]
Our aim is therefore to show that $f(A\setminus \gamma)$ lies between these two curves.
The inner boundary of $A$ is parametrized, in polar coordinates, by $\{(3|\v|/2, \vp_1 + \vp) ~:~ \vp \in [0,2\pi)\}$.
For a point $q = (3|\v|/2, \vp_1 + \vp)$,
\[
f(q) = \frac{3}{2} \v + \int_{p_1}^q \hP_\v + \int_{p_1}^q \Delta \P = \frac{3|\v|}{2} (\cos\vp, \sin\vp) + \frac{\vp}{2\pi} \v + \int_{p_1}^q \Delta \P,
\]
hence
\[
\Abs{f(q) - \frac{3|\v|}{2} (\cos\vp, \sin\vp)  - \frac{\vp}{2\pi} \v } \le 3\pi |\v| \|\Delta \P\|_{L^\infty(\W_\v)}.
\]
The inner boundary lies between $\sigma_{\text{in}}$ and $\sigma_{\text{out}}$ if $\|\Delta \P\|_{L^\infty(\W_\v)}$ is small enough (independently of $\v$).
By the same argument, for $q = (3|\v|, \vp_1 + \vp)$ on the outer-boundary of $A$,
\[
\Abs{f(q) - (t_1 + \tfrac{3}{2}|\v|)(\cos\vp, \sin \vp) - \frac{\vp}{2\pi} \v } \le 6\pi |\v| \|\Delta \P\|_{L^\infty(\W_\v)}.
\]
Since $t_1 + 3|\v|/2 < t_0 + |\v|$, the outer-boundary of $f(A \setminus \gamma)$ is between $\sigma_{\text{in}}$ and $\sigma_{\text{out}}$ for small enough $\|\Delta \P\|_{L^\infty(\W_\v)}$.
This completes the proof that $\iota = \hat f^{-1} \circ f : A \to \hM_\v^{4|\v|}$ is an isometric immersion.

To complete the proof that $(\W_\v,\P)$ has a regular inner boundary, we need to show that $A$ 
is Lipschitz equivalent to $B_{2|\v|}\setminus B_{|\v|}$ with bilipschitz constant $10$, where $A$ is endowed with the metric induced by $\P$.
This follows by the same arguments as the proof that $\hM_\v^R$ has a regular inner boundary, using the fact that the metric induced by $\P$ is equivalent to the Euclidean metric on $A$ with a factor of $9/7$ (which follows from Proposition~\ref{prop:many_disloc_bound} below).

Finally, the condition that $10b<a$ implies that the Euclidean distance between the inner boundaries is at least $a - 3b > 7b$. Since distances with respect to the metrics $\euc$ and $\P^\#\euc$ are equivalent with constant $9/7$ (again, by Proposition~\ref{prop:many_disloc_bound} below), it follows that $\rho$ as defined in  \defref{def:body_w_disloc}(d) satisfies the requirements.
\end{proof}

The following proposition estimates the deviation of $(\W_\v, \P)$ from the Euclidean domain $(\W_\v,\IdRtwo)$.

\begin{proposition}
\label{prop:many_disloc_bound}
Assume that $b$ and $a$ satisfy the assumptions $b/a, b/a^2<c$ of \propref{prop:construct_many_disloc}.
Then,
\[
|\IdRtwo|_{\P^\#\euc,\euc},\,\, |\IdRtwo|_{\euc,\P^\#\euc} \le \tfrac97.
\]
Furthermore, for a point $p \in \W_\v \cap B_a(p_i)$,
\[
|\IdRtwo - \P|_{\P^\#\euc,\euc} (p)\lesssim \frac{|\v^i|}{r_i(p)} + \frac{b}{a^2} \simeq \frac{|\v^i|}{\r(p)} + \frac{b}{a^2},
\]
and for a point $p\in \W_\v \setminus \bigcup_i B_a(p_i)$,
\[
|\IdRtwo - \P|_{\P^\#\euc,\euc} (p)\lesssim  \frac{b}{a^2}.
\]
Finally,
\beq
\int_{\W_\v} |\IdRtwo - \P|_{\P^\#\euc,\euc}^2 \,\dVol_{\P^\#\euc} \lesssim \sum_{i=1}^m |\v^i|^2 \log\brk{\frac{a}{|\v^i|}} + 
\|\tmu\|_{\HminusOne(\W)}^2.
\label{eq:5.4estimate}
\eeq
\end{proposition}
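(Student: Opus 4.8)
The plan is to exploit the explicit formula $Q = \id_{\R^2} + \alpha - \beta + \gamma$ coming from the construction. Since $Z$ is the identity map in coordinates, $dZ = \id_{\R^2}$ as an $\R^2$-valued $1$-form, so $dZ - Q = -(\alpha - \beta + \gamma)$. First I would invoke the bilipschitz bounds $|dZ|_{Q^\#\euc,\euc},\,|dZ^{-1}|_{\euc,Q^\#\euc} \le 9/7$ established in the first part of the proposition: these say $Q^\#\euc$ and $\euc$ are uniformly equivalent on $\W_\v$, whence $\dVol_{Q^\#\euc} \simeq \VolumeE$ and $|\cdot|_{Q^\#\euc,\euc} \simeq |\cdot|_{\euc,\euc}$ there. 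Using this, $(x+y+z)^2 \le 3(x^2+y^2+z^2)$, and $\W_\v\subset\W$, the problem reduces to the Euclidean estimate
\[
\int_{\W_\v} |\alpha|_{\euc}^2\,\VolumeE \;+\; \int_{\W} |\beta|_{\euc}^2\,\VolumeE \;+\; \int_{\W} |\gamma|_{\euc}^2\,\VolumeE \;\lesssim\; \sum_{i=1}^m |\v^i|^2 \log\frac{a}{|\v^i|} \;+\; \|\tmu\|_{\HminusOne(\W)}^2,
\]
i.e. it suffices to bound the three pieces separately.

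The $\alpha$- and $\beta$-pieces are elementary polar integrals. The $\alpha_i$ have pairwise disjoint supports, $|\v^i\otimes\alpha_i|_{\euc,\euc} = |\v^i|/(2\pi r_i)$ on $B_a(p_i)$, and the part of that support lying in $\W_\v$ is the annulus $B_a(p_i)\setminus B_{3|\v^i|/2}(p_i)$; integrating about $p_i$ gives $\int_{\W_\v}|\alpha|^2 = \sum_i \tfrac{|\v^i|^2}{2\pi}\log\tfrac{2a}{3|\v^i|} \lesssim \sum_i |\v^i|^2\log\tfrac{a}{|\v^i|}$, where the last step uses $a>10|\v^i|$. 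Likewise $|\v^i\otimes\beta_i|_{\euc,\euc} = |\v^i| r_i/(2\pi a^2)$ on $B_a(p_i)$, so $\int_\W|\beta|^2 = \sum_i |\v^i|^2/(8\pi)$, which is $\lesssim \sum_i |\v^i|^2\log(a/|\v^i|)$ because $\log(a/|\v^i|) \ge \log 10 > 1$; thus the $\beta$-term is simply absorbed into the self-energy term.

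The $\gamma$-term is where the $\HminusOne$ norm enters, and it is the only nonelementary step. Writing components $\gamma^{(k)} = \sum_i (\v^i)^{(k)}\gamma_i$ for $k=1,2$, each $\gamma^{(k)}$ is a scalar $1$-form solving $d\gamma^{(k)} = f^{(k)}\VolumeE$, $\delta\gamma^{(k)}=0$ in $\W$, $\gamma^{(k)}(\frakn)=0$ on $\pl\W$, where $f^{(k)}$ is the $k$-th component of the ($L^\infty$) density of $\tmu$. I would identify $\gamma^{(k)}$ with a divergence-free planar vector field with vanishing normal trace, hence with a rotated gradient $\nabla^\perp\psi^{(k)}$, where $\psi^{(k)}\in\HOneZero(\W)$ solves the Dirichlet problem $-\Delta\psi^{(k)} = f^{(k)}$; then $\|\gamma^{(k)}\|_{L^2(\W)}^2 = \|\nabla\psi^{(k)}\|_{L^2(\W)}^2 = \int_\W f^{(k)}\psi^{(k)} \le \|f^{(k)}\|_{\HminusOne(\W)}\,\|\psi^{(k)}\|_{\HOneZero(\W)}$, so $\|\gamma^{(k)}\|_{L^2(\W)} \le \|f^{(k)}\|_{\HminusOne(\W)}$, and summing in $k$ yields $\int_\W|\gamma|^2 \lesssim \|\tmu\|_{\HminusOne(\W)}^2$. (Equivalently one may invoke the $\HminusOne$-to-$L^2$ elliptic estimate for this div-curl system, mirroring the $L^q$ estimate used in \lemref{lem:uniform_estimates}.) Assembling the three bounds gives \eqref{eq:5.4estimate}. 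The main obstacle is precisely this last step: one must single out the correct duality pairing, and — should $\W$ fail to be simply connected — check that the prescribed boundary condition removes the harmonic-field ambiguity, so that $\gamma_i$ genuinely coincides with the stream-function solution above; everything else is bookkeeping with the bilipschitz comparison and two polar integrals.
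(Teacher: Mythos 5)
Your proposal is correct and follows essentially the same route as the paper: reduce via the uniform equivalence of $\euc$ and $Q^\#\euc$ to Euclidean bounds on $\alpha$, $\beta$, $\gamma$, compute the first two by explicit polar integrals, and control $\gamma$ by the elliptic estimate $\|\gamma\|_{L^2(\W)}\lesssim\|d\gamma\|_{\HminusOne(\W)}=\|\tmu\|_{\HminusOne(\W)}$. The only difference is that you prove that last estimate yourself via the stream-function/duality argument (which is valid here, since $\W$ is simply connected), where the paper simply cites elliptic regularity.
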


\begin{proof}
By the definition of the pullback metric and the operator norm, 
\[
\begin{aligned}
 |\IdRtwo|_{\P^\#\euc,\euc} &= |\P^{-1}|_{\euc,\euc} < \tfrac97 \\
  |\IdRtwo|_{\euc,\P^\#\euc}  &= |\P|_{\euc,\euc} < \tfrac{11}{9},
\end{aligned}
\]
where we used \eqref{eq:estimateQ1} and \eqref{eq:estimateQ2}.
We have thus proved the uniform bilipschitz bounds on $\IdRtwo$ (with respect to the metrics $\P^\#\euc$ and $\euc$). 
Moreover, by the equivalence of the metrics $\euc$ and $\P^\#\euc$ (with a constant depending only on $\W$, as long as $a$ and $b$ satisfy the constraints), 
\[
|\IdRtwo- \P|_{\P^\#\euc,\euc} \lesssim |\IdRtwo- \P|_{\euc,\euc}  \le |\alpha - \beta|_{\euc,\euc} + |\gamma|_{\euc,\euc} \lesssim \sum_{i=1}^m \frac{|\v^i|}{r_i} \chi_{B_a(p_i)} + \frac{b}{a^2}
\]
from which the pointwise bounds follow, using the fact that $r_i \simeq \r$ in $B_a(p_i)$, which follows from the same analysis as in \eqref{eq:frakr_Q}.

As for the integral bound, by the equivalence of norms and volume forms, 
\[
\int_{\W_\v} |\IdRtwo - \P|_{\P^\#\euc,\euc}^2  \,\dVol_{\P^\#\euc} \lesssim \|\alpha - \beta\|_{L^2(\W_\v)}^2 + \|\gamma\|_{L^2(\W)}^2.
\]
It is immediate that 
\[
\|\alpha - \beta\|_{L^2(\W_\v)}^2 \lesssim \sum_{i=1}^m |\v^i|^2 \log\brk{\frac{a}{|\v^i|}}.
\]
As for the bound on $\gamma$, we use again elliptic regularity \cite{Sch95}:
\[
\|\gamma\|_{L^2(\W_\v)} \lesssim \|d\gamma \|_{\HminusOne(\W)} = \|\tmu\|_{\HminusOne(\W)}.
\]
This completes the proof.
\end{proof}

The pointwise bound on $|\IdRtwo - \P|_{\P^\#\euc,\euc}$ has two contributions: a ``near field" which is affected by the nearest dislocation, and a ``far field"  which accounts for all the dislocations. 
In the forthcoming analysis, these two contributions will be identified with a self-energy and an interaction energy, respectively. 
Either term may be dominant, depending on the relation between the number of dislocations and their magnitude.

\begin{comment}
The implant map $\P$ constructed above is not smooth, however it is continuous and $d\P=0$ distributively. Using a mollification, we may obtain a smooth approximation preserving the circulation and satisfying all the bounds. 
\end{comment}

\subsection{Convergence of bodies with dislocations}\label{sec:conv_disloc}

We next define a notion of convergence of bodies with dislocations as the magnitude of each dislocation tends to zero, while a (possibly rescaled) total Burgers vector tends to a limit (cf. \cite{KM15,KM16,KM16a,EKM20}). 
We start with a definition in which the total Burgers vector is not rescaled. This is a refinement of the definitions used in \cite{KM15,KM16,KM16a,EKM20}, where several examples can be found.
We later focus on the case in which the total Burgers vector tends to zero, hence has to be
rescaled to obtain a non-trivial limit.

\begin{definition}\label{def:man_conv1}
Let $(\MEps,\PEps)$ be a sequence of bodies having $m_\e$ dislocations with dislocation structure $\bbS$ at length-scale $\e$. 
Denote by $\gEps = \PEps^\#\euc$ the Riemannian metric induced by $\PEps$.
We denote the corresponding Burgers vectors by $\e\v_\e^i$, $i=1,\dots,m_\e$, $\v_\e^i\in\bbS$.
Let $(\M,\P)$ be a simply-connected complete elastic body (with $\P$ not necessarily closed), and denote by $\g = \P^\#\euc$ its Riemannian metric.  
We say that $(\MEps,\PEps)$ converges to $(\M,\P)$ if $\MEps$ can be embedded in $\M$, such that the inclusion map $(\MEps,\PEps)\to (\MEps, \P)$ is uniformly-bilipschitz and the following is satisfied: 

\begin{enumerate}[itemsep=0pt,label=(\alph*)]
\item \textbf{Asymptotic surjectivity:} The outer-boundary of $\MEps$ coincides with the boundary of $\M$, and
\[
\Vol_\g(\M \setminus \MEps) \to 0.
\]

\item \textbf{The embeddings are asymptotically rigid pointwise:} 
\beq
|\IdRtwo - \PEps|_{\gEps,\g} \lesssim \frac{\e  |\v_\e^i|}{\r_\e} + c_\e,
\label{eq:local_dist_bnd}
\eeq
almost everywhere, where $|\v_\e^i|$, is the magnitude of the nearest dislocation, $\r_\e$ is the shortest distance to a dislocation in $\MEps$, and $c_\e>0$ is an infinitesimal sequence, $c_\e\to0$ as $\e\to0$.

\item
\textbf{Implant map convergence:}
\[
\|\PEps - \P\|_{L^2(\MEps)} \to 0.
\]
\end{enumerate}
\end{definition}

The requirement that the outer-boundary of $\MEps$ coincides with $\pl\M$ can be relaxed to a weaker condition: that the domains enclosed by the outer-boundaries of $\MEps$ are uniformly Lipschitz equivalent to $\M$. 
 
It can be shown that the limit is unique: if $(\MEps,\PEps)$ also converges to $(M',\P')$, then $(M,\P)$ and  $(M',\P')$ are isometric. This result is a consequence of a generalization of Reshetnyak's rigidity theorem to Riemannian manifolds; see \cite[Thm. 5.3]{KMS19} for a similar statement.

The convergence of the implant maps $\PEps$ implies a convergence of the corresponding distributions,
\[
\torsion_\e : \psi_\e \mapsto \int_{\dM_\e} \TR(\PEps \otimes \psi_\e),
\]
which we can consider as bounded linear functionals on $C_c(\M;\R^2)$ via the restriction $\psi \mapsto \torsion_\e(\psi|_{\MEps} )$, in the following sense:
For every $\psi\in \HOneZero(\M;\R^2)\cap C_c(\M;\R^2)$,
\[
\LimEps \torsion_\e(\psi|_{\MEps}) = \int_\M \TR(d\P \otimes \psi) \equiv \torsion(\psi).
\]
Indeed, it follows from \eqref{eq:Tpsi_H_1_0} that
\[
\begin{split}
\torsion_\e(\psi|_{\MEps})  
&=  \int_{\MEps}  \TR(\PEps\wedge d\psi).
\end{split}
\]
Letting $\e\to0$, using the $L^2$-convergence of $\PEps$, the smoothness of $\P$, the asymptotic surjectivity, and the fact that $\psi\in H_0^1(\M;\R^2)$, 
\[
\LimEps \torsion_\e(\psi) = \int_{\M}  \TR(\P\otimes d\psi) = \int_{\M}  \TR(d\P\otimes \psi).
\]
Note that while the functionals $\torsion_\e$
can be identified with $\R^2$-valued Radon measures on $\M$,
the convergence of $\torsion_\e$ to $\torsion$ on $\HOneZero(\M)\cap C_c(\M)$ cannot be extended to a convergence of measures, without additional assumptions, as $\torsion_\e$ are not necessarily uniformly bounded measures.

If $\P$ is closed, then $\torsion=0$, which we may interpret as $(\M,\P)$ being dislocation-free.
The case where $\torsion\ne0$ was treated in \cite{KM15,KM16} in the context of the emergence of torsion as a limit of dislocation density (see also \cite{EKM20}). 

In this work, we consider converging bodies in a regime where the limiting implant map $\P$ is closed. 
Since $\M$ is simply-connected, this implies that $\P$ is exact, that is $\P=d\phi$ for some $\phi :\M\to \R^2$.
The condition $d\phi=\P$ implies that $\phi$ is an isometric immersion of $\M$ into $\R^2$.
We will further assume that $\phi$ is an embedding,  and thus 
we can assume that $(\M,\P) = (\W,\IdRtwo)$, where $\W$ is a simply-connected domain in $\R^2$.
In this regime, where the limiting body is defect-free, a more refined definition is required to capture the convergence of the density of dislocations:

\begin{definition}
\label{def:man_conv2}
Let $(\MEps,\PEps)$ converge to $(\W,\IdRtwo)$  
according to \defref{def:man_conv1}.
Let  $\mu$ be an $\R^2$-valued Radon measure on $\W$ having finite total mass. We say that $(\MEps,\PEps)$ converge to $(\W,\IdRtwo,\mu)$
with respect to a sequence $n_\e$ satisfying $n_\e\e\to0$, if in addition:
\begin{enumerate}[itemsep=0pt,label=(\alph*)]
\item \textbf{Global distortion bound:}
\beq
\int_{\MEps} |\IdRtwo - \PEps|^2 \,\VolumeEps \lesssim h_\e^2,
\label{eq:global_dist_bnd}
\eeq
where
\[
h_\e^2 = \max\{n_\e^2\e^2,n_\e \e^2\LogEps\}.
\]
\item \textbf{Burgers vector convergence:} 
the measures $\frac{1}{n_\e \e}  \torsion_\e$ weakly-star converge to $\mu$ in $\calM(\W;\R^2)$,
in the sense that  for every $\psi\in C_c(\W;\R^2)$,
\beq
\label{eq:burgers_conv}
\frac{1}{n_\e\e} \torsion_\e(\psi|_{\MEps})  
\to \int_{\W}  \TR(\psi\otimes d\mu).
\eeq
\end{enumerate}
We denote this mode of convergence by
\[
(\MEps,\PEps) 
\xrightarrow[]{n_\e}
(\W,\IdRtwo,\mu).
\]
\end{definition}

Roughly speaking, $\e$ represents the typical magnitude of a dislocation and $n_\e$, which controls the energy and Burgers vector scalings, is related to the number of dislocations $m_\e$.  
Following \cite{GLP10}, we identify three regimes of parameters: the case $n_\e \ll \LogEps$ is called the \Emph{subcritical regime}; the case $n_\e = \LogEps$ is called the \Emph{critical regime}; the case $n_\e \gg \LogEps$ is called the \Emph{supercritical regime}. In the subcritical regime, the distortion bound is of order $n_\e \e^2\LogEps$, and is induced by the ``near field" contributions, whereas in the supercritical regime,  the distortion bound is of order $n_\e \e^2$, and is induced by the ``far field" contribution.

For a sequence $(\MEps,\PEps)\xrightarrow[]{n_\e}(\W,\IdRtwo,\mu)$, we consider sequences of functions and 1-forms defined on $\MEps$ to converge to functions and one-forms on $\W$, when their extensions by zero converge (with respect to the Euclidean metric $\euc$ on $\W$).

\begin{remark}
We will later show, in Theorem~\ref{thm:measure_comp}, that in the critical and subcritical regimes, a sequence satisfying \eqref{eq:global_dist_bnd} has a subsequence satisfying \eqref{eq:burgers_conv} for some $\mu \in \calM(\W;\R^2)$, assuming that the dislocations are well-separated. 
\end{remark}

\begin{example}
\label{ex:conv1}
The sequence of manifolds with a single dislocation $(\hM_{\e\v}^{R},\hP_{\e\v})$ converges to $(B_R,\IdRtwo,\v\,\delta_0)$ with respect to the parameters $n_\e =1$. This follows from Proposition~\ref{prop:conv_manifolds}.
\end{example}

\begin{example}
\label{ex:conv2}
Let $\W = (0,1)^2$ and let $\mu\in \calM(\W;\R^2)$ be absolutely-continuous with respect to the Lebesgue measure, with $d\mu/dx \in L^\infty(\W;\R^2)$.
We construct bodies with dislocations such that $(\MEps,\PEps) \to (\W, \IdRtwo, \mu)$. 
In this example, no dislocation structure is assumed.

Choose any sequence $n_\e\to\infty$ satisfying $n_\e\e\to0$.
Partition $\W$ into $n_\e^{1/2}\times n_\e^{1/2}$ squares, $D_\e^1,\ldots, D_\e^{n_\e}$.
Denote by $p_\e^i$ the center of the $i$-th square, and let $\v_\e^i = n_\e\mu(D_\e^i)$. Set $\mu_\e = \sum_{i=1}^{n_\e} \e \v_\e^i \otimes \delta_{p_\e^i}$,
then 
\[
\frac{1}{n_\e \e}\mu_\e = \sum_{i=1}^{n_\e} \mu(D_\e^i)\delta_{p_\e^i} \weakstar \mu.
\]

Construct $(\MEps,\PEps)$ as in \secref{sec:disloc_construction}, according to the measure $\mu_\e$.
That is,
\[
\MEps = \W \setminus \brk{\bigcup_{i=1}^{n_\e} B_{3\e |\v_\e^i|/2}(p_\e^i)},
\]
and $\P$ is defined by altering $\IdRtwo$ with the $\R^2$-valued 1-forms $\alpha,\beta,\gamma$.
In the notation of \secref{sec:disloc_construction}, 
\[
b  = \max_i \e|\v_\e^i| = n_\e \e \, \max_i |\mu(D_\e^i)| \le \|d\mu/dx\|_{\infty} \e,
\]
$m=n_\e$ and $a \simeq n_\e^{-1/2}$.
By \propref{prop:construct_many_disloc}, $(\MEps,\PEps)$ is a body with $n_\e$ dislocations if $b/a \simeq n_\e^{1/2}\e$ and $b/a^2 \simeq n_\e \e$ are small enough, which is eventually the case, as $n_\e\e\to0$.

Moreover, it follows from \propref{prop:many_disloc_bound} that
\[
|\IdRtwo - \PEps| \lesssim \frac{\e|\v_\e^i|}{\r_\e} + n_\e \e,
\]
that the embeddings $(\MEps,\PEps)\to (\MEps,\IdRtwo)$ are uniformly bilipschitz, and
\[
\int_{\MEps} |\IdRtwo - \PEps|^2 \,\VolumeEps \lesssim n_\e \e^2 \log\brk{\frac{n_\e^{-1/2}}{\e}} + n_\e^2 \e^2 \lesssim h_\e^2.
\]
(Note here the distinct contributions on the near- and far-fields.)
Finally, since $\frac{1}{n_\e \e}\mu_\e \weakstar \mu$, it follows that $\frac{1}{n_\e \e}\torsion_\e \to \mu$ in the sense of \defref{def:man_conv2}(b), namely
\beq
\LimEps \frac{1}{n_\e \e}\torsion_\e(\psi|_{\MEps}) = \int_\W \TR(\psi\otimes d\mu)
\label{eq:burgers_conv_2}
\eeq
for every $\psi\in C_c(\W;\R^2)$.
Indeed, given $\psi\in C_c(\W;\R^2)$ set
\[
\psi_\e = \sum_{i=1}^{n_\e} \psi(p_\e^i) \chi_{D_\e^i}.
\]
On the one hand,
\[
\begin{split}
\frac{1}{n_\e\e} \torsion_\e(\psi_\e|_{\MEps}) &= 
 \frac{1}{n_\e\e} \int_{\dM_\e} \TR(\PEps\otimes (\psi_\e)) \\
&= \frac{1}{n_\e\e} \sum_{i=1}^{n_\e} \TR\brk{\psi(p_\e^i) \otimes \int_{\dM_\e} \PEps (\chi_{D_\e^i})}\\
&= \frac{1}{n_\e\e} \sum_{i=1}^{n_\e} \TR(\e\v_\e^i \otimes \psi(p_\e^i)) \\
&= \frac{1}{n_\e\e} \int_\W \TR(\psi\otimes d\mu_\e),
\end{split}
\] 
which converges to the right-hand side of \eqref{eq:burgers_conv_2},
On the other hand,
\[
\begin{split}
\Abs{\frac{1}{n_\e \e}\torsion_\e(\psi|_{\MEps}) - \frac{1}{n_\e \e}\torsion_\e(\psi_\e|_{\MEps})} 
&= \frac{1}{n_\e\e}\Abs{ \int_{\dM_\e} \TR(\PEps\otimes ((\psi - \psi_\e)|_{\MEps}))} \\
&\lesssim \frac{1}{n_\e \e} \sum_{i=1}^{n_\e} \e |\v_\e^i| \|\psi - \psi_\e\|_{L^\infty(\W)} \\
&= \brk{\sum_{i=1}^{n_\e} |\mu(D_\e^i)|} \|\psi - \psi_\e\|_{L^\infty(\W)} \\
&= |\mu|(\W)\,  \|\psi - \psi_\e\|_{L^\infty(\W)},
\end{split}
\] 
where the inequality uses the fact that the length of the boundary around the $i$-th dislocation is of order $\e|\v_\e^i|$.
Since by the continuity of $\psi$,  the right-hand side tends to zero as $\e\to0$, we obtain \eqref{eq:burgers_conv_2}.

Therefore, $(\MEps,\PEps) \to (\W, \IdRtwo, \mu)$, according to \defref{def:man_conv2}. 
A variant of this construction will be at the heart of the recovery sequence in the $\Gamma$-convergence result below.
\end{example}

The number $m_\e$ of dislocations and the magnitude $|\v_\e^i|$ of individual dislocations is not assumed a priori in the definition of converging sequences of bodies with dislocations. The following lemma asserts that the convergence implies bounds on both:

\begin{lemma}
\label{lem:max_disloc}
Let 
\[
(\MEps,\PEps) 
\xrightarrow[]{n_\e}
(\W,\IdRtwo,\mu).
\]
Then,
\begin{enumerate}[itemsep=0pt,label=(\alph*)]

\item 
\Emph{Burgers vector bound}:
\[
\sum_{i=1}^{m_\e} |\v_\e^i| \lesssim n_\e.
\]
In particular,
\[
\max_i |\v_\e^i| \lesssim n_\e.
\]

\item \Emph{Holes volume bounds:}
\beq
|\W\setminus\MEps| \lesssim n_\e^2 \e^2.
\label{eq:bnd_holes_area}
\eeq

\item \Emph{Number of dislocations bound}:
\beq
m_\e \lesssim n_\e.
\label{eq:me_bound}
\eeq
\end{enumerate}
The constants in all inequalities may only depend on $\Omega$ and $\mu$.
\end{lemma}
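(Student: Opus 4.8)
The plan is to reduce all three bounds to estimate (a) and then derive (b) and (c) from it. For (a): since $\ZEps:\MEps\to\W$ is an embedding it is injective, and pushforward preserves total variation, so $\|(\ZEps)_\#\torsion_\e\|_{\calM(\W;\R^2)}=\|\torsion_\e\|_{\calM(\MEps;\R^2)}$. A weakly-$*$ convergent sequence in the Banach-space dual $\calM(\W;\R^2)=C_0(\W;\R^2)^*$ is norm-bounded (Banach--Steinhaus), so \eqref{eq:burgers_conv} gives $\sup_\e \tfrac1{n_\e\e}\|(\ZEps)_\#\torsion_\e\|_{\calM(\W;\R^2)}<\infty$. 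By \eqref{eq:norm_calT}, applied with the Burgers vectors $\e\v_\e^i$, we have $\|\torsion_\e\|_{\calM(\MEps;\R^2)}\simeq \sum_{i=1}^{m_\e}\e|\v_\e^i|$, whence $\sum_i|\v_\e^i|\lesssim n_\e$; since $|\v_\e^i|\le\sum_j|\v_\e^j|$, also $\max_i|\v_\e^i|\lesssim n_\e$.

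For (c): because $\MEps$ carries a dislocation structure $\bbS$ at length-scale $\e$, each $\v_\e^i\in\bbS$, and since $\bbS\cup\{0\}=\operatorname{span}_{\mathbb Z}S$ is a full-rank lattice, hence discrete, there is $c_0=c_0(\bbS)>0$ with $|\v_\e^i|\ge c_0$ for all $i$; thus $c_0 m_\e\le\sum_i|\v_\e^i|\lesssim n_\e$. For (b): fix $i$. By \defref{def:body_w_disloc}(c) the $i$-th hole has a regular inner boundary, so by \defref{def:regular_dM}(c) its $\gEps$-length is $\lesssim \e|\v_\e^i|$; since $\ZEps$ is uniformly bilipschitz, the Jordan curve $\ZEps(\pl(\mathrm{hole}_i))\subset\W$ has Euclidean length $\lesssim \e|\v_\e^i|$. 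By \defref{def:man_conv1}(a) the outer boundary of $\ZEps(\MEps)$ is $\pl\W$, so $\W\setminus\ZEps(\MEps)$ is exactly the disjoint union over $i$ of the planar domains enclosed by these Jordan curves; by the isoperimetric inequality each has area $\lesssim(\e|\v_\e^i|)^2$. Using part (a),
\[
|\W\setminus\ZEps(\MEps)|\lesssim \e^2\sum_{i=1}^{m_\e}|\v_\e^i|^2\le \e^2\Big(\max_i|\v_\e^i|\Big)\Big(\sum_i|\v_\e^i|\Big)\lesssim n_\e^2\e^2 .
\]
All constants depend only on $\W$, $\mu$, and the bilipschitz and inner-boundary-regularity constants.

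The main obstacle is step (a). Pointwise convergence of the functionals $\tfrac1{n_\e\e}(\ZEps)_\#\torsion_\e$ against $C_c(\W;\R^2)$ does not, on its own, bound their total variations -- a clump of mass of order $n_\e$ could leak towards $\pl\W$ -- as is noted after \defref{def:man_conv1}; and the global distortion bound \eqref{eq:global_dist_bnd} cannot compensate, because for clustered dislocations the single-dislocation lower bound (\corrref{cor:low_bnd_single_disloc}, with outer-to-inner radius ratio only $\gtrsim 1$) contributes merely $\gtrsim\e^2|\v_\e^i|^2$ per dislocation, which is too weak to control $\sum_i|\v_\e^i|$ by $n_\e$. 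The argument therefore genuinely relies on interpreting \eqref{eq:burgers_conv} as bona fide weak-$*$ convergence of finite Radon measures, which is precisely what furnishes the missing uniform total-variation bound; once that is in hand, (a) is immediate and (b), (c) are elementary consequences.
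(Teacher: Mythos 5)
Your proof is correct and follows essentially the same route as the paper: uniform boundedness of $\frac{1}{n_\e\e}(\ZEps)_\#\torsion_\e$ in total variation combined with \eqref{eq:norm_calT} for (a), the isoperimetric inequality applied to the uniformly-bilipschitz images of the regular inner-boundaries for (b), and the lower bound $|\v_\e^i|\gtrsim 1$ coming from the dislocation structure for (c). Your closing remark correctly identifies the one point the paper treats tersely, namely that \eqref{eq:burgers_conv} is to be read as genuine weak-$*$ convergence of finite Radon measures so that the total-variation bound is available.
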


\begin{proof}
The Burgers vector convergence \eqref{eq:burgers_conv} implies that $\frac{1}{n_\e \e} \torsion_\e$ is uniformly bounded in $\calM(\M;\R^2)$. By \eqref{eq:norm_calT},
\[
\frac{1}{n_\e \e} \sum_{i=1}^{m_\e} \e |\v_\e^i | \simeq \frac{1}{n_\e \e} \|\torsion_\e\|_{\calM(\MEps)} = \frac{1}{n_\e \e} \|\torsion_\e\|_{\calM(\M)} \lesssim 1,
\]
which completes the proof of the first assertion.

For the second assertion note that the length of the inner boundary of $\MEps$ that corresponds to the $i$-th dislocation, when measured with respect to $\P$, is of the same order as when measured with respect to $\PEps$ , which is $\e|\v_\e^i|$.
Thus  $|\W\setminus \MEps| \simeq \sum_{i=1}^{m_\e} (\e |\v_\e^i|)^2$ by the isoperimetric inequality, and the right-hand side is bounded by $\e^2 n_\e^2$ by the first assertion.

The last assertion follows from the first, since for every $i$, $\v_\e^i\in \bbS$, and therefore $|\v_\e^i| \gtrsim  1$. 
Thus $m_\e  \lesssim \sum_{i=1}^{m_\e} |\v_\e^i| \lesssim n_\e$. 
\end{proof}

The next set of lemmas and propositions concern refined estimates in conjunction with the convergence of the Burgers vector:

\begin{lemma}
\label{lem:5.8}
Let 
\[
(\MEps,\PEps) 
\xrightarrow[]{n_\e}
(\W,\IdRtwo,\mu).
\]
Then, for every $\psi\in \HOneZero(\W;\R^2)$,
\[
\LimEps \frac{1}{n_\e \e} \int_{\MEps} \TR(\IdRtwo\wedge d\psi) = 0.
\]
which in coordinates reads,
\[
\LimEps \frac{2}{n_\e \e} \int_{\MEps} \brk{\pl_1 \psi^2 - \pl_2 \psi^1}\, dx = 0.
\]
\end{lemma}

\begin{proof}
Pulling back,
\[
\begin{split}
\Abs{\frac{1}{n_\e\e}\int_{\MEps} \TR(\IdRtwo\wedge d\psi)} 
&= \Abs{\frac{1}{n_\e\e}\int_{\W\setminus\MEps} \TR(\IdRtwo\wedge d\psi)} \\
&\lesssim \frac{1}{n_\e\e} |\W\setminus\MEps|\, \|\psi\|_{\HOne(\W)} \\
&\lesssim  n_\e\e \|\psi\|_{\HOne(\W)},
\end{split}
\]
where in the first line we used the fact that the integral of $\TR(\IdRtwo\wedge d\psi) = -d\TR(\IdRtwo\wedge \psi)$ over $\W$  vanishes, and the last inequality follows from \eqref{eq:bnd_holes_area}.
The right-hand side tends to zero since $n_\e\e\to 0$. 
\end{proof}

Combining \lemref{lem:5.8} and Eq.~\eqref{eq:Tpsi_H_1_0} we obtain:

\begin{corollary}
Condition \eqref{eq:burgers_conv} for the convergence of the Burgers vector implies that
\beq
\label{eq:modified_burgers_conv}
\LimEps \frac{h_\e}{n_\e\e} \int_{\MEps} \TR\brk{\tfrac{\PEps - \IdRtwo}{h_\e} \wedge d\psi}
= \int_\W \TR(\psi\otimes d\mu)
\eeq
for every $\psi\in \HOneZero(\W;\R^2)\cap C_c(\W;\R^2)$.
\end{corollary}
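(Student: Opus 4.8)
The plan is to reduce the stated limit to the two results just proved by a purely algebraic manipulation, so that there is essentially no new analytic content. First, observe that the factors of $h_\e$ in the statement are inert: by bilinearity of $\TR$ and of the wedge product,
\[
\begin{split}
&\frac{h_\e}{n_\e\e}\int_{\MEps}\TR\brk{\tfrac{\QEps - d\ZEps}{h_\e}\wedge d(\psi\circ\ZEps)} \\
&\qquad = \frac{1}{n_\e\e}\int_{\MEps}\TR\brk{\QEps\wedge d(\psi\circ\ZEps)} - \frac{1}{n_\e\e}\int_{\MEps}\TR\brk{d\ZEps\wedge d(\psi\circ\ZEps)},
\end{split}
\]
and it suffices to pass to the limit in the two terms on the right separately.

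For the first term I would argue that $\psi\circ\ZEps$ is an admissible argument for $\torsion_\e$ in the form \eqref{eq:Tpsi_H_1_0}: since $\ZEps$ is bilipschitz, $\psi\circ\ZEps\in\HOne(\MEps;\R^2)$, and since $\psi\in C_c(\W;\R^2)$ vanishes in a neighbourhood of $\partial\W$ while the outer boundary of $\ZEps(\MEps)$ coincides with $\partial\W$, the composition $\psi\circ\ZEps$ vanishes in a neighbourhood of the outer boundary of $\MEps$; hence $\psi\circ\ZEps\in\HOneZero(\MEps;\R^2)$ in the sense relevant for \eqref{eq:Tpsi_H_1_0} (no vanishing is required near the dislocation cores). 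Equation \eqref{eq:Tpsi_H_1_0} then gives $\int_{\MEps}\TR(\QEps\wedge d(\psi\circ\ZEps)) = \torsion_\e(\psi\circ\ZEps)$, and the Burgers-vector convergence \eqref{eq:burgers_conv}, which applies because $\psi\in C_c(\W;\R^2)$, yields $\tfrac{1}{n_\e\e}\torsion_\e(\psi\circ\ZEps)\to\int_\W\TR(\psi\otimes d\mu)$.

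For the second term, \lemref{lem:5.8} applies verbatim, its hypothesis $\psi\in\HOneZero(\W;\R^2)$ being part of our assumption, and gives $\tfrac{1}{n_\e\e}\int_{\MEps}\TR(d\ZEps\wedge d(\psi\circ\ZEps))\to 0$. Subtracting the two limits produces exactly $\int_\W\TR(\psi\otimes d\mu)$, which is the claim. The only step carrying any content — and thus the ``main obstacle'', such as it is — is the bookkeeping in the middle paragraph: verifying that $\psi\circ\ZEps$ legitimately enters \eqref{eq:Tpsi_H_1_0} and that the orientation and sign conventions for the inner boundary are consistent with those used there. Everything else is linearity together with citing \lemref{lem:5.8} and \eqref{eq:burgers_conv}.
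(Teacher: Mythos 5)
Your proof is correct and follows essentially the same route as the paper: both arguments reduce the claim to the identity \eqref{eq:Tpsi_H_1_0} combined with the Burgers-vector convergence \eqref{eq:burgers_conv} for the $\QEps$ term, and to \lemref{lem:5.8} for the $d\ZEps$ term, with the $h_\e$ factors cancelling trivially. Your extra care in checking that $\psi\circ\ZEps$ is an admissible test function for \eqref{eq:Tpsi_H_1_0} is a point the paper leaves implicit, but it does not change the argument.
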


\begin{proof}
\lemref{lem:5.8} implies that for every $\psi\in \HOneZero(\W;\R^2)$,
\[
\begin{split}
\LimEps \frac{1}{n_\e\e} \torsion_\e(\psi|_{\MEps}) &= 
\LimEps \frac{1}{n_\e\e} \int_{\MEps} \TR\brk{\PEps \wedge d\psi} \\
&=\LimEps \frac{h_\e}{n_\e\e} \int_{\MEps} \TR\brk{\tfrac{\PEps - \IdRtwo}{h_\e} \wedge d\psi},
\end{split}
\]
where the first equality follows from $\eqref{eq:Tpsi_H_1_0}$. The result then follows from \eqref{eq:burgers_conv}, using the fact that $\psi\in C_c(\W;\R^2)$.
\end{proof}

The following proposition shows that if $(\MEps,\PEps)$ converges to $(\W,\IdRtwo,\mu)$ with respect to critical or supercritical parameters $n_\e$, then the measure $\mu$ has $\HminusOne$ regularity:

\begin{proposition}
\label{prop:5.5}
Let 
\[
(\MEps,\PEps) 
\xrightarrow[]{n_\e}
(\W,\IdRtwo,\mu).
\]
If $n_\e$ is critical or supercritical, i.e., $n_\e\gtrsim\LogEps$, then $\mu\in \HminusOne(\W;\R^2)$.
\end{proposition}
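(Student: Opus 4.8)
The plan is to test the measure $\mu$ against arbitrary $\psi \in \HOneZero(\W;\R^2)\cap C_c(\W;\R^2)$ and show that $|\int_\W \TR(\psi\otimes d\mu)| \lesssim \|\psi\|_{\HOne(\W)}$; since $\HOneZero \cap C_c$ is dense in $\HOneZero$, this gives $\mu\in \HminusOne(\W;\R^2)$. The starting point is the identity \eqref{eq:modified_burgers_conv}:
\[
\int_\W \TR(\psi\otimes d\mu) = \LimEps \frac{h_\e}{n_\e\e} \int_{\MEps} \TR\brk{\tfrac{\QEps - d\ZEps}{h_\e} \wedge d(\psi\circ \ZEps)}.
\]
So I would bound the right-hand side uniformly in $\e$ by a constant times $\|\psi\|_{\HOne(\W)}$.

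First I would apply Cauchy--Schwarz to the integral over $\MEps$, splitting the integrand into the factor $(\QEps-d\ZEps)/h_\e$ and the factor $d(\psi\circ\ZEps)$. The global distortion bound \eqref{eq:global_dist_bnd} gives $\|\QEps - d\ZEps\|_{L^2(\MEps)} \lesssim h_\e$, so $\|(\QEps-d\ZEps)/h_\e\|_{L^2(\MEps)} \lesssim 1$ uniformly. For the second factor, since $\ZEps$ are uniformly bilipschitz and asymptotically surjective, a change of variables gives $\|d(\psi\circ\ZEps)\|_{L^2(\MEps)} \lesssim \|d\psi\|_{L^2(\ZEps(\MEps))} \le \|d\psi\|_{L^2(\W)} \lesssim \|\psi\|_{\HOne(\W)}$, with constant depending only on the uniform bilipschitz constant. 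Hence
\[
\Abs{\frac{h_\e}{n_\e\e} \int_{\MEps} \TR\brk{\tfrac{\QEps - d\ZEps}{h_\e} \wedge d(\psi\circ \ZEps)}} \lesssim \frac{h_\e}{n_\e\e}\,\|\psi\|_{\HOne(\W)}.
\]
The decisive point is then the prefactor $h_\e/(n_\e\e)$: by definition $h_\e^2 = \max\{n_\e^2\e^2, n_\e\e^2\LogEps\}$, so $h_\e/(n_\e\e) = \max\{1, \sqrt{\LogEps/n_\e}\}$, which is bounded by $1$ precisely when $n_\e \gtrsim \LogEps$, i.e. in the critical and supercritical regimes. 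This is exactly the hypothesis, so the prefactor stays bounded, and passing to the $\limsup$ over $\e$ yields $|\int_\W\TR(\psi\otimes d\mu)| \lesssim \|\psi\|_{\HOne(\W)}$. A density argument extends this to all $\psi\in\HOneZero(\W;\R^2)$, and by duality $\mu$ defines a bounded functional on $\HOneZero$, i.e. $\mu \in \HminusOne(\W;\R^2)$.

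I do not expect a serious obstacle here; the only delicate point is making sure the constants in the Cauchy--Schwarz step and the change-of-variables step genuinely depend only on $\W$ and the uniform bilipschitz constant (not on $\e$), which follows from the uniform bilipschitz property built into \defref{def:man_conv1}, and checking that $\psi\circ\ZEps$ indeed lies in $\HOne(\MEps)$ with the stated norm control — routine given $\psi\in\HOneZero\cap C_c$ and $\ZEps$ bilipschitz. In the subcritical case the argument breaks down exactly because $h_\e/(n_\e\e)\to\infty$, consistent with the fact that there $\curl J = 0$ and no $\HminusOne$ regularity of $\mu$ is claimed.
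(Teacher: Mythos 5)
Your proposal is correct and follows essentially the same route as the paper: start from \eqref{eq:modified_burgers_conv}, apply Cauchy--Schwarz together with the global distortion bound \eqref{eq:global_dist_bnd} and the uniform bilipschitz property of $\ZEps$, observe that the prefactor $h_\e/(n_\e\e)$ is bounded precisely in the critical and supercritical regimes, and conclude by density. The only cosmetic difference is that the paper immediately substitutes $h_\e = n_\e\e$ rather than tracking the prefactor explicitly.
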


\begin{proof}
In the critical and supercritical regimes, $h_\e = n_\e \e$. By \eqref{eq:modified_burgers_conv}, for every $\psi\in \HOneZero(\W;\R^2)\cap C_c(\W;\R^2)$,
\beq
\int_\W \TR(\psi\otimes d\mu) = 
\LimEps \int_{\MEps} \TR\brk{\tfrac{\PEps - \IdRtwo}{h_\e} \wedge d\psi}. 
\label{eq:int_psi_dmu}
\eeq
By the Cauchy-Schwarz inequality and by the global distortion bound \eqref{eq:global_dist_bnd},
\[
\Abs{\int_\W \TR(\psi\otimes d\mu)} 
\lesssim \LimsupEps \brk{\int_{\MEps} \Abs{\tfrac{\PEps - \IdRtwo}{h_\e}}^2 \,\VolumeEps}^{1/2} \, \|\psi\|_{\HOne(\W;\R^2)} 
\lesssim \|\psi\|_{\HOne(\W;R^2)},
\]
i.e., $\mu\in \HminusOne(\W;\R^2)$.
\end{proof}

Finally, we relate the convergence \eqref{eq:burgers_conv} of Burgers vectors to a convergence of measures. We start with the following technical lemmas:

\begin{lemma}
\label{lem:bilip_holes}
Let 
\[
(\MEps,\PEps) 
\xrightarrow[]{n_\e}
(\W,\IdRtwo,\mu).
\]
For every $\e$, denote by $D_\e^i$, $i=1,\dots,m_\e$, the subdomains of $\W$ that are encircled by the cores of the $m_\e$ dislocations in $\MEps$. 
Denote
\[
A_\e^i \supset \{p\in\MEps ~:~ \r_i(p) < 2\e |\v_\e^i|\}
\]
be the regular annular domains as in Definition~\ref{def:regular_dM} (with respect to the distance defined by $\PEps$).
Then, $A_\e^i$ is Lipschitz equivalent (as a domain in $\R^2$) to $B_{2\e|\v_\e^i|}\setminus B_{\e|\v_\e^i|}$, with constants independent of $\e$ and $\v_\e^i$.
In particular, there exists a constant $C$ independent of $\e$ and $\v_\e^i$, such that
\[
\overline{D_\e^i}\subset B_{C\e|\v_\e^i|}(p_\e^i), \qquad i=1,\dots,m_\e,
\]
for some $p_\e^i \in \W$.
\end{lemma}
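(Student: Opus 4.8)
The plan is to chain together two bilipschitz equivalences. First I would note that, by \defref{def:body_w_disloc}(c) applied to an annular submanifold of $\MEps$ containing the $i$-th hole, this dislocation has Burgers vector $\e\v_\e^i$ and $(A_\e^i,\gEps,\QEps)$ has a regular inner-boundary in the sense of \defref{def:regular_dM}; in particular $(A_\e^i,\gEps)$ is bilipschitz equivalent to the Euclidean annulus $B_{2\e|\v_\e^i|}\setminus B_{\e|\v_\e^i|}$ with bilipschitz constant $10$, independently of $\e$ and $i$. Since the maps $\ZEps\colon\MEps\to\W$ are uniformly bilipschitz embeddings (\defref{def:man_conv1}), their restrictions to $A_\e^i$ are bilipschitz maps from $(A_\e^i,\gEps)$ onto $\ZEps(A_\e^i)$, the latter with the metric inherited from $\W\subset\R^2$, with a constant $L_0$ depending only on the converging sequence --- not on $\e$ or on $i$, since restricting a bilipschitz map does not increase its constant. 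Composing the two equivalences yields the first assertion: $\ZEps(A_\e^i)$ is bilipschitz equivalent to $B_{2\e|\v_\e^i|}\setminus B_{\e|\v_\e^i|}$ with a constant depending only on $L_0$.

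For the second assertion I would extract a diameter bound from the first equivalence: $\diam\ZEps(A_\e^i)\lesssim\e|\v_\e^i|$, with implied constant independent of $\e$ and $i$ (the diameter of $B_{2\e|\v_\e^i|}\setminus B_{\e|\v_\e^i|}$, in any of the comparable metrics one may put on it, is $\simeq\e|\v_\e^i|$). The boundary component $\Gamma_\e^i$ of $\MEps$ around the $i$-th hole is one of the two boundary circles of the annulus $A_\e^i$, so $\ZEps(\Gamma_\e^i)$ is a Jordan curve contained in $\ZEps(A_\e^i)$, hence of diameter $\lesssim\e|\v_\e^i|$, and $D_\e^i$ is by definition the bounded component of $\R^2\setminus\ZEps(\Gamma_\e^i)$. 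I would then invoke the elementary planar fact that the bounded complementary component of a Jordan curve lies in the closed convex hull of that curve (the complement of the convex hull is open, connected and unbounded, and therefore belongs to the unbounded component of the complement of the curve). Thus $\overline{D_\e^i}$ lies in the convex hull of $\ZEps(\Gamma_\e^i)$, which is contained in the closed ball of radius $C\e|\v_\e^i|$ centred at any fixed point $p_\e^i\in\ZEps(\Gamma_\e^i)$, for a suitable $C$ depending only on $L_0$; this is the claim.

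I do not expect a genuine obstacle here: the statement is essentially bookkeeping of uniform bilipschitz constants. The two points that need a little care are, first, checking that ``uniformly bilipschitz'' in \defref{def:man_conv1} furnishes a constant independent of both $\e$ and the index $i$ --- which holds because a restriction of a bilipschitz map has bilipschitz constant no larger than the original --- and, second, the topological identification of $\ZEps(A_\e^i)$ as a planar annular region whose inner Jordan boundary is $\ZEps(\Gamma_\e^i)$, which is immediate since $\ZEps$ is a homeomorphism onto its image and $A_\e^i$ is an annulus having $\Gamma_\e^i$ as one of its boundary circles.
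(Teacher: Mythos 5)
Your proposal is correct and follows essentially the same route as the paper's proof: compose the bilipschitz equivalence $A_\e^i \simeq B_{2\e|\v_\e^i|}\setminus B_{\e|\v_\e^i|}$ (constant $10$, from the regular inner-boundary assumption) with the uniform bilipschitz bound on $\ZEps$, then deduce the diameter bound $\diam \ZEps(A_\e^i)\lesssim \e|\v_\e^i|$ and observe that $D_\e^i$, being the disc enclosed by the image of the inner boundary, is contained in a ball of comparable radius. Your extra care with the Jordan-curve/convex-hull step and with the independence of the restricted bilipschitz constant on $i$ simply fills in details the paper leaves implicit.
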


\begin{proof}
By our assumption on the regularity of the inner boundaries and the inclusion map $(\MEps,\PEps) \to (\MEps, \P)$, 
we have that $B_{2\e|\v_\e^i|}\setminus B_{\e|\v_\e^i|}$ is Lipschitz equivalent to $(A_\e^i,\PEps)$ with an equivalence constant $10$, and $(A_\e^i, \P)$ is Lipschitz equivalent to $(A_\e^i,\PEps)$ with constant independent of $\e$ and $i$.
This proves the first part.
In particular $(A_\e^i, \P)$ has a diameter of order $\e|\v_\e^i|$ and can thus can be contained in a ball of radius $C\e|\v_\e^i|$ for some $C>0$.
Since $D_\e^i$ is the topological disc enclosed by the inner boundary of the annulus $A_\e^i$, the second part follows.
\end{proof}

\begin{lemma}
\label{lem:chi_e}
Let 
\[
(\MEps,\PEps)
\xrightarrow[]{n_\e}
(\W,\IdRtwo,\mu).
\]
and let $D_\e^i$ be as in Lemma~\ref{lem:bilip_holes}.
For every $\psi\in C^1_c(\W;\R^2)$, there exists  a sequence $\psi_\e\in C^\infty_c(\W;\R^2)$ satisfying
\begin{enumerate}[itemsep=0pt,label=(\alph*)]
\item $\psi_\e \to \psi$ uniformly.
\item $d\psi_\e \to d\psi$ in the following sense,
\beq
\|d\psi - d\psi_\e\|_{L^2(\MEps)} \lesssim n_\e\e,
\label{eq:bound_chi_chie}
\eeq
where the constant in the inequality may depend on $\psi$.
\item  $\psi_\e$ is constant on the domains $D_\e^i$,
\[
\psi_\e|_{D_\e^i} \equiv c_\e^i = \frac{1}{|D_\e^i|} \int_{D_\e^i} \psi\, \VolumeE.
\]
\end{enumerate}
\end{lemma}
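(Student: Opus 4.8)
The plan is to build $\psi_\e$ by altering $\psi$ only on thin neighbourhoods of the holes $D_\e^i$, interpolating there between the prescribed constant $c_\e^i$ and $\psi$ itself, and then mollifying to gain smoothness while leaving the holes untouched. By \lemref{lem:bilip_holes}, $\overline{D_\e^i}\subset B_{C\e|\v_\e^i|}(p_\e^i)$ with $C$ and the bilipschitz constants of $\ZEps$ independent of $\e$ and $i$, so in particular $\diam_\euc D_\e^i\lesssim\e|\v_\e^i|$. The separation built into \defref{def:body_w_disloc}(d), namely $\rho_\e>20\e\max_k|\v_\e^k|$, together with the uniform bilipschitz bound on $\ZEps$, yields a fixed $\lambda\in(0,1)$, depending only on $\W$, such that the sets $N_\e^i:=\{x\in\W:\dist(x,D_\e^i)<\lambda\e|\v_\e^i|\}$ are pairwise disjoint and compactly contained in $\W$ (the latter because $\dist(D_\e^i,\pl\W)\gtrsim\rho_\e\gg\lambda\e|\v_\e^i|$).

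For each $i$ I would pick a smooth cutoff $\chi_\e^i:\W\to[0,1]$ with $\chi_\e^i\equiv0$ on the $\tfrac{\lambda}{2}\e|\v_\e^i|$-neighbourhood of $D_\e^i$, $\chi_\e^i\equiv1$ off $N_\e^i$, and $|d\chi_\e^i|\lesssim(\e|\v_\e^i|)^{-1}$, and set
\[
\tilde\psi_\e:=\psi+\sum_i(1-\chi_\e^i)\,(c_\e^i-\psi),
\]
which is well-defined and $C^1$ since the summands have pairwise disjoint supports, all in the interior of $\W$. By construction $\tilde\psi_\e\equiv c_\e^i$ on a neighbourhood of $\overline{D_\e^i}$, $\tilde\psi_\e=\psi$ off $\bigcup_iN_\e^i$, and $\tilde\psi_\e\in C^1_c(\W;\R^2)$. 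On $N_\e^i$ one has $|c_\e^i-\psi|\le\|\psi\|_{C^1}(\diam_\euc D_\e^i+\lambda\e|\v_\e^i|)\lesssim\|\psi\|_{C^1}\e|\v_\e^i|$, whence the pointwise bounds
\[
|\tilde\psi_\e-\psi|\lesssim\|\psi\|_{C^1}\,\e|\v_\e^i|,\qquad
|d\tilde\psi_\e-d\psi|=|(\chi_\e^i-1)\,d\psi+d\chi_\e^i\,(\psi-c_\e^i)|\lesssim\|\psi\|_{C^1}
\]
on $N_\e^i$, with both differences vanishing elsewhere. Since $\max_i|\v_\e^i|\lesssim n_\e$ and $n_\e\e\to0$ (\lemref{lem:max_disloc}), the first bound gives $\|\tilde\psi_\e-\psi\|_\infty\lesssim\|\psi\|_{C^1}n_\e\e\to0$, which is (a); and using $|N_\e^i|\lesssim(\e|\v_\e^i|)^2$ and the Burgers-vector bound $\sum_i|\v_\e^i|\lesssim n_\e$,
\[
\|d\tilde\psi_\e-d\psi\|^2_{L^2(\W)}\lesssim\|\psi\|_{C^1}^2\sum_i(\e|\v_\e^i|)^2\le\|\psi\|_{C^1}^2\Big(\e\sum_i|\v_\e^i|\Big)^2\lesssim\|\psi\|_{C^1}^2\,n_\e^2\e^2,
\]
which is (b) for $\tilde\psi_\e$ (and, since $\ZEps(\MEps)\subset\W$, also on $\ZEps(\MEps)$).

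Finally I would smooth $\tilde\psi_\e$ by setting $\psi_\e:=\tilde\psi_\e*\varphi_{r_\e}$ with a standard mollifier of radius $r_\e$. Because $\v_\e^i\in\bbS$ forces $|\v_\e^i|\gtrsim1$, taking $r_\e$ below $\tfrac{\lambda}{2}\e\min_i|\v_\e^i|$ (so $r_\e\simeq\e$ suffices) keeps $\psi_\e\equiv c_\e^i$ on each $D_\e^i$, giving (c) exactly; taking $r_\e$ also below $\dist(\operatorname{supp}\tilde\psi_\e,\pl\W)$ gives $\psi_\e\in C^\infty_c(\W;\R^2)$. As $\tilde\psi_\e$ is $C^1$ with uniformly bounded gradient and compact support, $\|\psi_\e-\tilde\psi_\e\|_\infty\lesssim r_\e\|\psi\|_{C^1}\to0$ and $\|d\psi_\e-d\tilde\psi_\e\|_{L^2(\W)}=\|(d\tilde\psi_\e)*\varphi_{r_\e}-d\tilde\psi_\e\|_{L^2(\W)}\to0$; shrinking $r_\e$ further if needed so the latter is $\le n_\e\e$, properties (a) and (b) transfer from $\tilde\psi_\e$ to $\psi_\e$.

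The construction is essentially routine; the only load-bearing points are (i) that the separation hypothesis of \defref{def:body_w_disloc}(d) leaves enough room (a factor $\gtrsim20$ over the core sizes) to interpolate on pairwise disjoint neighbourhoods, and (ii) that the rate in (b) comes out as $n_\e\e$ and not, say, $\sqrt{m_\e}\,\e$ — this is exactly where the Burgers-vector bound $\sum_i|\v_\e^i|\lesssim n_\e$ of \lemref{lem:max_disloc} is used, via $\sum_i(\e|\v_\e^i|)^2\le(\e\sum_i|\v_\e^i|)^2\lesssim n_\e^2\e^2$.
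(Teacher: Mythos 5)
Your proposal is correct and follows essentially the same route as the paper's proof: interpolate between $\psi$ and the average $c_\e^i$ on thin disjoint annular neighbourhoods of the $D_\e^i$ (using \lemref{lem:bilip_holes} for the diameter bound and the separation condition for disjointness), estimate $\|d\tilde\psi_\e-d\psi\|_{L^2}^2$ by $\e^2\sum_i|\v_\e^i|^2\lesssim n_\e^2\e^2$ via \lemref{lem:max_disloc}(a), and then mollify at a scale small enough to preserve constancy on the cores. Your write-up is in fact slightly more careful than the paper's about the disjointness of the interpolation regions and the choice of mollification radius.
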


\begin{proof}
By the previous lemma, there exists a constant $C$ independent of $\e$, such that
\[
\overline{D_\e^i}\subset B_{C\e|\v_\e^i|}(p_\e^i), \qquad i=1,\dots,m_\e,
\]
for some $p_\e^i \in \W$.
Since $\psi \in C^1(\W;\R^2)$, it follows that for every $i$,
\[
|\psi - c_\e^i| \lesssim \e 
\qquad\text{in $B_{2C\e|\v_\e^i|}(p_\e^i)$}, 
\]
where the constant in the inequality only depends on $\|d\psi\|_\infty$.
Hence, there exists a $\tilde{\psi}_\e\in W^{1,\infty}_0(\W;\R^2)$ such that 
\[
\tilde{\psi}_\e|_{B_{C\e|\v_\e^i|}(p_\e^i)} \equiv c_\e^i 
\Textand \tilde{\psi}_\e|_{\W\setminus \bigcup_i B_{2C\e|\v_\e^i|}(p_\e^i)} = \psi, 
\]
such that $\|d\tilde{\psi}_\e\|_\infty \le C'$, for some $C'$ depending only on $\psi$ ($d\tilde{\psi}_\e$ can be constructed, for example, by extending $\tilde{\psi}_\e$ radially on each annulus $B_{2C\e|\v_\e^i|}(p_\e^i)\setminus B_{C\e|\v_\e^i|}(p_\e^i)$).
It is immediate that $\tilde{\psi}_\e \to \psi$ uniformly.

Now,
\[
\|d\psi - d\psi_\e\|_{L^2(\MEps)}^2  \lesssim \|d\psi\|_\infty \sum_{i=1}^{m_\e} |B_{2C\e|\v_\e^i|}| \lesssim \e^2 \sum_{i=1}^{m_\e} |\v_\e^i|^2 \lesssim n_\e^2 \e^2,
\]
where the last inequality follows from Lemma~\ref{lem:max_disloc}(a).
By mollification, we obtain a smooth $\psi_\e$ satisfying all the requirements.
\end{proof}

\begin{lemma}
\label{lem:5.9}
Let 
\[
(\MEps,\PEps) 
\xrightarrow[]{n_\e}
(\W,\IdRtwo,\mu).
\]
For every $\psi\in C^1_c(\W;\R^2)$, there exists a sequence $\psi_\e\in C^\infty_c(\W;\R^2)$ as in \lemref{lem:chi_e}, such that
\beq
\LimEps \frac{1}{n_\e\e} \torsion_\e(\psi_\e|_{\MEps})
= \int_\W \TR(\psi\otimes d\mu).
\label{eq:approx_torsion_chie}
\eeq
\end{lemma}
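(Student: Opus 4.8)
The plan is to take $\psi_\e$ to be exactly the sequence produced by \lemref{lem:chi_e}, and to deduce \eqref{eq:approx_torsion_chie} from the Burgers vector convergence \eqref{eq:burgers_conv} — which already holds with $\psi$ itself in place of $\psi_\e$ — together with the uniform convergence $\psi_\e\to\psi$. Concretely, using the linearity of $\torsion_\e$ I would split
\[
\frac{1}{n_\e\e}\torsion_\e(\psi_\e\circ \ZEps) = \frac{1}{n_\e\e}\torsion_\e(\psi\circ \ZEps) + \frac{1}{n_\e\e}\torsion_\e\brk{(\psi_\e-\psi)\circ \ZEps}.
\]
Since $\psi\in C^1_c(\W;\R^2)\subset C_c(\W;\R^2)$, \defref{def:man_conv2}(b), i.e.\ \eqref{eq:burgers_conv}, gives that the first term converges to $\int_\W\TR(\psi\otimes d\mu)$. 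It therefore remains to show that the second term is $o(1)$.

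For the remainder term I would recall that $\torsion_\e$ is a bounded $\R^2$-valued measure on $\MEps$, and by \eqref{eq:norm_calT} (with the length-scale $\e$) its total mass satisfies $\|\torsion_\e\|_{\calM(\MEps)}\simeq\sum_{i=1}^{m_\e}\e|\v_\e^i|$; combined with the Burgers vector bound $\sum_i|\v_\e^i|\lesssim n_\e$ from \lemref{lem:max_disloc}(a), this yields $\|\torsion_\e\|_{\calM(\MEps)}\lesssim n_\e\e$. Hence
\[
\frac{1}{n_\e\e}\Abs{\torsion_\e\brk{(\psi_\e-\psi)\circ \ZEps}} \le \frac{\|\torsion_\e\|_{\calM(\MEps)}}{n_\e\e}\,\|(\psi_\e-\psi)\circ \ZEps\|_{L^\infty(\MEps)} \lesssim \|\psi_\e-\psi\|_{L^\infty(\W)},
\]
which tends to $0$ by \lemref{lem:chi_e}(a). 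Adding the two estimates gives \eqref{eq:approx_torsion_chie}.

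Essentially all of the real work has been absorbed into \lemref{lem:chi_e} — the radial interpolation of $\psi$ to a compactly supported smooth function that is constant on each hole $D_\e^i$ while staying uniformly close to $\psi$ — so the present lemma is a soft approximation argument. The only point requiring a little care is that $\torsion_\e$ must be seen to have total mass of order $n_\e\e$, so that, after rescaling by $(n_\e\e)^{-1}$, it still only ``feels'' the uniformly small perturbation $\psi_\e-\psi$ at order $\|\psi_\e-\psi\|_\infty$; beyond this I do not anticipate any obstacle.
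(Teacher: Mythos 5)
Your proof is correct, but it takes a genuinely different route from the paper's. The paper reduces \eqref{eq:approx_torsion_chie} to showing $\frac{1}{n_\e\e}\torsion_\e((\psi-\psi_\e)\circ\ZEps)\to 0$ exactly as you do, but then handles the remainder through the $\HOneZero$-representation \eqref{eq:Tpsi_H_1_0}: it rewrites $\torsion_\e((\psi-\psi_\e)\circ\ZEps)$ as $\int_{\MEps}\TR(\QEps\wedge d((\psi-\psi_\e)\circ\ZEps))$, replaces $\QEps$ by $\QEps-d\ZEps$ via the argument of \lemref{lem:5.8}, and concludes by Cauchy--Schwarz using the global distortion bound \eqref{eq:global_dist_bnd} together with the $L^2$-gradient estimate \eqref{eq:bound_chi_chie}; the resulting bound is $O(h_\e)$. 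You instead treat $\torsion_\e$ as the boundary measure it is, bound its total variation by $\sum_i\e|\v_\e^i|\lesssim n_\e\e$ via \eqref{eq:norm_calT} and \lemref{lem:max_disloc}(a) (both of which are available at this point with no circularity), and pair it with the uniform smallness $\|\psi_\e-\psi\|_{L^\infty}\to 0$ from \lemref{lem:chi_e}(a). Your argument is more elementary: it uses only the $C^0$-closeness of $\psi_\e$ to $\psi$ and avoids the distortion bound entirely, whereas the paper's argument uses only the $W^{1,2}$-closeness \eqref{eq:bound_chi_chie} and mirrors the manipulation that is reused later in the compactness proof. Both are complete; the one point you rightly flag — that the rescaled measures have uniformly bounded mass — is exactly what \lemref{lem:max_disloc}(a) supplies.
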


\begin{proof}
Construct $\psi_\e$ as in \lemref{lem:chi_e}.
By the convergence \eqref{eq:burgers_conv} of Burgers vectors,
we need to prove that
\[
\LimEps \frac{1}{n_\e\e} \torsion_\e((\psi_\e - \psi)|_{\MEps}) = 0,
\]
i.e., that
\[
\LimEps \frac{1}{n_\e\e} \int_{\MEps} \TR\brk{\PEps \wedge d(\psi-\psi_\e)} = 0.
\]
By the same argument as in \lemref{lem:5.8},
it suffices to prove that
\[
\LimEps \frac{h_\e}{n_\e\e} \int_{\MEps} \TR\brk{\tfrac{\PEps - \IdRtwo}{h_\e} \wedge d(\psi-\psi_\e)} = 0,
\]
which is immediate as
\[
\begin{split}
& \Abs{\frac{h_\e}{n_\e\e} \int_{\MEps} 
\TR\brk{\tfrac{\PEps - \IdRtwo}{h_\e} \wedge d(\psi-\psi_\e)}}
\lesssim 
\frac{h_\e}{n_\e\e}  
\Norm{\frac{\PEps - \IdRtwo}{h_\e}}_{L^2(\MEps)} \|d\psi_\e - d\psi\|_{L^2(\MEps)} \lesssim h_\e,
\end{split}
\]
and the last passage follows from \eqref{eq:global_dist_bnd} and \eqref{eq:bound_chi_chie}.
\end{proof}

\begin{proposition}
\label{prop:mu_e_mu}
Let 
\[
(\MEps,\PEps) 
\xrightarrow[]{n_\e}
(\W,\IdRtwo,\mu).
\]
and let $D_\e^i$, $i=1,\dots,m_\e$ be defined as in Lemma~\ref{lem:bilip_holes}. Then,
\[
\tmu_\e \stackrel{*}{\weakly} \mu,
\]
where $\tmu_\e\in\calM(\W;\R^2)$ are given by
\[
d\tmu_\e = \frac{1}{n_\e} \sum_{i=1}^{m_\e}   \v_\e^i \otimes \frac{\ind_{D_\e^i}}{|D_\e^i|} \VolumeE .
\]
\end{proposition}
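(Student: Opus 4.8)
The plan is to test $\tmu_\e$ against smooth compactly-supported test functions, recognize the pairing as the (rescaled) torsion functional applied to the specific test functions constructed in Lemma~\ref{lem:chi_e}, invoke Lemma~\ref{lem:5.9} to pass to the limit, and finally upgrade to arbitrary continuous test functions via a uniform mass bound and a density argument.

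\textbf{Step 1: reduction to $C^1_c$ test functions.} Fix $\psi\in C^1_c(\W;\R^2)$ and let $\psi_\e\in C^\infty_c(\W;\R^2)$ be the sequence provided by Lemma~\ref{lem:chi_e}, which is constant on each $D_\e^i$ with value $c_\e^i = \frac{1}{|D_\e^i|}\int_{D_\e^i}\psi\,\VolumeE$. By the very definition of $\tmu_\e$, a direct contraction of the trace gives
\[
\int_\W \TR(\psi\otimes d\tmu_\e) = \frac{1}{n_\e}\sum_{i=1}^{m_\e}\ip{\v_\e^i,c_\e^i}.
\]
On the other hand, $\ZEps$ carries the inner-boundary component of $\MEps$ encircling the $i$-th dislocation onto $\partial D_\e^i\subset\overline{D_\e^i}$, and $\psi_\e$ is (by continuity) constant equal to $c_\e^i$ there; hence $\psi_\e\circ\ZEps$ takes the constant value $c_\e^i$ on the $i$-th inner boundary of $\MEps$ for each $i$. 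Since the Burgers vectors of $\MEps$ are $\e\v_\e^i$, formula \eqref{eq:Tpsi_const_on_bdry} yields
\[
\torsion_\e(\psi_\e\circ\ZEps) = \sum_{i=1}^{m_\e}\ip{\e\v_\e^i,c_\e^i} = \e\, n_\e \int_\W \TR(\psi\otimes d\tmu_\e),
\]
so that $\frac{1}{n_\e\e}\torsion_\e(\psi_\e\circ\ZEps) = \int_\W \TR(\psi\otimes d\tmu_\e)$. Combining with Lemma~\ref{lem:5.9}, we conclude that
\[
\LimEps \int_\W \TR(\psi\otimes d\tmu_\e) = \int_\W \TR(\psi\otimes d\mu)
\qquad\text{for every }\psi\in C^1_c(\W;\R^2).
\]

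\textbf{Step 2: uniform bound and density.} The total variation of $\tmu_\e$ is bounded: $|\tmu_\e|(\W) = \frac{1}{n_\e}\sum_{i=1}^{m_\e}|\v_\e^i|\lesssim 1$ by Lemma~\ref{lem:max_disloc}(a), with a constant depending only on $\W$ and $\mu$. Thus $\{\tmu_\e\}$ is bounded in $\calM(\W;\R^2)$, and by the Banach--Alaoglu theorem every subsequence admits a further subsequence converging weak-star to some $\nu\in\calM(\W;\R^2)$. Since $C^1_c(\W;\R^2)$ is dense in $C_0(\W;\R^2)$ in the supremum norm and $\mu$ has finite total mass, Step~1 forces $\int_\W\TR(\psi\otimes d\nu) = \int_\W\TR(\psi\otimes d\mu)$ first for $\psi\in C^1_c$ and then for all $\psi\in C_0(\W;\R^2)$; hence $\nu=\mu$. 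As every subsequence has a sub-subsequence converging to the same limit $\mu$, the whole sequence satisfies $\tmu_\e\stackrel{*}{\weakly}\mu$.

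\textbf{Main obstacle.} The only point requiring care is the bookkeeping in Step~1 that identifies $\frac{1}{n_\e\e}\torsion_\e(\psi_\e\circ\ZEps)$ with $\int_\W\TR(\psi\otimes d\tmu_\e)$: one needs $\ZEps$ to map the $i$-th inner boundary of $\MEps$ onto $\partial D_\e^i$ and $\psi_\e$ to be genuinely constant there. This is precisely what Lemma~\ref{lem:bilip_holes} and the construction in Lemma~\ref{lem:chi_e} were designed to guarantee, so no real difficulty remains; the passage to the limit and the extension to general test functions are then soft functional analysis.
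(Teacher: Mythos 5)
Your proposal is correct and follows essentially the same route as the paper: pair $\tmu_\e$ against the adjusted test functions $\psi_\e$ of Lemma~\ref{lem:chi_e}, identify the pairing with $\frac{1}{n_\e\e}\torsion_\e(\psi_\e\circ\ZEps)$ via \eqref{eq:Tpsi_const_on_bdry}, pass to the limit with Lemma~\ref{lem:5.9}, and extend to all of $C_c(\W;\R^2)$ using the uniform mass bound from Lemma~\ref{lem:max_disloc}(a). Your Step~2 merely spells out the standard density/Banach--Alaoglu argument that the paper leaves implicit.
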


\begin{proof}
For $\psi\in C^\infty_c(\W;\R^2)$, let $\psi_\e$ be as in \lemref{lem:5.9}. Then
\beq
\begin{split}
\int_\W \TR(\psi\otimes d\mu) &= \LimEps \frac{1}{n_\e\e} \int_{\MEps} \TR(\PEps \wedge d\psi_\e) \\
&= \LimEps \frac{1}{n_\e\e} \sum_{i=1}^{m_\e}   \oint_{\pl D_\e^i} \TR(\PEps \otimes \psi_\e )  \\
&= \LimEps\frac{1}{n_\e} \sum_{i=1}^{m_\e}  \left\langle\frac{1}{|D_\e^i|} \int_{D_\e^i} \psi\, \VolumeE , \v_\e^i\right\rangle \\
&=\LimEps \int_\W \TR(\psi\otimes d\tmu_\e),
\end{split}
\label{eq:tildemu_mu}
\eeq
where in the transition to the third line we use \eqref{eq:Tpsi_const_on_bdry}.
By the Burgers vector bound \lemref{lem:max_disloc}(a),
\[
\int_\W d|\tmu_\e| \le \frac{1}{n_\e} \sum_{i=1}^{m_\e} |\v_\e^i| \lesssim 1.
\]
Thus, $\tmu_\e$ is uniformly bounded in $\calM(\W;\R^2)$ and therefore \eqref{eq:tildemu_mu} extends to all $\psi\in C_c(\W;\R^2)$.
\end{proof}

\subsection{Geometric rigidity}\label{sec:rigidity}

In this section we prove a uniform geometric rigidity statement for converging bodies with dislocations.
\begin{theorem}
\label{thm:rigidity}
Let 
\[
(\MEps,\PEps) 
\xrightarrow[]{n_\e}
(\W,\IdRtwo,\mu).
\]
For every $f_\e \in \HOne(\MEps;\R^2)$, there exists  a matrix $U_\e\in \SO(2)$, such that
\[
\|df_\e - U_\e \PEps\|_{L^2(\MEps)}^2 \lesssim \int_{\MEps} \dist^2(df_\e,\SO(\gEps,\euc))\,\VolumeEps + h_\e^2,
\]
where the constant depends on $\W$ and on the bilipschitz constant of the embedding of $(\MEps,\PEps)$ into $(\W,\IdRtwo)$.
\end{theorem}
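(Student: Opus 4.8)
The plan is to transport the problem to the fixed Euclidean domain $\W$ via $\ZEps$, apply the single-domain FJM estimate of Friesecke–James–Müller there, and then estimate the cost of replacing $d\ZEps$ by $\QEps$ using the global distortion bound \eqref{eq:global_dist_bnd}. First I would push forward: set $\tf_\e = f_\e\circ \ZEps^{-1}$ on $\ZEps(\MEps)\subset\W$, and extend it somehow (or rather work directly on $\ZEps(\MEps)$). Since the domains $\ZEps(\MEps)$ are uniformly bilipschitz-equivalent to $\W$ (a fixed Lipschitz domain), the FJM constant is uniform over $\e$ — this is the scale/bilipschitz-invariance of the rigidity constant, exactly as in \lemref{lem:FJM_annulus} and \cite[Theorem~5]{FJM06}. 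Hence there is $U_\e\in\SO(2)$ with
\[
\int_{\ZEps(\MEps)} |d\tf_\e - U_\e|^2 \,\VolumeE \lesssim \int_{\ZEps(\MEps)} \dist^2(d\tf_\e,\SO(2))\,\VolumeE,
\]
with constant depending only on $\W$ and the bilipschitz constant of $\ZEps$.

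Next I would change variables back to $\MEps$. Because $\ZEps$ is uniformly bilipschitz, $\VolumeEps \simeq \ZEps^\#\VolumeE$ and operator norms are comparable up to the bilipschitz constant, and $d\tf_\e\circ d\ZEps = df_\e$. A mild point is that $\dist^2(d\tf_\e,\SO(2))$ on $\ZEps(\MEps)$ corresponds, under pullback, to $\dist^2(df_\e\circ d\ZEps^{-1},\SO(2))$; since $d\ZEps^{-1}$ is within $O(\e|\v_\e^i|/\r_\e + c_\e)$ of $\QEps^{-1}$ pointwise (condition \eqref{eq:local_dist_bnd}) and $\QEps^{-1}$ is an isometry from $(\R^2,\euc)$ to $(T\MEps,\gEps)$, one has $\dist_{\euc,\euc}(df_\e\circ d\ZEps^{-1},\SO(2)) \lesssim \dist_{\gEps,\euc}(df_\e,\SO(\gEps,\euc)) + |df_\e|\cdot(\tfrac{\e|\v_\e^i|}{\r_\e}+c_\e)$. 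This almost works but the extra $|df_\e|$ term is not obviously controlled; the cleaner route is to absorb the discrepancy into the $h_\e^2$ error from the start. Concretely: write $df_\e - U_\e\QEps = (df_\e - U_\e\, d\ZEps) + U_\e(d\ZEps - \QEps)$, so
\[
\|df_\e - U_\e\QEps\|_{L^2(\MEps)}^2 \lesssim \|df_\e - U_\e\, d\ZEps\|_{L^2(\MEps)}^2 + \|d\ZEps - \QEps\|_{L^2(\MEps)}^2,
\]
and the second term is $\lesssim h_\e^2$ by \eqref{eq:global_dist_bnd}. For the first term, pulling back the FJM inequality above gives $\|df_\e - U_\e\,d\ZEps\|_{L^2(\MEps)}^2 \lesssim \int_{\MEps}\dist^2(df_\e\circ d\ZEps^{-1},\SO(2))\,\VolumeEps$ (using the uniform bilipschitz bounds to pass norms and volumes back and forth). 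Finally bound the right-hand side: $\dist(df_\e\circ d\ZEps^{-1},\SO(2)) \le \dist(df_\e\circ\QEps^{-1},\SO(2)) + |df_\e\circ d\ZEps^{-1} - df_\e\circ\QEps^{-1}|$, and since $df_\e\circ\QEps^{-1}$ maps $(\R^2,\euc)\to(\R^2,\euc)$ with $\dist(df_\e\circ\QEps^{-1},\SO(2)) = \dist(df_\e,\SO(\gEps,\euc))$ (as $\QEps\in\SO(\gEps,\euc)$), the first piece integrates to $\int_{\MEps}\dist^2(df_\e,\SO(\gEps,\euc))\,\VolumeEps$ as desired, while the second piece is controlled by $\|df_\e\|_{L^2}\,\|d\ZEps^{-1}-\QEps^{-1}\|_{L^\infty}$-type estimates, or — more robustly — by splitting $\MEps$ into the union of the annuli $A_\e^i$ near the dislocations (where \eqref{eq:local_dist_bnd} degenerates but whose total volume is $O(h_\e^2)$ by \lemref{lem:max_disloc}) and its complement (where $\tfrac{\e|\v_\e^i|}{\r_\e}+c_\e$ is small). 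On the complement one gets a term $\lesssim \int_{\MEps}\dist^2(df_\e,\SO(\gEps,\euc))\,\VolumeEps + h_\e^2$ after Young's inequality, and on the annuli one crudely bounds $|df_\e\circ d\ZEps^{-1}|^2$ using the bilipschitz bounds and the local $L^2$-mass of $df_\e$, which is in turn $\lesssim \int \dist^2(df_\e,\SO)\,\VolumeEps + \Vol(A_\e^i)$.

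I expect the main obstacle to be precisely this last bookkeeping: controlling the error incurred by swapping $d\ZEps^{-1}$ for $\QEps^{-1}$ inside $\dist^2(df_\e\circ\,\cdot\,,\SO(2))$ without a free $L^2$-bound on $df_\e$. The resolution is that one does have such a bound \emph{a posteriori}: combining the triangle inequality $|df_\e| \le |df_\e - U_\e\QEps| + |\QEps|$ with $|\QEps|_{\gEps,\euc}=1$ and the conclusion being proved, the $df_\e$-dependent error terms can be absorbed into the left-hand side (for $\e$ small, where $\tfrac{\e|\v_\e^i|}{\r_\e}+c_\e$ is small away from the cores), and near the cores one uses the volume bound on $\bigcup_i A_\e^i$. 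This is an entirely standard absorption argument once the geometry is set up, so I would present it compactly, referencing \lemref{lem:max_disloc}, \eqref{eq:local_dist_bnd}, \eqref{eq:global_dist_bnd}, and the uniform bilipschitz property, and relying on \cite[Theorem~3.1]{FJM02b} together with \cite[Theorem~5]{FJM06} for the uniform FJM constant on the bilipschitz-equivalent family $\{\ZEps(\MEps)\}$.
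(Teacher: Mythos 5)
Your overall skeleton coincides with the paper's: split $df_\e - U_\e\QEps$ into $(df_\e - U_\e\,d\ZEps) + U_\e(d\ZEps-\QEps)$, control the second piece by the global distortion bound \eqref{eq:global_dist_bnd}, and handle the first piece by an FJM estimate on $\ZEps(\MEps)$. But there is a genuine gap at the step you treat as routine: you assert that ``the domains $\ZEps(\MEps)$ are uniformly bilipschitz-equivalent to $\W$ (a fixed Lipschitz domain), the FJM constant is uniform over $\e$.'' This is false. $\ZEps(\MEps)$ is $\W$ with $m_\e$ holes removed, and $m_\e\to\infty$; no bilipschitz map can identify it with the simply-connected domain $\W$, and \cite[Theorem~5]{FJM06} gives nothing for a family of domains whose topology changes with $\e$. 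Establishing a uniform rigidity constant for these perforated domains is the actual technical content of the theorem. The paper does it (Propositions~\ref{prop:uniform_FJM} and \ref{prop:FJM_ZeMe}) by an extension argument: apply FJM on each annulus $\ZEps(A_\e^i)$ surrounding a hole (these \emph{are} uniformly bilipschitz to a standard annulus of aspect ratio $2$), use the resulting rotations $U_i$ and Poincar\'e to extend $f$ affinely into the holes with controlled extra energy (the factor $r_i^{-2}$ from the cutoff gradient is exactly cancelled by the factor $r_i^2$ from Poincar\'e, which is why the estimate is uniform in the number and size of the holes), and only then apply FJM on the full simply-connected $\W$. Without this, or an equivalent argument, your proof does not close.

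A secondary remark: your handling of the swap between $d\ZEps^{-1}$ and $\QEps^{-1}$ inside $\dist(df_\e\circ\,\cdot\,,\SO(2))$ generates a spurious $|df_\e|$-dependent error requiring an absorption argument. The paper avoids this entirely by comparing, for each fixed $V\in\SO(2)$, the quantities $|df_\e - V\,d\ZEps|$ and $|df_\e - V\QEps|$: their difference is $|V(d\ZEps-\QEps)|=|d\ZEps-\QEps|$, with no factor of $df_\e$, and taking the pointwise infimum over $V$ (using that every element of $\SO(\gEps,\euc)$ is $V\QEps$ for some $V\in\SO(2)$) yields
\[
\dist^2_{\euc,\euc}(df_\e\circ d\ZEps^{-1},\SO(2))\circ\ZEps \lesssim \dist^2_{\gEps,\euc}(df_\e,\SO(\gEps,\euc)) + |\QEps-d\ZEps|^2_{\gEps,\euc},
\]
which integrates directly to the desired bound plus $h_\e^2$. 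I recommend this route; your absorption argument is probably salvageable but is unnecessary bookkeeping.
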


\begin{proof}
Using the triangle inequality, for every $V\in\SO(2)$, 
\[
\begin{split}
|df_\e - V\PEps|_{\gEps,\euc}^2 &\lesssim |df_\e - V|_{\gEps,\euc}^2 + |V(\IdRtwo - \PEps)|_{\gEps,\euc}^2 \\
&\lesssim |\IdRtwo|_{\gEps,\euc} |df_\e - V|_{\euc,\euc}^2 +  |\IdRtwo - \PEps|_{\gEps,\euc}^2.
\end{split} 
\]
We integrate over $\MEps$ with respect to the volume form $\VolumeEps$.  The second term is $O(h_\e^2)$ using the global distortion bound \eqref{eq:global_dist_bnd}.
For the first term, we use the uniform boundedness of $|\IdRtwo|_{\gEps,\euc}$ and the equivalence of the volume forms $\VolumeEps$ and $\VolumeE$ to obtain
\[
\int_{\MEps} |df_\e - V\PEps|_{\gEps,\euc}^2\,\VolumeEps \lesssim
\int_{\MEps}  |df_\e - V|_{\euc,\euc}^2\,\VolumeE + h_\e^2.
\]

In Proposition~\ref{prop:FJM_ZeMe} below we show that there exists for every $f_\e: \MEps \to \R^2$  a $U\in \SO(2)$ such that
\beq
\label{eq:uniform_FJM}
\int_{\MEps} |df_\e - U|_{\euc,\euc}^2\,\VolumeE 
\lesssim
\int_{\MEps} \dist_{\euc,\euc}^2(df_\e,\SO(2))\,\VolumeE.
\eeq
It remains to reverse the estimate obtained at the beginning of the proof. 
First, using the equivalence of the volume forms,
\[
\int_{\MEps} \dist_{\euc,\euc}^2(df_\e,\SO(2))\,\VolumeE 
\lesssim 
\int_{\MEps} \dist_{\euc,\euc}^2(df_\e,\SO(2))\,\VolumeEps.
\]
For every $V\in\SO(2)$, using once again the uniformly bilipschitz property of the embedding of $(\MEps,\PEps)$ into $(\W,\IdRtwo)$,
\[
\begin{split}
|df_\e - V|^2_{\euc,\euc} &\lesssim |df_\e - V|^2_{\gEps,\euc} 
\lesssim |df_\e - V \PEps|^2_{\gEps,\euc} + |\PEps- \IdRtwo|^2_{\gEps,\euc}.
\end{split}
\]
Thus,
\[
\dist_{\euc,\euc}^2(df_\e,\SO(2)) \lesssim 
\dist_{\gEps,\euc}^2(df_\e,\SO(\gEps,\euc)) + |\PEps- \IdRtwo|^2_{\gEps,\euc}.
\]
Integrating over $\MEps$, and using once more the global distortion bound \eqref{eq:global_dist_bnd}, we finally obtain 
\[
\begin{split}
\int_{\MEps} |df_\e - U|_{\euc,\euc}^2\,\VolumeE 
&\lesssim \int_{\MEps} \dist^2_{\gEps,\euc}(df_\e,\SO(\gEps,\euc))\,\VolumeEps + h_\e^2.
\end{split}
\]
\end{proof}

The following proposition will be used for proving Proposition~\ref{prop:FJM_ZeMe}:

\begin{proposition}
\label{prop:uniform_FJM}
Let $\W\subset\R^2$ be a bounded, open, simply-connected domain with Lipschitz boundary.  
Let $x_1,\dots,x_m\in \W$ and $r_1,\dots,r_m>0$ such that the discs $B_{2r_i}(x_i)$ are disjoint and their closures in $\W$. 
Denote
\[ 
\W_h = \W \setminus \bigcup_{i=1}^{m} B_{r_i}(x_i). 
\]
Then there exists a constant $C>0$ depending only of $\W$, such that there exists for every $f\in \HOne(\W_h;\R^2)$ a matrix $U\in \SO(2)$, such that
\[
\int_{\W_h} |df - U|^2\,\VolumeE \le C \int_{\W_h} \dist^2(df,\SO(2))\,\VolumeE.
\] 
\end{proposition}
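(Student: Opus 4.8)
The goal is a uniform (constant depending only on $\W$) Friesecke--James--M\"uller estimate on a domain $\W$ from which one has removed $m$ well-separated discs. The natural strategy is the one of \cite[Proposition~3.3]{SZ12}: cover $\W_h$ by a bounded number of overlapping subdomains, apply the classical FJM rigidity theorem \cite[Theorem~3.1]{FJM02b} on each, and then glue the resulting rotations together using the overlaps. The key point that makes the constant \emph{uniform} (independent of $m$ and of the radii $r_i$) is that the number of subdomains in the cover, and their bilipschitz constants relative to fixed model domains, can be bounded purely in terms of $\W$.

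\textbf{Step 1: a cover of $\W_h$ by a controlled number of good pieces.} First I would fix a covering of $\W$ itself by finitely many open sets $V_1,\dots,V_N$ (with $N=N(\W)$), each bilipschitz to a ball (or to a square), with uniformly bounded bilipschitz constants, and such that the pairwise intersections $V_j\cap V_k$ that are nonempty contain a fixed ball of radius comparable to $\operatorname{diam}\W$; this is possible since $\W$ is bounded, open, simply connected with Lipschitz boundary. Each hole $B_{r_i}(x_i)$ lies inside some $V_{j(i)}$. The only subdomains of $\W_h$ that differ from the $V_j$ are the punctured ones $V_j\setminus\bigcup_{i:\,j(i)=j}B_{r_i}(x_i)$; because the discs $B_{2r_i}(x_i)$ are disjoint with closures in $\W$, a punctured piece is still a John domain (or an extension domain) with bilipschitz-type constants bounded in terms of $\W$ only — the removal of well-separated small discs does not degrade the relevant geometric constants. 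This is the step I expect to be the main obstacle: one must verify that the FJM constant for $V_j\setminus(\text{disjoint discs})$ is bounded independently of the number and sizes of the discs. The cleanest route is to observe that an annulus $B_{2r}\setminus B_r$ has a uniform FJM constant (as in \lemref{lem:FJM_annulus}), decompose each punctured piece into a "bulk" part away from the holes plus dyadic annular collars around each hole, and chain the rotations across these collars; since the holes are separated by a factor comparable to their size, the chaining length is bounded.

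\textbf{Step 2: FJM on each piece.} For $f\in\HOne(\W_h;\R^2)$, apply \cite[Theorem~3.1]{FJM02b} (in the form that the constant is scale-invariant and depends only on the bilipschitz class of the domain, \cite[Theorem~5]{FJM06}) on each piece $W_j$ of the cover, obtaining $U_j\in\SO(2)$ with
\[
\int_{W_j} |df - U_j|^2\,\VolumeE \le C_j \int_{W_j}\dist^2(df,\SO(2))\,\VolumeE \le C_j\int_{\W_h}\dist^2(df,\SO(2))\,\VolumeE,
\]
where $C_j \le C(\W)$ by Step 1.

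\textbf{Step 3: glue the rotations.} On a nonempty overlap $W_j\cap W_k$, which by construction has volume bounded below by a constant times $|\W|$ (for the unpunctured pieces) or times $r_i^2$ (for a collar shared between consecutive dyadic annuli), use $|U_j-U_k|^2\lesssim |df-U_j|^2+|df-U_k|^2$ pointwise and integrate over the overlap to get
\[
|U_j - U_k|^2 \lesssim \frac{1}{|W_j\cap W_k|}\int_{\W_h}\dist^2(df,\SO(2))\,\VolumeE.
\]
Since the adjacency graph of the cover is connected with diameter bounded in terms of $\W$ (the dyadic collars around each hole form a chain of bounded length because $r_i\gtrsim r_i$ separation forces $O(1)$ many collars before reaching the bulk — more precisely one only needs to reach the first annulus, then the bulk), every $U_j$ differs from a fixed $U:=U_{j_0}$ by $|U-U_j|^2\lesssim \int_{\W_h}\dist^2(df,\SO(2))$. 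Combining with Step 2, $|df-U|^2\lesssim |df-U_j|^2+|U_j-U|^2$ on each $W_j$, sum over $j$ (at most $N+$ (number of collars) $\lesssim m$ pieces, but each extra piece contributes only an integral over a hole-collar of total area $\lesssim\sum r_i^2$, and the $\dist^2(df,\SO(2))$ bound there is already accounted for), to conclude
\[
\int_{\W_h}|df-U|^2\,\VolumeE \le C(\W)\int_{\W_h}\dist^2(df,\SO(2))\,\VolumeE.
\]
The subtlety in the summation is that naively summing $O(m)$ pieces would produce an $m$-dependent constant; this is avoided because the collar pieces around distinct holes are disjoint, so their contributions add up to a single integral over $\W_h$, not $m$ copies of it — exactly the mechanism already used in the proof of \thmref{thm:rigidity_single_disloc}. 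I would write the argument so that each region of $\W_h$ is covered by a \emph{bounded} number (depending only on $\W$) of pieces, which is the real content of Step 1.
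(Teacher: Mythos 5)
You correctly identify the crux --- whether the ``bulk'' piece, i.e.\ a subdomain of $\W$ with many small discs removed, has an FJM constant controlled by $\W$ alone --- but your proposed resolution does not close it. The bulk $\W\setminus\bigcup_i B_{2r_i}(x_i)$ is itself a multiply-punctured domain of exactly the type the proposition is about, so invoking an FJM constant for it ``bounded in terms of $\W$ only'' is circular; and the alternative of chaining through dyadic collars fails quantitatively. Each hole needs its own simply-connected cover pieces, so the total number of pieces is at least of order $m$; in the chaining step each piece $W_j$ contributes $|W_j|\,|U_j-U|^2$, and since the overlap volumes are at best comparable to $|W_j|$, each such term is already of size $\int_{\W_h}\dist^2(df,\SO(2))$, so summing over $\sim m$ pieces yields a constant growing with $m$ (consider $m$ discs of radius $\sim m^{-1/2}$ on a grid: any simply-connected piece of the cover has diameter $\lesssim m^{-1/2}$, forcing $\sim m$ pieces and a chaining graph of diameter $\sim m^{1/2}$). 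The cover-and-glue mechanism is exactly what the paper uses for Theorem~\ref{thm:rigidity_single_disloc}, but there the number of pieces is three and fixed; it does not scale to $m$ holes.

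The paper's proof avoids applying FJM to any multiply-punctured domain except the \emph{unpunctured} $\W$ at the very end, via an extension argument. On each disjoint annulus $A_i=B_{2r_i}(x_i)\setminus B_{r_i}(x_i)$ (all similar, hence with one universal FJM constant) one gets $U_i\in\SO(2)$ and, by Poincar\'e with the scale-correct factor $r_i^2$, a vector $b_i$ with $\int_{A_i}|f-U_ix-b_i|^2\lesssim r_i^2\int_{A_i}\dist^2(df,\SO(2))$. One then fills each hole by interpolating on $A_i$ between $f$ and the affine map $U_ix+b_i$ and setting $d\tf=U_i$ on $B_{r_i}(x_i)$. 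The cost $\int_{\W_h}|df-d\tf|^2$ is a sum of integrals over the \emph{disjoint} annuli, hence bounded by a single integral of $\dist^2(df,\SO(2))$ over $\W_h$ with a universal constant --- this is where the disjointness does the work that your summation heuristic was reaching for --- and $\dist^2(d\tf,\SO(2))$ vanishes on the holes. A single application of FJM on the fixed domain $\W$ to $\tf$ then produces the global rotation $U$, and the triangle inequality finishes. If you want to salvage your outline, replace Steps 1--3 by this extension construction; the covering strategy by itself cannot deliver an $m$-independent constant.
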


\begin{proof}
Let $f\in \HOne(\W_h;\R^2)$ be given.
Consider the annuli
\[
A_i = B_{2r_i}(x_i) \setminus B_{r_i}(x_i),
\]
which are by assumption disjoint. Since they are all similar, there exists a constant $C_{ann}>0$ and matrices $U_i\in\SO(2)$, such that for every $i$,
\[
\int_{A_i} |df - U_i|^2\,\VolumeE \le C_{ann} \int_{A_i} \dist^2(df,\SO(2))\,\VolumeE.
\]
Furthermore, by the Poincar\'e inequality, there exists a constant $C_P>0$ and vectors $b_i\in\R^2$, such that
\[
\begin{split}
\int_{A_i} |f - U_ix - b_i|^2\,\VolumeE &\le C_P r_i^2 \int_{A_i} |df - U_i|^2\,\VolumeE \\
&\le C_{ann} C_P r_i^2  \int_{A_i} \dist^2(df,\SO(2))\,\VolumeE.
\end{split}
\]
Define $\tf\in \HOne(\W;\R^2)$ as follows
\[
d\tf(x) = \Cases{
df & x\in \W\setminus  \bigcup_{i=1}^n B_{2r_i}(x_i) \\
df - d(\vp(|x - x_i|/r_i)(f - U_ix - b_i)) & x\in A_i \\
U_i & x\in B_{r_i}(x_i),
}
\]
where $\vp\in C^\infty([1,2])$ satisfies $0\le \vp \le 1$, $\vp'\le2$, $\vp=0$ in a neighborhood of $2$ and $\vp=1$ in a neighborhood of $1$. Note that the right-hand side has $L^2$-regularity and is weakly closed, hence it is the differential of an $H^1$-function.
Now,
\[
\begin{split}
\int_{\W_h} |df - d\tf|^2\,\VolumeE &= \sum_{i=1}^n \int_{A_i} |d(\vp(|x - x_i|/r_i)(f - U_ix - b_i)|^2\,\VolumeE \\
&\le \sum_{i=1}^n \brk{\frac{4}{r_i^2} \int_{A_i} |f - U_ix - b_i|^2\,\VolumeE +  \int_{A_i} |df - U_i|^2\,\VolumeE} \\
&\le \sum_{i=1}^n \brk{\frac{4}{r_i^2} C_{ann} C_P r_i^2+ C_{ann}}  \int_{A_i} \dist^2(df,\SO(2))\,\VolumeE \\
&\le (4C_{ann} C_P + C_{ann})  \int_{\W_h} \dist^2(df,\SO(2))\,\VolumeE.
\end{split}
\]
Furthermore, since $U_i\in\SO(2)$,
\[
\int_\W \dist^2(d\tf,\SO(2))\,\VolumeE = \int_{\W_\h} \dist^2(d\tf,\SO(2))\,\VolumeE.
\]
Finally, there exists a constant $C_\W$ and a matrix $U\in\SO(2)$, such that
\[
\int_\W |d\tf - U|^2\,\VolumeE \le C_\W \int_\W \dist^2(d\tf,\SO(2))\,\VolumeE,
\]
hence
\[
\int_{\W_h} |d\tf - U|^2\,\VolumeE \le C_\W \int_{\W_h} \dist^2(d\tf,\SO(2))\,\VolumeE.
\]
Putting it all together,
\[
\begin{split}
\int_{\W_h} |df - U|^2\,\VolumeE &\le 2\int_{\W_h} |d\tf - U|^2\,\VolumeE + 2\int_{\W_h} |d\tf - df|^2\,\VolumeE \\
&\le 2C_\W \int_{\W_h} \dist^2(d\tf,\SO(2))\,\VolumeE + 2\int_{\W_h} |d\tf - df|^2\,\VolumeE \\
&\le 4C_\W \int_{\W_h} \dist^2(df,\SO(2))\,\VolumeE + (2+4C_\W)\int_{\W_h} |d\tf - df|^2\,\VolumeE \\
&\le \brk{4C_\W + (2+4C_\e)(4C_{ann} C_P + C_{ann})}  \int_{\W_h} \dist^2(df,\SO(2))\,\VolumeE.
\end{split}
\]
\end{proof}

\begin{proposition}\label{prop:FJM_ZeMe}
Let 
\[
(\MEps,\PEps) 
\xrightarrow[]{n_\e}
(\W,\IdRtwo,\mu).
\]
Then the domains $\MEps$ satisfy the Friesecke--James--M\"uller rigidity theorem with a constant that is $\e$-independent,
that is, \eqref{eq:uniform_FJM} holds.
\end{proposition}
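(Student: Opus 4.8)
The plan is to run the argument of Proposition~\ref{prop:uniform_FJM} almost verbatim, with the genuinely circular annuli $B_{2r_i}(x_i)\setminus B_{r_i}(x_i)$ there replaced by the annular collars $\ZEps(A_\e^i)$ surrounding the holes $D_\e^i$ of $\ZEps(\MEps)$. By Lemma~\ref{lem:bilip_holes} each $\ZEps(A_\e^i)$ is Lipschitz equivalent to a Euclidean annulus of aspect ratio $2$, with constants independent of $\e$ and $i$; moreover the $A_\e^i\subset\MEps$, and hence their $\ZEps$-images, are pairwise disjoint and disjoint from the outer-boundary (Definition~\ref{def:body_w_disloc}(d) and the comment following it), each collar $\ZEps(A_\e^i)$ separates its hole $D_\e^i$ from the rest of $\ZEps(\MEps)$, and $\diam\ZEps(A_\e^i)\simeq\e|\v_\e^i|$. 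No separation hypothesis beyond this is needed, since every step below is local to these disjoint collars.

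First I would apply, for a given $f\in\HOne(\ZEps(\MEps);\R^2)$, the FJM theorem on each $\ZEps(A_\e^i)$: since the FJM constant is scale-invariant and controlled by the bilipschitz class (exactly as in the proof of Lemma~\ref{lem:FJM_annulus}), there are matrices $U_i\in\SO(2)$ with $\int_{\ZEps(A_\e^i)}|df-U_i|^2\,\VolumeE\lesssim\int_{\ZEps(A_\e^i)}\dist^2(df,\SO(2))\,\VolumeE$, uniformly in $\e$ and $i$. The Poincar\'e inequality on $\ZEps(A_\e^i)$, whose constant is $\simeq(\diam)^2\simeq(\e|\v_\e^i|)^2$ with a uniform prefactor, then yields vectors $b_i\in\R^2$ with $\int_{\ZEps(A_\e^i)}|f-U_ix-b_i|^2\,\VolumeE\lesssim(\e|\v_\e^i|)^2\int_{\ZEps(A_\e^i)}\dist^2(df,\SO(2))\,\VolumeE$.

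Next I would extend $f$ across the holes. Pulling a radial cutoff back through the bilipschitz map to a standard annulus gives $\vp_\e^i\in C^\infty_c$, supported in $\ZEps(A_\e^i)\cup D_\e^i$, equal to $1$ on $D_\e^i$, vanishing near the outer component of $\partial\ZEps(A_\e^i)$, with $|\nabla\vp_\e^i|\lesssim(\e|\v_\e^i|)^{-1}$. Define $\tf\in\HOne(\W;\R^2)$ by $d\tf=df$ on $\W\setminus\bigcup_i(\ZEps(A_\e^i)\cup D_\e^i)$, $d\tf=U_i$ on $D_\e^i$, and $d\tf=df-d(\vp_\e^i(f-U_ix-b_i))$ on $\ZEps(A_\e^i)$. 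Summing the two bounds of the previous step over the pairwise-disjoint collars gives $\int_\W|d\tf-df|^2\,\VolumeE\lesssim\int_{\ZEps(\MEps)}\dist^2(df,\SO(2))\,\VolumeE$, while $\dist(d\tf,\SO(2))\equiv0$ on every $D_\e^i$. Then I would apply the FJM theorem on the fixed domain $\W$ to obtain $U\in\SO(2)$ with $\int_\W|d\tf-U|^2\,\VolumeE\lesssim\int_\W\dist^2(d\tf,\SO(2))\,\VolumeE=\int_{\ZEps(\MEps)}\dist^2(d\tf,\SO(2))\,\VolumeE$, and combine this with the elementary inequalities $\dist^2(d\tf,\SO(2))\lesssim\dist^2(df,\SO(2))+|d\tf-df|^2$ and $|df-U|^2\lesssim|d\tf-U|^2+|d\tf-df|^2$, integrated over $\ZEps(\MEps)$, together with the $L^2$-bound on $d\tf-df$; this produces \eqref{eq:uniform_FJM} with a constant depending only on $\W$ and the bilipschitz constant of $\ZEps$.

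I do not expect a serious obstacle here: all the substance is already packaged in Lemma~\ref{lem:bilip_holes}, which upgrades the a priori generic holes of $\ZEps(\MEps)$ to regions carrying uniformly controlled annular collars, together with the scale-invariance and bilipschitz-robustness of the FJM and Poincar\'e constants. The one point requiring slight care, compared with Proposition~\ref{prop:uniform_FJM}, is that the holes need not be round, so the cutoff function in the extension step must be built on the curved collars rather than on concentric circles — which is precisely what the bilipschitz equivalence with a standard annulus supplies.
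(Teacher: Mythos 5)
Your proposal is correct and follows essentially the same route as the paper: the paper's proof likewise runs Proposition~\ref{prop:uniform_FJM} verbatim with the round annuli replaced by the collars $\ZEps(A_\e^i)$ from Lemma~\ref{lem:bilip_holes}, invoking the uniform bilipschitz equivalence to control the FJM and Poincar\'e constants and composing the cutoff with the bilipschitz map to perform the gluing on the curved collars. No gaps.
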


\begin{proof}
The proof is essentially the same as in \propref{prop:uniform_FJM}, where the annuli $A_i$ in \propref{prop:uniform_FJM} are replaced by $A_\e^i$ from Lemma~\ref{lem:bilip_holes}, which are Lipschitz equivalent to annuli of aspect ratio 2, with constants independent of $\e$ and $i$.
Therefore their FJM constants can be uniformly bounded.
For the construction of $\tf$ in \propref{prop:uniform_FJM}, one can compose $\vp$ with the bilipschitz map from $A_\e^i$ to the appropriate annulus in order to obtain the gluing along the boundaries of $A_\e^i$.
\end{proof}

\begin{remarks}
\begin{itemize}
\item It is interesting to compare \thmref{thm:rigidity_single_disloc} to \thmref{thm:rigidity}: 
Note that the proof of \thmref{thm:rigidity} does not rely directly on the closedness of $\PEps$ (i.e., on the fact that the implant maps represent dislocations), but only on the fact that $\PEps$ is close enough to a Euclidean implant map and that the geometric rigidity constant of the domains $M_\e$ can be controlled uniformly.
This is unlike the proof of  \thmref{thm:rigidity_single_disloc} which crucially depends on the closedness of $\PEps$. 
\item Comparing the statements of \thmref{thm:rigidity_single_disloc} to \thmref{thm:rigidity}, we note that in the former the bound has no $h_\e^2$ correction to the elastic energy.
An equivalent formulation in the setting of \thmref{thm:rigidity} would be
\[
\|df_\e - U\PEps\|_{L^2(\MEps)}^2 \lesssim \int_{\MEps} \dist^2(df_\e,\SO(\gEps,\euc))\,\VolumeEps.
\] 
We do not know if such a statement is true.
In the line of proof of \thmref{thm:rigidity_single_disloc}, the constant $C$ depends on the number of dislocations, 
implying the $h_\e^2$ correction can be omitted (using this approach) only if $m_\e$ is bounded.
\item Using the compactness and $\Gamma$-convergence results, we can prove an asymptotic version of such a theorem for $(\MEps,\PEps)  \xrightarrow[]{n_\e} (\W,\IdRtwo,\mu)$ and $\e$ small enough (under some conditions on $\mu$), with a constant depending explicitly on $\mu$, but independent of the number of dislocations.
	See Theorem~\ref{thm:rigidity2}.
\item In addition, we note that \thmref{thm:rigidity} bears some similarities in its structure, if not in the details or in the method of the proof, to estimates on incompatible strains \cite[Theorem 3.3]{MSZ14}, and to rigidity estimate for non-Euclidean energy \cite[Theorem 2.3]{LP11}.
A key difference between \thmref{thm:rigidity} and \cite[Theorem 3.3]{MSZ14} is that in \thmref{thm:rigidity} the elastic part of the right-hand side contains information about the dislocations via the implant map $\PEps$ and the metric $\gEps$, whereas in \cite[Theorem 3.3]{MSZ14} the elastic term is Euclidean, whose minimum is zero (hence the need for the extra term penalizing for $\curl \beta$ is essential).
\end{itemize}
\end{remarks}

\section{Compactness}\label{sec:compactness}

Having defined a notion of convergence of bodies with dislocation, we proceed to define a notion of convergence for configurations, or more precisely, a convergence of rescaled strains.
We then prove a compactness property for the rescaled strains for configurations of bounded energy.
Finally, in the critical and subcritical regimes, and under an appropriate separation assumption, we prove a compactness property for the measures $\torsion_\e$, in the sense that the energy bound \eqref{eq:global_dist_bnd} implies the convergence \eqref{eq:burgers_conv} of a subsequence.

\begin{definition}
\label{def:conv_fe}
Let 
\[
(\MEps,\PEps) 
\xrightarrow[]{n_\e}
(\W,\IdRtwo,\mu).
\]
We say that $f_\e \in \HOne(\MEps;\R^2)$ converges to $(J,U)$, where $J\in L^2(\W;\R^2\otimes\R^2)$ and $U\in\SO(2)$, if 
\[
\frac{U_\e^T df_\e - \PEps}{h_\e} \weakly J \qquad \text{in }\,\, L^2(\W)
\]
for some sequence $U_\e\in \SO(2)$ converging to $U$.
\end{definition}
In this definition and below, weak convergence in $L^2(\W)$ of sequences defined on a subset of $\W$ (namely $\MEps$) is defined as the convergence of their extension by zero to the whole domain $\W$.

The following proposition asserts that this notion of convergence defines a unique limit modulo an anti-symmetric matrix:

\begin{proposition}
Let $f_\e \to (J, U)$ and $f_\e \to (J', U')$ in the sense of \defref{def:conv_fe}. Then $U'=U$ and $J'$ differs from $J$ by a constant anti-symmetric matrix.
\end{proposition}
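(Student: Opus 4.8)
The plan is to subtract the two defining convergences and analyse the discrepancy between the two normalizing rotations, which — as in standard $\SO(2)$-compactness arguments — turns out to be an infinitesimal rotation whose linearization is precisely the anti-symmetric ambiguity in the limiting strain. Work on the fixed domain $\W$ via the pushforwards of \secref{subsec:conv_func}: set $\varphi_\e = f_\e\circ\ZEps^{-1}$, so that $(\ZEps^{-1})^\# df_\e = d\varphi_\e$, and set $P_\e = (\ZEps^{-1})^\#\QEps$. By \defref{def:man_conv1}(c) (inherited by \defref{def:man_conv2}), $P_\e\to\id_{\R^2}$ in $L^2(\W;\R^2\otimes\R^2)$; the uniform bilipschitz property of $\ZEps$ yields, pointwise a.e.\ on $\ZEps(\MEps)$, the singular-value bounds $L^{-1}\le\sigma_{\min}(P_\e(x))\le\sigma_{\max}(P_\e(x))\le L$ for a constant $L$ independent of $\e$. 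By hypothesis there are $U_\e\to U$ and $U_\e'\to U'$ in $\SO(2)$ with $h_\e^{-1}(U_\e^T d\varphi_\e - P_\e)\weakly J$ and $h_\e^{-1}((U_\e')^T d\varphi_\e - P_\e)\weakly J'$ in $L^2(\W)$; in particular $\tilde J_\e' := h_\e^{-1}((U_\e')^T d\varphi_\e - P_\e)$ is bounded in $L^2(\W)$, hence so is $d\varphi_\e = U_\e'(P_\e + h_\e\tilde J_\e')$.

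Write $W_\e = (U_\e')^T U_\e\in\SO(2)$ (so $W_\e\to(U')^T U$). Using $d\varphi_\e = U_\e' P_\e + h_\e U_\e'\tilde J_\e'$ and $U_\e^T U_\e' = W_\e^T$, a direct computation gives
\[
\frac{U_\e^T d\varphi_\e - (U_\e')^T d\varphi_\e}{h_\e} = \frac{(W_\e^T - I)P_\e}{h_\e} + (W_\e^T - I)\tilde J_\e'.
\]
The left-hand side converges weakly in $L^2(\W)$ to $J - J'$, hence is bounded; the last term on the right is bounded in $L^2$ (a constant matrix of norm $\le 2$ times a bounded sequence); therefore $h_\e^{-1}(W_\e^T - I)P_\e$ is bounded in $L^2(\W)$. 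Using the elementary inequality $|MN|_{op}\ge|M|_{op}\,\sigma_{\min}(N)$ together with the lower bound $\sigma_{\min}(P_\e)\ge L^{-1}$ on $\ZEps(\MEps)$ and $|\ZEps(\MEps)|\to|\W|>0$, we conclude $|W_\e^T - I|_{op} = O(h_\e)$. In particular $W_\e\to I$, so $(U')^T U = I$, i.e.\ $U = U'$.

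It remains to identify $J - J'$. Set $A_\e := h_\e^{-1}(W_\e^T - I)$, a bounded sequence of $2\times2$ matrices by the previous step. Since $W_\e = I + h_\e A_\e^T$ and $W_\e W_\e^T = I$, expanding gives $A_\e + A_\e^T = -h_\e A_\e^T A_\e\to 0$, so every subsequential limit $A$ of $(A_\e)$ is anti-symmetric. Passing to such a subsequence, $A_\e P_\e\to A\,\id_{\R^2} = A$ in $L^2(\W)$ (a convergent sequence of constant matrices times the $L^2$-convergent sequence $P_\e$), while $(W_\e^T - I)\tilde J_\e' = h_\e A_\e\tilde J_\e'\to 0$ in $L^2(\W)$; so the displayed identity forces $J - J' = A$ along that subsequence. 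Since $J - J'$ does not depend on the subsequence, $(A_\e)$ converges fully to $J - J'$, and $J - J'$ is a constant anti-symmetric matrix; hence so is $J' - J$, which is the assertion.

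The only genuinely technical point is the term $h_\e^{-1}(W_\e^T - I)P_\e$: one must simultaneously extract that $h_\e^{-1}(W_\e^T - I)$ stays bounded — which relies on the uniform lower bound for $P_\e$ supplied by the bilipschitz constant of $\ZEps$ — and that its accumulation points are anti-symmetric, which is the $O(h_\e^2)$ remainder in the group relation for $\SO(2)$. Everything else is routine manipulation of products of weakly and strongly convergent sequences in $L^2$.
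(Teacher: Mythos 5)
Your argument is correct and follows essentially the same route as the paper: subtract the two defining convergences, deduce that the difference of the normalizing rotations is $O(h_\e)$ (using that the pushed-forward implant maps are uniformly non-degenerate), and then linearize the rotation group relation to see that the resulting constant limit is anti-symmetric. The only cosmetic difference is that you parametrize the discrepancy multiplicatively via $W_\e=(U'_\e)^TU_\e$ and expand $W_\e W_\e^T=I$, whereas the paper works with $U'_\e-U_\e$ directly and writes $U_\e=Ue^{A_\e}$; both yield the same conclusion.
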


\begin{proof}
Suppose that $U_\e\to U$ and $U'_\e\to U'$ along with
\[
\frac{df_\e - U_\e \PEps}{h_\e} \weakly UJ
\Textand
\frac{df_\e - U'_\e \PEps}{h_\e} \weakly U'J' \qquad\text{in $L^2(\W)$}.
\]
Then,
\[
df_\e - U_\e \PEps \to 0 
\Textand
df_\e - U'_\e \PEps \to 0
\qquad\text{in $L^2(\W)$},
\]
from which follows that $U_\e - U'_\e\to0$, i.e., $U = U'$. Furthermore,
\[
\frac{(U'_\e - U_\e)\PEps}{h_\e} \weakly U(J - J')
\qquad \text{in  $L^2(\W)$}.
\]
i.e.,
\[
\frac{U'_\e - U_\e}{h_\e}  + (U'_\e - U_\e) \frac{\PEps - \IdRtwo}{h_\e}  \weakly U(J - J')
\qquad \text{in  $L^2(\W)$}.
\]
The second term of the left-hand side tends to zero strongly in $L^2$ by virtue of $U'_\e-U_\e\to0$ and the global distortion bound \eqref{eq:global_dist_bnd}, hence
\[
\frac{U'_\e - U_\e}{h_\e} 
\to U(J - J')
\qquad \text{in  $L^2(\W)$},
\]
the convergence being a strong convergence since the terms of the left-hand side are constant matrices. Thus, $J$  differs from $J'$ by a constant matrix. Finally, we can write $U_\e = U e^{A_\e}$ and $U'_\e = U e^{A'_\e}$, where $A_\e$ and $A'_\e$ are anti-symmetric matrices converging to zero, leading to 
\[
\frac{A_\e - A_\e'}{h_\e}  \to J - J',
\]
the limit on the left-hand side being an anti-symmetric matrix.
\end{proof}

\begin{theorem}[Strain compactness]
\label{thm:compactness}
Let 
\[
(\MEps,\PEps) 
\xrightarrow[]{n_\e}
(\W,\IdRtwo,\mu),
\]
and denote the elastic energy functional associated with the body manifolds $(\MEps,\PEps)$ by $E_\e(\cdot,\PEps):H^1(\MEps;\R^2) \to \R$.
Let $f_\e \in \HOne(\MEps; \R^2)$ be a sequence of mappings satisfying
\[
E_\e(f_\e,\PEps) \lesssim h_\e^2.
\]
Then, there exists a subsequence (not relabeled) of $f_\e$ such that $f_\e \to (J,U)$ in the sense of \defref{def:conv_fe}.
Furthermore, $J$ satisfies
\beq
\int_\W \TR(J\wedge d\psi) = \Cases{0 & \text{subcritical} \\
-\int_\W \TR(\psi\otimes d\mu) & \text{critical \& supercritical}}
\label{eq:limitJ}
\eeq
for all $\psi\in \HOneZero(\W;\R^2)\cap C_c(\W;\R^2)$. Finally, in the subcritical regime, $J = d\vp$ for some $\vp\in \HOne(\W;\R^2)$. (In the critical and supercritical regimes, \eqref{eq:limitJ} is the weak form of $dJ = -d\mu$, or equivalently, $\curl J = -\mu$.)
\end{theorem}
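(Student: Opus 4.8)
The plan is to combine the geometric rigidity estimate of Theorem~\ref{thm:rigidity} with the Burgers-vector convergence \eqref{eq:burgers_conv}, in the refined form \eqref{eq:modified_burgers_conv}, and to use the core-adapted test functions of Lemma~\ref{lem:chi_e} to annihilate the boundary contributions coming from the dislocation cores. The bookkeeping of the ratio $n_\e\e/h_\e$ across the three energy regimes then produces exactly the dichotomy in \eqref{eq:limitJ}.

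\textbf{Compactness.} Since $\calW(A)\gtrsim\dist^2(A,\SO(2))$ by \eqref{eq:bound_calW} and $\QEps^{-1}$ is a pointwise isometry, the energy bound yields $\int_{\MEps}\dist^2(df_\e,\SO(\gEps,\euc))\,\VolumeEps\lesssim E_\e(f_\e)\lesssim h_\e^2$. Theorem~\ref{thm:rigidity} then gives matrices $U_\e\in\SO(2)$ with $\|df_\e-U_\e\QEps\|_{L^2(\MEps)}^2\lesssim h_\e^2$, so $J_\e := h_\e^{-1}(U_\e^Tdf_\e-\QEps)$ is bounded in $L^2(\MEps)$. Pushing forward by the uniformly bilipschitz $\ZEps$ and extending by zero, $(\ZEps^{-1})^\#J_\e$ is bounded in $L^2(\W;\R^2\otimes\R^2)$, hence has a weakly convergent subsequence with limit $J$; compactness of $\SO(2)$ gives $U_\e\to U$ along a further subsequence. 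By Definition~\ref{def:conv_fe}, $f_\e\to(J,U)$.

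\textbf{The curl identity.} Fix $\psi\in\HOneZero(\W;\R^2)\cap C_c(\W;\R^2)$. Testing the weak convergence against $d\psi\in L^2(\W)$ and changing variables, $\int_\W\TR(J\wedge d\psi)=\LimEps\int_{\MEps}\TR(J_\e\wedge d(\psi\circ\ZEps))$. Writing $U_\e^Tdf_\e-\QEps=dg_\e-(\QEps-d\ZEps)$ with $g_\e:=U_\e^Tf_\e-\ZEps$,
\[
\int_{\MEps}\TR(J_\e\wedge d(\psi\circ\ZEps))=\frac1{h_\e}\int_{\MEps}\TR(dg_\e\wedge d(\psi\circ\ZEps))-\frac{n_\e\e}{h_\e}\cdot\frac1{n_\e\e}\int_{\MEps}\TR((\QEps-d\ZEps)\wedge d(\psi\circ\ZEps)).
\]
By \eqref{eq:modified_burgers_conv} the last factor tends to $\int_\W\TR(\psi\otimes d\mu)$, while $n_\e\e/h_\e=\min\{1,\sqrt{n_\e/\LogEps}\}$ tends to $0$ in the subcritical regime and equals $1$ in the critical and supercritical regimes. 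For the first term, replace $\psi$ by the approximation $\psi_\e\in C^\infty_c(\W;\R^2)$ of Lemma~\ref{lem:chi_e}, which is constant (equal to $c_\e^i$) on each hole $D_\e^i$: since $dg_\e$ is exact on $\MEps$, its circulation around every core vanishes, so by Stokes' theorem (the outer boundary contributes nothing because $\psi_\e$ is compactly supported and $\ZEps$ maps the outer boundary onto $\pl\W$) $\int_{\MEps}\TR(dg_\e\wedge d(\psi_\e\circ\ZEps))=\pm\sum_i\TR\brk{\bigl(\oint_{\pl\MEps^{(i)}}dg_\e\bigr)\otimes c_\e^i}=0$. Finally, by Cauchy--Schwarz, the rigidity and distortion bounds $\|dg_\e\|_{L^2(\MEps)}\lesssim h_\e$, the uniform bilipschitz property of $\ZEps$, and \eqref{eq:bound_chi_chie},
\[
\frac1{h_\e}\Abs{\int_{\MEps}\TR(dg_\e\wedge d((\psi-\psi_\e)\circ\ZEps))}\lesssim\frac1{h_\e}\,\|dg_\e\|_{L^2(\MEps)}\,\|d\psi-d\psi_\e\|_{L^2(\ZEps(\MEps))}\lesssim n_\e\e\to0.
\]
Combining these limits gives \eqref{eq:limitJ}. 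In the subcritical regime \eqref{eq:limitJ} says $\curl J=0$ distributionally on the simply-connected domain $\W$, whence $J=d\vp$ for some $\vp\in\HOne(\W;\R^2)$ by the Poincar\'e lemma.

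\textbf{Main obstacle.} The delicate point is the first term in the decomposition above: because $f_\e$ is only $H^1$, the boundary integrals over the cores $\pl\MEps^{(i)}$ cannot be controlled by a trace inequality directly. The device resolving this is the substitution of the core-constant approximation $\psi_\e$ of Lemma~\ref{lem:chi_e}, which forces the exact-form boundary term to vanish identically (the circulations of $dg_\e$ being zero), leaving an error governed by the $L^2$-closeness $\|d\psi-d\psi_\e\|_{L^2}\lesssim n_\e\e$ and the $O(h_\e)$ size of $dg_\e$. The only other care needed is tracking which of the two competing contributions in $h_\e^2=\max\{n_\e^2\e^2,n_\e\e^2\LogEps\}$ dominates, i.e. the regime dependence of $n_\e\e/h_\e$.
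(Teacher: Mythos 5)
Your proposal is correct and follows essentially the same route as the paper: rigidity plus the energy bound gives the $L^2(h_\e)$ bound and weak limit $J$, and the curl identity is extracted using the core-constant test functions $\psi_\e$ of Lemma~\ref{lem:chi_e}, the vanishing circulation of the exact part, the Burgers-vector circulation of $\QEps$, and the regime-dependent factor $n_\e\e/h_\e$. The only (cosmetic) difference is that you split off $dg_\e=U_\e^Tdf_\e-d\ZEps$ explicitly and invoke \eqref{eq:modified_burgers_conv} for the remainder, whereas the paper integrates the closed form $J_\e$ by parts directly onto $\pl D_\e^i$ and recognizes the resulting boundary term as $-\tfrac{1}{h_\e}\torsion_\e(\psi_\e\circ\ZEps)$, handled by Lemma~\ref{lem:5.9} — the same ingredients in a different order.
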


\begin{proof}
From the lower bound \eqref{eq:bound_calW} on the energy dentity and geometric rigidity (\thmref{thm:rigidity}), there exist matrices $U_\e \in \SO(2)$ such that
\[
\|df_\e - U_\e \PEps\|_{L^2(\MEps)} \lesssim \h_\e.
\]
By moving to a subsequence, we may assume by the compactness of $\SO(2)$ that $U_\e \to U$ for some $U \in \SO(2)$.
Consider the family of closed $\R^2$-valued 1-forms on $\W$,
\[
J_\e = \frac{U_\e^Tdf_\e - \PEps}{h_\e} \quad \text{in $\MEps$,}
\]
extended to zero on $\W\setminus \MEps$.
Since the embeddings of $(\MEps,\PEps)$ into $(\W,\IdRtwo)$ are uniformly bilipschitz,
the uniform boundedness of $J_\e$ in $L^2(\MEps,\PEps)$ implies its uniform boundedness in $L^2(\W)$. Thus, it  has a weakly convergent subsequence, $J_\e \weakly J$ in $L^2(\W)$. 

It remains to obtain relation \eqref{eq:limitJ} between $(J,U)$ and $\mu$.
By approximation, it suffices to prove \eqref{eq:limitJ} for $\psi\in C^\infty_c(\W;\R^2)$.
As before, we denote by $D^1_\e,\ldots, D^{m_\e}_\e$ the subdomains of $\W$ that are encircled by the cores of the $m_\e$ dislocations in $\MEps$. 
Let $\psi \in C^\infty_c(\W;\R^2)$ and let $\psi_\e \in C^\infty_c(\W)$ be as in \lemref{lem:chi_e}.
In particular, recall that $\psi_\e$ are constant on each $D^i_\e$, and their value there is $c^i_\e$ (the average of $\psi$ on these domains).

Since
\[
\Abs{\int_\W \TR(J_\e \wedge d(\psi-\psi_\e))} \lesssim \|J_\e\|_{L^2(\W)} 
\|d\psi-d\psi_\e\|_{L^2(\MEps)} \to 0,
\]
and since $J_\e\weakly J$, it follows that
\beq
\label{eq:J_e_aux1}
\begin{split}
\lim_{\e\to 0} \int_\W \TR(J_\e \wedge d\psi_\e) &= \lim_{\e\to 0} \int_\W \TR(J_\e \wedge d(\psi_\e - \psi)) + \lim_{\e\to 0} \int_\W \TR(J_\e \wedge d\psi) \\
&= \int_\W \TR(J \wedge d\psi).
\end{split} 
\eeq
Integrating by parts,  using that $J_\e$ are closed and $\psi_\e$ are constant on every $\pl D^i_\e$, we obtain,
\beq
\label{eq:J_e_aux2}
\begin{split}
\int_\W \TR(J_\e \wedge d\psi_\e) 
&=\sum_{i=1}^{m_\e} \oint_{\pl D^i_\e} \TR(\psi_\e \otimes J_\e) \\
&=  \frac{1}{\h_\e}\sum_{i=1}^{m_\e} \oint_{\pl D^i_\e} \TR(c_\e^i \otimes (U_\e^Tdf_\e - \PEps)) \\
&= -\frac{1}{\h_\e}\sum_{i=1}^{m_\e} \oint_{\pl D^i_\e} \TR(c_\e^i \otimes  \PEps ) \\
&= -\frac{1}{\h_\e}\sum_{i=1}^{m_\e} \oint_{\pl D^i_\e} \TR(\psi_\e \otimes  \PEps ) \\
&= - \frac{1}{\h_\e} \int_{\pl \MEps} \TR(\psi_\e\otimes  \PEps) \\
&= -\frac{n_\e \e}{\h_\e} \frac{1}{n_\e \e} \torsion_\e(\psi_\e).
\end{split}
\eeq
Taking $\e\to0$, since $\psi_\e \to\psi$ uniformly,
\[
\LimEps\frac{n_\e \e}{h_\e}  = \Cases{0 & \text{subcritical} \\ 1 & \text{critical \& supercritical}}
\]
and
\[
\LimEps\frac{1}{n_\e \e} \torsion_\e(\psi ) = \int_\W \TR(\psi\otimes d\mu) = \int_\W \TR(\psi\otimes d\mu),
\]
we obtain \eqref{eq:limitJ} from \eqref{eq:J_e_aux1}--\eqref{eq:J_e_aux2}. Finally, in the subcritical regime, \eqref{eq:limitJ} is the weak formulation of $dJ=0$, from which follows, since $\W$ is simply-connected, that $J$ is the weak differential of an $\HOne$-function.
\end{proof}

\begin{theorem}[Dislocation measures compactness]\label{thm:measure_comp}
Assume that $n_\e\lesssim \log(1/\e)$ (i.e., critical or subcritical regimes), and that the dislocations are well-separated, in the sense that the minimum separation $\rho_\e$ between dislocations in $\MEps$ (see \defref{def:body_w_disloc}(d)) satisfies $\rho_\e \gtrsim \e^s$ for some $s\in (0,1)$.
Assume that $(\M_\e,\PEps)$ converges to $(\W,\IdRtwo)$ in the sense of Definition~\ref{def:man_conv1}, and that, furthermore, the global distortion bound \eqref{eq:global_dist_bnd} holds.
Then, there exists a measure $\mu \in \calM(\W;\R^2)$ and a subsequence $(\M_\e,\PEps)$ converging to $(\W,\IdRtwo,\mu)$ in the sense of Definition~\ref{def:man_conv2}, i.e., \eqref{eq:burgers_conv} holds as well.
\end{theorem}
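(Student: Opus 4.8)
The plan is to show that the global distortion bound \eqref{eq:global_dist_bnd} forces the measures $\tmu_\e$ (or equivalently the pushforward torsions $\frac{1}{n_\e\e}(\ZEps)_\#\torsion_\e$) to be uniformly bounded in $\calM(\W;\R^2)$, so that after passing to a subsequence we obtain a weak-$*$ limit $\mu$, and then to check that this $\mu$ satisfies the convergence \eqref{eq:burgers_conv}. The uniform bound on the total Burgers vector $\sum_i|\v_\e^i|$ is where the energy enters and where the separation and criticality assumptions are used. First I would set up, as in \secref{sec:disloc_construction}, the auxiliary form $\alpha_\e$ built from the circulation $1$-forms around the cores of $\MEps$ via the maps $\ZEps$: precisely, for each $i$ pick $p_\e^i\in\W$ with $\overline{D_\e^i}\subset B_{C\e|\v_\e^i|}(p_\e^i)$ (Lemma~\ref{lem:bilip_holes}) and, on the annulus $B_{\rho_\e/2}(p_\e^i)\setminus B_{C\e|\v_\e^i|}(p_\e^i)$, compare $(\ZEps)_\#\QEps$ to the model implant map $\hQ_{\e\v_\e^i}$ using Proposition~\ref{prop:M_hM_M} and Proposition~\ref{prop:conv_manifolds}; the disjointness of these annuli is guaranteed by $\rho_\e\gtrsim\e^s$ together with $\e|\v_\e^i|\ll\rho_\e$ (which follows from $|\v_\e^i|\lesssim n_\e\lesssim\log(1/\e)$, Lemma~\ref{lem:max_disloc}(a), and $\rho_\e\gtrsim\e^s$).

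The core estimate is a lower bound on the energy by the sum of single-dislocation self-energies on disjoint annuli. On each disjoint annulus $A_\e^i$ of aspect ratio $\rho_\e/(C\e|\v_\e^i|)$ around the $i$-th core, Theorem~\ref{thm:rigidity_single_disloc} (applied to $f_\e\circ\ZEps^{-1}$ transplanted via the embedding of Proposition~\ref{prop:M_hM_M}) gives a matrix $U_\e^i\in\SO(2)$ with
\[
\int_{A_\e^i}|d f_\e - U_\e^i\QEps|^2\,\VolumeEps \lesssim \int_{A_\e^i}\dist^2(df_\e,\SO(\gEps,\euc))\,\VolumeEps\lesssim E_\e(f_\e),
\]
and then the argument of Proposition~\ref{prop:4.4} (Jensen on circles, the vanishing circulation of $df_\e$ against the circulation $\e\v_\e^i$ of $\QEps$) yields
\[
\int_{A_\e^i}\dist^2(df_\e,\SO(\gEps,\euc))\,\VolumeEps \gtrsim \e^2|\v_\e^i|^2\log\frac{\rho_\e}{\e|\v_\e^i|}\gtrsim \e^2|\v_\e^i|^2\log(1/\e),
\]
using $\rho_\e\gtrsim\e^s$ and $\e|\v_\e^i|\lesssim\e\log(1/\e)$ so that the log is $\gtrsim(1-s)\log(1/\e)$. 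Summing over the disjoint $A_\e^i$ and using $E_\e(f_\e)\lesssim h_\e^2$ with $h_\e^2=\max\{n_\e^2\e^2,\,n_\e\e^2\log(1/\e)\}\lesssim n_\e^2\e^2 + n_\e\e^2\log(1/\e)$ and the criticality assumption $n_\e\lesssim\log(1/\e)$ (so $n_\e^2\e^2\lesssim n_\e\e^2\log(1/\e)$, i.e.\ $h_\e^2\simeq n_\e\e^2\log(1/\e)$), we get
\[
\e^2\log(1/\e)\sum_{i=1}^{m_\e}|\v_\e^i|^2 \lesssim h_\e^2 \simeq n_\e\e^2\log(1/\e),
\qquad\text{hence}\qquad \sum_{i=1}^{m_\e}|\v_\e^i|^2\lesssim n_\e.
\]
Wait — this only controls the $\ell^2$ sum; to bound $\|\tmu_\e\|_{\calM}\simeq\frac1{n_\e}\sum_i|\v_\e^i|$ I must instead observe that, since $\v_\e^i\in\bbS$ so $|\v_\e^i|\gtrsim 1$, one has $\sum_i|\v_\e^i|\le\sum_i|\v_\e^i|^2\lesssim n_\e$, and therefore $\|\tmu_\e\|_{\calM(\W;\R^2)}\lesssim 1$ uniformly, where $\tmu_\e = \frac1{n_\e}\sum_i\v_\e^i\otimes\frac{\ind_{D_\e^i}}{|D_\e^i|}\VolumeE$ as in Proposition~\ref{prop:mu_e_mu}.

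With the uniform mass bound in hand, by weak-$*$ compactness of bounded measures there is a subsequence along which $\tmu_\e\weakstar\mu$ for some $\mu\in\calM(\W;\R^2)$ with finite total mass. It then remains to verify that $(\MEps,\gEps,\QEps)$ converges to $(\W,\euc,\id_{\R^2},\mu)$ in the sense of Definition~\ref{def:man_conv2}, i.e.\ that \eqref{eq:burgers_conv} holds. This is exactly the content of the chain of identities in Proposition~\ref{prop:mu_e_mu}: for $\psi\in C^\infty_c(\W;\R^2)$ and $\psi_\e$ the modified test functions of Lemma~\ref{lem:chi_e}, \eqref{eq:Tpsi_const_on_bdry} and integration by parts give $\frac1{n_\e\e}\torsion_\e(\psi_\e\circ\ZEps)=\int_\W\TR(\psi\otimes d\tmu_\e)\to\int_\W\TR(\psi\otimes d\mu)$, while Lemma~\ref{lem:5.9}'s estimate (which uses only \eqref{eq:global_dist_bnd} and \eqref{eq:bound_chi_chie}, both available here) shows $\frac1{n_\e\e}\torsion_\e((\psi-\psi_\e)\circ\ZEps)\to 0$; combining gives \eqref{eq:burgers_conv}, and since $\mu$ has finite mass the extension to all $\psi\in C_c(\W;\R^2)$ is automatic. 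The main obstacle is the disjointness bookkeeping in the lower bound: one must be careful that the annuli $A_\e^i$ on which Theorem~\ref{thm:rigidity_single_disloc} and Proposition~\ref{prop:4.4} are applied really are pairwise disjoint and of aspect ratio $\gtrsim\e^{-(1-s)}$, which is precisely where $\rho_\e\gtrsim\e^s$, $n_\e\lesssim\log(1/\e)$, and the lower bound $|\v_\e^i|\gtrsim 1$ coming from the dislocation structure are all simultaneously needed; in the supercritical regime $h_\e^2\simeq n_\e^2\e^2$ overwhelms $n_\e\e^2\log(1/\e)$ and this argument fails, which is why the theorem is restricted to $n_\e\lesssim\log(1/\e)$.
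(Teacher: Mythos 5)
Your strategy is the same as the paper's: bound $\sum_i|\v_\e^i|^2$ (hence, via $|\v_\e^i|\gtrsim 1$ from the dislocation structure, the total mass of $\frac{1}{n_\e\e}\torsion_\e$) by combining the single-dislocation lower bound on the disjoint annuli of radius $\rho_\e/2$ with the global distortion bound, then extract a weak-$*$ limit. One small misattribution first: the theorem posits no configuration $f_\e$ and no energy bound $E_\e(f_\e)\lesssim h_\e^2$; the map to which the lower bound of Proposition~\ref{prop:4.4}/Corollary~\ref{corr:Escaling} must be applied is $Z_\e$ itself, whose distortion is what hypothesis \eqref{eq:global_dist_bnd} controls. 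The computation is unchanged, but $Z_\e$, not an arbitrary $f_\e$, plays the role of the test configuration.

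More seriously, there is a circularity in how you obtain $\log\frac{\rho_\e}{\e|\v_\e^i|}\gtrsim(1-s)\log(1/\e)$: you invoke Lemma~\ref{lem:max_disloc}(a) to get $|\v_\e^i|\lesssim n_\e$, but that lemma assumes the Burgers-vector convergence \eqref{eq:burgers_conv}, which is exactly the conclusion you are proving. The repair is the bootstrap the paper uses: first sum the lower bounds to get $\sum_i|\v_\e^i|^2\log\frac{\rho_\e}{\e|\v_\e^i|}\lesssim n_\e\log(1/\e)\lesssim\log^2(1/\e)$, note that each logarithmic factor is at least $\log 20>0$ because Definition~\ref{def:body_w_disloc}(d) guarantees $\rho_\e>20\,\e\max_i|\v_\e^i|$, deduce from a single term that $\max_i|\v_\e^i|\lesssim\log(1/\e)$, and only then conclude that each logarithmic factor is $\gtrsim\frac{1-s}{2}\log(1/\e)$ and divide through. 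With that fixed the argument is sound. Your treatment of the last step is actually more explicit than the paper's (which stops at the mass bound): the observation that the estimates in Lemmas~\ref{lem:chi_e} and \ref{lem:5.9} and the identity in Proposition~\ref{prop:mu_e_mu} only require the mass bound, the resulting holes-volume bound, and \eqref{eq:global_dist_bnd} --- not \eqref{eq:burgers_conv} itself --- is correct and is what legitimately upgrades weak-$*$ convergence of $\tmu_\e$ to the convergence \eqref{eq:burgers_conv} along the subsequence.
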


\begin{proof}
Let $(\M_\e,\PEps)$ be a sequence of bodies with dislocations satisfying the assumptions, with Burgers vectors $\{\e\v_\e^i\}_{i=1}^{m_\e}$, $\v_\e^i\in \bbS$.
We need to show that the measures $\torsion_\e$ satisfy $\|\torsion_\e\|=O(n_\e\e)$.
By \eqref{eq:norm_calT}, it suffices to show that
\[
\sum_{i=1}^{m_\e} |\v_\e^i| = O(n_\e).
\]
Since $\v_\e^i\in \bbS$, these vectors are uniformly bounded away from zero, and thus it suffices to show that
\[
\sum_{i=1}^{m_\e} |\v_\e^i|^2 = O(n_\e).
\]
Consider the balls
\[
B_{\rho_\e}^i = \{p\in\MEps~:~ \r_\e^i(p) < \rho_\e/2\}, 
\qquad i=1,\dots,m_\e.
\]
By definition, the balls $B_{\rho_\e}^i$ are disjoint, i.e., each ball is a body with a single dislocation; by Corollary~\ref{corr:Escaling}, it follows that
\[
\int_{B_{\rho_\e}^i} \dist^2(\IdRtwo,\SO(\g,\euc)) \,\VolumeEps \gtrsim \e^2|\v_\e^i|^2 \log\frac{\rho_\e}{\e|\v_\e^i|}.
\]
By \eqref{eq:global_dist_bnd}, 
\[
\begin{split}
\sum_{i=1}^{m_\e} \e^2|\v_\e^i|^2 \log\frac{\rho_\e}{\e|\v_\e^i|} &\lesssim
\sum_{i=1}^{m_\e} \int_{B_{\rho_\e}^i} \dist^2(\IdRtwo,\SO(\g,\euc)) \,\VolumeEps \\
&\le \int_{\M_\e} \dist^2(\IdRtwo,\SO(\g,\euc)) \,\VolumeEps \lesssim n_\e \e^2 \log(1/\e),
\end{split}
\]
hence 
\[
\sum_{i=1}^{m_\e} |\v_\e^i|^2  \log\frac{\rho_\e}{\e|\v_\e^i|} \lesssim n_\e \log(1/\e)
\]
(we used here the fact that $n_\e$ is not supercritical).
In particular, since by assumption $\frac{\rho_\e}{\e|\v_\e^i|} > 20$ for all $i$ (\defref{def:body_w_disloc}(d)), and since $n_\e\lesssim \log(1/\e)$, we have that $\max_{i} |\v_\e^i| \lesssim \log(1/\e)$. 
Thus, since $\rho_\e\gtrsim \e^s$,
\[
\log\frac{\rho_\e}{\e|\v_\e^i|} \gtrsim \log\frac{\e^s}{\e\log(1/\e)} \ge \frac{1-s}{2}\log (1/\e).
\]
Therefore, 
\[
\log (1/\e) \sum_{i=1}^{m_\e} |\v_\e^i|^2 \lesssim \sum_{i=1}^{m_\e} |\v_\e^i|^2 \log\frac{\rho_\e}{\e|\v_\e^i|} \lesssim n_\e \log(1/\e),
\]
which completes the proof.
\end{proof}

\begin{remark}
Note that in all the regimes, the above proof shows that $\|\torsion_\e\|=O(h_\e^2/\e\log(1/\e))$. In particular, if \eqref{eq:global_dist_bnd} holds with $h_\e^2 = n_\e \e^2\log(1/\e)$, then \eqref{eq:burgers_conv} holds as well, regardless of the regime.
In the supercritical case, this situation is equivalent to the one treated in \cite{FPP19} in the linear admissible strain model.
\end{remark}

\section{$\Gamma$-convergence}
\label{sec:GammaConv}

Let $X_\e$ be the space of all bodies $(\MEps,\PEps)$ containing dislocations, and let $n_\e\to \infty$ be a parameter.
In the sequel, we assume the following assumptions regarding $n_\e$ and the minimum separation $\rho_\e$ between dislocations in $\MEps$ for all $(\MEps,\PEps)\in X_\e$ (see \defref{def:body_w_disloc}(d)):
\begin{enumerate}[itemsep=0pt,label=(\alph*)]
\item
 $\log(1/\rho_\e) \ll \LogEps$, namely, $\rho_\e$ may tend to zero, however slower than any positive power of $\e$. 
\item 
$\log n_\e \ll  \LogEps$.
Thus, even in the supercritical regime, we assume that $n_\e$ does not grow faster than any negative power of $\e$. 
\end{enumerate}
Note that the separation assumption already implies that the number of dislocations does not grow faster than any negative power of $\e$; the assumption on $n_\e$ is slightly more restrictive, and implies also that the magnitude of all Burgers vectors in $\M_\e$ is at most $\e^{1-o(1)}$ (see Lemma~\ref{lem:max_disloc}(a)).

Let $\mathbb{X}_\e$ be the space of bodies with dislocations along with their configurations:
\[
\mathbb{X}_\e = \BRK{(f_\e,\PEps) ~:~ (\MEps,\PEps) \in X_\e, \,\, f_\e \in \HOne(\MEps;\R^2)},
\]
where we omit $\MEps$ explicitly in the tuple for notational brevity (it is implicit as the domains of $(f_\e,\PEps)$).
Define the rescaled energy $\calE_\e:  \mathbb{X}_\e\to\R$ ,
\[
\label{eq:rescaled_E}
\calE_\e(f_\e,\PEps) = \frac{1}{h_\e^2} E_\e(f_\e,\PEps) =  \frac{1}{h_\e^2} \int_{\MEps} \calW(df_\e\circ \PEps^{-1})\,\VolumeEps.
\]

In this section we prove that the sequence $\calE_\e$ $\Gamma$-converges to the functional
\[
\calE_0 : L^2\W^1(\W;\R^2) \times \calM(\W;\R^2) \to \R\cup\{\infty\}
\] 
defined by
\[
\calE_0(J,\mu) =
\Cases{
\calE^{\text{elastic}}_0(J) + \calE^{\text{self}}_0(\mu) & \text{subcritical and $\curl J =0$} \\
\calE^{\text{elastic}}_0(J) + \calE^{\text{self}}_0(\mu) & \text{critical, $\mu\in \HminusOne(\W;\R^2)$, and $\curl J = -\mu$} \\
\calE^{\text{elastic}}_0(J) & \text{supercritical, $\mu\in \HminusOne(\W;\R^2)$, and $\curl J = -\mu$} \\
\infty & \text{otherwise},
}
\]
where
\[
\calE^{\text{elastic}}_0(J) = \int_\W \bbW(J)\,\VolumeE,
\]
and
\[
\calE^{\text{self}}_0(\mu) = \int_\W \SEF\brk{\frac{d\mu}{d|\mu|}}\,d|\mu|.
\]
The $\Gamma$-convergence is with respect to the topology induced by the convergence of $(\MEps,\PEps)\xrightarrow[]{n_\e} (\W,\IdRtwo,\mu)$ (\defref{def:man_conv2}) and the convergence $f_\e\to(J,U)$ (\defref{def:conv_fe}). 

Specifically, we prove that:

\begin{enumerate}[itemsep=0pt,label=(\alph*)]
\item \textbf{Lower bound:}
For every sequence $(\MEps,\PEps)\xrightarrow[]{n_\e} (\W,\IdRtwo,\mu)$, and for every sequence of  $f_\e\to (J,U)$,   
\[
\liminf_{\e\to 0} \calE_\e(f_\e,\PEps) \ge \calE_0(J,\mu).
\]

\item \textbf{Upper bound, subcritical case:} for every Lipschitz domain $\W$ endowed with a measure $\mu\in\calM(\W;\R^2)$, and for every subcritical sequence $n_\e\to\infty$, there exists a recovery sequence of bodies with dislocations
\[
(\MEps,\PEps) 
\xrightarrow[]{n_\e}
(\W,\IdRtwo,\mu),
\]
for which the following property holds:
For every $U\in\SO(2)$ and $J\in L^2(\W;\R^2\otimes\R^2)$ satisfying $\curl J=0$, there exists a recovery sequence of configurations $f_\e\in H^1(\MEps;\R^2)$ converging to $(J,U)$, such that
\[
\limsup_{\e\to 0} \calE_\e(f_\e,\PEps) \le \calE_0(J,\mu).
\]

\item \textbf{Upper bound, critical and supercritical cases:} for every Lipschitz domain $\W$ endowed with a measure $\mu\in\calM(\W;\R^2)\cap \HminusOne(\W;\R^2)$, 
and for every critical or supercritical sequence $n_\e\to\infty$, there exists a recovery sequence of bodies with dislocations
\[
(\MEps,\PEps) 
\xrightarrow[]{n_\e}
(\W,\IdRtwo,\mu),
\]
for which the following property holds:
For every $U\in\SO(2)$ and $J\in L^2(\W;\R^2\otimes\R^2)$ satisfying $\curl J = -\mu$, there exists a recovery sequence of configurations $f_\e\in H^1(\MEps;\R^2)$ converging to $(J,U)$, such that
\[
\limsup_{\e\to 0} \calE_\e(f_\e,\PEps) \le \calE_0(J,\mu).
\]

\end{enumerate}

\begin{comment}
\begin{enumerate}
\item The upper bound statements are stronger than needed for proving $\Gamma$-convergence. Given $\W$, $\mu$, $U$ and $J$, we should construct a recovery sequence of joint bodies with dislocations $(\MEps,\PEps)$ and configurations $f_\e$. 
Instead, we construct $(\MEps,\PEps)$ independently of $U$ and $J$.
\item By \propref{prop:5.5}, the condition that $\mu\in \HminusOne(\W;\R^2)$ is necessary for critical or supercritical recovery sequences $(\MEps,\PEps)$ to exist. Likewise, by \eqref{eq:limitJ}, the restrictions on $\curl J$ are necessary for recovery sequences $(\MEps,\PEps)$ to exist.
\end{enumerate}
\end{comment}

\subsection{Lower bound}

\begin{theorem}[lim-inf inequality]
\label{thm:liminf}
Let 
\[
(\MEps,\PEps)\xrightarrow[]{n_\e} (\W,\IdRtwo,\mu),
\]
with $\log n_\e, \log(1/\rho_\e) \ll \log(1/\e)$.
Let $f_\e \in \HOne(\MEps;\R^2)$ converges to $(J,U)$ in the sense of \defref{def:conv_fe}. 
Then,
\[
\liminf_{\e\to 0} \calE_\e(f_\e,\PEps) \ge \calE_0(J,\mu).
\]
\end{theorem}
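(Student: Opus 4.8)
The plan is to decompose the energy $\calE_\e(f_\e)$ into a ``far-field'' part that produces the elastic term $\calE_0^{\text{elastic}}(J)$ and a collection of ``near-field'' annular regions around the individual dislocations that produce the self-energy term $\calE_0^{\text{self}}(\mu)$, and then to show that these two contributions are asymptotically additive. First I would fix a separation radius $\rho_\e$ as in \defref{def:body_w_disloc}(d), choose an auxiliary scale $t_\e$ with $\e^{1-o(1)}\ll t_\e\ll \rho_\e$ (this is possible by the standing assumption $\log n_\e,\log(1/\rho_\e)\ll\log(1/\e)$ together with \lemref{lem:max_disloc}(a), which bounds $\max_i|\v_\e^i|\lesssim n_\e$), and split $\MEps$ into the balls $B^i_\e=\{p: \r^i_\e(p)<t_\e\}$ and the remainder $\MEps^{\text{far}}=\MEps\setminus\bigcup_i B^i_\e$. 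On the far part I would use the lower bound in \eqref{eq:bound_calW} and the rigidity theorem (\thmref{thm:rigidity}) to pass from $\calW(df_\e\circ\QEps^{-1})$ to $\tfrac1{h_\e^2}\dist^2(df_\e,\SO(\gEps,\euc))$, then replace $\dist^2$ by a Taylor expansion of $\calW$ around $\SO(2)$: writing $df_\e\circ\QEps^{-1}=U_\e(I+h_\e J_\e)$ with $J_\e=(U_\e^Tdf_\e-\QEps)/h_\e\circ(\cdots)$ bounded in $L^2$ and small in measure, one linearizes $\calW$ to leading order $h_\e^2\,\Quad(J_\e)$ and uses frame-indifference to kill $U_\e$. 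Pushing forward by $\ZEps$ (using the volume-form equivalence and bilipschitz bounds) and invoking weak $L^2$-lower-semicontinuity of the quadratic functional $\int\Quad(\cdot)$, together with $J_\e\weakly J$ from \defref{def:conv_fe}, yields $\liminf \tfrac1{h_\e^2}\int_{\MEps^{\text{far}}}\calW \ge \int_\W\Quad(J)\,\VolumeE$. The cutoff-in-measure argument here is exactly the one already carried out in the proof of \propref{prop:cell_liminf}, so I would cite that pattern rather than redo it.

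For the near-field part, on each ball $B^i_\e$ the body is, by \defref{def:body_w_disloc}(c), a body with a single edge-dislocation of Burgers vector $\e\v_\e^i$ with regular inner-boundary, and by \propref{prop:M_hM_M} it encloses a copy of the model annulus $\hM_{\e\v_\e^i}$ with outer radius $\sim t_\e$ and inner radius $\sim\e|\v_\e^i|$. Hence $\tfrac1{h_\e^2}\int_{B^i_\e}\calW(df_\e\circ\QEps^{-1})\ge \tfrac{\e^2\log(1/\e)}{h_\e^2}\,\hI^{R^i_\e}_{\e,\delta^i_\e}(\v_\e^i)$ with $\delta^i_\e R^i_\e\sim \e|\v_\e^i|$; applying \propref{prop:lower_bnd_single2} with a slowly-growing parameter $N_\e$ (to be chosen so that the hypotheses $|\v_\e^i|\le N_\e$, $R^i_\e\ge \e N_\e^{(2-s)/(1-s)}$ hold — again possible by the standing smallness assumptions) gives, for any $s<1$,
\[
\frac{1}{h_\e^2}\int_{B^i_\e}\calW(df_\e\circ\QEps^{-1})\,\VolumeEps \ge \frac{\e^2\log(1/\e)}{h_\e^2}\,s\,\Ilin_0(\v_\e^i)\brk{1-o(1)}.
\]
Summing over $i$ and recalling $h_\e^2=\max\{n_\e^2\e^2,n_\e\e^2\log(1/\e)\}$: in the subcritical and critical regimes $\e^2\log(1/\e)/h_\e^2\gtrsim 1/n_\e$, so $\sum_i\tfrac1{h_\e^2}\int_{B^i_\e}\calW \ge s(1-o(1))\,\tfrac1{n_\e}\sum_i\Ilin_0(\v_\e^i)$, and I would recognize $\tfrac1{n_\e}\sum_i\Ilin_0(\v_\e^i)$ as (bounded below by) $\int_\W\SEF(d\tmu_\e/d|\tmu_\e|)\,d|\tmu_\e|$ up to the convexity/homogeneity of $\SEF$ from \lemref{lem:4.15}, then use $\tmu_\e\weakstar\mu$ (\propref{prop:mu_e_mu}) and lower-semicontinuity of $\mu\mapsto\int\SEF(d\mu/d|\mu|)d|\mu|$ (convex, $1$-homogeneous integrand, Reshetnyak) to get $\liminf \ge s\,\calE_0^{\text{self}}(\mu)$; letting $s\uparrow1$ finishes. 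In the supercritical regime $\e^2\log(1/\e)/h_\e^2\to0$, so the near-field contributions are negligible and only $\calE_0^{\text{elastic}}$ survives, matching the definition of $\calE_0$.

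Finally I would combine the two halves: since $\MEps^{\text{far}}$ and the $B^i_\e$ are disjoint, $\calE_\e(f_\e)\ge \tfrac1{h_\e^2}\int_{\MEps^{\text{far}}}\calW + \sum_i\tfrac1{h_\e^2}\int_{B^i_\e}\calW$, and taking liminf of a sum is bounded below by the sum of liminfs, giving $\liminf\calE_\e(f_\e)\ge \calE_0^{\text{elastic}}(J)+\calE_0^{\text{self}}(\mu)$ in the sub/critical cases and $\ge\calE_0^{\text{elastic}}(J)$ in the supercritical case; the curl constraint $\curl J=0$ or $\curl J=-\mu$ that makes $\calE_0$ finite is supplied by \thmref{thm:compactness}, and if it fails $\calE_0=\infty$ and there is nothing to prove. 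The main obstacle I expect is the bookkeeping that makes the far-field and near-field estimates \emph{simultaneously} valid with a single choice of the intermediate scale $t_\e$ (equivalently the single parameter $N_\e$): one needs $t_\e$ small enough that the annuli $B^i_\e$ are disjoint and the ``removed'' far-field volume $\sum_i|B^i_\e|\to0$ (so no elastic energy is lost), yet large enough — $\log(t_\e/\e|\v_\e^i|)\gtrsim s\log(1/\e)$ — that \propref{prop:lower_bnd_single2} captures a fraction $s$ of the full self-energy $\Ilin_0$; threading this needle is exactly where the standing hypotheses $\log n_\e,\log(1/\rho_\e)\ll\log(1/\e)$ are used, and verifying the chain of inequalities $\e|\v_\e^i|\ll \e N_\e^{(2-s)/(1-s)}\le R^i_\e\le t_\e\le\rho_\e$ is the delicate step.
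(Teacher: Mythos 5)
Your proposal is correct and follows essentially the same route as the paper: the same decomposition into disjoint near-field annuli at an intermediate scale (the paper's $r_\e$ with $n_\e r_\e^2\ll1$, $\log(1/r_\e)\ll\log(1/\e)$) plus a far-field remainder, with Proposition~\ref{prop:lower_bnd_single2}, the homogeneity/convexity of $\SEF$, $\tmu_\e\weakstar\mu$ and Reshetnyak lower semicontinuity handling the self-energy, and a cutoff-in-measure linearization with weak $L^2$ lower semicontinuity of $\Quad$ handling the elastic part. The only cosmetic difference is your invocation of Theorem~\ref{thm:rigidity} and $\dist^2$ at the start of the far-field step, which is unnecessary (the $L^2$-boundedness of $J_\e$ is already built into the assumed convergence $f_\e\to(J,U)$) and is superseded by the direct Taylor expansion you then correctly carry out.
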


Towards the proof, we denote by $r_\e\le \rho_\e$ an infinitesimal sequence satisfying $n_\e r_\e^2 \ll 1$ and $\log(1/r_\e)\ll \log(1/\e)$, and
\beq
\label{eq:dislocation_balls}
B_\e^i = \{p\in\MEps~:~ \r_\e^i(p) < r_\e\}, 
\qquad i=1,\dots,m_\e
\eeq
the metric annulus of outer-radius $r_\e$ around the $i$-th dislocation in $\MEps$ ($r_\e$ is an intermediate scaling that is introduced to ensure the bound $n_\e r_\e^2 \ll 1$ that does not necessarily hold for $\rho_\e$). By the definition of $\rho_\e$, the annuli $B_\e^i$ are disjoint, hence
\[
\begin{split}
\calE_\e(f_\e,\PEps) &= \frac{1}{h_\e^2} \sum_{i=1}^{m_\e} \int_{B_\e^i} \calW(df_\e\circ \PEps^{-1})\,\VolumeEps + 
\frac{1}{h_\e^2} \int_{\MEps\setminus \cup_i B_\e^i} \calW(df_\e\circ \PEps^{-1})\,\VolumeEps \\
&\equiv \calE_\e^{\text{self}}(f_\e,\PEps) + \calE_\e^{\text{elastic}}(f_\e,\PEps).
\end{split}
\]
Since the balls are disjoint, $B_\e^i$ is isometric to a subset of $\hM_{\e \v_\e^i}$, and specifically, using \propref{prop:M_hM_M}, 
\beq
\label{eq:B_e_i_bounds}
\hM_{\e \v_\e^i}^{r_\e/2} \setminus \hM_{\e \v_\e^i}^{3\e |\v_\e^i|} \subset B_\e^i \subset \hM_{\e \v_\e^i}^{2r_\e}.
\eeq
Since the metrics $\hg_{\e \v_\e^i}$ and the Euclidean metrics on $\hM_{\e\v_\e^i}$ are uniformly equivalent, independent of $\e$ and $\v_\e^i$ (this follows from \eqref{eq:bilipschitz}), we have that $\Vol_{\hg_{\e \v_\e^i}}(\hM_{\e \v_\e^i}^{2r_\e}) \lesssim \Vol_{\euc} (B_{2r_\e}) \simeq r_\e^2$.
Thus
\[
\Vol_{\gEps}\brk{B_\e^i} \lesssim r_\e^2.
\]
Since by \lemref{lem:max_disloc}(c) the number of dislocations $m_\e$ satisfies $m_\e \lesssim n_\e$, we obtain that
\beq
\label{eq:B_e_i_vol}
\Vol_{\gEps}\brk{\bigcup_{i=1}^{m_\e}B_\e^i} \lesssim n_\e r_\e^2 \to 0.
\eeq

We prove \thmref{thm:liminf} by showing in Proposition~\ref{prop:liminf2} that
\[
\LiminfEps \calE_\e^{\text{self}}(f_\e,\PEps) \ge
\Cases{
\calE^{\text{self}}_0(\mu) & \text{subcritical \& critical} \\
0 & \text{supercritical},
}
\]
and in Proposition~\ref{prop:liminf1} that
\[
\LiminfEps \calE_\e^{\text{elastic}}(f_\e,\PEps) \ge
\calE^{\text{elastic}}_0(J).
\]

\begin{lemma}
\label{lem:dZ_Q_unif}
Let 
\[
(\MEps,\PEps)\xrightarrow[]{n_\e} (\W,\IdRtwo,\mu).
\]
Then, $|\IdRtwo - \PEps|_{\gEps,\euc} \to 0$ uniformly on $\MEps\setminus \cup_i B_\e^i$. Similarly, the convergence $|\IdRtwo- \PEps^{-1}|_{\euc,\gEps} \to 0$ 
is uniform on that same set.
\end{lemma}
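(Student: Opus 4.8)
The plan is to read the conclusion straight off the pointwise asymptotic-rigidity estimate \eqref{eq:local_dist_bnd} of Definition~\ref{def:man_conv1}(b), after noting that on the complement of the dislocation balls $B_\e^i$ the distance-to-the-nearest-dislocation in the denominator is bounded below. Recall that \eqref{eq:local_dist_bnd} gives, almost everywhere on $\MEps$, $|d\ZEps - \QEps|_{\gEps,\euc} \lesssim \e|\v_\e^{i(p)}|/\r_\e(p) + c_\e$ with $c_\e\to0$. If $p\in\MEps\setminus\bigcup_i B_\e^i$, then by the definition \eqref{eq:dislocation_balls} of $B_\e^i$ we have $\r_\e^i(p)\ge r_\e$ for every $i$, hence $\r_\e(p)\ge r_\e$; moreover $\e|\v_\e^{i(p)}|\le \e\max_i|\v_\e^i|\lesssim \e n_\e$ by the Burgers vector bound \lemref{lem:max_disloc}(a). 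Combining, for every such $p$,
\[
|d\ZEps(p) - \QEps(p)|_{\gEps,\euc} \lesssim \frac{\e n_\e}{r_\e} + c_\e .
\]
Taking logarithms and invoking the standing assumptions $\log n_\e\ll\LogEps$ and $\log(1/r_\e)\ll\LogEps$ gives $\log(\e n_\e/r_\e) = -\LogEps\,(1-o(1))\to-\infty$, so $\e n_\e/r_\e\to0$; together with $c_\e\to0$ this yields the first claim, the convergence being uniform on $\MEps\setminus\bigcup_i B_\e^i$ since the right-hand side does not depend on $p$.

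For the statement about the inverses, I would use the algebraic identity $d\ZEps^{-1}-\QEps^{-1} = -\,d\ZEps^{-1}\circ(d\ZEps-\QEps)\circ\QEps^{-1}$, valid because $d\ZEps^{-1}\circ d\ZEps = \QEps^{-1}\circ\QEps = \id_{T\MEps}$. Reading $\QEps^{-1}$ as a map $(\R^2,\euc)\to(T\MEps,\gEps)$ it is a fiberwise isometry since $\QEps\in\Gamma(\SO(\gEps,\euc))$, so $|\QEps^{-1}|_{\euc,\gEps}=1$; and $|d\ZEps^{-1}|_{\euc,\gEps}$ is bounded by the uniform bilipschitz constant of the $\ZEps$, which is part of the convergence hypothesis. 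Submultiplicativity of the operator norm then gives
\[
|d\ZEps^{-1}-\QEps^{-1}|_{\euc,\gEps} \le |d\ZEps^{-1}|_{\euc,\gEps}\,|d\ZEps-\QEps|_{\gEps,\euc}\,|\QEps^{-1}|_{\euc,\gEps} \lesssim |d\ZEps-\QEps|_{\gEps,\euc},
\]
and the right-hand side tends to zero uniformly on $\MEps\setminus\bigcup_i B_\e^i$ by the first part.

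I do not expect a genuine obstacle: the whole content reduces to the elementary estimate $\e n_\e/r_\e\to0$, and the only place care is needed is in lining up the three smallness inputs — $\max_i|\v_\e^i|\lesssim n_\e$, $\log n_\e\ll\LogEps$, and $\log(1/r_\e)\ll\LogEps$ — and, in the inverse estimate, in keeping track of the operator-norm indices, which the identity above together with $\QEps$ being a fiberwise isometry handles automatically.
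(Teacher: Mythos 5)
Your proposal is correct and follows the same route as the paper: the pointwise bound of Definition~\ref{def:man_conv1}(b) combined with the Burgers vector bound of \lemref{lem:max_disloc}(a) gives $|d\ZEps-\QEps|\lesssim \e n_\e/r_\e + c_\e$ off the balls $B_\e^i$, which vanishes under the standing assumptions $\log n_\e,\log(1/r_\e)\ll\LogEps$. Your treatment of the inverses via the identity $d\ZEps^{-1}-\QEps^{-1}=-d\ZEps^{-1}\circ(d\ZEps-\QEps)\circ\QEps^{-1}$ is exactly the "follows similarly" step the paper leaves implicit (and matches how the analogous bound is handled in \propref{prop:conv_manifolds}).
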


\begin{comment}
Under the assumption $(\MEps,\PEps)\xrightarrow[]{n_\e} (\W,\IdRtwo,\mu)$, the uniform bilipschitzness of the inclusion maps $\MEps \to \W$ implies that the metric $\gEps = \PEps^\#\euc$ is uniformly equivalent to the Euclidean metric $\euc$, and thus the norms $|\cdot |_{\gEps,\euc}$, $ |\cdot|_{\euc,\euc}$, $ |\cdot|_{\euc,\gEps}$ are all uniformly equivalent. 
Hence, throughout this section, we will not always denote the norms, in order to simplify the notation.
\end{comment}

\begin{proof}
By \lemref{lem:max_disloc}(a), all the dislocations in $(\MEps,\PEps)$ are of magnitude of at most $C \e n_\e$, for some $C>0$ independent of $\e$.
From Property (b) in \defref{def:man_conv1},
\[
|\IdRtwo - \PEps| \lesssim \frac{\e n_\e}{r_\e} + c_\e
\qquad\text{ in $\MEps\setminus \cup_i B_\e^i$}.
\]
By our assumptions on $n_\e$ and $r_\e$, this tends to zero uniformly.
The convergence of the inverse maps follows similarly.
\end{proof}

In the proofs of the lower and upper bounds of $\calE_\e^{\text{self}}(f_\e;\PEps)$, we need the following result \cite[Theorems~2.38 and 2.39]{AFP00}:

\begin{proposition}[Reshetnyak]
\label{prop:Resh_lsc}
Let $X$ be a locally-compact separable metric space. Let $\mu_n,\mu\in \calM(X)$ be $\R^k$-valued Radon measures having finite total mass, such that $\mu_n\stackrel{*}{\weakly} \mu$. Then,
\[
\liminf_{\e\to0} \int_X f\brk{x,\frac{d\mu_n}{d|\mu_n|}} \, d|\mu_n| \ge \int_X f\brk{x,\frac{d\mu}{d|\mu|}} \, d|\mu|, 
\] 
for every continuous $f:X\times \R^k\to \R$, which is 1-homogeneous and convex in its second argument, satisfying the growth bound $|f(x,\xi)| \le C |\xi|$ for some $C>0$.  
If in addition $|\mu_n|(X) \to |\mu|(X)$, then there is an equality.
\end{proposition}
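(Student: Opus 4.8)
The statement is a standard result (cited here from Ambrosio--Fusco--Pallara, \cite[Theorems~2.38 and 2.39]{AFP00}), so the plan is to reproduce its proof in a self-contained way, or — more in keeping with the spirit of the paper — simply to cite it and indicate the skeleton. I would proceed as follows. First, reduce to the case where the total masses converge, $|\mu_n|(X)\to|\mu|(X)$: indeed, weak-$*$ convergence $\mu_n\weakstar\mu$ together with the uniform bound $\sup_n|\mu_n|(X)<\infty$ guarantees, after passing to a subsequence, that $|\mu_n|\weakstar\lambda$ for some positive measure $\lambda\ge|\mu|$ (lower semicontinuity of total variation under weak-$*$ convergence), and it suffices to prove the inequality along an arbitrary subsequence realizing the liminf. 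Then introduce the pairing: encode $(\mu_n,|\mu_n|)$ as a single $\R^k\times\R$-valued measure $\sigma_n$ on $X$, whose weak-$*$ limit is $\sigma=(\mu,\lambda)$; its polar $d\sigma_n/d|\sigma_n|$ takes values in the unit sphere.

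Second, the heart of the matter is that the functional
\[
\sigma \mapsto \int_X g\Brk{x,\frac{d\sigma}{d|\sigma|}}\,d|\sigma|
\]
is weak-$*$ lower semicontinuous on measures, for $g$ continuous, nonnegative, and positively $1$-homogeneous and convex in the second variable. For this one uses the duality representation
\[
\int_X g\Brk{x,\frac{d\sigma}{d|\sigma|}}\,d|\sigma|
= \sup\BRK{\int_X \ip{\varphi(x),d\sigma(x)} ~:~ \varphi\in C_c(X;\R^{k+1}),\ \ip{\varphi(x),\xi}\le g(x,\xi)\ \forall (x,\xi)},
\]
which is valid precisely because $g(x,\cdot)$ is convex and $1$-homogeneous, hence equal to the support function of the closed convex set $\{w : \ip{w,\xi}\le g(x,\xi)\}$. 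Each linear functional $\sigma\mapsto\int_X\ip{\varphi,d\sigma}$ with $\varphi\in C_c$ is weak-$*$ continuous, so the supremum of such functionals is weak-$*$ lower semicontinuous; applying this to $\sigma_n\weakstar\sigma$ gives the liminf inequality with $g$ in place of $f$. Taking $g(x,(\xi,t)) = f(x,\xi)$ (which inherits continuity, $1$-homogeneity and convexity in $(\xi,t)$, and the growth bound) and noting $d\sigma_n/d|\sigma_n|$ has the form $(\theta_n,s_n)$ with $|\theta_n|^2+s_n^2=1$ and $\theta_n\,d|\sigma_n|$-part reproducing $d\mu_n/d|\mu_n|\,d|\mu_n|$, one recovers exactly $\liminf_n\int f(x,d\mu_n/d|\mu_n|)\,d|\mu_n| \ge \int f(x,d\sigma/d|\sigma|)\,d|\sigma| \ge \int f(x,d\mu/d|\mu|)\,d|\mu|$, the last step using $\lambda\ge|\mu|$, nonnegativity of $f$ and the fact that on the Radon--Nikodym decomposition of $\sigma$ the $X$-part carrying $\mu$ dominates.

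Third, for the equality statement under $|\mu_n|(X)\to|\mu|(X)$: now $\lambda=|\mu|$ forces $\sigma_n\weakstar\sigma$ with $|\sigma_n|(X)\to|\sigma|(X)$, i.e.\ strict convergence of measures. One then invokes the companion continuity theorem: for $g$ merely continuous (not necessarily convex/homogeneous) but with linear growth, strict convergence $\sigma_n\to\sigma$ implies $\int_X g(x,d\sigma_n/d|\sigma_n|)\,d|\sigma_n|\to\int_X g(x,d\sigma/d|\sigma|)\,d|\sigma|$; this is proved by approximating $g$ uniformly on $X\times S^{k}$ by functions of the product form $\sum_j a_j(x)b_j(\xi)$ and using strict convergence together with the fact that the polars converge in an appropriate measure-theoretic sense (Reshetnyak's continuity theorem). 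Combining the lower semicontinuity inequality with its reverse (apply the lsc result to $-f$... which is not homogeneous-convex, so one genuinely needs the continuity theorem here) yields equality.

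The main obstacle is the equality half: the lower bound follows cleanly from convex duality, but the matching upper bound requires Reshetnyak's continuity theorem, whose proof rests on the approximation of $g$ by separable sums and a careful use of strict convergence of the associated sphere-valued measures — this is exactly where the hypothesis $|\mu_n|(X)\to|\mu|(X)$ is indispensable and cannot be weakened to plain weak-$*$ convergence. Since all of this is classical and available in \cite{AFP00}, in the paper I would state the proof as: \emph{``This is \cite[Theorems~2.38 and 2.39]{AFP00}; the lower bound follows from the duality formula for the integrand together with the weak-$*$ continuity of linear functionals on $\calM(X)$, and the equality from Reshetnyak's continuity theorem.''}
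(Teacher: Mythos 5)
Your proposal is correct and matches the paper exactly: the paper gives no proof of this proposition at all, merely citing \cite[Theorems~2.38 and 2.39]{AFP00}, which is precisely what you propose to do, and your accompanying sketch (lower bound via the support-function duality and weak-$*$ continuity of linear pairings, equality via Reshetnyak's continuity theorem under strict convergence) is the standard argument from that reference. No gaps to report.
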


\begin{proposition}
\label{prop:liminf2}
Under the assumptions of \thmref{thm:liminf}, 
\[
\LiminfEps \calE_\e^{\text{self}}(f_\e,\PEps) \ge
\Cases{
\calE^{\text{self}}_0(\mu) & \text{subcritical \& critical} \\
0 & \text{supercritical}.
}
\]
\end{proposition}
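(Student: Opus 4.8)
The plan is to reduce the self-energy lower bound to the single-dislocation estimate of Proposition~\ref{prop:lower_bnd_single2} plus Reshetnyak's lower-semicontinuity (Proposition~\ref{prop:Resh_lsc}), following the strategy of \cite{GLP10,MSZ14} but adapted to the geometric setting. First I would fix $s\in(1/2,1)$ and $\delta\in(0,1/10)$, and restrict each $\calW(df_\e\circ\QEps^{-1})$ on the ball $B_\e^i$ to an annular region where Proposition~\ref{prop:lower_bnd_single2} applies. Concretely, by \eqref{eq:B_e_i_bounds} the ball $B_\e^i$ contains an isometric copy of $\hM_{\e\v_\e^i}^{r_\e/2}\setminus\hM_{\e\v_\e^i}^{3\e|\v_\e^i|}$, and since the energy density is nonnegative we may bound $\int_{B_\e^i}\calW(df_\e\circ\QEps^{-1})\,\VolumeEps$ from below by the corresponding integral over that model annulus. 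Dividing by $h_\e^2$ and using $h_\e^2\ge n_\e\e^2\LogEps$ in the subcritical/critical case (and the definition of $\hI$), I would get, for each $i$,
\[
\frac{1}{h_\e^2}\int_{B_\e^i}\calW(df_\e\circ\QEps^{-1})\,\VolumeEps
\ge \frac{\e^2\log(r_\e/3\e|\v_\e^i|)}{h_\e^2}\,\hI_{\e,3\e|\v_\e^i|/(r_\e/2)}^{r_\e/2}(\v_\e^i).
\]
Then I would apply Proposition~\ref{prop:lower_bnd_single2} with $R_\e = r_\e/2$ — checking that the hypotheses $|\v_\e^i|\le n_\e$ (which holds by Lemma~\ref{lem:max_disloc}(a)) and $R_\e\ge \e n_\e^{(2-s)/(1-s)}$ (which holds since $\log(1/r_\e)\ll\LogEps$ and $\log n_\e\ll\LogEps$) are satisfied — to obtain
\[
\frac{1}{h_\e^2}\int_{B_\e^i}\calW(df_\e\circ\QEps^{-1})\,\VolumeEps
\ge \frac{\e^2\log(r_\e/3\e|\v_\e^i|)}{h_\e^2}\, s\,\Ilin_0(\v_\e^i)\brk{1-C\brk{\tfrac{1}{\log(1/\delta)}+\sigma_{\e,\delta}}}.
\]

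Next I would sum over $i$ and convert the resulting sum into an integral against a measure in order to apply Reshetnyak. The point is that $\log(r_\e/3\e|\v_\e^i|) = \LogEps(1+o(1))$ uniformly in $i$, because $\log(1/r_\e)\ll\LogEps$ and $|\v_\e^i|\lesssim n_\e$ with $\log n_\e\ll\LogEps$; hence $\e^2\log(r_\e/3\e|\v_\e^i|)/h_\e^2 = \e^2\LogEps/h_\e^2\,(1+o(1))$, which equals $(n_\e\e)^{-2}n_\e\e^2\LogEps/h_\e^2\cdot n_\e(1+o(1))$. In the subcritical and critical regimes $h_\e^2=n_\e\e^2\LogEps$, so this prefactor is $(1+o(1))/n_\e$ (times $n_\e$ overall from the normalization already absorbed). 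Recalling the measures $\tmu_\e$ from Proposition~\ref{prop:mu_e_mu}, where $d\tmu_\e = \frac{1}{n_\e}\sum_i \v_\e^i\otimes\frac{\ind_{D_\e^i}}{|D_\e^i|}\VolumeE$ and $\tmu_\e\weakstar\mu$, and using that $\Ilin_0$ is a quadratic form so $\Ilin_0(\v_\e^i) = |D_\e^i|^{-1}\int_{D_\e^i}\Ilin_0(n_\e\,\tfrac{d\tmu_\e}{d|\ldots|})\,\ldots$ up to $1$-homogeneity bookkeeping, I would rewrite
\[
\sum_{i=1}^{m_\e}\frac{1}{n_\e}\Ilin_0(\v_\e^i)
= \int_\W \Ilin_0\!\brk{\tfrac{d\tmu_\e}{d|\tmu_\e|}}\,d|\tmu_\e|
\]
— here using that $\Ilin_0$ restricted to the unit sphere, extended $1$-homogeneously, is exactly a function of the type allowed in Proposition~\ref{prop:Resh_lsc} (continuous, $1$-homogeneous, and convex since $\Ilin_0$ is a positive-definite quadratic form and $\v\mapsto\Ilin_0(\v)/|\v|$ inherits convexity on rays... ). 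Actually the correct object to integrate is $\SEF$, not $\Ilin_0$; since $\SEF(\v)\le\Ilin_0(\v)$ trivially (take $N=1$) the above gives a lower bound in terms of $\int\Ilin_0$, which is $\ge\int\SEF$ only in the wrong direction. So I would instead be more careful: the genuine argument replaces $\Ilin_0(\v_\e^i)$ at the level of \emph{each} dislocation by $\SEF(\v_\e^i)$ (legitimate since $\SEF\le\Ilin_0$ reverses), obtaining $\sum_i\frac1{n_\e}\SEF(\v_\e^i)\le\sum_i\frac1{n_\e}\Ilin_0(\v_\e^i)$, which is the wrong way; the resolution used in \cite{GLP10,MSZ14} is that one may assume without loss of generality, via an optimal decomposition, that each $\v_\e^i$ lies in the finite set of Lemma~\ref{lem:4.15}(c)(d), so that $\Ilin_0(\v_\e^i)=\SEF(\v_\e^i)$ on that set, and then apply Reshetnyak to $\SEF$ composed with $\tmu_\e$. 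I would therefore invoke Lemma~\ref{lem:4.15} to reduce to $\v_\e^i$ in a fixed finite set and identify $\Ilin_0$ with $\SEF$ there.

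Finally, I would pass to the limit: Proposition~\ref{prop:Resh_lsc} applied to $f(x,\xi)=\SEF(\xi)$ and $\mu_n=\tmu_\e\weakstar\mu$ gives $\liminf_\e\int_\W\SEF(\tfrac{d\tmu_\e}{d|\tmu_\e|})\,d|\tmu_\e|\ge\int_\W\SEF(\tfrac{d\mu}{d|\mu|})\,d|\mu| = \calE^{\text{self}}_0(\mu)$; combining with the $(1+o(1))$ and $(1-C(\tfrac1{\log(1/\delta)}+\sigma_{\e,\delta}))$ factors and letting first $\e\to0$ (so $\sigma_{\e,\delta}\to0$) and then $s\to1$, $\delta\to0$ yields $\liminf_\e\calE_\e^{\text{self}}(f_\e)\ge\calE^{\text{self}}_0(\mu)$ in the subcritical and critical regimes. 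In the supercritical regime $h_\e^2=n_\e^2\e^2\gg n_\e\e^2\LogEps$, so the prefactor $\e^2\LogEps/h_\e^2\cdot n_\e = \LogEps/n_\e\to0$, and since $\calE_\e^{\text{self}}(f_\e)\ge0$ trivially the claimed bound $\liminf\ge0$ is immediate. The main obstacle I anticipate is the bookkeeping in the reduction step: making rigorous that one may replace the actual Burgers vectors $\v_\e^i$ by vectors in the finite set of Lemma~\ref{lem:4.15}(c)(d) without decreasing the lower bound (one needs an accompanying decomposition of the measure $\tmu_\e$ and a control that the number of sub-dislocations stays $O(n_\e)$), together with keeping the uniform estimate $\log(r_\e/3\e|\v_\e^i|)=\LogEps(1+o(1))$ valid after this replacement; everything else is a matter of assembling the already-proved single-dislocation lower bound and Reshetnyak's theorem.
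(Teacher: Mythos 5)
Your overall strategy is the paper's: localize to the disjoint annuli $B_\e^i$, embed the model annuli via \eqref{eq:B_e_i_bounds}, apply Proposition~\ref{prop:lower_bnd_single2} with $R_\e = r_\e/2$ (your hypothesis checks are correct), absorb the logarithmic prefactor using $\log(r_\e/\e|\v_\e^i|)=(1+o(1))\log(1/\e)$, rewrite the sum as an integral against $\tmu_\e$ from Proposition~\ref{prop:mu_e_mu}, and finish with Reshetnyak. The supercritical case is indeed trivial.

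However, the final step of your argument contains a genuine error. You correctly observe that Reshetnyak cannot be applied to $\Ilin_0$ directly (its $1$-homogeneous extension $\v\mapsto\Ilin_0(\v)/|\v|$ need not be convex), but you then claim that replacing $\Ilin_0(\v_\e^i)$ by $\SEF(\v_\e^i)$ goes ``the wrong way'' and propose to fix this by assuming, without loss of generality, that each $\v_\e^i$ lies in the finite set of Lemma~\ref{lem:4.15}(c)--(d). This is backwards. You have established a \emph{lower} bound of the form $\LiminfEps\calE_\e^{\text{self}}(f_\e)\ge s(1-o_\delta(1))\liminf_\e \frac{1}{n_\e}\sum_i\Ilin_0(\v_\e^i)$, and since $\SEF\le\Ilin_0$ pointwise (take $N=1$ in \eqref{eq:SigmaS}), weakening it to $\frac{1}{n_\e}\sum_i\SEF(\v_\e^i)=\int_\W\SEF\bigl(\tfrac{d\tmu_\e}{d|\tmu_\e|}\bigr)\,d|\tmu_\e|$ preserves the lower bound; this weakened quantity is exactly the one to which Proposition~\ref{prop:Resh_lsc} applies, because $\SEF$ \emph{is} convex and $1$-homogeneous by Lemma~\ref{lem:4.15}(a)--(b), and its liminf is $\ge\calE_0^{\text{self}}(\mu)$. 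That is the entire point of introducing the relaxation $\SEF$ as the limiting density. Moreover, the ``WLOG'' reduction you propose is not available here: in the liminf inequality the Burgers vectors $\v_\e^i$ are part of the given data and cannot be replaced by an optimal decomposition (that device belongs to the recovery-sequence construction, cf.\ Lemma~\ref{lem:approx_mu}), and in any case $\SEF$ and $\Ilin_0$ need not coincide even on the finite set of Lemma~\ref{lem:4.15}. Deleting the reduction step and simply using $\SEF\le\Ilin_0$ turns your proposal into the paper's proof.
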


\begin{proof}
In the supercritical case there is nothing to prove. Assume therefore a subcritical or critical regime, i.e., $h_\e^2 = n_\e \e^2 \log(1/\e)$.
Each $B_\e^i$ is a manifold with a single dislocation whose Burgers' vector is $\e\v_\e^i$.
Thus,
\[
\begin{split}
\LiminfEps \calE_\e^{\text{self}}(f_\e,\PEps)  &= 
\LiminfEps \frac{1}{h_\e^2} \sum_{i=1}^{m_\e} \int_{B_\e^i} \calW(df_\e\circ \PEps^{-1})\,\VolumeEps  \\
&\ge \LiminfEps \frac{1}{h_\e^2} \sum_{i=1}^{m_\e} \inf_{\tf_\e\in \HOne(\hM_{\e\v_\e^i};\R^2)}\int_{\hM_{\e\v_\e^i}^{r_\e/2}\setminus\hM_{\e\v_\e^i}^{3\e |\v_\e^i|}} \calW(d\tf_\e\circ \hP_{\e\v_\e^i}^{-1})\,\dVol_{\hP_{\e\v_\e^i}} \\
&= \LiminfEps \frac{\log(r_\e/6|\v_\e^i|\e)}{n_\e\log(1/\e)}\sum_{i=1}^{m_\e} \hI_{\e,6\e|\v_\e^i|/r_\e}^{r_\e/2}(\v_\e^i) \\
&\ge \LiminfEps \frac{1}{n_\e}\sum_{i=1}^{m_\e} \hI_{\e,6\e|\v_\e^i|/r_\e}^{r_\e/2}(\v_\e^i),
\end{split}
\]
where $\hI_{\e,6\e|\v_\e^i|/r_\e}^{r_\e/2}$ was defined in Section~\ref{sec:asymp_estimates}, the transition to the second line follows from \eqref{eq:B_e_i_bounds}, and the transition to the last line follows from the fact that $|\log(r_\e)|\ll \log(1/\e)$ and $|\log(|\v_\e^i|)|\lesssim \log n_\e \ll \log(1/\e)$.

Thus, it suffices to prove that
\[
\LiminfEps \frac{1}{n_\e} \sum_{i=1}^{m_\e}\hI_{\e,6\e|\v_\e^i|/r_\e}^{r_\e/2}(\v_\e^i) \ge \calE^{\text{self}}_0(\mu).
\]

We now apply Proposition~\ref{prop:lower_bnd_single2} with $R_\e = r_\e/2$: fixing $s\in (1/2,1)$ and $\delta\in (0,1/10)$, we have that for $\e$ small enough (depending on $s$),
\[
\hI_{\e,6\e|\v_\e|/r_\e}^{r_\e/2}(\v_\e^i) \ge s\, \Ilin_0(\v_\e^i)\brk{1 - C \brk{\frac{1}{\log(1/\delta)} + \tilde{\sigma}_{\e,\delta}}}.
\]
Thus,
\[
\begin{split}
\frac{1}{n_\e}\sum_{i=1}^{m_\e} \hI_{\e,6\e|\v_\e^i|/r_\e}^{r_\e/2}(\v_\e^i)
&\ge s\brk{1 - C \brk{\frac{1}{\log(1/\delta)} + \tilde{\sigma}_{\e,\delta}}} \frac{1}{n_\e}\sum_{i=1}^{m_\e} \Ilin_0(\v_\e^i) \\
&\ge s\brk{1 - C \brk{\frac{1}{\log(1/\delta)} + \tilde{\sigma}_{\e,\delta}}} \frac{1}{n_\e}\sum_{i=1}^{m_\e} \SEF(\v_\e^i) \\
&= s\brk{1 - C \brk{\frac{1}{\log(1/\delta)} + \tilde{\sigma}_{\e,\delta}}} \frac{1}{n_\e}\sum_{i=1}^{m_\e} \SEF\brk{\frac{\v_\e^i}{|\v_\e^i|}}|\v_\e^i|\\
&= s\brk{1 - C \brk{\frac{1}{\log(1/\delta)} + \tilde{\sigma}_{\e,\delta}}} \int_\W \SEF\brk{\frac{d\tmu_\e}{d|\tmu_\e|}}\,d|\tmu_\e|,
\end{split}
\]
where the passage to the second line follows from the inequality $\SEF(\v) \le \Ilin_0(\v)$ (which follows from the the definition of $\SEF(\v)$), and the passage to the third line follows from the 1-homogeneity of $\SEF$ (\lemref{lem:4.15}(a)).
In the passage to the last line, we use the measure 
\[
d\tmu_\e = \frac{1}{n_\e} \sum_{i=1}^{m_\e}  \v_\e^i \otimes \frac{\ind_{D_\e^i}}{|D_\e^i|} \VolumeE  
\]
defined in \propref{prop:mu_e_mu}. Since
\[
d|\tmu_\e| = \frac{1}{n_\e} \sum_{i=1}^{m_\e} |\v_\e^i|  \frac{\ind_{D_\e^i}}{|D_\e^i|} \VolumeE,
\]
it follows that
\[
\frac{d\tmu_\e}{d|\tmu_\e|} = \sum_{i=1}^{m_\e} \frac{\v_\e^i}{|\v_\e^i|} \ind_{D_\e^i},
\]
hence
\[
\begin{split}
\int_\W \SEF\brk{\frac{d\tmu_\e}{d|\tmu_\e|}}\,d|\tmu_\e| &=
\frac{1}{n_\e} \sum_{i=1}^{m_\e} \int_\W \SEF\brk{\frac{d\tmu_\e}{d|\tmu_\e|}}   |\v_\e^i| \frac{\ind_{D_\e^i}}{|D_\e^i|} \VolumeE  \\
&= \frac{1}{n_\e} \sum_{i=1}^{m_\e} \int_\W \SEF\brk{\frac{\v_\e^i}{|\v_\e^i|}}   |\v_\e^i| \frac{\ind_{D_\e^i}}{|D_\e^i|} \VolumeE \\
&= \frac{1}{n_\e} \sum_{i=1}^{m_\e} \SEF\brk{\frac{\v_\e^i}{|\v_\e^i|}} |\v_\e^i|.
\end{split}
\]
By \propref{prop:mu_e_mu}, $\tmu_\e \stackrel{*}{\weakly} \mu$.
By \lemref{lem:4.15}, $\SEF$ satisfies the assumptions of Reshetnyak's theorem (\propref{prop:Resh_lsc}), hence by taking first $\e \to 0$ we obtain
\[
\LiminfEps \calE_\e^{\text{self}}(f_\e,\PEps) \ge 
s\brk{1 - C \brk{\frac{1}{\log(1/\delta)} }} \calE^{\text{self}}_0(\mu).
\]
Letting $\delta\to 0$ and then $s\to 1$ completes the proof.
\end{proof}

\begin{proposition}
\label{prop:liminf1}
Under the assumptions of \thmref{thm:liminf},
\beq
\LiminfEps \calE_\e^{\text{elastic}}(f_\e,\PEps) \ge \calE^{\text{elastic}}_0(J).
\label{eq:liminfI1}
\eeq
\end{proposition}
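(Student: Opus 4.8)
The plan is to push the energy onto the fixed domain $\W$, linearize $\calW$ about the identity by a truncation argument, and conclude by weak lower-semicontinuity of the quadratic functional — essentially the local computation in the proof of Proposition~\ref{prop:cell_liminf}, but with the geometric change of variables carried along. We may assume $\LiminfEps\calE_\e(f_\e)<\infty$, so that along a subsequence realizing the liminf $\calE_\e(f_\e)\le C$, hence $\calE_\e^{\text{elastic}}(f_\e)\le C$ since $\calW\ge0$. Let $U_\e\to U$ be the rotations from $f_\e\to(J,U)$ and set $\eta_\e=(U_\e^Tdf_\e\circ\QEps^{-1}-I)/h_\e$; by frame-indifference $\calW(df_\e\circ\QEps^{-1})=\calW(I+h_\e\eta_\e)$, and since $\QEps$ is an isometry and $\ZEps$ is uniformly bilipschitz, the defining weak convergence $\eta_\e\circ\QEps\circ d\ZEps^{-1}\weakly J$ in $L^2(\W)$ forces $\Norm{\eta_\e}_{L^2(\MEps)}\lesssim1$.

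First I would push everything to $\W$. Writing $\W_\e=\ZEps(\MEps\setminus\bigcup_iB_\e^i)$ and $\teta_\e=\eta_\e\circ\ZEps^{-1}$ (as a section, extended by zero off $\W_\e$), asymptotic surjectivity (\defref{def:man_conv1}) and \eqref{eq:B_e_i_vol} give $|\W\setminus\W_\e|\to0$, and a change of variables gives $\calE_\e^{\text{elastic}}(f_\e)=h_\e^{-2}\int_{\W_\e}\calW(I+h_\e\teta_\e)\,(\ZEps^{-1})^\#\VolumeEps$. On $\MEps\setminus\bigcup_iB_\e^i$, Lemma~\ref{lem:dZ_Q_unif} shows $P_\e:=\QEps\circ d\ZEps^{-1}\to I$ uniformly, and the same estimate controls the Jacobian of $\ZEps$, so $(\ZEps^{-1})^\#\VolumeEps=(1+o(1))\VolumeE$ uniformly on $\W_\e$. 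Since $\eta_\e\circ\QEps\circ d\ZEps^{-1}=\teta_\e P_\e$ on $\W_\e$ and $\Norm{P_\e-I}_{L^\infty(\W_\e)}\to0$, we get $\Norm{\teta_\e-\teta_\e P_\e}_{L^2(\W_\e)}\to0$; combining this with $f_\e\to(J,U)$ (and testing against $\psi\ind_{\W\setminus\W_\e}$, which vanishes in the limit by absolute continuity of the $L^2$-norm since $|\W\setminus\W_\e|\to0$) yields $\teta_\e\weakly J$ in $L^2(\W)$.

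Next I would linearize exactly as in the proof of Proposition~\ref{prop:cell_liminf}: with $\chi_\e=\ind_{\{|\teta_\e|\le h_\e^{-1/2}\}}$, Chebyshev and $\Norm{\teta_\e}_{L^2}\lesssim1$ give $|\{\chi_\e\ne1\}\cap\W_\e|\lesssim h_\e\to0$, while on $\{\chi_\e=1\}$ one has $h_\e|\teta_\e|\le h_\e^{1/2}\to0$. Using $\calW\ge0$, $\calW(I)=0$, $D_I\calW=0$ and the $C^2$-regularity near $\SO(2)$, on $\{\chi_\e=1\}$ we have $\calW(I+h_\e\teta_\e)\ge h_\e^2\Quad(\teta_\e)-h_\e^2|\teta_\e|^2\omega(h_\e|\teta_\e|)$ with $\omega(s)\to0$ as $s\to0^+$. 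Discarding the nonnegative contribution of $\{\chi_\e=0\}$ and using $\Quad\ge0$ (which follows from \eqref{eq:D2calW_bound}), $\Quad(A)\lesssim|A|^2$, the volume estimate, $\Norm{\teta_\e}_{L^2}\lesssim1$, and the fact that $\chi_\e$ is an indicator and $\Quad$ quadratic, this yields
\[
\calE_\e^{\text{elastic}}(f_\e)\ge(1-o(1))\int_\W\Quad(\chi_\e\teta_\e)\,\VolumeE-o(1).
\]
Since $\chi_\e\to1$ boundedly in measure and $\teta_\e\weakly J$, the product satisfies $\chi_\e\teta_\e\weakly J$ in $L^2(\W)$, so weak lower-semicontinuity of the nonnegative quadratic functional $A\mapsto\int_\W\Quad(A)\,\VolumeE$ gives $\LiminfEps\int_\W\Quad(\chi_\e\teta_\e)\,\VolumeE\ge\int_\W\Quad(J)\,\VolumeE=\calE_0^{\text{elastic}}(J)$, which combined with the display (and $\int_\W\Quad(\chi_\e\teta_\e)\,\VolumeE\ge0$) completes the argument.

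The main obstacle is the second step: the hypothesis $f_\e\to(J,U)$ concerns the sheared form $(U_\e^Tdf_\e-\QEps)/h_\e$ viewed on all of $\ZEps(\MEps)$, whereas the linearization must be run on the de-sheared section $\teta_\e$ restricted to $\W_\e$, so every geometric discrepancy — $\QEps$ versus $d\ZEps$, the two volume forms, and the excised dislocation cores — has to be shown to be $o(1)$ before the convexity step applies. This is precisely where Lemma~\ref{lem:dZ_Q_unif}, the uniform bilipschitz bound on $\ZEps$, and the volume bound \eqref{eq:B_e_i_vol} are used; the remaining ingredients (truncation, Taylor expansion, weak lower-semicontinuity) are routine.
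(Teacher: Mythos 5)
Your proposal is correct and follows essentially the same route as the paper's proof: the same truncation $\ind_{|\cdot|\le h_\e^{-1/2}}$ combined with the indicator of the complement of the core balls, the same use of Chebyshev/Markov plus the volume bound \eqref{eq:B_e_i_vol} to get bounded convergence in measure, the same appeal to Lemma~\ref{lem:dZ_Q_unif} to replace $d\ZEps^{-1}$ by $\QEps^{-1}$ (equivalently your $P_\e\to I$), and the same Taylor expansion followed by weak lower-semicontinuity of $\Quad$. The only difference is bookkeeping — you push forward to $\W$ and de-shear before truncating, while the paper truncates on $\MEps$ first — and you are slightly more explicit than the paper about the convergence of the volume forms, which is a harmless addition.
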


\begin{proof}
We need to prove that
\[
\LiminfEps \frac{1}{h_\e^2} \int_{\MEps\setminus \cup_i B_\e^i} \calW(df_\e\circ \PEps^{-1})\,\VolumeEps 
\ge 
\int_\W \bbW(J)\,\VolumeE.
\]
Denote $J_\e = (U_\e^T df_\e - \PEps)/h_\e$.
It is given that $J_\e \weakly J$ in $L^2$.
Define the characteristic functions $\chi_\e :\MEps \to\R$,
\[
\chi_\e = \ind_{|J_\e|\le h_\e^{-1/2}} \ind_{\MEps\setminus \cup_i B_\e^i}.
\]
Since the sequence $J_\e$ is uniformly bounded in $L^2$, and since the volume bound \eqref{eq:B_e_i_vol} holds, it follows from Markov's inequality that $\chi_\e$ converge to $1$ in measure boundedly. Since the product of an $L^2$-weakly converging sequence and a sequence converging in measure boundedly converges weakly in $L^2$ to the product of the limits, 
\[
\chi_\e J_\e \weakly J \qquad\text{in $L^2(\W;\R^2\otimes\R^2)$},
\]
By the uniform convergence $|\IdRtwo - \PEps^{-1}|\to0$ in $\MEps\setminus \cup_i B_\e^i$ (\lemref{lem:dZ_Q_unif}), it follows that
\beq
\chi_\e\, J_\e \PEps^{-1} \weakly J \qquad\text{in $L^2(\W;\R^2\otimes\R^2)$}.
\label{eq:chi_G_to_dpsi}
\eeq

Substituting the definition of $J_\e$ and using the frame-indifference of $\calW$,
\[
\begin{split}
\calE_\e^{\text{elastic}}(f_\e,\PEps) 
&\ge \frac{1}{h^2_\e} \int_{\MEps} \chi_\e \calW(U_\e (I + h_\e J_\e \PEps^{-1}))\,\VolumeEps \\
&= \frac{1}{h^2_\e} \int_{\MEps} \chi_\e \calW(I + h_\e J_\e \PEps^{-1})\,\VolumeEps.
\end{split}
\]
As in the proof of \propref{prop:cell_liminf}, we write
\[
\begin{split}
\calW(I +h_\e J_\e \PEps^{-1}) 
&= h_\e^2 \bbW(J_\e \PEps^{-1}) + \omega(h_\e^2 |J_\e|^2),
\end{split}
\]
where $\omega(x)/x^2\to0$ as $x\to0$.
Since $\chi_\e\ne0$ implies that $|J_\e|^2\le h_\e^{-1}$, i.e., $h_\e^2 |J_\e|^2\le h_\e$, we obtain that
\[
\begin{split}
\LiminfEps  \calE_\e^{\text{elastic}}(f_\e,\PEps) 
 &\ge  \LiminfEps  \int_{\MEps} \chi_\e \brk{  \bbW(J_\e \PEps^{-1}) +|J_\e|^2\frac{\omega(h_\e^2 |J_\e|^2)}{h_\e^2 |J_\e|^2}} \,\VolumeEps \\
&=  \LiminfEps  \int_{\MEps} \chi_\e  \bbW(J_\e \PEps^{-1}) \,\VolumeEps \\
&=  \LiminfEps  \int_{\MEps} \bbW(\chi_\e J_\e \PEps^{-1}) \,\VolumeEps.
\end{split}
\]
By the lower-semicontinuity of quadratic forms and the weak convergence \eqref{eq:chi_G_to_dpsi},
it follows that
\[
\LiminfEps  \calE_\e^{\text{elastic}}(f_\e,\PEps)  \ge \int_{\W} \bbW(J)\,\VolumeE
= \calE^{\text{elastic}}_0(J).
\]
\end{proof}

\subsection{Upper bound}
\label{sec:recovery_seq}

In this section we prove:

\begin{theorem}[lim-sup inequality]
\label{thm:limsup}
Let $\W\subset \R^2$ be a simply-connected Lipschitz domain and let $\mu\in \calM(\W;\R^2)$. For every subcritical sequence $n_\e\to\infty$, there exists a sequence of bodies with dislocations
\[
(\MEps,\PEps)\xrightarrow[]{n_\e} (\W,\IdRtwo,\mu),
\]
for which the following property holds:
For every $U\in \SO(2)$ and $J \in L^2(\W;\Hom(\R^2))$ satisfying $\curl J = 0$, 
there exist configurations $f_\e \in \HOne(\MEps;\R^2)$ such that $f_\e\to (J,U)$,
and
\[
\limsup_{\e\to 0} \calE_\e(f_\e,\PEps) \le \calE_0(J,\mu).
\]
If $\mu \in \calM(\W;\R^2)\cap \HminusOne(\W;\R^2)$, then the statement holds for $n_\e$ in either regime; in the critical and supercritical regimes, $J$ has to satisfy $\curl J = - \mu$.
\end{theorem}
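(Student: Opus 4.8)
The plan is to construct the recovery sequence of bodies using only $(\W,\mu,n_\e)$, and, on top of each such body, a recovery sequence of configurations depending on $(J,U)$; since the self-energy is produced near the dislocation cores and the elastic energy in the far field, the two contributions decouple. By the lower semicontinuity of the $\Gamma$-$\limsup$ and the continuity of $\calE_0$ in the relevant topologies --- $\calE^{\text{elastic}}_0$ is $L^2$-continuous in $J$, while $\calE^{\text{self}}_0$ is continuous under weak-$*$ convergence of $\mu$ together with convergence of total masses, by Reshetnyak's theorem (\propref{prop:Resh_lsc}) and the $1$-homogeneity and convexity of $\SEF$ (\lemref{lem:4.15}) --- I would first prove the inequality for $\mu$ in a dense class and then diagonalise with a slowly-growing refinement parameter. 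In the subcritical regime I would approximate $\mu$ by atomic measures $\mu_k=\sum_\ell\mu(Q_\ell)\delta_{x_\ell}$ attached to finer partitions; in the critical and supercritical regimes, where $\mu\in\HminusOne$, I would mollify, $\mu_k=\mu*\rho_k$, so that also $\mu_k\to\mu$ in $\HminusOne$, which lets me correct $J$ by a Biot--Savart solution of $\curl(\cdot)=-(\mu-\mu_k)$ of vanishing $L^2$-norm; recovering $\calE_0(J_k,\mu_k)$ then gives $\calE_0(J,\mu)$ in the limit, and in all cases $|\mu_k|(\W)\to|\mu|(\W)$.

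\textbf{The bodies.} For a nice $\mu_k$, I would partition $\W$ into cells $Q_\ell$ on which $\mu_k$ is (nearly) a constant times Lebesgue measure, and use \lemref{lem:4.15}(c)--(d) to write $\mu_k(Q_\ell)=\sum_j\lambda_j^\ell\v_j^\ell$ with $\v_j^\ell\in\bbS$, $|\v_j^\ell|<K$, $\lambda_j^\ell>0$, $\sum_j\lambda_j^\ell\Ilin_0(\v_j^\ell)=\SEF(\mu_k(Q_\ell))$, $\sum_j\lambda_j^\ell\lesssim|\mu_k|(Q_\ell)$. Inside $Q_\ell$ I would scatter on a sublattice roughly $n_\e\lambda_j^\ell$ dislocations of Burgers vector $\e\v_j^\ell$, so that the total number is $m_\e\simeq n_\e\sum_{\ell,j}\lambda_j^\ell=O(n_\e)$, the separation is $a\simeq\ell_{\mathrm{cell}}\,n_\e^{-1/2}$ and $b=\max_i\e|\v_\e^i|\lesssim\e$. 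Feeding $(p_i,\v_\e^i)$ into the construction of \secref{sec:disloc_construction} (the hypotheses $b/a,b/a^2<c$ of \propref{prop:construct_many_disloc} hold eventually, as $n_\e\e\to0$) yields $(\MEps,\gEps,\QEps):=(\W_\v,Q^\#\euc,Q)$. Taking $\ZEps=Z$ as in \propref{prop:many_disloc_bound}, its pointwise bounds give the asymptotic rigidity \eqref{eq:local_dist_bnd} with $c_\e\simeq n_\e\e\to0$, and its integral bound \eqref{eq:5.4estimate} --- with $\sum_i\e^2|\v_\e^i|^2\log(a/\e|\v_\e^i|)\lesssim n_\e\e^2\LogEps$ and $\|\tmu\|_{\HminusOne}^2\lesssim n_\e^2\e^2$ --- gives the global distortion bound \eqref{eq:global_dist_bnd}; asymptotic surjectivity holds since the holes have area $\lesssim n_\e^2\e^2\to0$, and $\|(\ZEps)_\#\QEps-\id_{\R^2}\|_{L^2}\lesssim h_\e\to0$; the Burgers-vector convergence $\tfrac1{n_\e\e}\torsion_\e\weakstar\mu_k$ is obtained as in Example~\ref{ex:conv2}. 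Thus $(\MEps,\gEps,\QEps)\xrightarrow{n_\e,\ZEps}(\W,\euc,\id_{\R^2},\mu_k)$.

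\textbf{The configurations.} Given $(J_k,U)$ with $\curl J_k=-\mu_k$ (resp.\ $\curl J_k=0$, so $J_k=d\varphi_k$, in the subcritical case), which we may take smooth on $\overline{\W}$, I would fix an intermediate scale $r_\e\to0$ below the minimal separation (so the metric balls $B_\e^i=\{\r_\e^i<r_\e\}$ are disjoint) and, in the critical/subcritical case, with $\log(1/r_\e)\ll\LogEps$. On the far field $F_\e=\MEps\setminus\bigcup_i B_\e^i$ I would set $f_\e=U\omega_\e$ with $d\omega_\e=\QEps+h_\e J_k\circ d\ZEps+\sum_i\e\,\beta_{\v_\e^i}\circ d\ZEps$ plus a negligible per-hole corrector, where $\beta_{\v_\e^i}$ is the form from \eqref{eq:betav} suitably truncated and corrected as in \secref{sec:disloc_construction}: the $\e\beta_{\v_\e^i}$ carry circulation $-\e\v_\e^i$ around the $i$-th hole, cancelling that of $\QEps$, so the right-hand side is exact and $\omega_\e$ is a genuine map; since $\|\sum_i\e\beta_{\v_\e^i}\|_{L^2(F_\e)}^2\lesssim n_\e\e^2\log(1/r_\e)=o(h_\e^2)$, one gets $f_\e\to(J_k,U)$, hence (diagonalising) $f_\e\to(J,U)$, in the sense of \defref{def:conv_fe}. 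Near the $i$-th dislocation I would use the asymptotically optimal, boundary-adjusted map of \corrref{cor:cell_limsup_Quad} (with outer radius $R_\e=r_\e$ and boundary datum $\ZEps$), glued to $f_\e$ along $\pl B_\e^i$.

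\textbf{Energy accounting and the hard part.} With this ansatz, a change-of-variables/dominated-convergence argument (uniform smallness of the strain on $F_\e$, equivalence of the volume forms, frame-indifference and the expansion $\calW(I+\xi)=\Quad(\xi)+o(|\xi|^2)$) gives $\tfrac1{h_\e^2}\int_{F_\e}\calW(df_\e\circ\QEps^{-1})\to\int_\W\Quad(J_k)=\calE^{\text{elastic}}_0(J_k)$, while the near-core contributions sum to $\le\tfrac{\e^2\LogEps}{h_\e^2}\sum_i\Ilin_0(\v_\e^i)(s+o(1))+C(1-s)$; using $\tfrac{\e^2\LogEps}{h_\e^2}=n_\e^{-1}$, the cell identity $\sum_{i\in Q_\ell}\Ilin_0(\v_\e^i)=n_\e\SEF(\mu_k(Q_\ell))$ and the subadditivity of $\SEF$, this is $\le s\,\calE^{\text{self}}_0(\mu_k)+C(1-s)+o(1)$ in the critical/subcritical case, and $\to0$ in the supercritical case (there $\tfrac{\e^2\LogEps}{h_\e^2}=\LogEps/n_\e^2$, $\sum_i|\v_\e^i|^2\lesssim n_\e$, and one may simply take $r_\e=a/2$ and use the crude bound \corrref{corr:Escaling}); letting $\e\to0$, then $s\to1$, then $k\to\infty$, and diagonalising, yields $\limsup_\e\calE_\e(f_\e)\le\calE_0(J,\mu)$. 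I expect the main obstacle to be precisely this configuration construction: producing a single-valued map on the multiply-connected $\MEps$ that simultaneously absorbs each hole's incompatibility $\oint_{C_i}\QEps=\e\v_\e^i$, reproduces $J_k$ at order $h_\e$ in the far field despite $\curl J_k=-\mu_k\neq0$, and is near-optimal around each core --- the circulation bookkeeping (splitting off the $\beta_{\v_\e^i}$, controlling their far-field $L^2$-tails, and the matching along $\pl B_\e^i$ via \corrref{cor:cell_limsup_Quad}), compounded by the fact that the bodies must be fixed before $J$ is seen, which forces the diagonal argument over $k$.
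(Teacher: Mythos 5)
Your overall architecture matches the paper's (approximate $\mu$ by atomic measures via \lemref{lem:4.15} and a diagonal argument, build the bodies from \secref{sec:disloc_construction} with $\ZEps$ the identity, split the energy into near-core and far-field, use \corrref{cor:cell_limsup_Quad} with boundary datum $\ZEps$ near the cores, and let $\e\to0$, $s\to1$, $k\to\infty$). Steps I and II are essentially the paper's Lemmas~\ref{lem:approx_mu} and \ref{lem:upper_bound_II}. The gap is in your far-field configuration in the critical and supercritical regimes. The $1$-form $\QEps+h_\e J_k\circ d\ZEps+\sum_i\e\beta_{\v_\e^i}\circ d\ZEps$ is \emph{not} closed on $F_\e$ when $\curl J_k=-\mu_k\neq0$: cancelling the circulations around the holes removes the topological obstruction, but the term $h_\e J_k\circ d\ZEps$ contributes a bulk curl $\simeq -h_\e\mu_k$ spread over all of $\W$, which no per-hole corrector can cancel. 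Hence $\omega_\e$ is not a single-valued map and $f_\e$ is not a configuration. Relatedly, your bound $\|\sum_i\e\beta_{\v_\e^i}\|_{L^2(F_\e)}^2\lesssim n_\e\e^2\log(1/r_\e)$ ignores that the tails of the $\beta_{\v_\e^i}$ overlap; once you "truncate and correct as in \secref{sec:disloc_construction}" to make the singular parts disjointly supported, the global corrector $\gamma_\e$ reappears, and it is \emph{not} negligible — its $L^2$-norm is of order $h_\e$ in the critical/supercritical regimes.

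The paper's resolution is exactly to exploit this: the incompatible part of $J$ is not added by hand but is already produced by the bodies themselves. One decomposes $J=J_0+d\psi$, where $J_0$ is the div-curl solution with $dJ_0=\mu$ and $J_0(\frakn)=0$; the corrector $\gamma_\e$ inside $d\ZEps-\QEps$ satisfies $d\gamma_\e=\tmu_\e$ and $h_\e^{-1}\gamma_\e\to J_0$ in $L^2$ by elliptic regularity and the $\HminusOne$-convergence of $\tmu_\e/(n_\e\e)$ (Lemmas~\ref{lem:tf_e}--\ref{lem:tf_e_conv}). Only the \emph{exact} remainder $d\psi=J-J_0$ is then added via $f_\e=U(\tf_\e+h_\e\psi_\e\circ\ZEps)$ with Lipschitz truncations $\|d\psi_\e\|_{L^\infty}\ll\e^{1/2}h_\e^{-1}$, which is always the differential of a map. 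This also sidesteps the need to approximate $J$ by fields $J_k$ with $\curl J_k=-\mu_k$ matched to the already-fixed diagonal sequence of bodies, which your scheme would additionally require. In the subcritical regime your ansatz is sound, since there $J=d\varphi$ is exact and $J_0=0$.
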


The proof is partitioned into three principal steps.
Given $\W$, $\mu$ and $n_\e$, we construct manifolds with dislocations similar to the construction presented in \secref{sec:disloc_construction}:
For every $\e$, we choose points $p_\e^1,\ldots, p_\e^{m_\e} \in \W$ and vectors $\v_\e^1,\ldots, \v_\e^{m_\e}\in \R^2$,  with $m_\e \simeq n_\e$, such that the corresponding combinations of $\R^2$-valued $\delta$-measures 
\[
\mu_\e = \sum_{i=1}^{m_\e} \e \v_\e^i \otimes \delta_{p_\e^i}
\]
approximate $\mu$, and proceed as in Section~\ref{sec:disloc_construction}.
The challenge is as follows: as seen in the proof of \propref{prop:liminf2}, the rescaled self-energy is estimated by a term of the form $\sum_i \Ilin_0(\v_\e^i)$, which is bounded by $\SEF$ from below. 
Thus, the vectors $\v_\e^i$ must be chosen optimal for achieving the relaxation $\SEF$,

This is done in Step I, which bears similarities with previous constructions in \cite{GLP10} and in \cite[Theorem~4.6]{MSZ14}.
In Step II, we construct the manifolds $(\MEps,\PEps)$ from the measures $\mu_\e$, and prove that they converge to $(\W,\IdRtwo, \mu)$.
Finally, in Step III, we construct a recovery sequence of configurations, $f_\e$, and estimate their energy.

\paragraph{Step I: Approximation of $\mu$}

\begin{lemma}
\label{lem:approx_mu}
Let $\W\subset \R^2$ be a simply-connected Lipschitz domain, let $\mu\in \calM(\W;\R^2)$ and let $n_\e\to\infty$ (in either regime).
There exists a $K>0$ (independent of $\mu$) and a sequence $\mu_\e =  \sum_{i=1}^{m_\e} \e \v_\e^i \otimes \delta_{p_\e^i}$ of measures with $|\v_\e^i|< K$, supported on $m_\e \simeq n_\e$ points $p_\e^1\dots,p_\e^{m_\e}$, the distance between each two at least $a_\e$, with $\log (1/a_\e) \ll \log (1/\e)$, such that
\beq \label{eq:mu_e_L_conv}
\frac{1}{n_\e \e} \mu_\e \weakstar \mu \qquad \text{in }\, \calM(\W;\R^2),
\eeq
and
\beq
\label{eq:conv_of_self_energy}
\frac{1}{n_\e} \sum_{i=1}^{m_\e} \Ilin_0(\v_\e^i) \to \int_{\W} \SEF\brk{\frac{d\mu}{d|\mu|}} d|\mu|.
\eeq
Furthermore, denote
\[
d\tmu_\e = \sum_{i=1}^{m_\e} (\e\v_\e^i) \otimes \frac{\chi_{B_{a_\e}(p_\e^i)}}{\pi a_\e^2}\VolumeE  
\]
be a ``smeared" version of $\mu_\e$, constant over discs of radius $a_\e$.
Then, $\|\tmu_\e\|_{H^{-1}}\lesssim h_\e$, and if $n_\e$ is subcritical, then $\|\tmu_\e\|_{H^{-1}} \ll h_\e$.
If $\mu \in H^{-1}(\W;\R^2)$, then we further have
\beq \label{eq:tmu_e_L_conv}
\frac{1}{n_\e \e} \tmu_\e \to \mu \qquad \text{in }\, H^{-1}(\W;\R^2).
\eeq
\end{lemma}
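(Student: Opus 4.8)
The plan is to realize $\mu$ by a two‑scale discretization: a coarse grid of mesh $L_\e\to0$ on which $\mu$ is resolved into its Burgers content, together with a fine sublattice inside each grid cell on which the individual dislocations are placed. By a preliminary mollification and diagonalization I may assume that $\mu$ has a continuous density: if $\mu$ is mollified (with a slight interior dilation for the Lipschitz domain) to $\mu^{(k)}$, then $\mu^{(k)}\weakstar\mu$, $|\mu^{(k)}|(\W)\to|\mu|(\W)$, hence $\calE^{\text{self}}_0(\mu^{(k)})\to\calE^{\text{self}}_0(\mu)$ by the equality case of Reshetnyak's theorem (Proposition~\ref{prop:Resh_lsc}), and $\mu^{(k)}\to\mu$ in $\HminusOne$ whenever $\mu\in\HminusOne$; since weak‑$*$ convergence is metrizable on mass‑bounded subsets of $\calM(\W;\R^2)$, a diagonal sequence extracted from the $\e$‑sequences built for the $\mu^{(k)}$ satisfies all conclusions for $\mu$, with constants independent of $k$. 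So set $N_\e=\lfloor n_\e^{1/2}\rfloor$, let $\{Q^j\}$ be the cells of a grid of side $L_\e=N_\e^{-1/2}$ meeting $\W$ (the boundary cells have total area $o(1)$ and are discarded), and fix $x^*_j\in Q^j$.

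For each $j$, Lemma~\ref{lem:4.15}(c)--(d) applied to $\mu(Q^j)=\int_{Q^j}d\mu$ yields $\v^j_1,\dots,\v^j_{k_j}\in\bbS$ with $|\v^j_l|<K$ (the universal constant of Lemma~\ref{lem:4.15}(c)) and weights $\lambda^j_l>0$ such that $\mu(Q^j)=\sum_l\lambda^j_l\v^j_l$ and $\sum_l\lambda^j_l\Ilin_0(\v^j_l)=\SEF(\mu(Q^j))$, with $k_j$ bounded by a universal $k$. Inside $Q^j$ I place, for each $l$, exactly $\lfloor n_\e\lambda^j_l\rfloor$ dislocations of Burgers vector $\v^j_l$, all on a common regular sublattice of $Q^j$ of spacing $\simeq n_\e^{-1/2}$; this fits because $Q^j$ receives $\lesssim n_\e/N_\e$ points and $|Q^j|\simeq N_\e^{-1}$. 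The resulting $m_\e$ points $p_\e^i$ are pairwise at distance $\gtrsim a_\e$ with $a_\e\simeq n_\e^{-1/2}$, so $\log(1/a_\e)\simeq\tfrac12\log n_\e\ll\LogEps$, and the discs $B_{a_\e}(p_\e^i)$ are disjoint and inside $\W$. Put $\mu_\e=\sum_i\e\v_\e^i\otimes\delta_{p_\e^i}$. Since each cell carries $O(1)$ types, the rounding errors in the count, in $\frac1{n_\e\e}\mu_\e$, and in the self‑energy are $O(N_\e)=o(n_\e)$; using $\SEF(\mu(Q^j))\simeq\sum_l\lambda^j_l\simeq|\mu(Q^j)|$ and $\sum_j\SEF(\mu(Q^j))\le\int_\W\SEF(d\mu/d|\mu|)\,d|\mu|<\infty$ (with a matching lower bound once $N_\e$ is large, provided $\mu\ne0$) one gets $m_\e\simeq n_\e$, and $\sum_i|\v_\e^i|\lesssim n_\e$. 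A Riemann‑sum argument over the refining grid gives $\frac1{n_\e\e}\mu_\e\weakstar\mu$, i.e.\ \eqref{eq:mu_e_L_conv}. For \eqref{eq:conv_of_self_energy}, since the per‑cell cost is exactly $\SEF(\mu(Q^j))$,
\[
\frac1{n_\e}\sum_i\Ilin_0(\v_\e^i)=\sum_j\SEF(\mu(Q^j))+o(1)\le\int_\W\SEF\Big(\frac{d\mu}{d|\mu|}\Big)\,d|\mu|+o(1),
\]
the inequality being Jensen's inequality for the convex, positively $1$‑homogeneous $\SEF$ (Lemma~\ref{lem:4.15}(a),(b)); the reverse inequality is automatic, because $\frac1{n_\e\e}\tmu_\e\weakstar\mu$ as well (replacing point masses by bump regularizations of vanishing radius does not change the weak‑$*$ limit), $\Ilin_0\ge\SEF$, and Reshetnyak's lower semicontinuity applied to $\frac1{n_\e\e}\tmu_\e\weakstar\mu$ gives $\liminf\frac1{n_\e}\sum_i\SEF(\v_\e^i)\ge\int_\W\SEF(d\mu/d|\mu|)\,d|\mu|$ (using $1$‑homogeneity to identify $\frac1{n_\e}\sum_i\SEF(\v_\e^i)$ with $\int_\W\SEF(d\tmu_\e/d|\tmu_\e|)\,d|\tmu_\e|/(n_\e\e)$).

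For the $\HminusOne$ statements, the crude bound suffices for the first: pairing $\tmu_\e$ with $\phi\in\HOneZero(\W;\R^2)$ and applying Cauchy--Schwarz in the sum over $i$ and in each cell‑average (using that the $B_{a_\e}(p_\e^i)$ are disjoint),
\[
\|\tmu_\e\|_{\HminusOne}\lesssim\frac{\e}{a_\e}\Big(\sum_i|\v_\e^i|^2\Big)^{1/2}\lesssim\frac{\e\sqrt{n_\e}}{a_\e}\simeq\e n_\e\le h_\e,
\]
since $\sum_i|\v_\e^i|^2\lesssim K\sum_i|\v_\e^i|\lesssim n_\e$ and $h_\e\ge n_\e\e$; in the subcritical regime $h_\e=\e\sqrt{n_\e\LogEps}$, so $\e n_\e=o(h_\e)$ and $\|\tmu_\e\|_{\HminusOne}\ll h_\e$. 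For \eqref{eq:tmu_e_L_conv}, observe that on the disjoint discs the density of $\frac1{n_\e\e}\tmu_\e=\frac1{n_\e}\sum_i\v_\e^i\,\frac{\ind_{B_{a_\e}(p_\e^i)}}{\pi a_\e^2}$ has modulus $\simeq|\v_\e^i|\lesssim K$, so $\frac1{n_\e\e}\tmu_\e$ is bounded in $L^\infty(\W)$, hence in $L^2(\W)$, hence precompact in $\HminusOne(\W)$ by Rellich; combined with $\frac1{n_\e\e}\tmu_\e\weakstar\mu$ this forces $\frac1{n_\e\e}\tmu_\e\to\mu$ in $\HminusOne$, with constants independent of the mollification index so that the reduction at the start is legitimate.

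The technical heart is the balancing of the three scales $\e\ll a_\e\simeq n_\e^{-1/2}\ll L_\e=N_\e^{-1/2}$: $L_\e$ must be coarse enough that the Riemann sums over the grid converge and each cell receives $\gg1$ dislocations (so that rounding is negligible), yet $a_\e$ must be fine enough that the discs $B_{a_\e}(p_\e^i)$ fit disjointly in $\W$ and $\|\tmu_\e\|_{\HminusOne}\lesssim h_\e$, while simultaneously $\log(1/a_\e)\ll\LogEps$ and the smallness conditions $b/a_\e,\,b/a_\e^2<c$ of Proposition~\ref{prop:construct_many_disloc} hold (with $b=\e\max_i|\v_\e^i|<\e K$, so $b/a_\e^2\simeq\e n_\e\to0$, the last limit using the standing assumption $\log n_\e\ll\LogEps$ in the supercritical case). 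The only genuinely non‑elementary point is the $\HminusOne$‑convergence \eqref{eq:tmu_e_L_conv}, which is exactly where the choice of the smearing scale $a_\e$ and the $L^\infty$/Rellich compactness enter; the case $\mu=0$ is trivial, with $m_\e=0$ and $\MEps=\W$.
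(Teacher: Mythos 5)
Your construction is essentially a self-contained variant of the paper's: the key idea in both is identical (decompose the Burgers content of each grid cell optimally via Lemma~\ref{lem:4.15}(c)--(d), place the resulting quantized vectors on a fine sublattice of spacing $\simeq n_\e^{-1/2}$, and pass to a general $\mu$ by approximation plus a diagonal argument). Where you genuinely diverge: (i) you mollify $\mu$ to a continuous density and use an $\e$-dependent coarse grid $L_\e\to0$, whereas the paper approximates $\mu$ by measures that are piecewise constant on a \emph{fixed} family of squares and then invokes \cite[Lemma~14]{GLP10} to distribute the deltas; your Riemann-sum argument replaces that citation, which is a small gain in self-containedness. (ii) For \eqref{eq:conv_of_self_energy} you use a sandwich --- the Jensen/sublinearity inequality $\sum_j\SEF(\mu(Q^j))\le\int_\W\SEF(d\mu/d|\mu|)\,d|\mu|$ from above, and Reshetnyak lower semicontinuity applied to $\frac{1}{n_\e\e}\tmu_\e\weakstar\mu$ together with $\SEF\le\Ilin_0$ from below --- instead of the paper's direct computation via the per-type weak limits $\frac1{n_\e}\mu^j_{k,\e}\weakstar\lambda^j_k\chi_{\W_j}$. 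This is correct and arguably cleaner. (iii) Your $\HminusOne$ bounds via Cauchy--Schwarz on disjoint discs and via $L^2$-boundedness plus Rellich replace the paper's appeal to elliptic regularity and to the $\HminusOne$ convergence itself; in particular your unconditional estimate $\|\tmu_\e\|_{\HminusOne}\lesssim \e a_\e^{-1}(\sum_i|\v_\e^i|^2)^{1/2}$ is a nice addition, since the paper's route to $\|\tmu_\e\|_{\HminusOne}\simeq n_\e\e$ goes through \eqref{eq:tmu_e_L_conv} and hence implicitly through the approximation step.

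One point needs repair: your opening claim that the diagonal extraction works ``with constants independent of $k$'' is not literally true. The lattice spacing is really $a_\e\simeq(\|d\mu^{(k)}/dx\|_\infty\, n_\e)^{-1/2}$, so the $L^\infty$ and $L^2$ bounds on the smeared density, the Rellich step, and your Cauchy--Schwarz estimate all degrade like powers of $\|d\mu^{(k)}/dx\|_\infty$, which blows up along the mollification. In particular the crude bound becomes $\|\tmu_\e^{(k_\e)}\|_{\HminusOne}\lesssim \e n_\e\|d\mu^{(k_\e)}/dx\|_\infty^{1/2}$, which exceeds $h_\e=n_\e\e$ in the critical and supercritical regimes unless the density stays bounded; there one must instead fall back on \eqref{eq:tmu_e_L_conv} (available since $\mu\in\HminusOne$ in those regimes), while in the subcritical regime one must let $k_\e\to\infty$ slowly enough that $\|d\mu^{(k_\e)}/dx\|_\infty\ll\LogEps/n_\e$. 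The paper handles exactly this by choosing $\|d\mu^{k}/dx\|_\infty$ to blow up slowly; you mention the constraint only for $\log(1/a_\e)\ll\LogEps$. With that one sentence added, your argument is complete.
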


\begin{proof}
Let us first assume that $\mu$ is locally-constant, 
\[
d\mu = \sum_{j=1}^{J} \bxi^j \otimes \chi_{\W_j}\,\VolumeE  
\] 
for some  $\bxi^j \in \R^2\setminus \{0\}$, $j=1,\dots,J$, where $\W_j \subset \W$ are pairwise disjoint squares of edge-length $L$.
For the rest of this proof, we refer to such measures as ``locally-constant on squares''.

By items (c) and (d) in \lemref{lem:4.15}, there exists a $K>0$, such that
for every $j=1,\dots,J$, 
\beq
\label{eq:splitting_vectors}
\bxi^j =  \sum_{k=1}^{M_j} \lambda_k^j \v_k^j 
\Textand
\SEF(\bxi^j) = \sum_{k=1}^{M_j} \lambda_k^j \Ilin_0(\v_k^j),
\eeq
for some $\v_k^j \in \bbS\cap B_{K}$, $\lambda_k^j >0$ and $M_j\in\bbN$. 
Denote
\[
\Lambda = \max_{j=1}^{J} \sum_{k=1}^{M_j} \lambda_k^j 
\Textand 
a_{\e} = \frac{L}{\lceil (L (\Lambda n_\e)^{1/2}\rceil}.
\]
That is, $a_\e \simeq (\Lambda n_\e)^{-1/2}$ is an asymptotically-vanishing length scale which divides $L$.
By our assumptions on $n_\e$, it follows that $\log(1/a_\e) \ll \log(1/\e)$, i.e., $a_\e$ qualifies as a lower bound on the inter-defect separation.
From the $1$-homogeneity of $\SEF$, 
for every $j=1,\dots,J$,
\[
\SEF\brk{\frac{\bxi^j}{|\bxi^j|}} = \frac{\sum_{k=1}^{M_j} \lambda_k^j \Ilin_0(\v_k^j)}{|\bxi^j|},
\]
from which follows that for every $j=1,\dots,J$, 
\[
0 < \frac{\min_{|\bxi|=1} \SEF(\bxi)}{\max_{\bbS\cap B_{K}} \Ilin_0(\v)}
\le 
\frac{\sum_{k=1}^{M_j} \lambda_k^j}{|\bxi^j|} 
\le 
\frac{\max_{|\bxi|=1} \SEF(\bxi)}{\min_{\bbS\cap B_{K}} \Ilin_0(\v)}
< \infty,
\]
and the bounds are independent of $\mu$ (they only depend on the dislocation structure).
Thus, 
\[
\Lambda\simeq \max_{j=1}^J |\bxi^j| = \|d\mu/dx\|_\infty, 
\]
and consequently, 
\[
a_\e \simeq (\|d\mu/dx\|_\infty  n_\e)^{-1/2}.
\]
For every $j=1,\dots,J$, denote by $\{p_{\e,i}^j\}_i$ the centers of a partition of $\W_j$ into squares of edge-length $3a_\e$.
From \cite[Lemma~14]{GLP10}, we can choose measures
\[
\mu_\e =  \sum_{j=1}^J \sum_{k=1}^{M_j} (\e \v_k^j) \otimes \mu_{k,\e}^j  
\Textand
\tmu_\e =  \sum_{j=1}^J \sum_{k=1}^{M_j}  (\e \v_k^j)\otimes  \tmu_{k,\e}^j ,
\]
where $\mu_{k,\e}^j$ are sums of delta measures supported on a subset of $\{p_{\e,i}^j\}_{i}$, and $\tmu_{k,\e}^j$ are the measures obtained from $\mu_{k,\e}^j$ by replacing each $\delta_p$ in its support with $(\pi a_\e^2)^{-1} \chi_{B_{a_\e}(p)}$, such that 
\ref{eq:mu_e_L_conv} and \ref{eq:tmu_e_L_conv} hold (measures that are locally-constant on squares are in $H^{-1}$). Furthermore, for each $j$ and $k$,
\begin{align}
\label{eq:mu_e_L_conv2}
&\frac{1}{n_\e} \mu^j_{k,\e} \weakstar \lambda^j_k \chi_{\W_j} \,\VolumeE & \text{in } & \calM(\W).
\end{align}
The construction of the measures in \cite[Lemma~14]{GLP10} shows that $\mu_\e$ can be chosen to be supported on at most $C |\mu|(\W) n_\e$ points, where $C>0$ is independent of $\e$ and $\mu$.

It remains to show that \eqref{eq:conv_of_self_energy} holds  
(still limited to measures that are locally-constant on squares), namely, that
\[
\LimEps\frac{1}{n_\e} \sum_{j=1}^J \sum_{k=1}^{M_j} \sum_i \Ilin_0(\v_k^j) = \int_{\W} \SEF\brk{\frac{d\mu}{d|\mu|}} d|\mu|,
\]
where the sum over $i$ is over all points in the support of $\mu_{k,\e}^j$ (it is the only sum out of the three whose range depends on $\e$).
This sum can be replaced with $\mu_{k,\e}^j (\W)$, hence 
\[
\begin{split}
\LimEps\frac{1}{n_\e} \sum_{j=1}^J \sum_{k=1}^{M_j} \sum_i \Ilin_0(\v_k^j) &=
\sum_{j=1}^J \sum_{k=1}^{M_j}  \Ilin_0(\v_k^j) \LimEps \int_\W \frac{1}{n_\e} d\mu_{k,\e}^j   \\
&= \sum_{j=1}^J \sum_{k=1}^{M_j}  \Ilin_0(\v_k^j) \lambda_k^j |\W_j|   \\
&= \sum_{j=1}^J \SEF(\bxi^j)|\W_j|   \\
&= \sum_{j=1}^J \SEF\brk{\frac{\bxi^j}{|\bxi^j|}}\, |\bxi^j| |\W_j|   \\
&= \int_{\W} \SEF\brk{\frac{d\mu}{d|\mu|}} d|\mu|.
\end{split}
\]
where in the passage to the second line we used the weak convergence \eqref{eq:mu_e_L_conv2}, and in the passage to the third line we used the choice \eqref{eq:splitting_vectors} of the $\v_k^j$.

The convergence \eqref{eq:tmu_e_L_conv} implies that, 
$\|\tmu_\e\|_{H^{-1}} \simeq n_\e\e$, which in the subcritical case implies that $\|\tmu_\e\|_{H^{-1}} \ll h_\e$.
This completes the proof for measures that are locally-constant on squares.

For a general measure $\mu \in \calM(\W;\R^2)$, we construct a sequence of measures $\mu^k$, locally-constant on squares of edge length $L^k \to 0$, such that 
\beq
\mu^k\weakstar \mu
\Textand
|\mu^k|(\W) \to |\mu|(\W).
\label{eq:mu_conv}
\eeq
If $\mu \in H^{-1}(\W;\R^2)$, then in addition construct $\mu^k$ such that 
\[
\mu^k \to \mu
\qquad\text{in $H^{-1}$}.
\]
(One can construct such a sequence by mollifying $\mu$, approximating the resulting functions by piecewise-constant functions, and taking a diagonal sequence.)
Eq.~\eqref{eq:mu_conv} implies, using the second part of Reshetnyak's continuity theorem (\propref{prop:Resh_lsc}),  that
\[
\lim_{k\to\infty} \int_{\W} \SEF\brk{\frac{d\mu^k}{d|\mu^k|}} d|\mu^k| = \int_{\W} \SEF\brk{\frac{d\mu}{d|\mu|}} d|\mu|.
\]

The proof is completed by a diagonal argument. 
All the measures in question have bounded total variation, and the weak star topology of closed bounded sets in $\calM(\W;\R^2)$ is metrizable. Let $\mu_\e^k$ be above the construction adapted to $\mu^k$. We can choose $k_\e$ such that the diagonal sequence satisfies
\[
\frac{1}{n_\e \e} \mu_\e^{k_\e} \weakstar \mu.
\]
If $\mu\in H^{-1}(\W;\R^2)$, then the sequence $k_\e$ can be chosen such that in addition
\[
\frac{1}{n_\e \e} \tmu_\e^{k_\e} \to \mu
\qquad\text{in $H^{-1}$}.
\]
Finally, if $\mu\in H^{-1}(\W;\R^2)$, then $\|\tmu_\e^{k_\e}\|_{H^{-1}} \lesssim n_\e\e \le h_\e$.
If  $n_\e$ is subcritical, then regardless of whether $\mu$ is in $H^{-1}$ or not,  $k_\e$ can be chosen such that $\frac{1}{n_\e\e}\|\tmu_\e^{k_\e}\|_{H^{-1}}$ blows up as slow as we like, and in particular, 
\[
\frac{1}{n_\e \e} \|\tmu_\e^{k_\e}\|_{H^{-1}} \ll \sqrt{\frac{\log(1/\e)}{n_\e}},
\]
since the right-hand side tends to infinity in the subcritical regime. 
Thus, $\|\tmu_\e^{k_\e}\|_{H^{-1}} \ll \sqrt{n_\e \e^2 \,\log(1/\e)} = h_\e$.

Finally, we can choose $\|d\mu^k/dx\|_\infty$ to blow up slowly enough, such that 
$a_\e \simeq (\|d\mu^{k_\e}_\e/dx\|_\infty  n_\e)^{-1/2}$ satisfies the separation bound $\log(1/a_\e) \ll \log(1/\e)$.
Note also that by construction, $m_\e \lesssim |\mu^{k_\e}|(\W) n_\e \lesssim |\mu|(\W) n_\e$, where we use \eqref{eq:mu_conv} again.
\end{proof}

\paragraph{Step II: Construction of $(\MEps, \PEps)$}

\begin{lemma}
\label{lem:upper_bound_II}
Let $\mu\in \calM(\W;\R^2)$ and let $n_\e\to\infty$ be a subcritical sequence; if $\mu \in  \calM(\W;\R^2)\cap H^{-1}(\W;\R^2)$ then $n_\e\to\infty$ can be in either regime.
Let $\mu_\e$ be an approximating sequence for $\mu$ as in \lemref{lem:approx_mu}.
Let $(\MEps, \PEps)$ be the manifolds with dislocations associated with $\mu_\e$ according to the construction of Section~\ref{sec:disloc_construction}.
Then 
\[
(\MEps,\PEps)\xrightarrow[]{n_\e} (\W,\IdRtwo,\mu).
\]
Furthermore, the following bound holds,
\beq\label{eq:Z_eQ_e_recovery}
|\IdRtwo - \PEps| \lesssim \frac{\e}{\r} + \frac{\e}{\rho_\e^2},
\eeq
where the minimum separation parameter $\rho_\e$ satisfies $\log(1/\rho_\e)\ll \log(1/\e)$.
\end{lemma}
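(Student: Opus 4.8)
## Proof plan for Lemma~\ref{lem:upper_bound_II}

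The plan is to verify, one by one, the three conditions of Definition~\ref{def:man_conv1} together with the two additional conditions of Definition~\ref{def:man_conv2}, using the approximating sequence $\mu_\e$ from Lemma~\ref{lem:approx_mu} and the geometric estimates of Proposition~\ref{prop:construct_many_disloc} and Proposition~\ref{prop:many_disloc_bound}. First I would record the parameter bookkeeping: with $b = \max_i \e|\v_\e^i| \lesssim \e$ (since $|\v_\e^i| < K$) and $a = a_\e$ the minimal separation guaranteed by Lemma~\ref{lem:approx_mu}, which satisfies $\log(1/a_\e)\ll \log(1/\e)$, we have $b/a_\e \simeq \e/a_\e$ and $b/a_\e^2 \simeq \e/a_\e^2$, both $\to 0$ because $\log(1/a_\e)\ll\log(1/\e)$ forces $a_\e \gg \e^t$ for every $t<1$, in particular $a_\e^2 \gg \e$. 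Hence the smallness hypothesis $b/a_\e, b/a_\e^2 < c(\W)$ of Proposition~\ref{prop:construct_many_disloc} holds for small $\e$, so $(\MEps,\QEps^\#\euc,\QEps)$ is genuinely a body with $m_\e$ dislocations with regular inner boundaries, and $\rho_\e \gtrsim a_\e$ satisfies $\log(1/\rho_\e)\ll\log(1/\e)$. This also immediately gives the pointwise bound \eqref{eq:Z_eQ_e_recovery}: by Proposition~\ref{prop:many_disloc_bound}, $|d\ZEps - Q_\e| \lesssim |\v_\e^i|/r_i + b/a_\e^2 \lesssim 1/\r + \e/a_\e^2$ near the $i$-th dislocation (using $|\v_\e^i|\lesssim 1$ and $r_i\simeq\r$), absorbing the factor $\e$ by rescaling Burgers vectors; away from all discs it is $\lesssim \e/a_\e^2$. (I would write it with $\rho_\e$ in place of $a_\e$ to match the statement; they differ only by a constant.)

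Next I would check the three items of Definition~\ref{def:man_conv1}. Asymptotic surjectivity: $\ZEps$ is the identity in coordinates, so $\W\setminus\ZEps(\MEps) = \bigcup_i B_{3\e|\v_\e^i|/2}(p_\e^i)$, whose area is $\lesssim \sum_i \e^2|\v_\e^i|^2 \lesssim m_\e \e^2 \lesssim n_\e \e^2 \to 0$. Pointwise asymptotic rigidity \eqref{eq:local_dist_bnd}: this is exactly \eqref{eq:Z_eQ_e_recovery} with $c_\e = C\e/\rho_\e^2 \to 0$, noting $\dist(d\ZEps,\SO(\gEps,\euc))\le|d\ZEps - \QEps|$ and $|\v_\e^i|\simeq 1$ so $\e|\v_\e^i|/\r \simeq \e/\r$; the uniform bilipschitz property of $\ZEps$ is Proposition~\ref{prop:many_disloc_bound}. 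Implant map convergence $\|(\ZEps)_\#\QEps - \id_{\R^2}\|_{L^2(\ZEps(\MEps))}\to 0$: since $(\ZEps)_\#\QEps = \QEps\circ d\ZEps^{-1}$ and $Z_\e = \mathrm{id}$ in coordinates, this quantity is controlled by $\|\alpha - \beta + \gamma\|_{L^2}$, which by the estimates in the proof of Proposition~\ref{prop:many_disloc_bound} is $\lesssim \sum_i |\v_\e^i|^2\log(a_\e/(\e|\v_\e^i|)) + \|\tmu_\e\|_{H^{-1}}^2 \lesssim n_\e\e^2\log(1/\e) + h_\e^2 \lesssim h_\e^2 \to 0$. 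This also establishes the global distortion bound \eqref{eq:global_dist_bnd}, item (a) of Definition~\ref{def:man_conv2}.

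It remains to verify Burgers vector convergence \eqref{eq:burgers_conv}, item (b) of Definition~\ref{def:man_conv2}: for $\psi \in C_c(\W;\R^2)$, $\frac{1}{n_\e\e}\torsion_\e(\psi\circ\ZEps) \to \int_\W \TR(\psi\otimes d\mu)$. Here I would argue exactly as in Example~\ref{ex:conv2}: set $\psi_\e = \sum_i \psi(p_\e^i)\chi_{D_\e^i}$ where $D_\e^i$ is a small cell around $p_\e^i$, compute $\frac{1}{n_\e\e}\torsion_\e(\psi_\e\circ\ZEps) = \frac{1}{n_\e\e}\int_\W\TR(\psi\otimes d\mu_\e)$ using that the circulation of $\QEps$ around the $i$-th hole is $\e\v_\e^i$, which converges to $\int_\W\TR(\psi\otimes d\mu)$ by \eqref{eq:mu_e_L_conv}; then estimate the error $|\frac{1}{n_\e\e}\torsion_\e((\psi-\psi_\e)\circ\ZEps)| \lesssim \frac{1}{n_\e\e}\sum_i \e|\v_\e^i|\,\|\psi-\psi_\e\|_{L^\infty} \lesssim \|\psi - \psi_\e\|_{L^\infty}\to 0$ by uniform continuity of $\psi$ and the Burgers vector bound $\sum_i|\v_\e^i|\lesssim n_\e$. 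Finally, \eqref{eq:tmu_e_L_conv} from Lemma~\ref{lem:approx_mu} matches the ``smeared'' measure used in $\gamma$, closing the loop. The main obstacle is not any single step but rather making sure the choice of $a_\e$ (equivalently $\rho_\e$) from Step~I is simultaneously compatible with all the constraints — the Proposition~\ref{prop:construct_many_disloc} smallness threshold $b/a_\e^2 < c$, the separation requirement $\log(1/\rho_\e)\ll\log(1/\e)$, and the $H^{-1}$-size bound $\|\tmu_\e\|_{H^{-1}}\lesssim h_\e$ (with $\ll h_\e$ in the subcritical regime) — but all of these were already arranged in Lemma~\ref{lem:approx_mu}, so here it is a matter of citing them correctly rather than proving anything new.
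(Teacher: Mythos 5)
Your proposal is correct and follows essentially the same route as the paper's proof: verify the smallness hypotheses of Proposition~\ref{prop:construct_many_disloc} via $b\lesssim\e$, $a=a_\e$ and $\log(1/a_\e)\ll\log(1/\e)$, read off asymptotic surjectivity, the bilipschitz property, the pointwise bound \eqref{eq:Z_eQ_e_recovery} and the global distortion bound from Proposition~\ref{prop:many_disloc_bound} together with the $H^{-1}$ control on $\tmu_\e$ from Lemma~\ref{lem:approx_mu}, and obtain the Burgers vector convergence by the argument of Example~\ref{ex:conv2}. The only cosmetic point is that the bound from Proposition~\ref{prop:many_disloc_bound} applied to the Burgers vectors $\e\v_\e^i$ directly reads $|d\ZEps-\QEps|\lesssim \e|\v_\e^i|/r_i + b/a_\e^2 \lesssim \e/\r + \e/\rho_\e^2$ since $|\v_\e^i|\le K$, so no separate ``rescaling'' step is needed.
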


\begin{proof}
For given $\e$, we first identify the parameters $b$ and $a$ in \propref{prop:many_disloc_bound} with
\[
b = \max_{i=1}^{m_\e} \e |\v_\e^i| \lesssim \e
\Textand
a = a_\e,
\]
and since $\log(1/a_\e)\ll \LogEps$,
the requirement that $b/a, b/a^2 < c$ holds for $\e$ small enough.
By construction,
\[
\MEps = \W\setminus \brk{\bigcup_{i=1}^{m_\e} B_{3\e|\v_\e^i|/2}(p_\e^i)}.
\] 
The asymptotic surjectivity of the inclusion maps follows since 
\[
\Abs{\bigcup_{i=1}^{m_\e} B_{3\e|\v_\e^i|/2}(p_\e^i)}\lesssim \e^2 m_\e \lesssim \e^2 n_\e \to 0.
\]
It follows from \propref{prop:many_disloc_bound} that
\[
|\IdRtwo|_{\PEps^\#\euc,\euc},\,\, |\IdRtwo|_{\euc,\PEps^\#\euc} \lesssim 1,
\]
hence the inclusion map is uniformly bilipschitz, as the intrinsic distance on $(\MEps,\IdRtwo)$ are uniformly equivalent to the distances on $\MEps$ as a subset of $\R^2$.
As a result, the minimal separation $\rho_\e$ between defects in $\MEps$ satisfies $\rho_\e\simeq a_\e$, and in particular, $\log(\rho_\e) \ll \LogEps$.
Furthermore,
\[
|\IdRtwo - \PEps|_{\PEps^\#\euc,\euc} \lesssim \frac{\e|\v_\e^i|}{\r_\e} + \frac{\e}{\rho_\e^2},
\]
hence the asymptotic rigidity requirement \eqref{eq:local_dist_bnd} is satisfied.
Moreover, since $|\v_\e^i| \le K$ for all $\e$ and $i$, the pointwise bound \eqref{eq:Z_eQ_e_recovery} holds, 
and since moreover $\|\tmu_\e\|_{H^{-1}} \lesssim h_\e$, we have from \eqref{eq:5.4estimate} that
\[
\begin{split}
\int_{\MEps} |\IdRtwo - \PEps|_{\P^\#\euc,\euc}^2 \,\dVol_{\P^\#\euc} 
&\lesssim \sum_{i=1}^{m_\e} (\e |\v_\e^i|)^2 \log\brk{\frac{\rho_\e}{\e |\v_\e^i|}} + \|\tmu\|_{\HminusOne(\W)}^2 \\
&\lesssim  m_\e \e^2 \log(1/\e) + h_\e^2 \lesssim h_\e^2,
\end{split}
\]
hence the global distortion bound \eqref{eq:global_dist_bnd} is satisfied.
It remains to show the Burgers vector convergence \eqref{eq:burgers_conv}.
This follows from the same argument as in Example~\ref{ex:conv2}, using the fact that $\frac{1}{n_\e\e} \mu_\e \weakstar \mu$ and the bound $m_\e \lesssim n_\e$.
\end{proof}

\paragraph{Step III: Construction of $f_\e$}

\begin{lemma}
\label{lem:tf_e}
Let $(\MEps,\PEps)$ be as in Step~II,
i.e.,
\[
(\MEps,\PEps)\xrightarrow[]{n_\e} (\W,\IdRtwo,\mu),
\]
and
\[
|\IdRtwo - \PEps| \lesssim \frac{\e}{\r} + \frac{\e}{\rho_\e^2},
\]
where $n_\e$ can be in either regime.
Let $r_\e\le \rho_\e^2$ be a sequence satisfying $n_\e r_\e^2 \to 0$ and $\log(1/r_\e) \ll \log(1/\e)$.
Then, there exist functions $\tf_\e:\MEps \to \R^2$ satisfying 
\[
\tf_\e|_{\MEps\setminus \cup_i B_\e^i} = \id,
\]
where $B_\e^i$ is the annulus of radius $r_\e$ around the $i$-th dislocation, as defined in \eqref{eq:dislocation_balls}, and for every $s\in (0,1)$
\beq\label{eq:Quad_beta_e}
\frac{1}{\log(1/\e)} \int_{B_\e^i\setminus B_\e^{i,s}} \bbW(\beta_\e)\,\VolumeEps \le \Ilin_0(\v_\e^i)\brk{s+ O\brk{\e^{1-s} + \frac{\log(1/r_\e)}{\log(1/\e)}}},
\eeq
where 
\[
\beta_\e = \frac{d\tf_\e\circ \PEps^{-1} - I}{\e},
\]
and
\beq\label{eq:B_e_i_s}
B_\e^{i,s} = \{p\in\MEps~:~ \r_\e^i(p) < 10K\e^s \},
\qquad i=1,\dots,m_\e,
\eeq
where $K>0$ is the bound on all $|\v_\e^i|$, as in Lemma~\ref{lem:approx_mu}.
Furthermore, we have the pointwise bound 
\beq\label{eq:tf_e_uni_bound}
|d\tf_\e - \PEps| \lesssim \frac{\e}{\r} + \frac{\e}{\rho_\e^2},
\eeq
and the $L^2$-bound
\beq\label{eq:tf_e_L2_bound}
\int_{\MEps} |d\tf_\e - \PEps|^2 \,\VolumeEps \lesssim h_\e^2.
\eeq
\end{lemma}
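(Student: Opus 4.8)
The plan is to keep $\tf_\e=\ZEps$ on the complement of $\bigcup_i B_\e^i$ and, on each annular neighbourhood $B_\e^i$ of the $i$-th dislocation, to replace $\ZEps$ by an almost energy-optimal configuration for the single dislocation $\e\v_\e^i$ which still coincides with $\ZEps$ on an outer sub-shell of $B_\e^i$, so that the pieces patch into a single $H^1$ map. Since $r_\e\to0$ (because $n_\e r_\e^2\to0$ with $n_\e\to\infty$) and $r_\e\le\rho_\e^2$, the annuli $B_\e^i$ are pairwise disjoint for $\e$ small, so the modifications do not interfere; on $\MEps\setminus\bigcup_i B_\e^i$ the pointwise bound \eqref{eq:tf_e_uni_bound} is exactly the hypothesis on $d\ZEps$. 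The core of the argument is to produce, on each $B_\e^i$, a configuration whose rescaled $\Quad$-energy away from the core equals $\Ilin_0(\v_\e^i)$ up to the stated error, and then to sum over the $m_\e\lesssim n_\e$ dislocations (\lemref{lem:max_disloc}(c)), using that in every regime $h_\e^2\ge n_\e\e^2\log(1/\e)$.

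For a fixed $B_\e^i$: first $r_\e>10\e|\v_\e^i|$ for $\e$ small, since $|\v_\e^i|\le K$ and $\log(1/r_\e)\ll\log(1/\e)$ forces $r_\e\gg\e^\alpha$ for every $\alpha>0$. Hence $B_\e^i$ is a body with a single dislocation $\e\v_\e^i$ with regular inner-boundary and outer boundary at $\{\r_\e^i=r_\e\}$, and \propref{prop:M_hM_M} (with outer radius $r_\e$) gives implant-preserving isometric embeddings
\[
\hM_{\e\v_\e^i}^{r_\e/2}\setminus\hM_{\e\v_\e^i}^{3\e|\v_\e^i|}\ \hookrightarrow\ B_\e^i\ \hookrightarrow\ \hM_{\e\v_\e^i}^{2r_\e}.
\]
On $B_\e^i$ one has $\r_\e^i\le r_\e\le\rho_\e^2$ and $1\lesssim|\v_\e^i|\le K$, so the hypothesis $|d\ZEps-Q_\e|\lesssim\e/\r+\e/\rho_\e^2$ reduces there to $|d\ZEps-Q_\e|\lesssim\e|\v_\e^i|/\r$. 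Transporting $\ZEps|_{B_\e^i}$ to the model through the outer embedding and extending it with the same pointwise bound to all of $\hM_{\e\v_\e^i}^{2r_\e}$ (e.g. by $\hZ_{\e\v_\e^i}$ near the core, using \eqref{eq:asymp_par}), \corrref{cor:cell_limsup_Quad} — applied with $R_\e=2r_\e$, compact set $K$, and $C$ chosen so that $\hM_{\e\v_\e^i}^{C\e^s}$ lies in the image of $B_\e^{i,s}$ — yields a map $g_\e^i$ with $|dg_\e^i-\hQ_{\e\v_\e^i}|\lesssim\e|\v_\e^i|/\r$, agreeing with the transported $\ZEps$ on $\{r=2r_\e\}$, and with
\[
\tfrac{1}{\e^2\log(1/\e)}\int_{\hM_{\e\v_\e^i}^{2r_\e}\setminus\hM_{\e\v_\e^i}^{C\e^s}}\Quad\bigl(dg_\e^i\circ\hQ_{\e\v_\e^i}^{-1}-I\bigr)\,\dVol_{\hg_{\e\v_\e^i}}=\Ilin_0(\v_\e^i)\Bigl(s+O\bigl(\e^{1-s}+\tfrac{|\log r_\e|+1}{\log(1/\e)}\bigr)\Bigr).
\]
I would then set $\tf_\e$ on $B_\e^i$ to be the pull-back of $g_\e^i$, and use the boundary-adjustment argument of the lemma preceding \corrref{cor:cell_limsup_Quad}, run with $\r_\e^i$-dyadic annuli in place of coordinate ones (legitimate by $\r\simeq r$, \lemref{lem:frakr=r}), to interpolate back to $\ZEps$ on a sub-shell just inside $\{\r_\e^i=r_\e\}$. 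Because $|d\ZEps-Q_\e|\lesssim\e|\v_\e^i|/\r$ on that sub-shell too, this preserves $|d\tf_\e-Q_\e|\lesssim\e|\v_\e^i|/\r\lesssim\e/\r+\e/\rho_\e^2$ and changes the rescaled energy by at most $O(|\v_\e^i|^2/\log(1/\e))$.

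For the global estimates: for \eqref{eq:Quad_beta_e}, via the embeddings and $\r\simeq r$, the region $B_\e^i\setminus B_\e^{i,s}$ contains a model annulus $\hM_{\e\v_\e^i}^{c_1r_\e}\setminus\hM_{\e\v_\e^i}^{c_2\e^s}$ and is contained in $\hM_{\e\v_\e^i}^{c_3r_\e}\setminus\hM_{\e\v_\e^i}^{c_4\e^s}$ with $\e$-independent $c_j$, and $\dVol_{\hg_{\e\v_\e^i}}\simeq\VolumeEps$ by \eqref{eq:bilipschitz}, so the displayed identity transfers to \eqref{eq:Quad_beta_e}, the shifted radii contributing only an extra $O(1)$ inside the $|\log r_\e|$ term. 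For \eqref{eq:tf_e_L2_bound}, I split $\int_{\MEps}|d\tf_\e-Q_\e|^2\,\VolumeEps$: over $\MEps\setminus\bigcup_iB_\e^i$ it is controlled by the global distortion bound \eqref{eq:global_dist_bnd} ($\lesssim h_\e^2$); over each $B_\e^i$, the pointwise bound $|d\tf_\e-Q_\e|\lesssim\e|\v_\e^i|/\r$ integrated over $\{c\e\lesssim\r\lesssim r_\e\}$ gives $\lesssim\e^2|\v_\e^i|^2\log(r_\e/\e)\lesssim\e^2\log(1/\e)$, and summing over $m_\e\lesssim n_\e$ gives $\lesssim n_\e\e^2\log(1/\e)\le h_\e^2$.

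The main obstacle is the second step. Unlike in \corrref{cor:cell_limsup_Quad}, $B_\e^i$ is not a model annulus $\hM_{\e\v_\e^i}^{r_\e}$: the shape of its core — hence of the level set $\{\r_\e^i=r_\e\}$ read in model coordinates — is not explicit. One therefore has to (i) import the explicit $\beta_{\v_\e^i}$-ansatz from the model through the double enclosure of \propref{prop:M_hM_M} and the radius/distance comparison of \lemref{lem:frakr=r}, absorbing the constant losses into the $\log r_\e$ error, and (ii) perform the matching with $\ZEps$ on a sub-shell of $B_\e^i$ rather than literally at $\{\r_\e^i=r_\e\}$, so that the whole family patches into an $H^1$ map without spoiling the pointwise bound, the contribution of that sub-shell to \eqref{eq:Quad_beta_e} being absorbed into the $O((|\log r_\e|+1)/\log(1/\e))$ term.
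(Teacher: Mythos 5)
Your proposal is correct and follows essentially the same route as the paper: the double enclosure of \propref{prop:M_hM_M}, the $\r\simeq r$ comparison of \lemref{lem:frakr=r}, the near-optimal $\beta_{\v_\e^i}$-ansatz of \corrref{cor:cell_limsup_Quad} with its built-in boundary adjustment, and the same splitting of $B_\e^i\setminus B_\e^{i,s}$ into an optimal inner model annulus plus an outer shell whose contribution is absorbed into the $O(\log(1/r_\e)/\log(1/\e))$ error. The only (inessential) difference is that the paper applies the corollary on the \emph{inner} embedded annulus $\hM_{\e\v_\e^i}^{r_\e/2}\subset B_\e^i$, so the gluing circle $\{r=r_\e/2\}$ already lies in $\MEps$ and no extension of $\ZEps$ to the model nor a separate sub-shell interpolation is needed; your variant with $R_\e=2r_\e$ works too but makes both of those extra steps necessary.
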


\begin{proof}
By Proposition~\ref{prop:M_hM_M}  for $\e$ small enough (independently of $s$),
\[
\hM_{\e\v_\e^i}^{r_\e/2}\setminus \hM_{\e\v_\e^i}^{20K\e^s} \hookrightarrow B_\e^i\setminus B_\e^{i,s} \hookrightarrow \hM_{\e\v_\e^i}^{2r_\e}\setminus \hM_{\e\v_\e^i}^{5K\e^s}. 
\]
To simplify notations, we will treat these isometric embeddings as inclusions.

Next, apply Corollary~\ref{cor:cell_limsup_Quad} to $\hM_{\e\v_\e^i}^{r_\e/2}$ for each $i$; all the assumptions are satisfied, as $\log(1/r_\e)\ll \log(1/\e)$ and the inclusion map satisfies \eqref{eq:Z_eQ_e_recovery} and thus $|\IdRtwo - \PEps| \lesssim \e/\r$ inside $B_\e^i$, since $r_\e \le \rho_\e^2$.  This defines $\tf_\e$ around the cores of the dislocations.
At all other points, define it as the inclusion map.
By Corollary~\ref{cor:cell_limsup_Quad}, the resulting function glues nicely and is in $W^{1,\infty}(\MEps;\R^2)$.
Furthermore, by \eqref{eq:df_e_bound} and \eqref{eq:Z_eQ_e_recovery}, the bound \eqref{eq:tf_e_uni_bound} is  satisfied.
To obtain the bound \eqref{eq:tf_e_L2_bound}, note that $|d\tf_\e - \PEps| \lesssim \e/\r$ in $B_\e^i$ and that
\[
\int_{B_\e^i} \frac{\e^2}{\r^2} \,\VolumeEps \lesssim \e^2\log(1/\e).
\]
Thus,
\[
\begin{split}
\int_{\MEps} |d\tf_\e - \PEps|^2 \,\VolumeEps 
&\le \int_{\MEps} |\IdRtwo - \PEps|^2 \,\VolumeEps + \sum_{i=1}^{m_\e}\int_{B_\e^i} |d\tf_\e - \PEps|^2 \,\VolumeEps \\
&\lesssim h_\e^2 + m_\e \e^2 \log(1/\e) \lesssim h_\e^2.
\end{split}
\]
Finally, to show \eqref{eq:Quad_beta_e}, 
\[
\begin{split}
&\frac{1}{\log(1/\e)} \int_{B_\e^i\setminus B_\e^{i,s}} \bbW(\beta_\e)\,\VolumeEps \\
&\qquad \le \frac{1}{\log(1/\e)} \int_{\hM_{\e\v_\e^i}^{2r_\e}\setminus \hM_{\e\v_\e^i}^{5K\e^s}} \bbW(\beta_\e)\,\VolumeEps \\
&\qquad = \frac{1}{\log(1/\e)} \int_{\hM_{\e\v_\e^i}^{r_\e/2}\setminus \hM_{\e\v_\e^i}^{5K\e^s}} \bbW(\beta_\e)\,\VolumeEps
+\frac{1}{\log(1/\e)} \int_{\hM_{\e\v_\e^i}^{2r_\e}\setminus \hM_{\e\v_\e^i}^{r_\e/2}} \bbW(\beta_\e)\,\VolumeEps \\
&\qquad = \Ilin_0(\v_\e^i)\brk{s + O\brk{\e^{1-s}+ \frac{\log (1/r_\e)}{\log(1/\e)}}}
+\frac{1}{\log(1/\e)} \int_{\hM_{\e\v_\e^i}^{2r_\e}\setminus \hM_{\e\v_\e^i}^{r_\e/2}} \bbW(\beta_\e)\,\VolumeEps \\
\end{split}
\]
where in the transition to the last line we used \eqref{eq:Quad_df_e_optimal} which holds by Corollary~\ref{cor:cell_limsup_Quad}.
The proof is complete by noting that
\[
\begin{split}
&\frac{1}{\log(1/\e)} \int_{\hM_{\e\v_\e^i}^{2r_\e}\setminus \hM_{\e\v_\e^i}^{r_\e/2}} \bbW(\beta_\e)\,\VolumeEps \\
&\qquad = \frac{1}{\e^2 \log(1/\e)} \int_{\hM_{\e\v_\e^i}^{2r_\e}\setminus \hM_{\e\v_\e^i}^{r_\e/2}} \bbW( \PEps^{-1} - I)\,\VolumeEps \\
&\qquad \simeq \frac{1}{\e^2 \log(1/\e)} \int_{\hM_{\e\v_\e^i}^{2r_\e}\setminus \hM_{\e\v_\e^i}^{r_\e/2}} \frac{\e^2|\v_\e^i|^2}{\r^2}\,\VolumeEps \\
&\qquad \simeq \frac{|\v_\e^i|^2}{\log(1/\e)} \simeq  \Ilin_0(\v_\e^i)\frac{1}{\log(1/\e)},
\end{split}
\]
which is negligible compared to $\Ilin_0(\v_\e^i)\log(1/r_\e)/\log(1/\e)$.
Here, in the transition to the third line we use the pointwise bound on $\IdRtwo-\PEps$ and the fact that $\bbW$ is a quadratic form.
\end{proof}

\begin{lemma}
\label{lem:tf_e_conv}
Let $(\MEps,\PEps)$ 
and $\tf_\e$ be as in \lemref{lem:tf_e}.
Then, $\tf_\e \to (I,J_0)$ in the sense of Definition~\ref{def:conv_fe}, where $J_0=0$ in the subcritical case and 
$J_0 \in L^2\W^1(\W;\R^2)$ is the solution of the elliptic first-order differential system
\[
\begin{cases}
dJ_0 = \mu
\textand d^* J_0=0 & \text{in $\W$} \\
J_0(\frakn) = 0 & \text{on $\pl\W$}
\end{cases}
\]
in the critical and supercritical cases.
Furthermore, the $L^2$-convergence of $h_\e^{-1}(d\tf_\e - \PEps) \to J_0$ is strong on $\MEps\setminus \cup_i B_\e^i$.
\end{lemma}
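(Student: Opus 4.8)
The plan is to verify Definition~\ref{def:conv_fe} with $U_\e=I$: setting $\tilde J_\e:=h_\e^{-1}(d\tf_\e-\QEps)$, the $L^2$-bound \eqref{eq:tf_e_L2_bound} together with the uniform bilipschitz property of $\ZEps$ shows that $(\ZEps^{-1})^\#\tilde J_\e$ is bounded in $L^2\W^1(\W;\R^2)$ (in Step~II the maps $\ZEps$ are the coordinate identity, so the pushforward is harmless), and I would identify its weak limit as $J_0$, the convergence being strong away from the cores. I split the domain as $\MEps=(\MEps\setminus\bigcup_iB_\e^i)\cup\bigcup_iB_\e^i$ and treat the two parts separately; because the limit on each part is pinned down explicitly, one gets convergence of the whole sequence, not merely of a subsequence.

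On the elastic part $\tf_\e=\ZEps$, hence $\tilde J_\e=h_\e^{-1}(d\ZEps-\QEps)$, and by the explicit construction of Section~\ref{sec:disloc_construction} one has $d\ZEps-\QEps=-(\alpha_\e-\beta_\e+\gamma_\e)$, with $\alpha_\e,\beta_\e,\gamma_\e$ the $\R^2$-valued one-forms attached to the measure $\mu_\e$ of Step~I. The first move is to discard $\alpha_\e$ and $\beta_\e$: from Lemma~\ref{lem:uniform_estimates}, the pointwise bound $|\alpha_\e|\lesssim\e|\v_\e^i|/r_i$ on $\bigcup_iB_{a_\e}(p_\e^i)$, the bounds $|\v_\e^i|\le K$ and $m_\e\lesssim n_\e$, and $\log(1/r_\e)\ll\LogEps$, I obtain $\|\alpha_\e\|_{L^2(\MEps\setminus\cup_iB_\e^i)}^2\lesssim n_\e\e^2\log(1/r_\e)\ll h_\e^2$ and $\|\beta_\e\|_{L^2(\W)}^2\lesssim m_\e\e^2\lesssim n_\e\e^2\ll h_\e^2$ (using $h_\e^2\ge n_\e\e^2\LogEps$), so $h_\e^{-1}\alpha_\e$ and $h_\e^{-1}\beta_\e$ tend to $0$ strongly in $L^2$ on the elastic part. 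The second, and only genuinely non-routine, step concerns $\gamma_\e$: it solves the first-order elliptic system $d\gamma_\e=\tmu_\e$, $\delta\gamma_\e=0$ in $\W$, $\gamma_\e(\frakn)=0$ on $\pl\W$, and by Lemma~\ref{lem:approx_mu} one has $\|\tmu_\e\|_{H^{-1}}\ll h_\e$ in the subcritical regime while $h_\e^{-1}\tmu_\e=(n_\e\e)^{-1}\tmu_\e\to\mu$ in $H^{-1}$ in the critical and supercritical regimes; since the solution operator of this system is bounded $H^{-1}(\W;\R^2)\to L^2\W^1(\W;\R^2)$ by elliptic regularity (as used in Section~\ref{sec:disloc_construction}), $h_\e^{-1}\gamma_\e$ converges strongly in $L^2(\W)$ to $0$ (subcritical) or to the solution with datum $\mu$ (critical/supercritical). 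Combining the three estimates, and reading off the signs from $\QEps=\id_{\R^2}+\alpha_\e-\beta_\e+\gamma_\e$, yields $\tilde J_\e\to J_0$ strongly in $L^2$ of $\MEps\setminus\bigcup_iB_\e^i$ — which is the last assertion of the lemma — and identifies $J_0$ with the solution of the stated elliptic system ($J_0=0$ in the subcritical case).

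It remains to promote this to global weak convergence. On the cores, the pointwise bound \eqref{eq:tf_e_uni_bound} gives $|d\tf_\e-\QEps|\lesssim\e/\r$ in $\bigcup_iB_\e^i$ — the $\e/\rho_\e^2$ term being negligible since $\log(1/\rho_\e)\ll\LogEps$ — whence $\|\tilde J_\e\|_{L^2(\cup_iB_\e^i)}^2\lesssim h_\e^{-2}m_\e\e^2\LogEps\lesssim1$ is bounded, while $\bigl|\bigcup_i\ZEps(B_\e^i)\bigr|\lesssim n_\e r_\e^2\to0$ as in \eqref{eq:B_e_i_vol}. Hence, by Cauchy--Schwarz and dominated convergence, $\ind_{\cup_i\ZEps(B_\e^i)}(\ZEps^{-1})^\#\tilde J_\e\weakly0$ in $L^2(\W)$; adding this to the strong convergence on the complement gives $(\ZEps^{-1})^\#\tilde J_\e\weakly J_0$ in $L^2(\W)$, i.e.\ $\tf_\e\to(J_0,I)$ in the sense of Definition~\ref{def:conv_fe}. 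The expected main obstacle is precisely the $\gamma_\e$ step: it is there that the regime dichotomy enters, through the $H^{-1}$-estimates of Lemma~\ref{lem:approx_mu} and the continuity of the elliptic solution operator; everything else is careful bookkeeping of pointwise and $L^2$ bounds already established.
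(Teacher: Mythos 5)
Your proposal is correct and follows essentially the same route as the paper: decompose $d\ZEps-\QEps$ on $\MEps\setminus\cup_iB_\e^i$ into the forms $\alpha_\e,\beta_\e,\gamma_\e$ of Section~\ref{sec:disloc_construction}, kill $\alpha_\e$ and $\beta_\e$ by the $L^2$-estimates, and identify the limit of $h_\e^{-1}\gamma_\e$ via elliptic regularity and the $H^{-1}$-behaviour of $\tmu_\e$ from Lemma~\ref{lem:approx_mu}. Your only additions are cosmetic but welcome: you spell out the weak convergence over the cores (bounded $L^2$-norm on sets of vanishing measure), where the paper relies on subsequence extraction, and you track the sign in $d\ZEps-\QEps=-(\alpha_\e-\beta_\e+\gamma_\e)$ more carefully than the paper's own displayed identity.
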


\begin{proof}
We first note that \eqref{eq:tf_e_L2_bound} implies that $\tf_\e \to (I,J_0)$ modulo a subsequence for some $J_0 \in L^2\W^1(\W;\R^2)$.

We start with the the critical and supercritical cases.
We need to analyze $d\tf_\e$ in more detail in $\MEps\setminus \cup_i B_\e^i$.
In this region, $d\tf_\e = \IdRtwo$ and
\[
\IdRtwo - \PEps = \alpha_\e - \beta_\e + \gamma_\e, 
\]
where $\alpha_\e$, $\beta_\e$ and $\gamma_\e$ are defined in Section~\ref{sec:disloc_construction}, and can be considered as one-forms on $\W$.
The sections $\alpha_\e$ and $\beta_\e$ are supported on the balls of radius $a_\e\simeq \rho_\e$ around the points $p_\e^i\in \W$ (defined in \lemref{lem:approx_mu}), and the following the bounds hold in each $B_{a_\e}(p_\e^i)$
\[
|\alpha_\e|(x) \lesssim \frac{\e}{|x-p_\e^i|}, \qquad |\beta_\e|(x) \lesssim \frac{\e}{\rho_\e},
\]
from which follows that
\[
\begin{split}
\int_{\MEps \setminus \cup_i B_\e^i} |\alpha_\e|^2 \,\VolumeEps 
&\lesssim m_\e \int_{r_\e}^{\rho_\e} \e^2\frac{1}{r^2}\,r\,dr \\
&\lesssim n_\e \e^2 \log(\rho_\e/r_\e) \ll n_\e \e^2 \log(1/\e) \lesssim h_\e^2,
\end{split}
\]
and similarly for $\beta_\e$.
Thus, in order to show that $h_\e^{-1}(\tf_\e - \PEps)$ converges strongly in $L^2$ to $J_0$ in $\MEps\setminus \cup_i B_\e^i$ (from which the weak convergence on $\MEps$ also follows), it suffices to show that $h_\e^{-1} \gamma_\e \to J_0$ in $L^2(\W)$.
Recall that
\[
\begin{cases}
d\gamma_\e = \tmu_\e
\textand d^* \gamma_\e=0 & \text{in $\W$} \\
\gamma_\e(\frakn) = 0 & \text{on $\pl\W$},
\end{cases}
\]
where $\tmu_\e$ are as defined in \lemref{lem:approx_mu}.
Thus, from elliptic regularity and \eqref{eq:tmu_e_L_conv},
\[
\left\| \frac{1}{h_\e} \gamma_\e - J_0\right\|_{L^2} \lesssim \left\| \frac{1}{n_\e \e} \tmu_\e - \mu\right\|_{H^{-1}} \to 0,
\]
which completes the proof for the critical and supercritical cases.
For the subcritical case, the proof follows in the same way, only that in the last inequality
\[
\left\| \frac{1}{h_\e} \gamma_\e\right\|_{L^2} \lesssim \frac{1}{h_\e} \left\|  \tmu_\e\right \|_{H^{-1}},
\]
which tends to zero from \lemref{lem:approx_mu}.
\end{proof}

\begin{lemma}[Recovery sequence]\label{lem:rec_seq}
Let $\mu \in \calM(\W;\R^2)$ (in the critical and supercritical cases, $\mu \in \calM \cap H^{-1}$), and let $(\MEps, \PEps)$ be the sequence constructed in \lemref{lem:upper_bound_II}.
Let $U\in \SO(2)$ and $J \in L^2\W^1(\W;\R^2)$ satisfy $\curl J = 0$ (subcritical) or $\curl J = -\mu$ (critical or supercritical).
Let $J_0$ as defined in \lemref{lem:tf_e_conv}, and let $\psi\in W^{1,2}(\W)$ be such that $d\psi = J-  J_0$.
Then, for every sequence $\psi_\e \in W^{1,\infty}(\W;\R^2)$ such that $\psi_\e\to \psi$ in $W^{1,2}(\W)$ and $\|d\psi_\e\|_{L^\infty} \ll \e^{1/2}h_\e^{-1}$, the sequence of functions
\[
f_\e = U(\tf_\e + h_\e \psi_\e )
\]
converges to $(U,J)$, and
\[
\LimsupEps \calE_\e(f_\e,\PEps) \le \calE_0(J,\mu).
\]
\end{lemma}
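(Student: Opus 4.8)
The plan is to verify separately the convergence $f_\e\to(U,J)$ of \defref{def:conv_fe} and the energy bound $\LimsupEps\calE_\e(f_\e)\le\calE_0(J,\mu)$, the latter by decomposing $\MEps$ into a far field, intermediate annuli around the dislocations, and shrinking cores.

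\emph{Convergence.} From $df_\e=U\brk{d\tf_\e+h_\e\,d(\psi_\e\circ\ZEps)}$ one gets $U^Tdf_\e-\QEps=(d\tf_\e-\QEps)+h_\e\,d(\psi_\e\circ\ZEps)$. Taking $U_\e\equiv U$ and dividing by $h_\e$, I would push forward under $\ZEps$: by \lemref{lem:tf_e_conv} the first term converges strongly in $L^2$ to $J_0$ on $\MEps\setminus\bigcup_iB_\e^i$, while the push-forward of $h_\e^{-1}\brk{h_\e\,d(\psi_\e\circ\ZEps)}$ is $d\psi_\e\to d\psi=J-J_0$ strongly in $L^2(\W)$; on $\bigcup_iB_\e^i$ the push-forward is only bounded in $L^2$ by \eqref{eq:tf_e_L2_bound}, but $\Abs{\ZEps(\bigcup_iB_\e^i)}\lesssim m_\e r_\e^2\to0$, so this part contributes nothing to weak limits. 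Hence $h_\e^{-1}(U^Tdf_\e-\QEps)\weakly J$ in $L^2$, which is the asserted convergence.

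\emph{Energy decomposition and the key estimate.} Fix $s\in(0,1)$ and split $\calE_\e(f_\e)$ into the integrals over $\MEps\setminus\bigcup_iB_\e^i$, over $\bigcup_i(B_\e^i\setminus B_\e^{i,s})$ (with $B_\e^{i,s}$ as in \eqref{eq:B_e_i_s}), and over $\bigcup_i(B_\e^{i,s}\cap\MEps)$. By frame indifference $\calW(df_\e\circ\QEps^{-1})=\calW\brk{(d\tf_\e+h_\e\,d(\psi_\e\circ\ZEps))\circ\QEps^{-1}}$; set $b_\e=h_\e\,d(\psi_\e\circ\ZEps)\circ\QEps^{-1}$, so $|b_\e|\lesssim h_\e\Norm{d\psi_\e}_{L^\infty}\ll\e^{1/2}$. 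The crucial observation is that $\sum_i\int_{B_\e^i}|b_\e|^2\lesssim h_\e^2\int_{\bigcup_i\ZEps(B_\e^i)}|d\psi_\e|^2=o(h_\e^2)$, since $|d\psi_\e|^2$ is equi-integrable (being $L^1$-convergent) while $\Abs{\bigcup_i\ZEps(B_\e^i)}\to0$; by Cauchy--Schwarz together with $\sum_i\int_{B_\e^i}\e^2|\beta_\e|^2\lesssim h_\e^2$ (from the pointwise bound $|d\tf_\e-\QEps|\lesssim\e/\r+\e/\rho_\e^2$) the cross-term sum $\sum_i\int_{B_\e^i}|\e\beta_\e||b_\e|$ is $o(h_\e^2)$ as well.

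\emph{The three regions.} On the far field $\tf_\e=\ZEps$, and $\zeta_\e:=h_\e^{-1}(U^Tdf_\e\circ\QEps^{-1}-I)$ pushes forward to a sequence converging to $J$ strongly in $L^2(\W)$ (combining \lemref{lem:tf_e_conv}, \lemref{lem:dZ_Q_unif} and $d\psi_\e\to J-J_0$); a Taylor expansion $\calW(I+h_\e\zeta_\e)=h_\e^2\Quad(\zeta_\e)+o(h_\e^2|\zeta_\e|^2)$ where $h_\e|\zeta_\e|$ is small, the upper bound $\calW\lesssim\dist^2(\cdot,\SO(2))\le h_\e^2|\zeta_\e|^2$ on its complement (of vanishing measure by Chebyshev, as $h_\e\to0$), and equi-integrability of $|\zeta_\e|^2$ give $\LimsupEps\tfrac1{h_\e^2}\int_{\MEps\setminus\cup_iB_\e^i}\calW(df_\e\circ\QEps^{-1})\le\calE^{\text{elastic}}_0(J)$. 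On $B_\e^i\setminus B_\e^{i,s}$ one has $d\tf_\e\circ\QEps^{-1}-I=\e\beta_\e$ with $|\e\beta_\e|\lesssim\e/\r\le\e^{1-s}/(10K)\to0$, so $\calW$ may again be expanded about $I$; after summing and dividing by $h_\e^2$, the $\Quad(b_\e)$, cross, and remainder contributions are $o(1)$ (the remainder for fixed $s$, by the previous paragraph), while by \eqref{eq:Quad_beta_e} and \eqref{eq:conv_of_self_energy} the main term $\tfrac{\e^2\LogEps}{h_\e^2}\sum_i\Ilin_0(\v_\e^i)(s+o(1))$ tends to $s\,\calE^{\text{self}}_0(\mu)$ in the critical and subcritical regimes and to $0$ in the supercritical one (there $\e^2\LogEps/h_\e^2=\LogEps/n_\e^2$ and $\sum_i\Ilin_0(\v_\e^i)\lesssim\sum_i|\v_\e^i|\lesssim n_\e$). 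On the cores the crude bound $\calW\lesssim|\e\beta_\e+b_\e|^2\lesssim(\e/\r)^2+(\e/\rho_\e^2)^2+|b_\e|^2$, integrated over $\{\r\lesssim\e^s\}$ (of area $\lesssim\e^{2s}$), summed over the $m_\e\lesssim n_\e$ dislocations and divided by $h_\e^2\ge n_\e\e^2\LogEps$, gives a contribution $\lesssim(1-s)$. Combining the three regions and letting $s\to1$ (the left-hand side being $s$-independent) yields $\LimsupEps\calE_\e(f_\e)\le\calE_0(J,\mu)$ in all regimes.

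\emph{Main obstacle.} The delicate step is the control of the perturbation $h_\e\psi_\e\circ\ZEps$ near the dislocations: since only $\Norm{d\psi_\e}_{L^\infty}\ll\e^{1/2}h_\e^{-1}$ is available, a pointwise bound for the contribution of $b_\e$ over $\bigcup_iB_\e^i$ is hopeless, that set having volume only $\simeq m_\e r_\e^2$ with $r_\e$ not a power of $\e$. The remedy is exactly the equi-integrability of $|d\psi_\e|^2$ coming from the strong $W^{1,2}$-convergence $\psi_\e\to\psi$, combined with the vanishing of the total core volume; this makes the estimate $\sum_i\int_{B_\e^i}|b_\e|^2=o(h_\e^2)$ hold, and with it the cross-term bound. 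Everything else is routine: the $C^2$ Taylor expansion of $\calW$ about $\SO(2)$, a Chebyshev argument on the set where the strain is not small, the upper bound \eqref{eq:bound_calW} to absorb that set, and the already-established single-dislocation estimates \corrref{cor:cell_limsup_Quad} and the self-energy convergence \eqref{eq:conv_of_self_energy}.
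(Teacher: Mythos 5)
Your proposal is correct and follows essentially the same route as the paper's proof: the same splitting of $\MEps$ into the far field, the intermediate annuli $B_\e^i\setminus B_\e^{i,s}$, and the cores $B_\e^{i,s}$, with the self-energy extracted from \eqref{eq:Quad_beta_e} and \eqref{eq:conv_of_self_energy}, the far field linearized to $\calE^{\text{elastic}}_0(J)$ using the strong convergence from \lemref{lem:tf_e_conv}, the cores absorbed into an $O(1-s)$ term, and the perturbation $h_\e\,d(\psi_\e\circ\ZEps)$ near the dislocations controlled exactly as the paper does, via the equi-integrability of $|d\psi_\e|^2$ against the vanishing volume of $\bigcup_i B_\e^i$. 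The only cosmetic difference is your Chebyshev/upper-bound step in the far field, which the paper avoids because there the strain is already uniformly small pointwise.
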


\begin{proof}
We have
\[
\frac{U^T df_\e - \PEps}{h_\e} = \brk{\frac{d\tf_\e - \PEps}{h_\e}-J_0} + 
(d\psi_\e  + J_0).
\]
From  \lemref{lem:tf_e_conv} and Definition~\ref{def:conv_fe}, the first term on the right-hand side tends to zero weakly in $L^2$. From the $L^2$-convergence $d\psi_\e \to d\psi = J - J_0$, we obtain that
$f_\e \to (U, J)$.
Furthermore, \lemref{lem:tf_e_conv} implies that on $\MEps\setminus \cup_i B_\e^i$ the $L^2$-convergence is strong.

Similarly to the way we proceeded for the lower bound, we split the energy into
\[
\begin{split}
\calE_\e(f_\e,\PEps) &= \frac{1}{h_\e^2} \sum_{i=1}^{m_\e} \int_{B_\e^i} \calW(df_\e\circ \PEps^{-1})\,\VolumeEps + 
\frac{1}{h_\e^2} \int_{\MEps\setminus \cup_i B_\e^i} \calW(df_\e\circ \PEps^{-1})\,\VolumeEps \\
&\equiv \calE_\e^{\text{self}}(f_\e,\PEps) + \calE_\e^{\text{elastic}}(f_\e,\PEps),
\end{split}
\]
evaluating each part separately.
For the ``elastic" (far field) part, 
\[
\begin{split}
\calE_\e^{\text{elastic}}(f_\e,\PEps) &= \frac{1}{h_\e^2} \int_{\MEps\setminus \cup_i B_\e^i} 
\calW(U^T df_\e\circ \PEps^{-1})\,\VolumeEps \\
 &= \frac{1}{h_\e^2} \int_{\MEps\setminus \cup_i B_\e^i} 
 \calW(I+ (d\tf_\e\circ \PEps^{-1}-I)+h_\e d\psi_\e  \circ \PEps^{-1})\,\VolumeEps. \\
\end{split}
\]
Note that both $h_\e d\psi_\e  \circ \PEps^{-1}$ and $d\tf_\e\circ \PEps^{-1}-I$ tend to zero pointwise (the first from the assumption $\|d\psi_\e\|_{L^\infty} \ll \e^{1/2}h_\e^{-1}$, and the second from \eqref{eq:tf_e_uni_bound}, restricted to $\MEps\setminus \cup_i B_\e^i$), and the $L^2$ norms of both are $O(h_\e^2)$ (from the convergence $d\psi_\e\to d\psi$ and from \eqref{eq:tf_e_L2_bound}, respectively). Arguing as in \propref{prop:liminf1}, we linearize and obtain
\[
\begin{split}
\calE_\e^{\text{elastic}}(f_\e,\PEps) 
&= \int_{\MEps\setminus \cup_i B_\e^i} \bbW\brk{h_\e^{-1}\brk{d\tf_\e -\PEps}\PEps^{-1}+ d\psi_\e  \circ \PEps^{-1}}\,\VolumeEps + o(1). \\
&= \int_{\MEps\setminus \cup_i B_\e^i} \bbW\brk{h_\e^{-1}\brk{d\tf_\e -\PEps} + d\psi_\e}\,\VolumeEps + o(1). \\
&= \int_{\MEps\setminus \cup_i B_\e^i} \bbW\brk{\brk{h_\e^{-1}\brk{d\tf_\e -\PEps} - J_0} + d\psi_\e+J_0}\,\VolumeEps + o(1). \\
&= \int_{\MEps\setminus \cup_i B_\e^i} \bbW\brk{d\psi_\e+J_0}\,\VolumeEps + o(1). \\
\end{split}
\]
where in the transition to the second line we used the fact that $\PEps^{-1}$ and $\IdRtwo$ are uniformly close in $\MEps\setminus \cup_i B_\e^i$ (this is immediate from \eqref{eq:Z_eQ_e_recovery}), and in the transition to the last line we used the fact that $h_\e^{-1}(d\tf_\e -\PEps)$ converge strongly to $J_0$ on $\MEps\setminus \cup_i B_\e^i$ (\lemref{lem:tf_e_conv}).
Taking $\e\to 0$ and using the fact that $d\psi_\e \to J-J_0$ in $L^2$, and that the volume of $ \cup_i B_\e^i$ tends to zero, we obtain that
\beq\label{eq:calE_e_elastic_recovery}
\lim_{\e\to 0} \calE_\e^{\text{elastic}}(f_\e,\PEps) = \int_{\W} \bbW(J)\,\VolumeE = \calE_0^{\text{elastic}}(J)
\eeq
as needed.

We next evaluate the energy close to the cores of the dislocations. We split $B_\e^i$ further into $B_\e^{i,s}$ and $B_\e^i\setminus B_\e^{i,s}$, where $B_\e^{i,s}$ is defined in \eqref{eq:B_e_i_s}:
\[
\calE_\e^{\text{self}}(f_\e,\PEps) = \frac{1}{h_\e^2} \sum_{i=1}^{m_\e} \int_{B_\e^{i,s} } \calW(U^T df_\e\circ \PEps^{-1})\,\VolumeEps 
+ \frac{1}{h_\e^2} \sum_{i=1}^{m_\e} \int_{B_\e^i\setminus B_\e^{i,s}} \calW(U^T df_\e\circ \PEps^{-1})\,\VolumeEps.
\]
From the pointwise bounds \eqref{eq:tf_e_uni_bound} and $\|d\psi_\e\|_{L^\infty}\ll \e^{1/2}h_\e^{-1}$ we have in $B_\e^i$,
\beq
\label{eq:df_minus_Q_recovery}
|U^T df_\e - \PEps| = |d\tf_\e - \PEps| + h_\e |d\psi_\e | \lesssim \frac{\e}{\r} +  \e^{1/2},
\eeq
hence, by the upper bound in \eqref{eq:bound_calW}
\[
\calW(U^T df_\e\circ \PEps^{-1}) \lesssim \dist^2(U^T df_\e\circ \PEps^{-1}, \SO(2)) \le |U^T df_\e - \PEps|^2 \lesssim \frac{\e^2}{\r^2} + \e.
\]
Thus we have
\[
\begin{split}
\frac{1}{h_\e^2} \sum_{i=1}^{m_\e} \int_{B_\e^{i,s}} \calW(U^T df_\e\circ \PEps^{-1})\,\VolumeEps 
& \lesssim \frac{1}{h_\e^2} \sum_{i=1}^{m_\e} \int_{\e}^{\e^s} \brk{\frac{\e^2}{r^2} + \e} r\,dr \\
& \lesssim (1-s)\frac{n_\e \e^2 \log(1/\e)}{h_\e^2} + \frac{n_\e \e^{1+2s}}{h_\e^{2}}\\
& \lesssim 1-s.
\end{split}
\]
where we used the estimate $m_\e \lesssim n_\e$ and the fact that $s>1/2$ and thus $n_\e \e^{1+2s} \ll h_\e^2$.
Hence, when we eventually take $s\to 1$, the contribution will be negligible.

For the regions $B_\e^i\setminus B_\e^{i,s}$, 
the bound \eqref{eq:df_minus_Q_recovery} implies that $|U^T df_\e - \PEps| = o(1)$.
As before, we know that $h_\e^{-1}(U^T df_\e - \PEps)$ is $L^2$-bounded.
Thus, as for the far field part, we can linearize and obtain
\[
\begin{split}
&\frac{1}{h_\e^2} \sum_{i=1}^{m_\e} \int_{B_\e^i\setminus B_\e^{i,s}} \calW(U^T df_\e\circ \PEps^{-1})\,\VolumeEps \\
&\qquad = \sum_{i=1}^{m_\e} \int_{B_\e^i\setminus B_\e^{i,s}} \bbW(h_\e^{-1}(d\tf_\e\circ \PEps^{-1} - I) + d\psi_\e  \circ \PEps^{-1})\, \VolumeEps + o(1)
\end{split}
\]
Since the total volume of $\cup_i B_\e^i$ tends to zero as $\e\to 0$, and since $d\psi_\e$ converge strongly in $L^2(\W)$, we can omit the $d\psi_\e$ part:
\[
\begin{split}
\frac{1}{h_\e^2} \sum_{i=1}^{m_\e} \int_{B_\e^i\setminus B_\e^{i,s}} \calW(U^T df_\e\circ \PEps^{-1})\,\VolumeEps 
 = \frac{1}{h_\e^2} \sum_{i=1}^{m_\e} \int_{B_\e^i\setminus B_\e^{i,s}} \bbW(d\tf_\e\circ \PEps^{-1} - I)\, \VolumeEps + o(1).
\end{split}
\]
Using \eqref{eq:Quad_beta_e} and \eqref{eq:conv_of_self_energy}, we obtain
\[
\begin{split}
&\frac{1}{h_\e^2} \sum_{i=1}^{m_\e} \int_{B_\e^i\setminus B_\e^{i,s}} \calW(U^T df_\e\circ \PEps^{-1})\,\VolumeEps \\
&\qquad \le (s + o(1))\frac{\e^2 \log(1/\e)}{h_\e^2} \sum_{i=1}^{m_\e} \Ilin_0(\v_\e^i) + o(1) \\
&\qquad = (s+o(1))\frac{n_\e \e^2 \log(1/\e)}{h_\e^2} \brk{\int_{\W} \SEF\brk{\frac{d\mu}{d|\mu|}} d|\mu| + o(1)}+ o(1) \\
&\qquad = (s+o(1))\frac{n_\e \e^2 \log(1/\e)}{h_\e^2} \brk{\calE^{\text{self}}_0(\mu) + o(1)}+ o(1).
\end{split}
\]
In the supercritical regime $h_\e^2 \gg n_\e \e^2 \log(1/\e)$, hence the right-hand side tends to zero as $\e\to 0$.
In the critical and subcritical cases, it tends to $s\,\calE^{\text{self}}_0(\mu)$.
To conclude (in the critical and subcritical regimes---the supercritical regime is similar), we obtain
\[
\limsup_{\e\to 0} \calE_\e^{\text{self}}(f_\e,\PEps) \le s\, \calE^{\text{self}}_0(\mu) + C(1-s).
\]
Taking $s\to 1$ we obtain
\[
\limsup_{\e\to 0} \calE_\e^{\text{self}}(f_\e,\PEps) \le \calE^{\text{self}}_0(\mu).
\]
Combining this with the far field estimate \eqref{eq:calE_e_elastic_recovery} we obtain
\[
\limsup_{\e\to 0} \calE_\e(f_\e,\PEps) \le \calE^{\text{elastic}}_0(J) + \calE^{\text{self}}_0(\mu) = \calE_0(J,\mu),
\]
which completes the proof.
\end{proof}

\section{Rigidity revisited}
In this section we use the results of Sections~\ref{sec:compactness}--\ref{sec:GammaConv} to improve the result of Theorem~\ref{thm:rigidity}, when the limiting dislocation density $\mu$ is such that the energy $\calE_0(\cdot,\mu)$ has a positive minimum. 

Consider $\calE_0$ is as in \eqref{eq:limenergy} for $\bbW(J) = \frac{1}{4}|J+J^T|^2$, and define, in each regime, $F:\calM(\W;\R^2) \to [0,\infty]$ by
\[
F(\mu) = \inf \calE_0(\cdot,\mu).
\]

\begin{lemma}
$F(\mu)=0$ if and only if $\mu = 0$, or if the regime is supercritical, and $\mu\in H^{-1}(\W;\R^2)$ satisfies $\curl \mu = 0$.
\end{lemma}

\begin{proof}
In the subcritical and critical regimes, the self-energy term
\[
\int_\W \SEF\brk{\frac{d\mu}{d|\mu|}}\,d|\mu|
\]
in $\calE_0$, vanishes if and only if $\mu=0$, due to the positivity of $\SEF$ on the unit sphere.
In the supercritical regime, $\calE_0(J,\mu)$ vanishes if and only if $\mu\in H^{-1}(\W;\R^2)$ and there exists a $J\in L^2\W^1(\W;\R^2)$ satisfying $\curl J = -\mu$ and whose symmetric part vanishes a.e.
By the Saint-Venant condition \cite[Section 6.18]{Cia13}, this implies (and if $\W$ is simply-connected, equivalent to) $\curl \curl J = 0$, hence $\curl \mu = 0$.
\end{proof}

The following is an immediate corollary fo the lim-inf inequality Theorem~\ref{thm:liminf} and the compactness result Theorem~\ref{thm:compactness}:
\begin{corollary}
Let 
\[
(\MEps,\PEps)\xrightarrow[]{n_\e} (\W,\IdRtwo,\mu),
\]
with $\log n_\e, \log(1/\rho_\e) \ll \log(1/\e)$.
If $F(\mu)<\infty$, then there exists $\e_0>0$ such that for every $\e\in (0,\e_0)$,
\[
\inf_{f_\e\in W^{1,2}(\MEps;\R^2)} \int_{\MEps} \dist^2(df_\e,\SO(\gEps,\euc)) \, \VolumeEps \ge \frac{1}{2} h_\e^2 F(\mu).
\]
If $F(\mu)=\infty$ then
\[
\inf_{f_\e\in W^{1,2}(\MEps;\R^2)} \int_{\MEps} \dist^2(df_\e,\SO(\gEps,\euc)) \, \VolumeEps \gg h_\e^2.
\]
\end{corollary}

\begin{proof}
Denote the left-hand sides of both inequalities by $d_\e$.
If $d_\e \gg h_\e^2$, or if $F(\mu) = 0$ then there is nothing to prove.
Assume otherwise, then there exists a sequence $f_\e$ such that 
\[
\int_{\MEps} \dist^2(df_\e,\SO(\gEps,\euc))\, \VolumeEps \le \frac{3}{2}d_\e \lesssim h_\e^2.
\]
By moving to a subsequence, we can assume without a loss of generality that $d_\e/h_\e^2$ converges to a limit $\alpha$.
By Theorem~\ref{thm:compactness},  $f_\e \to (J,U)$ modulo a subsequence in the sense of \defref{def:conv_fe}.
By Theorem~\ref{thm:liminf}, 
\[
\frac{3}{2}\alpha \ge \liminf_{\e\to 0} \frac{1}{h_\e^2} \int_{\MEps} \dist^2(df_\e,\SO(\gEps,\euc))\, \VolumeEps \ge \calE_0(J,\mu) \ge F(\mu),
\]
and in particular $F(\mu)<\infty$. 
There exists $\e_0>0$ such that for every $\e\in (0,\e_0)$, $d_\e/h_\e^2 \ge \frac{3}{4}\alpha\ge \frac{1}{2}F(\mu)$. 
This completes the proof. 
\end{proof}

From this corollary, we immediately obtain a version of rigidity Theorem~\ref{thm:rigidity} without the additional $h_\e^2$ term:
\begin{theorem}\label{thm:rigidity2}
Let 
\[
(\MEps,\PEps) 
\xrightarrow[]{n_\e}
(\W,\IdRtwo,\mu),
\]
with $\log n_\e, \log(1/\rho_\e) \ll \log(1/\e)$, and $F(\mu)>0$.
Then, there exists a $\e_0>0$ such that for every $\e\in (0,\e_0)$ and for every $f_\e \in \HOne(\MEps;\R^2)$, there exists a matrix $U_\e\in \SO(2)$, such that
\[
\|df_\e - U_\e \PEps\|_{L^2(\MEps)}^2 \lesssim \brk{1+ \frac{1}{F(\mu)}} \int_{\MEps} \dist^2(df_\e,\SO(\gEps,\euc))\,\VolumeEps ,
\]
where the constant depends on $\W$ and on the bilipschitz constant of the embedding of $(\MEps,\PEps)$ into $(\W,\IdRtwo)$, but not on $\mu$ or on $n_\e$. 
\end{theorem}

\paragraph{Acknowledgments}
We are grateful to Manuel Friedrich, Adriana Garroni, Or Hershkovits and Dan Mangoubi for various discussions along the preparation of this paper. 
This project was initiated in the Oberwolfach meeting ``Material Theories" in July 2017; we hope that this fruitful series of meetings will resume soon.

\paragraph{Funding} RK was funded by ISF Grant 560/22 and CM was funded by ISF Grant 2304/24 and BSF Grant 2022076.

\footnotesize{
\bibliographystyle{amsalpha}
\bibliography{}
}

\end{document}